\numberwithin{equation}{section}
\newtheorem{definition}{Definition}
\newtheorem{assumption}{Assumption}
\newtheorem{example}{Example}
\newtheorem{lemma}{Lemma}
\newtheorem{proposition}{Proposition}
\newtheorem{theorem}{Theorem}
\newtheorem{remark}{Remark}
\newcommand{\vt}{{\vartheta}}
\newcommand{\dd}{\mathsf {d\kern -0.07em l}} 
\newcommand{\bgeqn}{\begin{eqnarray}}
\newcommand{\edeqn}{\end{eqnarray}}
\newcommand{\bgeq}{\begin{eqnarray*}}
\newcommand{\edeq}{\end{eqnarray*}}
\newcommand{\bec}{\begin{center}}
\def\R{\mathbb{R}}
\title{A Bayesian Composite Risk Approach for Stochastic Optimal Control and Markov Decision Processes}
\author[1,2]{Wentao Ma\thanks{Email: \texttt{mwtmwt7@stu.xjtu.edu.cn}}}
\author[1,2]{Zhiping Chen\thanks{Email: \texttt{zchen@mail.xjtu.edu.cn}}}
\author[3]{Huifu Xu\thanks{Corresponding author. Email: \texttt{hfxu@se.cuhk.edu.hk}}}
\affil[1]{School of Mathematics and Statistics, Xi'an Jiaotong University, Xi'an, Shaanxi, P. R. China}
\affil[2]{Research Center for Optimization Technology and Intelligent Game, Xi'an International Academy for Mathematics and Mathematical Technology, Xi'an, P. R. China}
\affil[3]{Department of Systems Engineering and Engineering Management, The Chinese University of Hong Kong, Hong Kong}
\date{}
\begin{document}

\maketitle
\begin{abstract}
Inspired by 
Shapiro et al.~\cite{shapiro2023episodic},
we consider a
stochastic optimal control (SOC) and Markov decision process (MDP) 
where the risks arising 
from epistemic and aleatoric uncertainties are assessed using
Bayesian composite risk (BCR) measures. 
The time dependence of the risk measures allows us to capture 
the decision maker's (DM) dynamic risk preferences 
opportunely as increasing information about both uncertainties is obtained. This makes the new BCR-SOC/MDP model more flexible than 
conventional risk-averse SOC/MDP models.  Unlike
\cite{shapiro2023episodic} where 
the control/action at each episode is 
based on 
the current state alone, 
the new model 
allows 
the control 
to depend on the probability distribution of the epistemic uncertainty, which reflects the fact that in many practical instances the cumulative 
information about 
epistemic uncertainty often
affects the DM's belief about 
the future aleatoric uncertainty and hence 
the DM's action \cite{strens2000bayesian}.
The new modeling paradigm incorporates several
existing SOC/MDP models including distributionally robust 
SOC/MDP models and Bayes-adaptive MDP models and generates so-called preference robust SOC/MDP models.
Moreover, we derive conditions under which the BCR-SOC/MDP model is well-defined,
demonstrate that finite-horizon BCR-SOC/MDP models can be 
solved using dynamic programming techniques,
and extend the discussion to the infinite-horizon case. By using Bellman equations,  
we show that under some standard 
conditions, asymptotic convergence of the optimal values and optimal policies
as the episodic variable goes to infinity is achieved. We introduce a novel hyper-parameter approach for discretizing posterior distributions to enhance the practicality and efficiency of proposed algorithms.
Finally,   
we carry out numerical tests on  a finite horizon spread betting problem and
an inventory control problem and 
show the effectiveness of the proposed model and numerical schemes.
\\

\textbf{Keywords:} Stochastic optimal control, Markov decision process, Bayesian learning, Bayesian composite risk measure
\end{abstract}

\section{Introduction}
Stochastic optimal control (SOC)  and Markov decision process (MDP) are
the fundamental models for sequential decision-making where the outcomes are partly random and partly under the control of a decision maker (DM). In recent years, MDP has become even more popular because of its close relationship to reinforcement learning.
In a SOC/MDP, the environment is characterized by a finite or infinite set of states, and the DM's actions influence the transitions between these states. Each action taken in a given state determines the next state probabilistically, thereby capturing the dynamic and stochastic nature of the environment and the long-term consequences of decisions.
SOC/MDP have been applied extensively in diverse fields such as inventory control \cite{puterman2014markov}, 
hydro-thermal planning \cite{guigues2023risk}, economics and behavioral ecology \cite{bertsekas2012dynamic}, among other fields \cite{feinberg2012handbook}.

A critical component of a SOC/MDP model's validity lies in its ability to characterize uncertainty effectively. The uncertainties addressed by such models can arise from both epistemic uncertainty—that is, arising from limited data availability that affects the estimation of model parameters—and the inherent randomness of the environment, referred to as aleatoric uncertainty \cite{wang2024aleatoric}, which are both inevitable in the real world. 
Aleatoric uncertainty pertains to 
the variability in the outcome of an experiment arising from inherently random effects. Consequently, even the most accurate model of this process can only offer probabilities for different outcomes, 
rather than exact answers. 
On the other hand, epistemic uncertainty arises from a lack of knowledge about the correct model. In other words, it reflects the ignorance of the DM regarding the epistemic state, rather than the inherent randomness of the system. Unlike uncertainty caused by inherent randomness, epistemic uncertainty can, in principle, be reduced by acquiring additional information. We will review existing SOC/MDP models that address these two types of uncertainty below.

The primary goal of SOC/MDP is to develop a policy that on average minimizes a specified cost function under uncertainty, typically by mapping states to actions. However, this risk-neutral SOC/MDP neglects critical low-probability events with potentially severe consequences. To address this limitation, risk-averse SOC/MDP has been proposed to minimize not only the expected costs but also the variability and associated risks of outcomes. 
These models typically handle aleatoric uncertainty using a fixed distribution. 
Early work in this area adopted the expected utility framework, where a convex utility function is adopted to achieve risk aversion \cite{howard1972risk}. Other studies have explored  a static risk-averse SOC/MDP, where a risk measure is applied to the total accumulated cost, e.g., the mean-variance \cite{sniedovich1980variance,mannor2011mean}, or value-at-risk (VaR) \cite{filar1995percentile}. However, these measures are not coherent, failing to satisfy properties consistent with rational decision-making. Various approaches have emerged \cite{chow2015risk,tamar2016sequential} within the realm of coherent risk measures. Nonetheless, SOC/MDPs employing a static risk-averse formulation cannot guarantee time-consistent optimal policies, where a current optimal policy may lose its optimality when new realizations emerge. As highlighted in \cite{ruszczynski2010risk},  utilizing nested compositions of risk transition mappings at each episode facilitates time-consistent decision-making through dynamic programming solutions. Notable research has explored dynamic formulation on finite and infinite horizons further, including average value-at-risk (AVaR) \cite{bauerle2011markov}, quantile-based risk measure \cite{jiang2018risk}, and entropic risk measure (ERM) \cite{pichler2023risk,hau2023entropic}, respectively.

Although SOC/MDP and their risk-averse variants provide valuable frameworks for decision-making under aleatoric uncertainty, a common practical challenge is the neglect of epistemic uncertainty in model parameters. In addition, the inherent randomness of the environment complicates the task of maintaining the robustness of learned policies against small perturbations in real-world applications (\cite{mannor2004bias}). Distributionally robust optimization (DRO) has emerged as a powerful approach which takes epistemic uncertainty into consideration by constructing an ambiguity set for possible distributions of the environment. 
Ambiguity sets in DRO can generally be categorized into two main types: moment-based and distance-based. In moment-based DRO \cite{delage2010distributionally,wiesemann2014distributionally,xu2018distributionally}, the DM has certain information about the moments of the random variable's distribution. In distance-based DRO, the DM has a known reference distribution and considers a region of uncertainty around it, typically defined by a probability metric such as Kullback-Leibler divergence \cite{philpott2018distributionally} or Wasserstein distance \cite{gao2023distributionally}. 
This has led to the development of distributionally robust SOC/MDP, as introduced in \cite{nilim2005robust,iyengar2005robust}.
Extensive discussions of distributionally robust SOC/MDP can be found in studies such as \cite{mannor2019data,wiesemann2013robust,xu2010distributionally,shapiro2022distributionally,yang2020wasserstein}, which usually rely on robust Bellman equations. However, it is essential to note that an inappropriately constructed ambiguity set may lead to an over conservative consideration of epistemic uncertainty, which will generate a policy that performs poorly under far more realistic scenarios than the worst case  \cite{wu2018bayesian}.

The existing SOC/MDP models discussed above highlight an important issue: there is a large middle ground between an optimistic belief in a fixed environment, and a pessimistic focus on worst-case scenarios when dealing with epistemic uncertainty. Further, since both risk-averse SOC/MDP and distributionally robust SOC/MDP operate within the domain of continuous decision-making, the dynamics of certain potential parameters may vary between interaction episodes and remain elusive.
When faced with unfamiliar scenarios, the DM is compelled to learn continuously through a single episode of interaction, striving to deduce the underlying model parameters while simultaneously aiming to optimize the objective function (see, \cite{birge2021dynamic,jalota2024stochastic}). Although relying on point estimates of potential parameters can yield feasible strategies within the current risk-averse SOC/MDP framework, and taking worst-case scenarios of these parameters into consideration can produce feasible strategies in the distributionally robust SOC/MDP framework, the absence of adaptive learning between episodes often results in insufficiently robust or over conservative strategies. This issue highlights a common shortcoming that is inadequately addressed in conventional SOC/MDP models, which assume no prior knowledge and depend solely on learning through repetitive interactions within the same system.
These challenges can be addressed effectively through a Bayesian approach to MDP, known as Bayes-adaptive MDP \cite{strens2000bayesian}. In Bayes-adaptive MDPs, the DM utilizes prior knowledge expressed as a belief distribution over a range of potential environments, continually updating this belief using Bayes' rule during engagement with the system \cite{duff2002optimal}.
By incorporating Bayesian posterior information instead of concentrating solely on extreme scenarios, the Bayes-adaptive MDP model offers a more balanced and practical approach than distributionally robust SOC/MDP  problems where uncertainty plays a critical role. Building on the Bayes-adaptive MDP, a significant body of work \cite{dearden2013model,osband2013more} enhances our understanding and application of Bayesian approaches in MDP, contributing to a broader landscape of intelligent decision-making systems. 
In this framework, the posterior distribution is treated as an augmented state, complementing the original physical state, and is updated at each episode based on the observed reward and transition, see \cite{ross2011bayesian}. 
In essence, this Bayesian approach not only enhances a deeper understanding of possible scenarios but also enables a more adaptive and flexible optimization policy.

In conclusion, the risk-averse SOC/MDP model captures low-probability, high-impact events effectively under aleatoric uncertainty, while the Bayes-adaptive MDP model adapts to and delineates epistemic uncertainty robustly through a learning process. These models complement each other, enhancing DM's management of risk. Therefore, exploring the integration of these models into a risk-averse Bayes-adaptive SOC/MDP framework is essential to capture and manage risks effectively, particularly those involving cognitive uncertainties and the integration of observational data and prior knowledge. However, existing literature on this topic remains sparse, with most studies primarily focusing on optimizing risk measures using finite and discrete states over MDPs such as \cite{sharma2019robust}. In addition, previous studies have predominantly addressed static risk, overlooking the dynamic evolution of risk over time, which is crucial for comprehensive and timely risk assessment \cite{rigter2021risk,chow2015risk}. 
However, in the absence of the nested risk function structure, prior studies such as \cite{osogami2012time,iancu2015tight} have demonstrated that optimizing a static risk function can also potentially lead to time-inconsistent behavior. Hence, investigating dynamic risk becomes paramount, especially in the context of risk-averse Bayes-adaptive SOC/MDPs with a continuous state space.
In a more recent development, 
a Bayesian risk MDP (BR-MDP) framework is  proposed in \cite{lin2022bayesian,wang2023bayesian} to dynamically solve a risk-neutral Bayes-adaptive MDP, whereby 
some specific 
coherent risk measures 
are used to
quantify 
the risk of the expected value of 
the cost function 
associated with epistemic uncertainty.
To reduce computational challenges, Shapiro et al.~\cite{shapiro2023episodic} proposed a suboptimal episodic Bayesian SOC framework
compared to BR-MDP, 
which is like conventional SOC/MDPs in avoiding the incorporation of the
posterior distribution
as an augmented state. 
They demonstrate convergence 
of the optimal values like
that of
conventional SOC/MDP models.
In this paper, we follow this strand of research by adopting general law invariant risk measures to quantify the risks associated with 
both 
epistemic and aleatoric uncertainties and
allow the explicit dependence of the optimal policy at each episode on the posterior distribution.
The new model provides a more general adaptive framework for 
risk management of dynamic decision-making.

Building on these insights, it becomes evident that a unified framework integrating risk-averse SOC/MDP, distributionally robust SOC/MDP, and Bayes-adaptive MDPs 
may help to address the limitations of existing models. Such a framework not only captures the dynamic interplay between epistemic and aleatoric uncertainties but also integrates these uncertainties into objective functions tailored to a wide range of risk preferences. Conventional approaches, while  handling specific aspects of uncertainty effectively, often fail to achieve this level of flexibility and comprehensiveness. 
To leverage fully the Bayesian approach in modeling decision-making under distributional uncertainty, given its suitability for such problems, we adopt a perspective akin to composite risk optimization (CRO). This approach, introduced by \cite{qian2019composite}, has proven effective for static (single-stage) optimization. By extending this approach to dynamic SOC/MDP settings, our research aims to analyze the temporal evolution of risk and its impact on state transitions and decisions, offering a refined strategy for timely, adaptive risk management policies. This perspective offers a robust foundation for unifying the existent SOC/MDP models within a coherent and dynamic framework.
The main contributions of 
this paper can be summarized as follows.
\begin{itemize}
\item \textbf{Modeling}. 
We propose an adaptive  
SOC/MDP model where the risks arising 
from epistemic and aleatoric uncertainties are assessed by
Bayesian composite risk (BCR) measures. 
Unlike the
episodic Bayesian SOC model \cite{shapiro2023episodic}, where the control or action at each episode is based on the current state alone and ignores any future revelation of the stochastic process, 
the BCR-SOC/MDP 
model allows the action to  
depend explicitly on the probability distribution of the epistemic uncertainty \cite{strens2000bayesian}, 
which reflects the fact that, in many practical situations, accumulated information about epistemic uncertainty can influence the DM's belief about future aleatoric uncertainty, thereby impacting the DM's actions.
Moreover, we show 
through examples
that the proposed BCR-SOC/MDP model
subsumes a number of important existing SOC/MDP models (Examples~\ref{bcr-1}-\ref{bcr-6}), and that it can be linked to preference robust SOC/MDP models (Example~\ref{bcr-9}).
Note that 
it is well known in the risk neutral setting that 
SOC and MDP models are equivalent \cite{shapiro2022distributionally}. 
The equivalence also holds in risk averse setting.
We use SOC/MDP instead of merely 
SOC or MDP as both are used in the literature \cite{lin2022bayesian,ruszczynski2010risk}.

\item \textbf{Analysis and computation}. 
 We derive conditions under which  
the BCR-SOC/MDP model is well defined (Propositions~\ref{convex} and \ref{thm:V-cont}) and
demonstrate how the BCR-SOC/MDP model can be solved
in both the finite and infinite horizon cases (Algorithms~\ref{alg:A}-\ref{alg:B}). Furthermore, in the case when the posterior distribution converges to the Dirac distribution at the true parameter value ($\theta^c$) and the inner risk measure is H\"older continuous in the parameter at $\theta^c$, we show as in \cite{shapiro2023episodic} that 
the optimal value and optimal policy derived from the proposed infinite-horizon BCR-SOC/MDP model converge to their respective true optimal counterparts with respect to increment of sample data (Theorem \ref{thm-canew-1}). When the risk measures take specific forms such as expectation, VaR, robust spectral risk measure (SRM) and AVaR,
we derive error bounds for the approximate value function in terms of the error of the posterior mean 
and the posterior variance (Theorem~\ref{thm-ca-2}). 
To ensure tractability of the proposed algorithms, we propose a novel hyper-parameter approach for discretizing the posterior distribution (Section~6), which 
differs significantly from the direct discretization 
approach in the augmented state space (\cite{lin2022bayesian}).
The main difference is that the former operates in a finite‑dimensional space whereas the latter operates
in the augmented state space
which is typically infinite dimensional (due to the posterior belief state). Further, we explore the tractability and probabilistic guarantees of specific models, including the VaR-Expectation SOC/MDP and the AVaR-AVaR SOC/MDP.

\item \textbf{Applications}.
We examine performances of the proposed BCR-SOC/MDP 
models and computational schemes 
by applying them to two 
conventional 
SOC/MDP problems: a finite-horizon spread betting problem (Section 7.1) and an infinite-horizon inventory control problem  (Section 7.2). 
In the spread betting problem, 
the BCR-SOC/MDP model is more robust and has lower variability 
than the standard risk-averse and distributionally robust SOC/MDP models, 
particularly 
when the number of historical  records of market movement is small.
In the inventory control problem, the BCR-SOC/MDP framework exhibits convergence of
the optimal values and optimal policies to their true counterparts as information 
on aleatoric uncertainty accumulates, demonstrates its adaptability and dynamic learning capabilities. 
The preliminary numerical results
show the efficiency and effectiveness of the BCR-SOC/MDP model in tackling complex decision-making problems with evolving uncertainties in real-world applications.
\end{itemize}

The rest of the paper is 
structured as follows. In
Section 2, we revisit some 
basic notions and results 
in SOC/MDP and risk measures which are needed 
throughout the paper. 
In Section 3, 
we introduce the BCR-SOC/MDP 
model and demonstrate
how it can subsume 
some existing SOC/MDP models.
In Section 4, we briefly discuss the dynamic programming 
formulation of 
finite horizon BCR-SOC/MDP.
In Section 5, we establish the existence, uniqueness, and convergence of a stationary optimal policy for the infinite-horizon BCR-SOC/MDP. 
In Section 6, we introduce an adaptive hyper-parameter algorithm for discretizing posterior distributions and present a computationally efficient SAA algorithm for two special cases. Finally, in Section 7,
we report numerical test results 
about the BCR-SOC/MDP model, displaying their superior performance and applicability.

\section{Preliminaries}

In this section, we revisit
some basic notions and results in SOC/MDP and risk measurement 
that are needed throughout the paper.

\subsection{Stochastic optimal control/Markov decision process}	

By convention (see e.g.~\cite{puterman2014markov}), we express a discounted MDP 
by a 5-tuple $(\mathcal{S}, \mathcal{A}, \mathcal{P}, \mathcal{C},\gamma)$, 
where $\mathcal{S}$ and $\mathcal{A}$  
denote the  state space of the system and the 
action space respectively, $\mathcal{P}$ represents the 
transition probability matrix, with $\mathcal{P}\left(s_{t+1}| s_t, a_t\right)$ signifying 
the probability of transition from state $s_t$ to state $s_{t+1}$ under action $a_t$.
Earlier research has shown that the state transition equation in the SOC model, coupled with a random variable $\xi_t$, determines the transition probability matrix in the MDP model at episode $t$. 
In particular, the state transition process in the MDP model can be described by the equation $s_{t+1} = g_t(s_t, a_t, \xi_t)$, where $g_t: \mathcal{S} \times \mathcal{A} \times \Xi \to \mathcal{S}$ is a state transition mapping (see e.g., \cite{lin2022bayesian, shapiro2022distributionally}). 
Here, $\xi_t$ is 
a random vector mapping from
probability space $(\Omega,{\cal F},\mathbb{P})$ with support set $\Xi$.
This relationship can be established by setting $\mathcal{P}(s_{t+1} = s' \mid s_t, a_t) = P(\xi_t = \xi')$, where $\xi' \in \Xi$ is a realization of $\xi_t$ such that $g_t(s_t, a_t, \xi') = s'$.
Indeed, 
it is well known in the risk neutral setting, that 
SOC and MDP models are equivalent (\cite{shapiro2022distributionally}). 
The cost function
$\mathcal{C}_t\left(s_t, a_t, \xi_{t}\right)$
quantifies the immediate cost incurred at episode $t$ and the parameter 
$\gamma\in (0,1]$ represents
the discount factor of the cost function. 
To ease the exposition, 
we 
restrict the discussions to deterministic Markovian policies 
and 
justify it in Section 5. 
A deterministic Markovian policy is a sequence $\pi = (\pi_1,\cdots,\pi_T),$ where $ \pi_t$  maps each state in $\mathcal{S}$ to an action in $\mathcal{A}$.
The objective of a standard SOC/MDP is to find an optimal policy $\pi^*$ that minimizes the expected cumulative cost over all initial states $s_1\in\mathcal{S}$, formulated as 
\begin{eqnarray} 
\label{eq:MDP-old}
\min_\pi \mathbb{E}^{\pi}\left[\sum_{t=1}^{T}\gamma^t \mathcal{C}_t\left(s_t, a_t, \xi_{t}\right)\right],
\end{eqnarray} 
where $\mathbb{E}^{\pi}$ 
denotes the mathematical 
expectation with respect to (w.r.t.~for short) the joint probability distribution of 
$(\xi_1,\cdots,\xi_T)$ for each fixed policy $\pi$ and 
$T \in {\mathbb{N}^+ \cup \{\infty\}}$ represents time horizon. In the case that $T=\infty$, (\ref{eq:MDP-old})
becomes a SOC/MDP with infinite time horizon. In that case, the discount factor $\gamma$ is restricted to taking values over $(0,1)$.

\subsection{Topology of weak convergence and risk measures}
Consider a probability space $(\Omega, \mathcal{F}, \mathbb{P})$ and 
an $\mathcal{F}$-measurable function $X: \Omega \to \mathbb{R}$, 
where $X$ represents random losses.
Let $P=\mathbb{P}\circ X^{-1}$ denote 
the probability measure/distribution \footnote{Throughout the paper,
we use the terminologies probability measure and probability distribution interchangeably. } on $\mathbb{R}$ induced by $X$, let $F_P$ denote the cumulative distribution function (cdf) of $X$ and $F_P^{\leftarrow}$
be the left quantile function, that is,
$
F_P^{\leftarrow}(t)= \inf\{x: F_P(x)\geq t,\ \forall t\in [0,1]\}.
$
Let $p\in [0,\infty)$ and  $L^p$ denote the 
space of random variables defined over $(\Omega,{\cal F},\mathbb{P})$ with $p$-th order finite moments $
\mathbb{E}_{\mathbb{P}}[|X|^p]:=
\int_\mathbb{R} |X(\omega)|^p\mathbb{P}(d\omega)<\infty$.   Let $\mathscr{P}(\mathbb{R})$ denote the space of all probability measures over $(\mathbb{R},{\cal B})$ and 
\begin{eqnarray} 
\label{eq:M_1^p}
{\cal M}^p_1 =\left\{Q\in \mathscr{P}(\mathbb{R}): 
\int_\mathbb{R} |x|^p Q(dx)
<\infty\right\}.
\end{eqnarray}
 In the case when $p=0$, ${\cal M}_1^0=\mathscr{P}(\mathbb{R})$ and in the case  when  $p\to\infty$, 
 ${\cal M}_1^p$
 reduces to 
 the set of all probability measures $Q$ with
 $Q([-1,1])>0$ and $Q(\mathbb{R}\backslash [-1,1])=0$. 
Note also that 
$X\in L^p$ iff
$Q
=\mathbb{P}\circ X^{-1}
\in {\cal M}_1^p$
in that
$\int_\mathbb{R} |x|^p Q(dx)
= 
\int_\Omega |X(\omega)|^p \mathbb{P}(d\omega)$. 
Define $\mathfrak{C}^p_1$ as the linear space of all continuous functions $h:\mathbb{R}\to \mathbb{R}$ for which there exists a positive constant $c$ such that
\begin{eqnarray}
|h(x)|\leq c(|x|^p+1),\ \forall x\in \mathbb{R}.
\label{eq:ht-phi-topo}
\end{eqnarray}
In the case when $p=\infty$,
$h(x)$ is bounded by $c$ 
over $[-1,1]$. 
By \cite[Proposition 2.1]{zhang2024statistical},
the Fortet-Mourier metric $\mathsf {d\kern -0.07em l}_{FM}$ (see definition in appendix)
metricizes the $\psi$-weak topology on 
${\cal M}^p_1$ for $\psi(x) =c(|x|^p+1)$, 
denoted by $\tau_{|\cdot|^p}$, which is the coarsest topology on
${\cal M}_1^p$ for which the mapping $g_h:{\cal M}_1^p\to \mathbb{R}$ defined by
$$
g_h(Q) :=\int_{\mathbb{R}^k} h(x) Q(dx),\; h\in \mathfrak{C}_1^p
$$
 is continuous. A sequence  $\{P_k\} \subset {\cal M}_1^p$ is said to converge $|\cdot|^p$-weakly to $P\in {\cal M}_1^p$, written
 ${P_k} \xrightarrow[]{|\cdot|^p} P$, if it converges with respect to (w.r.t.) $\tau_{|\cdot|^p}$.
 In the case when $p=0$, it reduces to the usual topology of weak convergence.
 Recall that a function $\rho(\cdot):
 L^p\rightarrow \mathbb{R}$ is
 called a {\em monetary  risk measure} if it satisfies: 
 (a) For any $X,\ Y \in 
 L^p
 $,
$X(\omega) \geqslant Y(\omega)$ for all $\omega \in \Omega$
implies that $\rho(X) \geqslant \rho(Y)$, and 
	(b)  $\rho(X+c)=\rho(X)+c$ for any $X \in L^p$ and real number $c \in \mathbb{R}$. 
A monetary risk measure $\rho(\cdot)$ is said to be {\em convex} 
 if it also satisfies:	(c) for any $X, Y \in L^p$ and $\lambda \in[0,1],$ 
 $\rho(\lambda X+(1-\lambda)Y) \leq \lambda\rho(X)+(1-\lambda)\rho(Y)$.
	A convex risk measure $\rho(\cdot)$ is said to be {\em coherent} 
 if it further satisfies
	(d)  $\text{for any } X \in L^p \text{ and any } \lambda \geqslant 0, \text{ it holds that } \rho(\lambda X) = \lambda\rho(X).$
A risk measure $\rho$ is {\em law invariant} if 
$\rho(X)=\rho(Y)$  
for any random variables $X,Y\in L^p$ with the same probability distribution.  

Note that  in \cite{shapiro2021lectures}, $F^{\leftarrow}_P(1-\alpha)$
is called {\em value-at-risk} (VaR)\footnote{In some references,
VaR is defined as the quantile function at $\alpha$, see e.g.~\cite{rockafellar2000optimization,acerbi2002spectral}. Here we follow the definition in \cite{shapiro2021lectures}.}, which is a monetary risk measure satisfying positive homogeneity but not convexity. By convention, 
we denote it by $\text{VaR}^\alpha_{P}(X)$, for a random variable $X$ with $\alpha\in [0,1]$.
Let $\sigma:[0,1]\to \mathbb{R}$ be a non-negative, non-decreasing function with the normalized property $\int_0^1\sigma(t)dt=1$ and $\mathfrak{S}$ denote the set of all such $\sigma$.
The {\em spectral risk measure} of $X$ with $\sigma\in \mathfrak{S}$, is defined as
\begin{eqnarray}
\text{(SRM)} \quad M_\sigma(X) :=\int_0^1  F^{\leftarrow}_P(t)\sigma(t)dt
= \int_0^1 \text{VaR}^{1-t}_{P}(X)\sigma(t)dt,
\label{eq:SRM}
\end{eqnarray}
where $\sigma$ is called a {\em risk spectrum} representing the DM's risk preference.
Obviously $\sigma$ 
plays a role of weighting and $M_\sigma(X)$ is the weighted average loss of $X$. Moreover, by changing variables in the integral (\ref{eq:SRM}) (setting 
$x=F^{\leftarrow}_P(t)$), 
we obtain
\begin{eqnarray}
\label{eq:SRM-DRO}
M_\sigma(X)=\int_{-\infty}^{\infty} x\sigma(F_{P}(x))dF_{P}(x),
\end{eqnarray}
in which case $\sigma$ may be interpreted as a distortion function of probability.
Thus SRM is also known as distortion risk measure (\cite{gzyl2008relationship}) although 
the latter has a slightly different representation.  
SRM is introduced by Acerbi \cite{acerbi2002spectral}.
It shows that $M_\sigma(X)$ is a law invariant coherent risk measure.
The following example lists a few well-known spectral risk measures in the literature.

\begin{example} 
\label{ex:spectral-risk-measure-list}
By choosing some specific risk spectra, we can recover a number of well-known law invariant coherent risk measures.
\begin{itemize}
    
\item[(i)] 
Average Value-at-Risk (AVaR).\footnote{
AVaR is also known as conditional value-at-risk (CVaR), see \cite{rockafellar2000optimization}.
Here we follow the definition and terminology from \cite{shapiro2021lectures}.
The relationship between the two is
$\text{AVaR}^\alpha_P(X)=\text{CVaR}^{1-\alpha}_P(X)$.
} 
Let $\sigma_{\alpha}(\tau):=\frac{1}{\alpha}\mathds{1}_{[1-\alpha,1]}(\tau)$,
where $\mathds{1}_{[1-\alpha,1]}$ is the indicator function and  $\alpha \in (0,1)$. 
Then
\begin{eqnarray} 
  M_{\sigma_{\alpha}}(X) = \frac{1}{\alpha}\int_{1-\alpha}^1  F^{\leftarrow}_P(t)dt
  =: \text{AVaR}^\alpha_P(X).
\end{eqnarray} 
\item[(ii)] Wang's proportional hazards transform or power distortion \cite{wozabal2014robustifying}.
Let
$
\sigma_{\nu}(\tau) = \nu(1-\tau)^{\nu-1},
$
where $\nu\in (0,1]$ is a parameter.
Then
$
 M_{\sigma_{\nu}}(X) = \int_{0}^{\infty}(1-F_P(t))^{\nu}dt.
$

\item[(iii)] Gini's measure \cite{wozabal2014robustifying}. Let
$
\sigma_s(\tau) = (1-s) +2s\tau,
$
where $s\in (0,1)$.
Then
$
M_{\sigma_{s}}(X)  =\mathbb{E}_P[X]+s\mathbb{E}_P(|X-X'|)=: \text{Gini}_s(X),
$
where $X'$ is an independent copy of $X$.

\item[(iv)] Convex combination of expected value and $\text{AVaR}$.
Let
\begin{eqnarray*} 
\sigma_{\lambda}(\tau)=\lambda \mathds{1}_{[0,1]}(\tau)+(1-\lambda)\frac{1}{\alpha}\mathds{1}_{[1-\alpha,1]}(\tau)
\end{eqnarray*} 
for some $\lambda\in [0,1]$, where $\mathds{1}_{[a,b]}$ is the indicator function over interval $[a,b]$. Then
\begin{eqnarray} 
M_{\sigma_{\lambda}}(X) =\lambda \mathbb{E}_P[X]+(1-\lambda)\text{AVaR}_P^{\alpha}(X).
\end{eqnarray} 

\end{itemize}
\end{example}

It may be helpful to note that any law invariant
risk measure defined over a non-atomic $L^p$ space can be represented as a risk functional over the space of the probability distributions of the random variables in the space.
Specifically, there exists a unique 
functional $\varrho: {\cal M}_1^p\to 
\mathbb{R}$ such that
\begin{eqnarray} 
\label{eq:varrho}
\rho(X) = \varrho(P) := \rho(F_P^{-1}(U)),
\end{eqnarray} 
where $U$ is a random variable uniformly distributed over $[0,1]$. 
In the case when $\rho$ is a coherent risk measure and $X,Y\in L^1$,
we have
\begin{equation}
\label{eq:Lip-RM-rho}
    |\rho(X)-\rho(Y)|=|\varrho(P)
    -\varrho(Q)|
    \leq L\mathsf {d\kern -0.07em l}_K(P,Q),
\end{equation}
where $L$ is the Lipschitz modulus of $\rho$, 
$Q$ denotes the probability distribution of $Y$, 
and $\mathsf {d\kern -0.07em l}_K$ denotes the Kantorovich metric (see the definition in the appendix),
see the proof of \cite[Corollary 4.8]{wang2021quantitative}.
Moreover, 
any monetary risk measure $\rho:L^\infty\to\mathbb{R}$ 
is Lipschitz continuous with respect to the Wasserstein distance $\mathsf {d\kern -0.07em l}_W^\infty$, that is,
    \begin{eqnarray} 
    |\rho(X)-\rho(Y)|\leq \mathsf {d\kern -0.07em l}_W^\infty(P,Q)=
    \sup_{0<u<1}|F_P^{\leftarrow}(u)-F_Q^{\leftarrow}(u)| =\|X-Y\|_\infty,
    \label{eq:Lip-non-expansive}
    \end{eqnarray} 
    see e.g.~\cite[Lemma 2.1]{weber2006distribution}. The continuity of 
    a risk measure of a random function 
    w.r.t.~the parameters of the function is slightly more complicated. The next lemma addresses this.

{\color{black}
\begin{lemma}
\label{Prop:continuity}
Let $X\in L^p$ be a random variable with support ${\cal X}$ and $z\in
{\cal Z}\subset \mathbb{R}^n$ be a decision vector.
Let $f:\mathcal{Z}\times {\cal X}\to\mathbb{R}$ be a continuous function  such that
\begin{eqnarray} 
|f(z,x)| \leq \kappa(|x|^\upsilon+1),\ \forall 
z\in {\cal Z}, x\in {\cal X}
\label{eq:growth}
\end{eqnarray} 
for some positive constants $\kappa,\upsilon$.
Let $P_{X} \in {\cal M}_1^{\upsilon p}$ denote the probability distribution of $X$ over $\mathbb{R}$, and for any fixed $z$, let  
$\nu_{z,P_X} = P_X \circ f^{-1}(z,\cdot)\in {\cal M}_1^{p}$
be a probability measure on $\mathbb{R}$ induced by 
$f(z,X)$.
Let $F_{\nu_{z,P_{X} }}(x)=: \nu_{z,P_X}((-\infty,x])$
for any $x\in\mathbb{R}$.
Then the following assertions hold.
\begin{itemize}
\item[(i)]  $\nu_{z,P_X }$ is continuous in  $(z,P_X)$ with respect to topology $\tau_\mathbb{R}\times \tau_\mathbb{R}\times \tau_{|\cdot|^{\upsilon p}}$ and 
$\tau^p$.

\item[(ii)]  Let
\begin{eqnarray} M_\sigma(f(z,X)):=\int_0^1
F_{\nu_{z, P_X}}^{\leftarrow}(x)
\sigma(x)dx
\end{eqnarray} 
be a spectral risk measure of $f(z,X)$,
where \(\sigma\) is a risk spectrum.
Let $\mathfrak{A}$ be a set of risk spectra. Suppose that there exists a positive number $q>1$ such that
$\mathfrak{A}\subset {\cal L}^q[0,1]$
and there is a positive constant $p$ with $\frac{1}{p}+\frac{1}{q}=1$ such that
$P_X\in {\cal M}_1^{\upsilon p}$ and $\mathfrak{A}$ is a compact set. Then
\end{itemize}
\begin{eqnarray} 
\left|\sup_{\sigma\in \mathfrak{A}} M_\sigma(f(z',X'))
-\sup_{\sigma\in \mathfrak{A}} M_\sigma(f(z,X))
\right|
\leq \mathsf {d\kern -0.07em l}_W^p(\nu_{z',P_{X'}},\nu_{z,P_X})\sup_{\sigma\in \mathfrak{A}}\left[\int_0^1\sigma(x)^qdx\right]^{\frac{1}{q}},
\end{eqnarray}
where $\mathsf {d\kern -0.07em l}_W^p$ is the Wasserstein distance of order $p$.
In the special case that $\mathfrak{A}$ is a singleton, 
$M_\sigma(f(z,X))$ is continuous in $(z,P_X)$ with respect to topology $\tau_\mathbb{R}\times \tau_\mathbb{R}\times \tau_{|\cdot|^{\upsilon p}}$ and 
$\tau_\mathbb{R}$.

\item[(iii)] For any law invariant continuous risk measure $\rho$, 
$\rho(f(z,X))$
is continuous in
$(z,P_X)$ with respect to topology $\tau_\mathbb{R}\times \tau_\mathbb{R}\times \tau_{|\cdot|^{\upsilon p}}$ and 
$\tau_\mathbb{R}$.

\item[(iv)] In the case that ${\cal X}$ is
compact,
$\rho(f(z,X))$
is continuous in
$(z, P_X)$ for any law invariant monetary risk measure $\rho$
with respect to topology $\tau_\mathbb{R}\times \tau_\mathbb{R}\times \tau_{|\cdot|^0}$ and 
$\tau_\mathbb{R}$, where $\tau_{|\cdot|^0}$ denotes the weak topology.

\end{lemma}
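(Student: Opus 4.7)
The plan is to handle the four assertions in order, with part~(i) supplying the main continuity engine that drives (ii)--(iv). For part~(i), I would start from the change-of-variables identity $\int_{\mathbb{R}} h\, d\nu_{z,P_X} = \int_{\mathcal{X}} h(f(z,x))\, P_X(dx)$, valid for every $h\in\mathfrak{C}_1^p$. The growth condition~(\ref{eq:growth}) on $f$ composed with $|h(y)|\le c(|y|^p+1)$ yields $|h(f(z,x))|\le c'(|x|^{\upsilon p}+1)$, so the integrand, viewed as a function of $x$, sits in $\mathfrak{C}_1^{\upsilon p}$ uniformly in $z$ on any compact neighbourhood. To obtain joint continuity, take $(z_n, P_{X_n})\to(z, P_X)$ in $\tau_{\mathbb{R}}\times\tau_{|\cdot|^{\upsilon p}}$; the task reduces to showing $\int h(f(z_n,x))\, P_{X_n}(dx)\to\int h(f(z,x))\, P_X(dx)$, which I would establish via a continuous-convergence (Billingsley-type) argument: by joint continuity of $h$ and $f$, the family $h\circ f(z_n,\cdot)$ converges continuously to $h\circ f(z,\cdot)$, and the common $|\cdot|^{\upsilon p}$-envelope controls the tails uniformly in $n$ through the moment convergence guaranteed by $\tau_{|\cdot|^{\upsilon p}}$-convergence.

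For part~(ii), I would use the quantile representation $M_\sigma(f(z,X)) = \int_0^1 F^{\leftarrow}_{\nu_{z,P_X}}(t)\sigma(t)\,dt$ and apply H\"older's inequality with conjugate exponents $(p,q)$ to the difference, obtaining
$$|M_\sigma(f(z',X')) - M_\sigma(f(z,X))| \;\le\; \|F^{\leftarrow}_{\nu_{z',P_{X'}}} - F^{\leftarrow}_{\nu_{z,P_X}}\|_{L^p[0,1]}\, \|\sigma\|_{L^q[0,1]}.$$
The first factor is precisely $\mathsf {d\kern -0.07em l}_W^p(\nu_{z',P_{X'}}, \nu_{z,P_X})$ by the classical one-dimensional Wasserstein formula. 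Taking the supremum over $\sigma\in\mathfrak{A}$ on both sides and invoking $|\sup_\sigma a_\sigma - \sup_\sigma b_\sigma|\le\sup_\sigma|a_\sigma-b_\sigma|$ delivers the stated bound. For the singleton case, continuity follows from part~(i) combined with the equivalence between $\tau_{|\cdot|^p}$-convergence and $\mathsf {d\kern -0.07em l}_W^p$-convergence in dimension one.

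Part~(iii) reduces to~(\ref{eq:varrho}): a law-invariant continuous risk measure $\rho$ on $L^p$ induces a functional $\varrho:\mathcal{M}_1^p\to\mathbb{R}$ that is continuous with respect to $\tau_{|\cdot|^p}$, so $\rho(f(z,X))=\varrho(\nu_{z,P_X})$ is continuous as the composition with the continuous map $(z,P_X)\mapsto\nu_{z,P_X}$ from part~(i). For part~(iv), the compactness of $\mathcal{X}$ implies that, for $z$ ranging over a compact neighbourhood of any fixed point, $f(z,X)$ takes values in a common compact interval, so $\nu_{z,P_X}\in\mathcal{M}_1^\infty$ with uniformly bounded support and $f(z,X)\in L^\infty$; the Lipschitz bound~(\ref{eq:Lip-non-expansive}) then yields $|\rho(f(z',X'))-\rho(f(z,X))| \le \mathsf {d\kern -0.07em l}_W^\infty(\nu_{z',P_{X'}}, \nu_{z,P_X})$, and the weak convergence of $\nu_{z_n,P_{X_n}}$ (from part~(i) with $p=0$) on uniformly compact support can be lifted to $\mathsf {d\kern -0.07em l}_W^\infty$-convergence by exploiting the uniform continuity of $f$ on the compact product $\mathcal{Z}\times\mathcal{X}$.

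The principal obstacle is the joint continuity in part~(i): standard weak-convergence statements handle a fixed integrand, whereas here the integrand $h\circ f(z_n,\cdot)$ itself varies along the sequence, so one must marry the continuous-convergence theorem with a uniform moment domination that survives the simultaneous passage of $z_n\to z$ and $P_{X_n}\xrightarrow{|\cdot|^{\upsilon p}}P_X$. A secondary delicacy appears in the $\mathsf {d\kern -0.07em l}_W^\infty$ upgrade in part~(iv), because weak convergence on a compact support does not in general imply $\mathsf {d\kern -0.07em l}_W^\infty$-convergence without additional regularity on the limit quantile; bridging this gap may require restricting to limit distributions with continuous quantile functions or invoking a Portmanteau-type criterion aligned with uniform continuity of $f$.
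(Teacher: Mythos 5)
Your proof is correct and follows essentially the same route as the paper's, which simply delegates part (i) to a cited lemma of Claus (continuity of the pushforward under $\psi$-weak convergence), part (ii) to a cited proposition of Wang and Xu (the H\"older/quantile argument you reproduce), and disposes of (iii) and (iv) in one line each; what you have written is, in effect, the content of those citations spelled out, and the decomposition (continuity of $(z,P_X)\mapsto \nu_{z,P_X}$ as the engine, then composition) is identical.

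One point deserves emphasis: the difficulty you flag at the end of part (iv) is genuine and is \emph{not} resolved by the paper, whose entire justification is ``Part (iv) follows from the fact that $f$ is bounded.'' That one-liner implicitly combines the nonexpansiveness bound $|\rho(X)-\rho(Y)|\le \mathsf {d\kern -0.07em l}_W^\infty(P,Q)$ with the claim that weak convergence of uniformly compactly supported measures implies $\mathsf {d\kern -0.07em l}_W^\infty$-convergence, which is false in general: $(1-1/n)\delta_0+(1/n)\delta_1$ converges weakly to $\delta_0$ while the $\infty$-Wasserstein distance remains equal to $1$. Indeed, taking $\rho$ to be the essential supremum --- a law-invariant monetary risk measure --- the continuity asserted in (iv) fails on this example, so the statement itself needs an extra hypothesis (e.g.\ continuity of $\rho$ with respect to weak convergence on compactly supported laws, or continuity of the limiting quantile function). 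Your proposed repairs are of the right kind; just be aware that without one of them neither your argument nor the paper's closes this step.
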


\noindent
\textbf{Proof.}
Part (i) follows directly from \cite[Lemma 2.74]{claus2016advancing}.
Part (ii) follows from \cite[Proposition 5.1]{wang2020robust}.
Part (iii) follows from Part (i).
Part (iv) follows from the fact that $f$ is bounded.
\hfill $\Box$}

\subsection{Bayesian composite risk measure}

Consider a random variable $X:(\Omega,{\cal F},P)\to\mathbb{R}$
whose true distribution belongs to a family of parametric distributions, denoted by $\{P_{\theta}\}\subset\mathcal{M}_1^p$, where $\theta\in\Theta\subseteq\mathbb{R}^d$
is a vector of parameters.
The true probability distribution is associated with an unknown underlying parameter ${{\theta}^c}$, i.e., $P^c=P_{{\theta}^c}$  for some ${\theta}^c\in\Theta\subseteq\mathbb{R}^d$.
To ease the exposition,
we consider the case that $d=1$. All of our results in the forthcoming discussions are applicable to the case when $d\geq 1$. 
Since the information on $\theta^c$ is 
incomplete, it might be sensible 
to describe $\theta$ as a random variable 
with prior distribution $\mu$
from a modeler's perspective. This distribution may be obtained from partially available information and/or subjective judgment.
For each realization of $\theta$, $P_\theta$ gives rise to a probability distribution of $X$.
In practice, we may not be able to obtain a closed form of $P_\theta$, rather we may 
obtain samples of $X$,
denoted by $\pmb{X}^N = \{X^1, \cdots, X^N\}$ 
and use them to estimate $\theta^c$ by maximum likelihood. 
Alternatively, we may use Bayes' formula to obtain a posterior distribution $Q_\mu$ of $\theta$, whose probability density function (pdf) is defined as
	\begin{equation}		\mu(\theta|\pmb{X}^N)=\frac{p(\pmb{X}^N|\theta)\mu(\theta)}{\int_\Theta p(\pmb{X}^N|\theta)\mu(\theta)d\theta},
 \label{eq:Bayes-post-distr}
	\end{equation}
where $p(\cdot|\theta)$ is the pdf of $X$ associated with $P_{\theta}$, $p(\pmb{X}^N|\theta)= \prod_{i=1}^N p\left(X^i| \theta\right)$ is the likelihood function and $\int_\Theta p(\pmb{X}^N|\theta)\mu(\theta)d\theta$ is the normalizing factor. As more and more samples are gathered, $\mu(\theta|\pmb{X}^N)$
converges to the Dirac distribution of $\theta$ at $\theta^c$ and subsequently the BCR converges to $\rho_{P_{\theta^c}}$.	
In this paper, we focus on the latter approach and apply it within the framework of SOC/MDP.	
 
To quantify the risk of $X$ (which represents losses), we may adopt a law invariant risk measure, that is, $\rho(X)$. The notation does not capture the probability distribution of $X$. However, if we use the risk functional 
$\varrho$ defined in (\ref{eq:varrho}), then equivalently we can write 
$\rho(X)$ as $\varrho(P_\theta)$. Since $\theta$ is a random variable, then $\varrho(P_\theta)$ is a random function of $\theta$. There are two ways to quantify the risk of $\varrho(P_\theta)$. One is to consider the mean value of $\varrho(P_\theta)$ with respect to the posterior distribution of $\theta$, that is, $\mathbb{E}_{\mu(\theta|\pmb{X}^N)}(\varrho(P_\theta))$.
The other is to adopt a law invariant risk measure $\hat{\rho}$ to quantify the risk of $\varrho(P_\theta)$, that is, 
$\hat{\rho}(\varrho(P_\theta))$. By using the equivalent risk function of $\hat{\rho}$, written $\hat{\varrho}$, we 
can write $\hat{\rho}(\varrho(P_\theta))$ as
$\hat{\varrho}(\eta)$, where $\eta$ denotes the probability distribution of $\varrho(P_\theta)$.
Unfortunately, this notation is too complex. To ease the exposition, we write 
$\rho_{P_\theta}(X)$ for $\rho(X)$ and $\rho_\mu\circ\rho_{P_\theta}(X)$ for $\hat{\varrho}(\eta)$ even though this notation is not consistent with our previous discussions.
We call $\rho_\mu\circ\rho_{P_\theta}(\cdot)$ a {\em Bayesian composite risk (BCR) measure} with $\rho_{P_{\theta}}$ being the {\em inner} risk measure conditional on $\theta$ and $\rho_{\mu}$ being the {\em outer} risk measure. 
This composite framework 
was proposed by Qian et al.~\cite{qian2019composite}.
In a particular case that 
$\rho_{P_{\theta}}$ is a spectral risk measure parameterized by $\theta$ and 
$\rho_\mu$ is the expectation, 
the composite risk measure recovers
the average randomized spectral risk measure 
introduced by Li et al.~\cite{li2022randomization}.
The next proposition states the well-definedness of the BCR.

\begin{proposition}
\label{Prop:BCD}
Consider the setting 
of Lemma~\ref{Prop:continuity}.
Assume: (a) the inequality (\ref{eq:growth}) holds, (b) $p(\cdot|\theta)$ is continuous in $\theta$ and (c) $\rho_{P_\theta}$ is any law invariant continuous risk measure. 
Then 
$\rho_\mu\circ\rho_{P_\theta}({f}(z,X))$ is well defined.
\end{proposition}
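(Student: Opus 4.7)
The statement requires verifying three things for the composite $\rho_\mu \circ \rho_{P_\theta}(f(z,X))$ to be meaningful: (i) for every $\theta\in\Theta$ the inner value $\rho_{P_\theta}(f(z,X))$ is a finite real number; (ii) the function $\theta\mapsto\rho_{P_\theta}(f(z,X))$ is Borel measurable so that it is a legitimate random variable under the posterior $\mu(\theta\mid\pmb{X}^N)$; and (iii) this random variable lies in the natural domain of the outer risk measure $\rho_\mu$. My plan is to address the three layers in turn, with the bulk of the effort on (ii).

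Layer (i) is immediate from the growth condition (\ref{eq:growth}) in assumption (a) together with the standing hypothesis $P_X\in\mathcal{M}_1^{\upsilon p}$ inherited from the setting of Lemma~\ref{Prop:continuity}: a one-line estimate of the form $\mathbb{E}_{P_\theta}[|f(z,X)|^p]\le 2^{p-1}\kappa^p(\mathbb{E}_{P_\theta}[|X|^{\upsilon p}]+1)<\infty$ places $f(z,X)\in L^p(P_\theta)$, and assumption (c) on the continuity of $\rho_{P_\theta}$ then delivers finiteness. For layer (iii), the Lipschitz bound (\ref{eq:Lip-RM-rho}) for law invariant coherent risk measures (or non-expansivity (\ref{eq:Lip-non-expansive}) in the bounded case) combined with the same envelope $\kappa(|x|^\upsilon+1)$ yields a uniform-in-$\theta$ bound on $\rho_{P_\theta}(f(z,X))$ in terms of $\int|x|^{\upsilon p}P_\theta(dx)$, placing the function in the natural domain of $\rho_\mu$ under any standard posterior-integrability assumption.

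The heart of the argument is layer (ii). My plan is first to upgrade the pointwise continuity of $p(\cdot\mid\theta)$ in $\theta$ (assumption (b)) to $\tau_{|\cdot|^{\upsilon p}}$-continuity of the map $\theta\mapsto P_\theta$: pointwise convergence of densities combined with Scheff\'e's lemma gives total-variation (hence ordinary weak) convergence whenever $\theta_k\to\theta$, and this is then strengthened to the $|\cdot|^{\upsilon p}$-weak topology by a uniform integrability argument on $\{|x|^{\upsilon p}\}$ under the family $\{P_\theta\}$. Once this continuity is in hand, Lemma~\ref{Prop:continuity}(iii) applied to the law invariant continuous inner risk measure yields continuity of $\rho_{P_\theta}(f(z,X))$ jointly in $(z,P_\theta)$, and composition with $\theta\mapsto P_\theta$ then gives continuity, and therefore Borel measurability, of $\theta\mapsto\rho_{P_\theta}(f(z,X))$. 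The main obstacle I anticipate is precisely this upgrade from pointwise density continuity to $\tau_{|\cdot|^{\upsilon p}}$-continuity of $P_\theta$: it requires a uniform integrability or dominating envelope that is not spelled out in (a)--(c), so the proof will likely either invoke an implicit moment-domination condition on the parametric family $\{p(\cdot\mid\theta)\}$ or work directly with the Fortet--Mourier characterization of $\tau_{|\cdot|^{\upsilon p}}$ recalled in Section~2.
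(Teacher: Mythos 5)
The paper states Proposition~\ref{Prop:BCD} without giving any proof, so there is no argument of record to compare yours against; what follows is an assessment of your reconstruction on its own terms. Your three-layer plan is exactly the route the placement of the proposition suggests the authors intend: finiteness of the inner value from (\ref{eq:growth}) and $P_\theta\in{\cal M}_1^{\upsilon p}$, measurability of $\theta\mapsto\rho_{P_\theta}(f(z,X))$ via continuity obtained from Lemma~\ref{Prop:continuity}(iii) composed with the continuity of $\theta\mapsto P_\theta$, and then application of the outer risk measure. Your layers (i) and (ii) are sound in outline, and the obstacle you flag in (ii) is a genuine one: assumption (b) gives pointwise convergence of densities, hence total-variation and ordinary weak convergence of $P_{\theta_k}$ by Scheff\'e, but $\tau_{|\cdot|^{\upsilon p}}$-convergence additionally requires uniform integrability of $|x|^{\upsilon p}$ under $\{P_{\theta_k}\}$, which (a)--(c) do not supply. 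Since Lemma~\ref{Prop:continuity}(iii) is stated for the topology $\tau_{|\cdot|^{\upsilon p}}$, the proposition as written is incomplete without such a domination or moment-equicontinuity condition on the parametric family; you are right not to paper over this.

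The same honesty applies to your layer (iii), and here the gap is arguably more consequential: nothing in (a)--(c) guarantees that the map $\theta\mapsto\rho_{P_\theta}(f(z,X))$ lies in the domain of $\rho_\mu$, because its envelope is controlled by $\int|x|^{\upsilon p}P_\theta(dx)$, whose integrability (or boundedness) under $\mu$ is a separate assumption. The paper itself tacitly acknowledges this by later imposing condition (\ref{eq-condition-of-risk}) in Assumption~\ref{ass-bcr-1}(b) precisely to control $\rho_{\mu}\circ\rho_{P_{\theta}}(|\xi|^\upsilon)$, and Example~\ref{example-2} is devoted to verifying it in concrete conjugate families. So your proposal is a correct and appropriately qualified reconstruction; to make it a complete proof you would need to state the two supplementary hypotheses explicitly (uniform $\upsilon p$-moment integrability of $\{P_\theta\}$ near each $\theta$, and $\mu$-integrability, in the sense required by $\rho_\mu$, of $\theta\mapsto\int|x|^{\upsilon p}P_\theta(dx)$), both of which the paper implicitly assumes.
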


 \section{The BCR-SOC/MDP model} 

In this section,
we move on from the conventional SOC/MDP model (\ref{eq:MDP-old}) to propose a BCR-SOC/MDP model 
by adopting BCR to quantify the risk of loss during
each episode  and discuss its relationship with some existing SOC/MDP models in the literature. 

Specifically we consider 
	\begin{subequations}
 \label{eq:MDP-BCR}
	\begin{eqnarray}
	    	\min _\pi &&\mathcal{J}_T^\gamma(\pi,s_1,\mu_1):=\rho_{\mu_1}\circ\rho_{P_{\theta_1}}\left[\mathcal{C}_1\left(s_1, a_1, \xi_1\right)+\cdots+\gamma\rho_{ \mu_{T-1}}\circ \rho_{P_{\theta_{T-1}}}\left[\mathcal{C}_{T-1}\left(s_{T-1}, a_{T-1}, \xi_{T-1}\right)+\gamma\mathcal{C}_T\left(s_T\right)\right]\right]
      \label{eq:MDP-BCR-a} \nonumber\\
			\text {s.t.} && s_{t+1}=g_t\left(s_t, a_t, \xi_t\right),\ 
   a_t=\pi_t(s_t,\mu_t),\ t=1, \cdots, T-1,
    \label{eq:MDP-BCR-b}
   \\
			&& \mu_{t+1}(\theta)=\frac{p\left(\xi_t |\theta\right)\mu_t(\theta) }{\int_\Theta p\left(\xi_t |\theta\right)\mu_t(\theta)  d \theta}, t=1, \cdots, T-1,
 \label{eq:MDP-BCR-c}
\end{eqnarray}		
 \end{subequations}
where 
$\theta_t$, $t=1,\cdots, T-1$, 
is a random 
variable with 
pdf $\mu_t$ at episode $t$,
the cost function $\mathcal{C}_t(s_t,a_t,\xi_t)$ is 
continuous with respect to $\xi_t$ and $\pi_t: \mathcal{S} \times \mathcal{D}_1^p \rightarrow \mathcal{A}$ is the policy for action at episode $t$, where $\mathcal{D}_1^p$ denotes the set of all pdfs whose corresponding probability distributions are in ${\cal M}_1^p$.  Compared to (\ref{eq:MDP-old}),
the BCR-SOC/MDP model has several new features.

\begin{itemize}

\item[(i)] For $t=1,\cdots, T-1$, the true probability distribution of $\xi_t$ is unknown, but it is known that it belongs to a family of parametric distributions, written as $P_{\theta}$, where
  $\theta$ is a random variable with prior distribution $\mu_1(\theta)$,  see \cite{shapiro2023episodic}.
In the objective function of the BCR-SOC/MDP model (\ref{eq:MDP-BCR}), we write $\theta_t$
for $P_\theta$ at episode $t$ to indicate that 
$\theta_t$ corresponds to the pdf $\mu_t$.
In the initial episode, the epistemic uncertainty is based on prior data and/or subjective judgment, which leads to a prior distribution $\mu:=\mu_1$ over $\Theta$.
As the stochastic process $\boldsymbol{\xi}^{t}:=\{\xi_1,\xi_2, \cdots, \xi_t\}$ is observed,
 the posterior distribution $\mu_{t+1}(\theta):=\mu(\theta|\boldsymbol{\xi}^{t})$ is updated with the 
 Bayes' formula:
 \begin{eqnarray}
  \mu_{t+1}(\theta)=\frac{p(\boldsymbol{\xi}^{t}|\theta)\mu_1(\theta)}{\int_\Theta p(\boldsymbol{\xi}^{t}|\theta)\mu_1(\theta)d\theta}=\frac{p\left(\xi_t |\theta\right)\mu_t(\theta) }{\int_\Theta p\left(\xi_t |\theta\right)\mu_t(\theta)  d \theta}. 
 \end{eqnarray}
Moreover, instead of using the mean values as in \cite{shapiro2023episodic}, we propose to use law invariant risk measures
 $\rho_{P_{\theta}}$ and $\rho_{\mu_t}$ to quantify 
the risks arising from  aleatoric uncertainty and  epistemic uncertainty respectively during each episode. This is 
intended to provide a complement to
\cite{shapiro2023episodic},
where some DMs are risk-averse 
rather than risk neutral. Here, $\rho_{P_{\theta_t}}$ quantifies
the risk arising from aleatoric uncertainty $\xi_t$ (which is standard in risk management),
and $\rho_{\mu_t}$ assesses the effect of epistemic uncertainty $\theta_t$ on 
the performance of 
$\rho_{P_{\theta_t}}(\mathcal{C}_t(s_t,a_t,\xi_t))$.
In the case that 
$\theta_t$ is contextual information
related to $\xi_t$, we may 
replace the composite risk measure with a single risk measure defined over the joint distribution of $\theta_t$ and $\xi_t$, see \cite{tao2025risk}.
The above two ways may differ depending on the DM's risk preferences in the two types of uncertainties.
The time dependence of the risk measures allows us to capture 
the DM's dynamic risk preferences 
as more information about both uncertainties is obtained. This  makes the new BCR-SOC/MDP model more flexible than the conventional SOC/MDP model (\ref{eq:MDP-old}).

\item[(ii)] 
Unlike \cite{shapiro2023episodic}
where $\pi_t$ is purely based on the physical state $s_t$, the policy $\pi_t$ in the BCR-SOC/MDP model depends not only on $s_t$ but also on the belief 
$\mu_t$, as considered in \cite{lin2022bayesian}. This is 
motivated by the fact that $\mu_t$ signifies how $\theta_t$ takes values over $\Theta$ and subsequently 
affects
the probability distribution $P_{\theta_t}$ of $\xi_t$. If we interpret $\xi_t$ as the underlying uncertainty that affects the loss function ${\cal C}_t$, then $\mu_t$ may be regarded as the DM's belief about the prospect of the future market, such as a bull market or bear market, at state $t$. Such belief is accumulated over the decision-making process over the past $t-1$ episodes and affects the DM's decision at episode $t$. Nevertheless, the above augmented pair of states might 
bring some computational difficulties 
as envisaged by Shapiro et al.~\cite{shapiro2023episodic}, 
where 
the authors 
propose a so-called episodic Bayesian stochastic optimal control framework to avoid explicit 
inclusion of $\mu_t$ as 
an augmented state. In the forthcoming discussions, we will demonstrate how to overcome
the resulting computational difficulties.

\item[(iii)] The SOC/MDP model serves as an effective tool to demonstrate various foundational concepts because it can be formulated within the multi-stage stochastic programming (MSP) framework \cite{dommel2021foundations,dupavcova2002comparison,shapiro2021tutorial}, by considering $x_t=(x_t^s,x_t^a)$ with $x_t^s=s_t$ and $x_t^a=a_t$, as an augmented decision variable.
Specifically, the BCR-SOC/MDP model can be reformulated as a risk-averse MSP model in \cite{pflug2014multistage} as follows:
	\begin{subequations}
	\begin{eqnarray}
		\min _{x} && \rho_{\xi_1}\left[\mathcal{C}_1\left(x_1, \xi_1\right)+\cdots+\gamma\rho_{\xi_{T-1}|\boldsymbol{\xi}^{T-2}}\left[\mathcal{C}_{T-1}\left(x_{T-1}, \xi_{T-1}\right)+\gamma\mathcal{C}_T\left(x_T\right)\right]\right] \\
		\text { s.t. } && x_{t+1}^s=g_t\left(x_t, \xi_t\right), 
		 x_{t+1}^a\in\mathcal{A}_t, t=1, \cdots, T-1 ,
	\end{eqnarray}
\end{subequations}
where $\rho_{\xi_{t+1}|\boldsymbol{\xi}^{t}}:= \rho_{\mu_{t+1}}\circ\rho_{P_{\theta_{t+1}}}$ with $\mu_t$ 
being updated according to (\ref{eq:MDP-BCR-c}) and the stochastic process $\boldsymbol{\xi}^{t}=\{\xi_1,\xi_2, \cdots, \xi_t\}$. For a detailed example, we refer the readers to \cite{shapiro2021lectures}, which illustrates how the classical inventory model can be adapted to fit the above framework. Unlike usual risk‐averse MSP models, the BCR-MDP model \eqref{eq:MDP-BCR}
explicitly distinguishes aleatoric uncertainty $\xi_t$ and epistemic uncertainty $\theta_t$
to reflect learnability of the latter during 
the dynamic decision-making process.
\end{itemize}

To ensure the BCR-SOC/MDP model to be well-defined, we make the following assumption.

 \begin{assumption}
	\label{ass-bcr-1} 
(a) For $t=1,\cdots,T$, the cost function ${\cal C}_t(s_t,a_t,\xi_t):\mathcal{S}\times\mathcal{A}\times \Xi
\to \mathbb{R}$ is continuous,  
and  there exist  positive constants $\kappa_{\cal C},\upsilon$ such that 
\begin{eqnarray} 
|{\cal C}_t(s_t,a_t,\xi_t)| \leq \kappa_{\cal C}(|\xi_t|^\upsilon+1),\ \forall  s_t,a_t,\xi_t,\ t=1,\cdots,T-1 
\label{eq:growth-of-C}
\end{eqnarray} 
holds, and
 \begin{eqnarray} 
 \label{eq:growth-of-C-T}
|\mathcal{C}_{T}\left( g_{T-1}\left(s_{T-1}, a_{T-1}, \xi_{T-1}\right)\right)|\leq \kappa_{\cal C}(|\xi_{T-1}|^\upsilon+1), \ \forall  s_{T-1},a_{T-1},\xi_{T-1}.
 \end{eqnarray} 
(b) 
For $t=1,\cdots, T-1$, 
$\rho_{P_{\theta_t}}:L^\upsilon\to \mathbb{R}$
is
a continuous law invariant monetary 
risk measure such that
$\rho_{P_{\theta_t}}(|\xi_{t}|^\upsilon)<\infty,\ \forall \theta_t\in\Theta$, and
 \begin{eqnarray} 
 \label{eq-condition-of-risk}
\rho_{\mu_t}\circ\rho_{P_{\theta_t}}(|\xi_{t}|^\upsilon)\leq\kappa_\rho(|\xi_{t-1}|^\upsilon+1),
 \end{eqnarray} 
 where $\kappa_\rho$ is a positive constant depending on $t$.
(c) The random variables $\xi_1,\cdots,\xi_T$
 are independent and identically distributed (i.i.d.) 
 with  true unknown distribution $P_{\theta^c}$ which belongs to 
 a parametric family of distributions 
 $\{P_{\theta}\mid \theta \in \Theta\}$, where 
 $P_\theta\in {\cal M}_1^{\upsilon p}$ for all $\theta\in\Theta$. 
(d) The pdf 
$p(\xi|\theta)$ is continuous in $\theta$ over $\Theta\subset \mathbb{R}$. (e) The action space $\mathcal{A}$ is  a compact and convex set.
 \end{assumption}

The following comments about how the specified conditions in the 
assumption can be satisfied might be helpful. 
Inequalities 
(\ref{eq:growth-of-C})-(\ref{eq:growth-of-C-T}) in 
Assumption~\ref{ass-bcr-1} (a) specify the growth conditions of cost functions in terms of $\xi_t$. This type of condition is widely used in stochastic programming, see~e.g., \cite{guo2022robust} and references therein, but it is a new consideration
for SOC/MDP models. 
In the literature of SOC/MDP models,
the cost functions are often assumed to be bounded. Here we relax the boundedness condition
to cover the case where the dependence of ${\cal C}_t$ on $\xi_t$ is non-linear and  the support set of $\xi_t$ can be unbounded. Condition (b)
is closely related to (a). In 
the case where $\upsilon=0$ in (\ref{eq:growth-of-C-T}), 
condition (a) subsumes the case that 
${\cal C}_t$ is bounded. By
setting $\kappa_\rho=1$, condition (b) will also accommodate this case.
We will return to this
shortly to show how condition (b) can be satisfied under some specific circumstances. 
Assumption~\ref{ass-bcr-1} (c) is the set-up of the
BCR-SOC/MDP model. The independence of $\xi_1,\cdots,\xi_T$
is usually assumed in SOC/MDP models and holds automatically in many scenarios.
Assumption~\ref{ass-bcr-1} (d) is 
also satisfied by many probability distributions and
is widely used in the literature, see e.g., \cite{shapiro2023episodic}. 

Inequality (\ref{eq-condition-of-risk})
in Assumption~\ref{ass-bcr-1} (b)
poses a bound on $|\xi_t|^\upsilon$
under the BCR. Since  ${\mu_t}$ depends on $\xi_{t-1}$, the upper bound of the BCR depends on $\xi_{t-1}$.
Inequality 
(\ref{eq-condition-of-risk})
requires the bound 
to be controllable 
by $|\xi_{t-1}|^\upsilon$.
This kind of condition is less intuitive,
so we use an example to explain how the condition can be satisfied in some important cases.

\begin{example}\label{example-2}
 Inequality 
 \eqref{eq-condition-of-risk} 
 holds under the following 
 specific cases. 

\begin{itemize}
    
\item[(a)] \underline{The support set of $\xi_t$, $\Xi$, is bounded}.
In this case,
$
\rho_{\mu_t}\circ\rho_{P_{\theta_t}}(|\xi_{t}|^\upsilon)$ is 
bounded by a positive constant $C$.

\item[(b)] 
\underline{Both $\xi_t$ and $\theta_t$ follow a normal distribution with 
$\xi_t\sim \mathcal{N}(\theta_t, \sigma^2)$ and 
$\theta_t\sim \mathcal{N}(m_t, d_t^2)$}.
Let $\mu_0(\theta) := \mathcal{N}(m_0, d_0^2)$. 
Then
\begin{equation}
\label{eq-update1}
        m_t = \lambda_t m_{t-1} + (1 - \lambda_t) \xi_{t-1}, \quad \lambda_t = \frac{d_{t-1}^2}{d_{t-1}^2 + \sigma^2}, \quad d_{t}^{-2}=d_{t-1}^{-2}+\sigma^{-2}.
\end{equation}
Since $\xi_t = \theta_t + \sigma Z$, 
then
\begin{eqnarray} 
|\xi_t|^\upsilon = |\theta_t + \sigma Z|^\upsilon\leq 2^{\upsilon - 1} \left( |\theta_t|^\upsilon + |\sigma Z|^\upsilon \right),
\end{eqnarray} 
where $Z \sim \mathcal{N}(0,1)$ and the inequality is based on the generalized Minkowski inequality.
By applying 
the inner risk measure $\rho_{P_{\theta_t}}$
to both sides of the inequality, 
we obtain
\begin{equation}\label{eq-example1}
   \rho_{P_{\theta_t}}(|\xi_t|^\upsilon) \leq 2^{\upsilon - 1} \left( |\theta_t|^\upsilon + \sigma^\upsilon \rho(|Z|^\upsilon)\right)=C_1|\theta_t|^\upsilon +C_2, 
\end{equation}
where $C_1 = 2^{\upsilon - 1}$ and $C_2 = C_1 \sigma^\upsilon \rho(|Z|^\upsilon)$.
Since $\theta_t \sim \mathcal{N}(m_t, d_t^2)$, 
then 
\begin{eqnarray} 
|\theta_t|^\upsilon \leq 2^{\upsilon - 1} \left( |m_t|^\upsilon + d_t^\upsilon |Z|^\upsilon \right).
\end{eqnarray} 
By applying the outer risk measure $\rho_{\mu_t}$ to both sides of the inequality above, we obtain
\begin{equation}\label{eq-example2}
    \rho_{\mu_t}(|\theta_t|^\upsilon) \leq 2^{\upsilon - 1} \left( |m_t|^\upsilon + d_t^\upsilon\rho(|Z|^\upsilon)\right).
\end{equation}
A combination of 
\eqref{eq-example1}-\eqref{eq-example2} yields
\begin{eqnarray} 
\rho_{\mu_t}\circ\rho_{P_{\theta_t}}(|\xi_t|^\upsilon) \leq C_3 |m_t|^\upsilon + C_4 d_t^\upsilon + C_2,
\label{eq:bnd-BCR-xi_t-normal-12}
\end{eqnarray} 
where $C_3 = C_1 \cdot 2^{\upsilon - 1}$, $C_4 = C_1 2^{\upsilon - 1} \rho(|Z|^\upsilon)$.
By applying \eqref{eq-update1} to the right-hand side of \eqref{eq:bnd-BCR-xi_t-normal-12}, 
we obtain
\[
\rho_{\mu_t}\circ\rho_{P_{\theta_t}}(|\xi_t|^\upsilon)\leq K |\xi_{t-1}|^\upsilon + C,
\]
where $K$ and $C$ are constants defined as
$
K = C_3 \cdot 2^{\upsilon - 1}$, $C = K  m_{t-1}^\upsilon + C_4 d_t^\upsilon + C_2
$ depending on $t$.

\item[(c)]
\underline{$\xi_t \sim \text{Exponential} 
 (\theta_t)$
and  $\theta_t\sim \text{Gamma}(m_t, d_t)$}.
Let $\mu_0(\theta)= \text{Gamma}(m_0, d_0)$.
Then the posterior distribution $\mu_t(\theta)= \text{Gamma}(m_t, d_t)$ can be updated based on $\xi_{t-1}$ as follows:
    \begin{eqnarray} 
    \label{eq:bnd-BCR-xi_t-exp-1}
    m_t = m_{t-1} + 1, \quad d_t = d_{t-1} + \xi_{t-1}.
    \end{eqnarray} 
We consider BCR with both the inner and outer risk measures being  expectation. 
Since $\xi_t \sim \text{Exponential}(\theta_t)$, then
\[
\mathbb{E}_{P_{\theta_t}}(|\xi_t|^\upsilon) = \int_0^\infty \xi_t^\upsilon \theta_t e^{-\theta_t \xi_t} d\xi_t = \frac{\Gamma(\upsilon + 1)}{\theta_t^\upsilon}
\]
and 
subsequently 
\[
\rho_{\mu_t}\circ\rho_{P_{\theta_t}}(|\xi_t|^\upsilon)=\mathbb{E}_{\mu_t}\mathbb{E}_{P_{\theta_t}}(|\xi_t|^\upsilon) = \Gamma(\upsilon + 1) \mathbb{E}_{\mu_t}\left( \frac{1}{\theta_t^\upsilon} \right).
\]
Moreover, since $\mu_t =\text{Gamma}(m_t, d_t)$, then we have
\[
\mathbb{E}_{\mu_t}\left( \frac{1}{\theta_t^\upsilon} \right) = \frac{d_t^{m_t}}{\Gamma(m_t)} \int_0^\infty \theta_t^{m_t - \upsilon - 1} e^{-d_t \theta_t} d\theta_t = \frac{d_t^\upsilon \Gamma(m_t - \upsilon)}{\Gamma(m_t)}
\]
for $m_t > \upsilon$. 
By exploiting \eqref{eq:bnd-BCR-xi_t-exp-1},
we obtain
\[
d_t^\upsilon = (d_{t-1} + \xi_{t-1})^\upsilon \leq 2^{\upsilon - 1} \left( d_{t-1}^\upsilon + |\xi_{t-1}|^\upsilon \right).
\]
Let 
\[
K = \Gamma(\upsilon + 1) \frac{\Gamma(m_t - \upsilon)}{\Gamma(m_t)} 2^{\upsilon - 1}, \quad C = K d_{t-1}^\upsilon.
\]
Then
\[
\rho_{\mu_t}\circ\rho_{P_{\theta_t}}(|\xi_t|^\upsilon)=\mathbb{E}_{\mu_t}\mathbb{E}_{P_{\theta_t}}(|\xi_t|^\upsilon)\leq K |\xi_{t-1}|^\upsilon + C.
\]

Next, we consider the case where the inner risk measure is $\rho_{P_{\theta_t}}=\text{AVaR}^\beta_{P_{\theta_t}}$. Since $\text{AVaR}^\beta_{P_{\theta_t}}(|\xi_t|^\upsilon)\leq \frac{1}{\beta}\mathbb{E}_{P_{\theta_t}}(|\xi_t|^\upsilon)$, the result follows in a similar manner to the case where $\rho_{P_{\theta_t}}=\mathbb{E}_{P_{\theta_t}}$.
Furthermore, consider the case where the inner risk measure is $\rho_{P_{\theta_t}}=\text{VaR}^\beta_{P_{\theta_t}}$. Since $\text{VaR}^\beta_{P_{\theta_t}}(|\xi_t|^\upsilon)\leq \text{AVaR}^\beta_{P_{\theta_t}}(|\xi_t|^\upsilon)$, the result can be deduced in a similar manner to the $\rho_{P_{\theta_t}}=\text{AVaR}^\beta_{P_{\theta_t}}(|\xi_t|^\upsilon)$ case.
For the outer risk measures $\rho_{\mu_t}=\text{VaR}^\alpha_{\mu_t}$ and $\rho_{\mu_t}=\text{AVaR}^\alpha_{\mu_t}$, similar results hold.

\item[(d)] 
\underline{$\xi_t\sim \text{Poisson}(\theta_t)$ and $\theta_t\sim \text{Gamma}(m_t, d_t)$}.
    By setting the prior distribution $\mu_0(\theta)=\text{Gamma}(m_0, d_0)$, the posterior distribution $\mu_t(\theta)= \text{Gamma}(m_t, d_t)$ can be updated based on sample 
    $\xi_{t-1}$ as follows:
    \begin{eqnarray} 
    m_t = m_{t-1} + \xi_{t-1}, \quad d_t = d_{t-1} + 1.
    \end{eqnarray} 
Consider the case that 
both the inner and outer risk measures are  expectations.
For $\xi_t\sim \text{Poisson}(\theta_t)$, the expectation is given by:
\[
\mathbb{E}_{P_{\theta_t}}(|\xi_t|^\upsilon) = \mathbb{E}_{P_{\theta_t}}(\xi_t^\upsilon) = \theta_t^\upsilon + \text{terms involving lower powers of } \theta_t.
\]
For simplicity, we can find constants $C_1$ and $C_2$ such that 
$\mathbb{E}_{P_{\theta_t}}(\xi_t^\upsilon) \leq C_1 \theta_t^\upsilon+C_2.$

Since $\mu_t =\text{Gamma}(m_t, d_t)$, we have:
\[
\mathbb{E}_{\mu_t}(\theta_t^\upsilon) = \frac{\Gamma(m_t + \upsilon)}{\Gamma(m_t)} d_t^{-\upsilon}.
\]
Substituting $m_t = m_{t-1} + \xi_{t-1}$ and $d_t = d_{t-1} + 1$ into the equation above, we obtain
\[
\mathbb{E}_{\mu_t}(\theta_t^\upsilon) = \frac{\Gamma(m_{t-1} + \xi_{t-1} + \upsilon)}{\Gamma(m_{t-1} + \xi_{t-1})} (d_{t-1} + 1)^{-\upsilon}.
\]
Using the properties of the Gamma function, we have 
\[
\frac{\Gamma(m_{t-1} + \xi_{t-1} + \upsilon)}{\Gamma(m_{t-1} + \xi_{t-1})} = \prod_{k=0}^{\upsilon - 1} (m_{t-1} + \xi_{t-1} + k).
\]
Consequently we can establish
\[
\mathbb{E}_{\mu_t}\mathbb{E}_{P_{\theta_t}}(|\xi_t|^\upsilon)\leq C_1\mathbb{E}_{\mu_t}(\theta_t^\upsilon)+C_2 \leq C_1(\xi_{t-1} + m_{t-1} + \upsilon - 1)^\upsilon (d_{t-1} + 1)^{-\upsilon}+C_2\leq K |\xi_{t-1}|^\upsilon + C,
\]
where $K = C_1(d_{t-1} + 1)^{-\upsilon}$ and $C = K (m_{t-1} + \upsilon - 1)^\upsilon+C_2$.
Similar results can be established when  VaR and AVaR are chosen for the inner and outer risk measures respectively. We omit the details.
\end{itemize}
   
\end{example}

We are now ready to address the well-definedness of the BCR-SOC/MDP problem (\ref{eq:MDP-BCR}).

\begin{proposition}
  Under Assumption~\ref{ass-bcr-1},
  the BCR-SOC/MDP problem (\ref{eq:MDP-BCR}) is well-defined.
\end{proposition}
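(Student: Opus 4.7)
The plan is to establish finiteness of $\mathcal{J}_T^\gamma(\pi, s_1, \mu_1)$ for every admissible Markovian policy $\pi$ by backward induction on the stage index. At each stage I would verify two things: first, that the argument fed into the monetary risk measure $\rho_{P_{\theta_t}}$ lies in its domain so that the nested BCR is well-defined in the sense of Proposition~\ref{Prop:BCD}; and second, that the resulting quantity admits a bound of the form $A_t(|\xi_{t-1}|^\upsilon + 1)$ for some constant $A_t$ depending on $\kappa_{\mathcal{C}}$, $\kappa_\rho$, $\gamma$, and the residual horizon $T-t$. The growth bounds in Assumption~\ref{ass-bcr-1}(a) give the base case, while the control bound (\ref{eq-condition-of-risk}) in Assumption~\ref{ass-bcr-1}(b) is precisely what is needed to close the inductive step.

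Concretely, I would first define $U_T := \mathcal{C}_T(g_{T-1}(s_{T-1}, a_{T-1}, \xi_{T-1}))$ and use (\ref{eq:growth-of-C-T}) to obtain the base bound $|U_T| \leq \kappa_{\mathcal{C}}(|\xi_{T-1}|^\upsilon + 1)$; continuity of $g_{T-1}$ and $\mathcal{C}_T$ from Assumption~\ref{ass-bcr-1}(a) delivers continuity of $U_T$ in $\xi_{T-1}$. For the inductive step, suppose the continuation $Y_{t+1}$ obtained after applying all BCR layers from episodes $t+2, \ldots, T$ is a continuous function of $\xi_{t+1}$ satisfying $|Y_{t+1}| \leq A_{t+1}(|\xi_{t+1}|^\upsilon + 1)$. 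Then monotonicity and translation invariance of $\rho_{P_{\theta_{t+1}}}$, combined with the finiteness hypothesis $\rho_{P_{\theta_{t+1}}}(|\xi_{t+1}|^\upsilon) < \infty$, imply that $\rho_{P_{\theta_{t+1}}}(Y_{t+1})$ is a finite function of $\theta_{t+1}$. Applying $\rho_{\mu_{t+1}}$ and invoking (\ref{eq-condition-of-risk}) then produces a bound of the order $A_{t+1}\kappa_\rho(|\xi_t|^\upsilon + 1) + A_{t+1}$, which together with (\ref{eq:growth-of-C}) applied to $\mathcal{C}_t(s_t, a_t, \xi_t)$ delivers the inductive bound at stage $t$ with a new constant $A_t$.

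Well-definedness of each nested layer as a continuous function of $(s_t, a_t, \mu_t)$ then follows from Proposition~\ref{Prop:BCD}, whose hypotheses are supplied by continuity of the costs, continuity of the likelihood $p(\cdot|\theta)$ in $\theta$, and law-invariant continuity of the inner risk measure under Assumption~\ref{ass-bcr-1}. Unrolling the recursion down to stage $t=1$, where the prior $\mu_1$ is deterministic, produces a finite real number $\mathcal{J}_T^\gamma(\pi, s_1, \mu_1)$, which is what we need. The main obstacle is the inductive step involving the outer risk measure $\rho_{\mu_{t+1}}$: since only translation invariance and monotonicity are available for a merely monetary risk measure, constants cannot be factored out freely, and (\ref{eq-condition-of-risk}) must be deployed in exactly the form stated in order to absorb the $|\xi_{t+1}|^\upsilon$ term and re-express the bound in terms of $|\xi_t|^\upsilon$, keeping the recursion closed all the way back to $t=1$.
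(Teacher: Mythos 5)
Your proposal is correct and follows essentially the same route as the paper's proof: a backward induction that defines the nested continuation value at each stage, bounds it by a constant multiple of $|\xi_{t-1}|^\upsilon+1$ using the growth conditions in Assumption~\ref{ass-bcr-1}(a) and the BCR control bound (\ref{eq-condition-of-risk}), and invokes Proposition~\ref{Prop:BCD} (with Lemma~\ref{Prop:continuity}) for well-definedness and continuity at each layer. The only difference is cosmetic: the paper writes the stage-$t$ constant explicitly as a geometric sum in $\gamma\kappa_\rho$, whereas you carry an abstract constant $A_t$ through the recursion.
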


\noindent
\textbf{Proof.}
 Let 
\begin{eqnarray*} 
\vt_{T}(s_{T}, a_{T}, \xi_{T},\mu_T) &:=&{\cal C}_T(s_T),\\
\vt_{t}(s_{t}, a_{t}, \xi_{t},\mu_{t}) &:=&
\mathcal{C}_{t}\left(s_{t}, a_{t}, \xi_{t}\right)+\gamma\rho_{\mu_{t+1}}\circ\rho_{P_{\theta_{t+1}}}\left(\vt_{t+1}(s_{t+1}, a_{t+1}, \xi_{t+1},\mu_{t+1})\right), t=1,\cdots, T-1.
\end{eqnarray*}
We begin by considering the case $t:=T-1$. Since $s_{T}=g_{T-1}\left(s_{T-1}, a_{T-1}, \xi_{T-1}\right)$, then 
\begin{eqnarray*}
\vt_{T-1}(s_{T-1}, a_{T-1}, \xi_{T-1},\mu_{T-1})&=&\mathcal{C}_{T-1}\left(s_{T-1}, a_{T-1}, \xi_{T-1}\right) +\gamma\rho_{\mu_{T}}\circ\rho_{P_{\theta_{T}}}\left(\vt_{T}(s_{T}, a_{T}, \xi_{T},\mu_T) \right)\\
&=&\mathcal{C}_{T-1}\left(s_{T-1}, a_{T-1}, \xi_{T-1}\right) +
\gamma\mathcal{C}_{T}\left( g_{T-1}\left(s_{T-1}, a_{T-1}, \xi_{T-1}\right)\right).
\end{eqnarray*}
Under Assumption~\ref{ass-bcr-1} (c),
\begin{eqnarray} 
|
\vt_{T-1}(s_{T-1}, a_{T-1}, \xi_{T-1},\mu_{T-1})| \leq 
(1+\gamma)\kappa_{\cal C}(|\xi_{T-1}|^\upsilon+1). 
\end{eqnarray} 
By setting $\kappa:=(1+\gamma)\kappa_{\cal C}$, 
we know by Proposition~\ref{Prop:BCD}
that $\rho_{\mu_{T-1}}
\circ\rho_{P_{\theta_{T-1}}}\left(
\vt_{T-1}(s_{T-1}, a_{T-1}, \xi_{T-1},\mu_{T-1})
\right)$ is well defined  and 
is continuous
in $a_{T-1}$.
Repeating the analysis above, we can show by induction that
\begin{eqnarray} 
\begin{aligned}
  |
\vt_{t}(s_{t}, a_{t}, \xi_{t},\mu_{t})|&\leq \mathcal{C}_{t}\left(s_{t}, a_{t}, \xi_{t}\right) +\gamma\rho_{\mu_{t+1}}\circ\rho_{P_{\theta_t}} |
\vt_{t+1}(s_{t+1}, a_{t+1}, \xi_{t+1},\mu_{t+1})|\\
&\leq \kappa_\mathcal{C}\frac{1-(\gamma\kappa_\rho)^{T-t+1}}{1-\gamma\kappa_\rho}(|\xi_t|^\upsilon+1)+\gamma\frac{1-(\gamma\kappa_\rho)^{T-t}}{1-\gamma\kappa_\rho}
\end{aligned}
\end{eqnarray}
and hence the well-definedness 
of
$
\rho_{\mu_{t}}
\circ\rho_{P_{\theta_{t}}}\left(\vt_{t}(s_{t}, a_{t}, \xi_{t},\mu_t) \right)$
by virtue of Proposition~\ref{Prop:BCD}  
and  continuity 
in $a_t$ by Lemma \ref{Prop:continuity}
for $t=T-2,\cdots,1$.
\hfill $\Box$

The proposition applies to the case where 
$T<\infty$, that is, 
the BCR-SOC/MDP is a finite-horizon problem. 
In the case of $T=\infty$,
we need to assume 
that $\mathcal{C}_t$  
is bounded and 
$\gamma<1$. The conditions are widely used in the literature of infinite horizon SOC/MDP. 
In the context of Assumption \ref{ass-bcr-1}, 
it corresponds to 
$\upsilon=0$ and $\kappa_\rho=1$. 
Consequently, 
$\vartheta_1(s_1,a_1,\xi_1,\mu_1)$ is bounded by a constant $\frac{2\kappa_\mathcal{C}+\gamma}{1-\gamma}$ for all $(s_1,a_1,\xi_1,\mu_1)\in\mathcal{S}\times\mathcal{A}\times\Xi\times\mathcal{D}_1^0$, which leads to the well-definedness of BCR-SOC/MDP problem \eqref{eq:MDP-BCR} with $T=\infty$.

\subsection{BCR-SOC/MDP vs conventional SOC/MDP models}

The proposed BCR-SOC/MDP model
subsumes several important 
SOC/MDP models in the literature. Here we list some of them.

	\begin{example}[Conventional risk-averse SOC/MDPs]
		\label{bcr-1}
 Conventional risk-averse SOC/MDP models (see e.g.~\cite{ahmadi2021constrained,petrik2012approximate,jiang2018risk}) assume that
 the transition probability matrix $\mathcal{P}$ or the distribution $P$ for $\xi$ is known. 
These models can be regarded as 
a special case of the 
BCR-SOC/MDP model
by setting
$\mu_1(\theta)=\delta_{\theta^c}$,
where $\delta_{\theta^c}$ is the 
Dirac probability measure at $\theta^c$.
Consequently the posterior distribution $\mu_t(\theta)$ remains $\delta_{\theta^c}$ 
for all $t=1,\cdots,T$, because of the Bayes' updating mechanism
$\delta_{\theta^c}\equiv\frac{p\left(\xi |\theta\right)\delta_{\theta^c} }{\int_\Theta p\left(\xi |\theta\right)\delta_{\theta^c}  d \theta}$. 
The resulting 
objective function of problem (\ref{eq:MDP-BCR}) can be reformulated as:
	\begin{equation}
		\min_\pi \rho_{P_{\theta^c}}\left[\mathcal{C}_1\left(s_1, a_1, \xi_1\right)+\cdots+\gamma\rho_{P_{\theta^c}}\left[\mathcal{C}_{T-1}\left(s_{T-1}, a_{T-1}, \xi_{T-1}\right)+\gamma\mathcal{C}_T\left(s_T\right)\right]\right].\label{ramdp}
	\end{equation}
	 Moreover, by adopting the inner risk measure $\rho_{P_{\theta^c}}$ as expectation, VaR and AVaR, respectively, 
  the BCR-SOC/MDP model recovers
  the relevant 
  risk-averse SOC/MDP models in the literature.
	 \end{example}
	
	\begin{example}[Distributionally robust SOC/MDPs]
		\label{bcr-2} In the distributionally robust SOC/MDP models (see e.g.~ \cite{shapiro2022distributionally,wiesemann2013robust}), 
  the DM 
  uses partially available information to construct an ambiguity set of transition probability matrices 
  to mitigate the risk arising from incomplete information about the true transition probability matrix $\mathcal{P}\left(s_{t+1} =s| s_t, a_t\right)$ for $s_t\in\mathcal{S}$ and $a_t\in\mathcal{A}$. 
  The optimal decision is based on the worst expected cost calculated with the ambiguity set of transition matrices. 
The distributionally robust SOC/MDP models  
can be recast as BCR-SOC/MDP models.
\begin{itemize}
    \item The ambiguity set of transition probabilities can equivalently be described by the ambiguity set of 
    the distributions
    of the random parameters within the BCR-SOC/MDP framework.  
    For instance, if we set the inner risk  measure  $\rho_{P_{\theta}}$ 
    as the expectation $\mathbb{E}_{P_{\theta}}$ 
and the outer risk measure 
as essential supremum 
(\text{$\text{VaR}_{\mu_t}^{\alpha}$} with $\alpha=0$), 
then the objective function of problem (\ref{eq:MDP-BCR}) becomes
		\begin{equation*}
			\min_\pi \text{VaR}_{\mu_1}^{\alpha}\circ\mathbb{E}_{P_{\theta_{1}}}\left[\mathcal{C}_1\left(s_1, a_1, \xi_1\right)+\cdots+\gamma\text{VaR}_{\mu_{T-1}}^{\alpha}\circ\mathbb{E}_{P_{\theta_{T-1}}}\left[\mathcal{C}_{T-1}\left(s_{T-1}, a_{T-1}, \xi_{T-1}\right)+\gamma\mathcal{C}_T\left(s_T\right)\right]\right] 
		\end{equation*}
or equivalently		
  \begin{equation}
			\min_\pi \max_{\theta_1\in\Theta_1}\mathbb{E}_{P_{\theta_{1}}}\left[\mathcal{C}_1\left(s_1, a_1, \xi_1\right)+\cdots+\gamma\max_{\theta_{T-1}\in\Theta_{T-1}}\mathbb{E}_{P_{\theta_{T-1}}}\left[\mathcal{C}_{T-1}\left(s_{T-1}, a_{T-1}, \xi_{T-1}\right)+\gamma\mathcal{C}_T\left(s_T\right)\right]\right]\label{drmdp}, 
		\end{equation}
		where $\Theta_{t}$ is the support set of $\mu_t$ for 
        $t=1,\cdots,
        T-1$. As demonstrated in \cite{ma2024multi}, this reformulation is in alignment with         distributionally robust SOC/MDP models
        if we 
  treat
  $\{P_{\theta_t},\theta_t\in\Theta_{t}\}$ as an ambiguity set of 
  $P_{\theta^c}$. If the inner risk measure is set using a general risk measure instead of the expectation,
  the BCR-SOC/MDP model is equivalent to the distributionally robust risk-averse SOC/MDP model proposed in \cite{ruan2023risk}.
  		 Moreover, by choosing \( \alpha \in (0,1) \) for outer risk measure $\text{VaR}_{\mu_t}^{\alpha}$, 
the BCR-SOC/MDP model effectively transforms into a less conservative chance-constrained SOC/MDP form, as proposed in \cite{delage2010percentile}. This BCR-SOC/MDP model can thus be viewed as a distributionally robust SOC/MDP with respect to the Bayesian ambiguity set defined in \cite{gupta2019near}. Further elaboration on this equivalence will be provided in Section 6.1.
		
	\item 	
 In view of the robust representation of coherent risk measures, employing some coherent risk measure as the outer risk measure $\rho_{\mu_t}$ gives our BCR-SOC/MDP framework (\ref{ramdp}) an interpretation aligned with distributionally robust SOC/MDP models concerning the corresponding ambiguity sets. To be consistent with the setting of our article, we refer to \cite{shapiro2022distributionally} for the detailed equivalence between the distributional robust model and the risk-averse model under the premise of parametric family of probability distributions.
 \end{itemize}
	\end{example}
   
 \begin{example}[Bayes-adaptive MDP]
 	\label{bcr-3} Like the concept behind BCR-SOC/MDP, the state space of Bayes-adaptive MDPs consists of the physical state space $\mathcal{S}$ and the posterior distribution space $\mathcal{D}_1^p$, as introduced in \cite{ross2011bayesian}. Specifically, as demonstrated in \cite{chen2024bayesian}, the objective function of the episodic Bayesian SOC in \cite{shapiro2023episodic} can be formulated as the following Bayes-adaptive form:
 	\begin{equation}
 		\min_\pi \mathbb{E}_{\hat{P}_{{\theta}_{1}}}\left[\mathcal{C}\left(s_1, a_1, \xi_1\right)+\cdots+\gamma\mathbb{E}_{\hat{P}_{{\theta}_{T-1}}}\left[\mathcal{C}\left(s_{T-1}, a_{T-1}, \xi_{T-1}\right)+\gamma\mathcal{C}\left(s_T\right)\right]\right]. 
 	\end{equation}
 	Here, $\hat{P}_{{\theta}_{t}}$ satisfying $d\hat{P}_{{\theta}_{t}}:=\int_{\Theta}p(\xi|\theta)\mu_t(\theta)d\theta$,
  represents the posterior estimate of the underlying distribution ${P_{\theta^c}}$.
 	By setting both the inner risk measure $\rho_{P_{\theta_{t}}}$ and the outer risk measure $\rho_{\mu_t}$ as the expectation operators $\mathbb{E}_{P_{\theta_{t}}}$ and $\mathbb{E}_{{\mu_t}}$ respectively, the objective function of the BCR-SOC/MDP model reduces to
 	\begin{equation}
 		\min_\pi \mathbb{E}_{{\mu_1}}\circ\mathbb{E}_{P_{\theta_{1}}}\left[\mathcal{C}\left(s_1, a_1, \xi_1\right)+\cdots+\gamma\mathbb{E}_{{\mu_{T-1}}}\circ\mathbb{E}_{P_{\theta_{T-1}}}\left[\mathcal{C}\left(s_{T-1}, a_{T-1}, \xi_{T-1}\right)+\gamma\mathcal{C}\left(s_T\right)\right] \right],
 	\end{equation}
 	which is equivalent to that of Bayes-adaptive MDP. This can easily be derived by exchanging the order of integrals. Furthermore, the objective function of the BR-MDP in \cite{lin2022bayesian}
\begin{equation}
 		\min_\pi \rho_{{\mu_1}}\circ\mathbb{E}_{P_{\theta_{1}}}\left(\mathcal{C}\left(s_1, a_1, \xi_1\right)+\cdots+\gamma\rho_{{\mu_{T-1}}}\circ\mathbb{E}_{P_{\theta_{T-1}}}\left[\mathcal{C}\left(s_{T-1}, a_{T-1}, \xi_{T-1}\right)+\gamma\mathcal{C}\left(s_T\right)\right] \right)
 	\end{equation} can be derived from the BCR-SOC/MDP model by only setting the inner risk measure $\rho_{P_{\theta_{t}}}$ as the expectation operator $\mathbb{E}_{P_{\theta_{t}}}$.
 \end{example} 
\begin{example}\label{bcr-6}
The  BCR-MDP model \eqref{eq:MDP-BCR}
may be viewed as a generalized 
contextual MDP model, where the contextual information 
$\theta_t$ is learnable through Bayesian updates.
For each contextual variable $\theta_t$, we can find the  transition kernel $\mathcal{P}(s_{t+1}\!\mid s_t,a_t,\theta_t)$ such that
    $
    \mathcal{P}(s_{t+1}\!\mid s_t,a_t,\theta_t)
      \;=\;
      P_{\theta_t}(\xi_t=\xi'),
    $
    where \(\xi'\in\Xi\) satisfies \(g_t(s_t,a_t,\xi')=s'\).
 In the case
 that there is no Bayesian update (i.e., \(\mu_t\equiv\mu_1\) for all \(t\), which means $\theta_t$ is time independent), and the DM is risk neutral in both 
$\xi_t$ and $\theta_t$ (i.e.,  \(\rho_{P_{\theta_t}}=\mathbb E_{P_{\theta_t}}\) and \(\rho_{\mu_t}=\mathbb E_{\mu_t}\)),
the BCR-MDP model \eqref{eq:MDP-BCR} reduces to the contextual MDP by Hallak et al.~\cite{hallak2015contextual}.
\end{example}

\subsection{
  BCR-SOC/MDP with randomized VaR and SRM -- a preference robust viewpoint
  }

BCR can be understood 
as the risk measure of a
random risk measure. 
In decision analytics,
$P_{\theta}$ is DM's belief 
about uncertainty of the state of nature and $\rho_{P_\theta}(X)$
is a measure of the risk of
such uncertainty. The risk measure captures the DM's risk preference 
and the uncertainty of $\theta$
makes $\rho_{P_\theta}(X)$ a random variable.
In this subsection, we will use two examples to compare the effect of empirical distributions and ambiguity sets for these purposes, and highlight the importance and practical relevance of the BCR 
model in real-world problems.

In a data-driven environment, a popular approach 
is to use samples to derive an estimation of $\theta$
and subsequently an approximation of $P_{\theta^c}$. 
However, the approach may not work well 
when the sample size is small.
The next example illustrates this.

\begin{example} 
Consider an example of predicting the price movement of a stock. The true probability distribution of the stock price increasing is $P_\theta$, where $\theta$ signifies the likelihood of an upward movement (stock price increase). When $\theta \in (0, 0.5)$, it means the probability that the stock price will increase is less than 0.5, indicating a generally downward market trend.
Let us consider the case where the stock price movement is neutral, i.e., $\theta^c = 0.5$, meaning the probability of an upward movement is 50\%. Suppose that we observe the stock price movement over five trading days, resulting in 2 days of price increases (up days) and 3 days of price decreases (down days). In this case, we may deduce that $\theta = 0.4$, suggesting a higher probability of a downward movement than the true $\theta^c = 0.5$. This estimate deviates significantly from the true $\theta^c = 0.5$ because the sample size is small.
Indeed, by using the Binomial distribution, we have
\[
\begin{aligned}
    &P_{0.4}(\pmb{\xi}^5=\{\text{2 up days and 3 down days}\}):=P(\pmb{\xi}^5=\{\text{2 up days and 3 down days}\}|\theta=0.4)=0.34\\
    \approx&P_{0.5}(\pmb{\xi}^5=\{\text{2 up days and 3 down days}\}):=P(\pmb{\xi}^5=\{\text{2 up days and 3 down days}\}|\theta=0.5)=0.31,
\end{aligned}\] 
which means that the probability of observing 2 up days and 3 down days with a stock market with a neutral price movement is very close to that of a stock market where $\theta=0.4$.
To address the issue, we can use a convex combination
$0.4\rho_{P_{0.4}}(X)+0.6\rho_{P_{0.5}}(X)$, instead of  $\rho_{P_{0.4}}(X)$, to 
evaluate the risk of loss in the market. This concept is used by Li et al.~\cite[Example 1]{li2022randomization} in the context of randomized 
spectral risk measures.

It might be tempting to adopt a robust approach by considering the worst-case probability distribution of $P_\theta$ from an ambiguity set 
such as $\{P_{\theta}|\theta\in[0.3,0.6]\}$. In that case, we have
\[
\begin{aligned}
    &P_{0.4}(\pmb{\xi}^5=\{\text{2 up days and 3 down days}\}):=P(\pmb{\xi}^5=\{\text{2 up days and 3 down days}\}|\theta=0.4)=0.34\\
   >&P_{0.6}(\pmb{\xi}^5=\{\text{2 up days and 3 down days}\}):=P(\pmb{\xi}^5=\{\text{2 up days and 3 down days}\}|\theta=0.6)=0.23,
\end{aligned}\] 
which means that the worst-case probability distribution is too conservative for use. This is because $\theta$ is assumed to have equal probability
over the range $[0.3,0.6]$ in the robust argument.
The simple calculation shows that
adopting a distributional form for estimation of $\theta$ 
which weights the different effects of the $\theta$ values on the distribution of $\xi$
might be more appropriate in the absence of complete information about $\theta$ based on a small sample size of $\xi$.

We can cope with the above issue from a Bayesian perspective: before observing any data, we assume that no specific information about the true parameter $\theta^c$ is known by the DM, and thus the DM can consider $\theta^c$ to be a realization of a random variable $\theta$ following a uniform distribution over the interval [0, 1], i.e., $\mu_0(\theta)=1$ for all $\theta\in[0,1]$. After observing data with 2 up days and 3 down days, we easily derive using Bayes' formula that the posterior distribution $\mu(\theta|\pmb{\xi}^5=\text{\{2 up days and 3 down days\}})$ follows a Beta(3,4) distribution, as illustrated in Figure \ref{fig:5}.
\begin{figure}[ht] 
	\centering \includegraphics[width=0.6\textwidth]{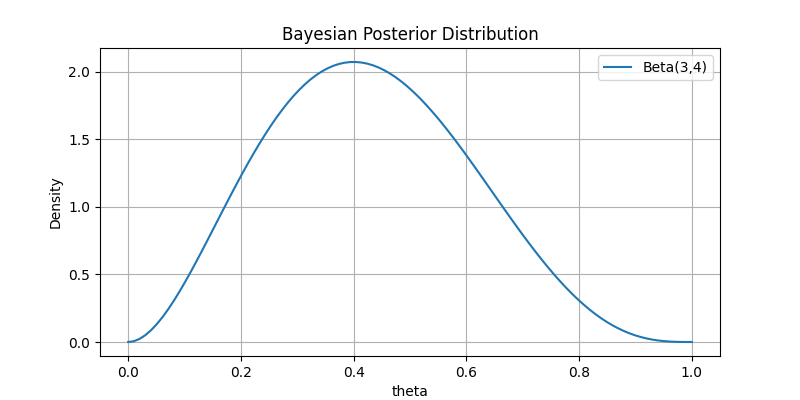} 
	\caption{Posterior distribution 
    of $\theta$ 
    based on observations with 2 up days and 3 down days} 
	\label{fig:5} 
\end{figure}
As depicted, in such cases,  employing a distribution $\mu$ to randomize $\theta$ allows us to capture the variability in environmental performance more effectively. From a modeling perspective, evaluating $\rho_{P_\theta}(X)$ under this framework can quantify the DM's risk preference with respect to the environment through $\rho_\mu\circ\rho_{P_{\theta}}(X)$. Therefore, utilizing $\mu$ as the probability distribution of $\theta$ is more reasonable.

\end{example}

{\color{black}
It is possible to express BCR-SOC/MDP as a preference robust SOC/MDP under some specific circumstances. 
To see this, we consider a BCR where 
$\rho_{P_\theta} = \text{VaR}_{P_\theta}^\alpha$ for some $\alpha\in (0,1)$ and $\rho_\mu = \mathbb{E}_\mu$.  The resulting BCR is
\[ \mathbb{E}_\mu\circ\text{VaR}_{P_\theta}^\alpha(X)=\int_{\Theta}\text{VaR}_{P_\theta}^{\alpha}(X)\mu(\theta)d\theta. \]
We will show shortly that the BCR may be expressed as an SRM.
Assume,
for fixed $\alpha$, that there exists a bijective function 
$f:\Theta\to(0,1)$ such that 
$\text{VaR}_{P_\theta}^{\alpha}(X)=\text{VaR}_{P_{\theta^c}}^{f(\theta)}(X)$,
 where $P_{\theta^c}$ is the true probability distribution of $X$.
For fixed $\mu$ and $f$, let
\begin{equation}
  t=1-f(\theta) \text{ and } \sigma(t)=\mu(f^{-1}(1-t))\cdot\left|\frac{d}{dt}f^{-1}(1-t)\right|.
    \label{eq:delta-spectrum}
\end{equation}
Then
\begin{eqnarray*} \mathbb{E}_\mu\circ\text{VaR}_{P_\theta}^{\alpha}(X)&&=\int_{\Theta}\text{VaR}_{P_\theta}^{\alpha}(X)\mu(\theta)
d\theta = \int_{\Theta}\text{VaR}_{P_{\theta^c}}^{f(\theta)}(X)\mu(\theta)d\theta \\&&= \int_{0}^{1}\text{VaR}_{P_{\theta^c}}^{1-t}(X)\sigma(t)dt=\int_{0}^{1}F^{\leftarrow}_{P_{\theta^c}}(t)\sigma(t)dt.
\end{eqnarray*}
The right-hand side of the equation above is a spectral risk measure, which can be viewed as the weighted average of 
the randomized $F^{\leftarrow}_{P_{\theta^c}}(t)$. 
The next example explains how it may work.

\begin{example}
Consider a random variable 
\(X\) which 
follows an exponential distribution 
parameterized by  \(\theta\).
It is easy to derive that
\begin{eqnarray*} 
\text{VaR}_{P_{\theta}}^{\alpha}(X)=\frac{-\ln \alpha}{\theta}.
\end{eqnarray*} 
Suppose that
the true probability distribution of 
\(X\) is 
\(P_{\theta^c}\).
By definition, the SRM of $X$ is
 \begin{eqnarray*} 
 M_\sigma(X)=\int_0^1\text{VaR}_{P_{\theta}^c}^{1-t}(X)\sigma(t)dt=\int_0^1\frac{-\ln (1-t)}{\theta^c}\sigma(t)dt.
 \end{eqnarray*}
On the other hand, the BCR of $X$ is
\begin{eqnarray*} \mathbb{E}_{\mu}\circ\text{VaR}_{P_\theta}^{\alpha}(X)=\int_\Theta\frac{-\ln\alpha}{\theta}\mu(\theta)d\theta.
\end{eqnarray*}
By setting 
$f(\theta)=\alpha^{\frac{\theta^c}{\theta}}
$ and $
t=1-f(\theta)=1-\alpha^{\frac{\theta^c}{\theta}}$,
we demonstrate that
$
\text{VaR}_{P_\theta}^{\alpha}(X)=\text{VaR}_{P_{\theta^c}}^{1-t}(X).
$
Thus, by letting risk spectrum 
\begin{eqnarray*} 
\sigma(t) :=\mu\left(\frac{\theta^c\ln\alpha}{\ln (1-t)}\right)\frac{\theta^c\ln1/\alpha}{(1-t)(\ln (1-t))^2},
\end{eqnarray*}
we obtain
\(\mathbb{E}_{\mu}\circ\text{VaR}_{P_\theta}^{\alpha}(X)=M_\sigma(X)\).
\end{example}
}

Generalizing from this, we consider a generic law invariant coherent risk measure $\rho$.
We know from \cite{shapiro2021lectures} that $\rho$ can be represented as
\[\rho(X)=\sup_{\zeta\in\mathfrak{D}}\mathbb{E}_\zeta X,
\]
where $\mathfrak{D}$ represents a set of pdfs.

By adopting 
a coherent risk measure
for the outer risk measure $\rho_\mu$, 
we have
\begin{equation}\label{probcr}
	\rho_\mu
 \circ\text{VaR}_{P_\theta}^{\alpha}(X)
 := \sup_{\tilde{\mu}\in\mathfrak{D}_{\rho_\mu}}\int_{\Theta}\text{VaR}_{P_\theta}^{\alpha}(X)\tilde{\mu}(\theta)d\theta=\sup_{\sigma\in\mathfrak{A}_{\rho_\mu}}\int_0^1\text{VaR}_{P_{\theta^c}}^{1-t}(X)\sigma(t)dt\equiv\sup_{\sigma\in\mathfrak{A}_{\rho_\mu}}M_\sigma(X),
\end{equation}
where $\mathfrak{D}_{\rho_\mu}$ is the domain of the Fenchel's conjugate of $\rho_\mu$ (see \cite[Theorem 6.5]{shapiro2021lectures}),
and $\mathfrak{A}_{\rho_\mu}$ is a set of risk spectra 
corresponding to $\mathfrak{D}_{\rho_\mu}$
based on the one-to-one correspondence between $\mu$ and $\sigma$ as specified  
in (\ref{eq:delta-spectrum}).
The second term in equation (\ref{probcr}) can be viewed as 
distributionally robust formulation
of 
$\rho_\mu
\circ\text{VaR}_{P_\theta}^{\alpha}(X)$, while 
the third  term of the equation 
is preference robustness of SRM. The robustness in the former is concerned with the ambiguity of the environmental 
risk represented by $\tilde{\mu}$ whereas the robustness in the latter 
is concerned with 
ambiguity of the DM's preference (represented by $\sigma$) associated with the ambiguity in the environmental 
risk.
Note that despite $\mathfrak{D}_{\rho_\mu}$ is independent of $X$, 
set $\mathfrak{A}_{\rho_\mu}$ depends on $X$ because otherwise 
$\sup_{\sigma\in\mathfrak{A}_{\rho_\mu}}M_\sigma(\cdot)$ would be a coherent risk measure whereas 
$\rho_\mu
 \circ\text{VaR}_{P_\theta}^{\alpha}(\cdot)$ is not.

\begin{example}[Preference robust SOC/MDP]\label{bcr-9}
  Let the inner risk measure $\rho_{P_{\theta}}$ be $\text{VaR}_{P_{\theta}}^\beta$ and the outer risk measure $\rho_{\mu_t}$ be a coherent risk measure. In this case, the objective function of the BCR-SOC/MDP model (\ref{eq:MDP-BCR}) becomes
  	\begin{equation*}
  		\min_\pi \rho_{\mu_1}\circ\text{VaR}_{P_{\theta_1}}^\beta\left(\mathcal{C}\left(s_1, a_1, \xi_1\right)+\cdots+\gamma\rho_{\mu_{T-1}}\circ\text{VaR}_{P_{\theta_{T-1}}}^\beta\left[\mathcal{C}\left(s_{T-1}, a_{T-1}, \xi_{T-1}\right)+\gamma\mathcal{C}\left(s_T\right)\right]\right).
  	\end{equation*}
    By virtue of (\ref{probcr}), we can recast the problem
    as a   {\em preference robust SOC/MDP problem}: 
  	  	\begin{equation*}
  		\min_\pi 	\sup_{\sigma_1\in\mathfrak{A}_{\rho_{\mu_1}}}M_{\sigma_1}\left[\mathcal{C}\left(s_1, a_1, \xi_1\right)+\cdots+\gamma\sup_{\sigma_{T-1}\in\mathfrak{A}_{\rho_{\mu_{T-1}}}}M_{\sigma_{T-1}}\left[\mathcal{C}\left(s_{T-1}, a_{T-1}, \xi_{T-1}\right)+\gamma\mathcal{C}\left(s_T\right)\right]\right], 
  	\end{equation*}
  	where $\mathfrak{A}_{\rho_{\mu_{t}}}$, $1\leq t\leq T-1$, 
    is a set of risk spectra 
    representing 
    the uncertainty of a DM's risk attitude 
    at episode $t$. 
    Based on Bayes' rule, 
    we understand that the updates of $\mu_{t}$ can be established by 
    a continuous process of learning and correction. Therefore, in the corresponding preference robust SOC/MDP model, the DM's ambiguity set of risk attitude $\mathfrak{A}_{\mu_{t}}$ also evolves continually as the sample data accumulates.

A similar correspondence 
between BCR-SOC/MDP and preference robust SOC/MDP
can be established when  
the inner risk measure $\rho_{P_{\theta}}$ is
$\text{AVaR}_{P_{\theta}}^\alpha$ and the outer risk measure $\rho_{ \mu}$ is a coherent risk measure. 
In that case, 
we can derive a representation 
akin to the Kusuoka's representation 
\cite{kusuoka2001law}, 
with 
Kusuoka's ambiguity set 
being 
induced by $\mu$. 
All these will lead to 
another preference robust SOC/MDP model 
with Kusuoka's ambiguity set representing 
DM's preference uncertainty. 
    \end{example}

In
summary, 
the examples discussed above show
that 
the BCR-SOC/MDP model displays 
the breath of its scope and appropriateness 
for adaptive and self-learning systems within the realm of risk control.
Moreover, by judiciously selecting risk measures and configuring posterior distributions, 
we can align BCR-SOC/MDP models with preference robust SOC/MDP models.

\section{Finite-horizon BCR-SOC/MDP}

We now turn to discussing numerical methods for solving
(\ref{eq:MDP-BCR}). As with conventional SOC/MDP models,
we need to separate the discussions depending on whether
$T<\infty$ and $T=\infty$. 
We begin with the former in this section.
The finite horizon BCR-SOC/MDP may be regarded as  an extension of 
the risk-neutral MDP \cite{lin2022bayesian}.
The key step is to derive dynamic recursive 
equations. In the literature of SOC/MDP and multistage stochastic programming, this relies heavily on interchangeability 
and the decomposability of the objective function \cite{shapiro2022distributionally}. However,
the BCR-SOC/MDP does not require these conditions. 
The next proposition
states this.
		
\begin{proposition}
		\label{lem-bcr-1} 
  Let 
  \begin{subequations}
  \begin{eqnarray} 
  V_T^*\left(s_T, \mu_T\right)
  &=&
  \mathcal{C}_T\left(s_T\right), \forall\left(s_T, \mu_T\right) \in \mathcal{S} \times \mathcal{D}_1^p,
  \label{eq:V_T-recursive-T-MDP}\\	
  V_t^*\left(s_t, \mu_t\right)
  &=&
  \inf _{a_t \in \mathcal{A}} \rho_{\mu_t} \circ\rho_{P_{\theta_t}}\left[\mathcal{C}_t\left(s_t, a_t, \xi_t\right)+\gamma V_{t+1}^*\left(s_{t+1}, \mu_{t+1}\right)\right],
  \nonumber\\ 
  &&\qquad\qquad\qquad \qquad\qquad\qquad\forall (s_t, \mu_t)\in\mathcal{S} \times \mathcal{D}_1^p,\ t=1,\cdots,T-1,
  \label{eq:V_t-recursive-t-MDP}
\end{eqnarray} 
\end{subequations}
where $s_t$ and $\mu_t$ satisfy equations \eqref{eq:MDP-BCR-b} and \eqref{eq:MDP-BCR-c}, respectively.    
Suppose that for each stage $t=1,\cdots,T-1$ and every $(s_t,\mu_t)\in\mathcal S\times\mathcal D_1^p$, the infimum
in \eqref{eq:V_t-recursive-t-MDP} is attainable.
   Let  $\mathcal{J}_T^{\gamma,*}(s_1,\mu_1)$  denote the optimal value of  problem (\ref{eq:MDP-BCR}).
    Under Assumption~\ref{ass-bcr-1},
\begin{eqnarray} \label{eq-dp-result}
\mathcal{J}_T^{\gamma,*}(s_1,\mu_1)=V_1^*(s_1,\mu_1).
\end{eqnarray} 
    Moreover, the optimal policy $\pi^* =\{ \pi_1^*,\cdots,\pi_{T-1}^* \}$ corresponding to $\mathcal{J}_T^{\gamma,*}(s_1,\mu_1)$ exists and can be determined recursively by:
	\begin{equation}\label{pi-true}
		\pi_t^*\left(s_t, \mu_t\right)\in\arg\min_{a\in\mathcal{A}}\rho_{\mu_t}\circ\rho_{P_{\theta_t}}\left[\mathcal{C}_t\left(s_t, a_t, \xi_t\right)+\gamma V_{t+1}^*\left(s_{t+1}, \mu_{t+1}\right)\right].
	\end{equation}
\end{proposition}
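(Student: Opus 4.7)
The plan is to establish \eqref{eq-dp-result} and the optimality of $\pi^*$ by backward induction on the episode index $t$ running from $T$ down to $1$. For each $t$ and each deterministic Markovian policy $\pi=(\pi_t,\ldots,\pi_{T-1})$, I would introduce the cost-to-go functional
\[
 \mathcal{J}_{t,T}^\gamma(\pi,s_t,\mu_t) := \rho_{\mu_t}\circ\rho_{P_{\theta_t}}\!\left[\mathcal{C}_t(s_t,\pi_t(s_t,\mu_t),\xi_t)+\gamma\mathcal{J}_{t+1,T}^\gamma(\pi,s_{t+1},\mu_{t+1})\right],
\]
with the terminal convention $\mathcal{J}_{T,T}^\gamma(\pi,s_T,\mu_T)=\mathcal{C}_T(s_T)$, so that $\mathcal{J}_T^{\gamma}(\pi,s_1,\mu_1)=\mathcal{J}_{1,T}^\gamma(\pi,s_1,\mu_1)$. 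The base step $t=T$ is immediate, since $V_T^*=\mathcal{C}_T$ does not depend on any action, so $V_T^*(s_T,\mu_T)=\inf_\pi\mathcal{J}_{T,T}^\gamma(\pi,s_T,\mu_T)$ trivially.

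For the inductive step, suppose that $V_{t+1}^*(s_{t+1},\mu_{t+1})=\inf_{\pi}\mathcal{J}_{t+1,T}^\gamma(\pi,s_{t+1},\mu_{t+1})$ pointwise in $(s_{t+1},\mu_{t+1})$. For the lower bound, fix an arbitrary Markovian policy $\pi$ and apply monotonicity of $\rho_{P_{\theta_t}}$ and $\rho_{\mu_t}$ (both are monetary and hence non-decreasing in the pathwise order) to the inductive inequality $\mathcal{J}_{t+1,T}^\gamma(\pi,\cdot,\cdot)\geq V_{t+1}^*(\cdot,\cdot)$ to obtain
\[
 \mathcal{J}_{t,T}^\gamma(\pi,s_t,\mu_t) \geq \rho_{\mu_t}\circ\rho_{P_{\theta_t}}\!\left[\mathcal{C}_t(s_t,\pi_t(s_t,\mu_t),\xi_t)+\gamma V_{t+1}^*(s_{t+1},\mu_{t+1})\right] \geq V_t^*(s_t,\mu_t),
\]
where the second inequality is by definition of $V_t^*$ as an infimum over $a_t\in\mathcal A$. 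Passing to the infimum over $\pi$ on the left yields $\inf_\pi \mathcal{J}_{t,T}^\gamma(\pi,s_t,\mu_t)\geq V_t^*(s_t,\mu_t)$. For the matching upper bound, select $\pi_t^*$ attaining the infimum in \eqref{eq:V_t-recursive-t-MDP}, which exists by the attainability hypothesis, and concatenate it with a tail policy $(\pi_{t+1}^*,\ldots,\pi_{T-1}^*)$ that is optimal from episode $t+1$ onward (supplied by the inductive hypothesis). Direct substitution and the inductive identity at $t+1$ give $\mathcal{J}_{t,T}^\gamma(\pi^*,s_t,\mu_t)=V_t^*(s_t,\mu_t)$, closing the induction; taking $t=1$ delivers \eqref{eq-dp-result} together with optimality of $\pi^*=(\pi_1^*,\ldots,\pi_{T-1}^*)$ and formula \eqref{pi-true}.

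The main obstacle I anticipate is not the order-preservation argument above, which is short, but the verification that the composite risk evaluation is legitimate at every backward step, i.e.\ that $\mathcal C_t(s_t,a_t,\xi_t)+\gamma V_{t+1}^*(s_{t+1},\mu_{t+1})$ lies in the domain of $\rho_{\mu_t}\circ\rho_{P_{\theta_t}}$. To handle this I would run a secondary induction, parallel to the main one, showing that $V_{t+1}^*$ inherits a growth bound in $\xi_t$ of the form used in Assumption~\ref{ass-bcr-1}(a); the bound (\ref{eq-condition-of-risk}) together with the recursive estimate already established in the proof of Proposition~2 of the excerpt controls the propagation, while Lemma~\ref{Prop:continuity} and the compactness of $\mathcal{A}$ ensure that the infimum in \eqref{eq:V_t-recursive-t-MDP} preserves continuity and measurability of $V_t^*$ in $(s_t,\mu_t)$, so that the composite risk in the next backward step is again well defined by Proposition~\ref{Prop:BCD}.
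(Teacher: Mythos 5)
Your proof is correct and follows essentially the same route as the paper: the paper's one-line justification --- ``by the monotonic increasing property of the BCRs, we can swap minimization operation with the BCRs'' --- is exactly the interchange that your two-sided inequality argument formalizes, and the paper likewise iterates the swap backward from $t=T-1$ down to $t=1$. Your version is simply the more careful writeup of that swap (explicit cost-to-go functionals, separate lower bound via monotonicity and upper bound via concatenating the attaining action with an optimal tail policy, plus the domain/well-definedness check that the paper delegates to its preceding well-definedness proposition).
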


\begin{proof} 

By the monotonic increasing property
of the BCRs, 
 we can swap minimization operation with the BCRs and subsequently establish
    \begin{equation*}
        \begin{aligned}
            &\min _{\pi_1,\cdots,\pi_{T-1}} \rho_{\mu_1}\circ\rho_{P_{\theta_1}}\left[\mathcal{C}_1\left(s_1, a_1, \xi_1\right)+\cdots+\gamma\rho_{ \mu_{T-1}}\circ \rho_{P_{\theta_{T-1}}}\left[\mathcal{C}_{T-1}\left(s_{T-1}, a_{T-1}, \xi_{T-1}\right)+\gamma\mathcal{C}_T\left(s_T\right)\right]\right]\\
            =&\min_{\pi_1,\cdots,\pi_{T-2}} \rho_{\mu_1}\circ\rho_{P_{\theta_1}}\left[\mathcal{C}_1\left(s_1, a_1, \xi_1\right)+\cdots+\gamma\min_{\pi_{T-1}}\rho_{ \mu_{T-1}}\circ \rho_{P_{\theta_{T-1}}}\left[\mathcal{C}_{T-1}\left(s_{T-1}, a_{T-1}, \xi_{T-1}\right)+\gamma V_T^*\left(s_T, \mu_T\right)\right]\right]
            \\
            =&\min_{\pi_1,\cdots,\pi_{T-2}} \rho_{\mu_1}\circ\rho_{P_{\theta_1}}\left[\mathcal{C}_1\left(s_1, a_1, \xi_1\right)+\cdots+\gamma V_T^*\left(s_{T-1}, \mu_{T-1}\right)\right].
        \end{aligned}
    \end{equation*}
 Repeating the swap above for $t=T-2,\cdots,1$, we obtain 
\eqref{eq-dp-result}.
By 
the definition of 
time consistency of dynamic risk measures
in \cite{ruszczynski2010risk}, 
 the first equality implies that 
 the nested BCRs are dynamically time consistent.
\end{proof}
In alignment with the discussions in \cite{delage2015robust,shapiro2021tutorial}, the BCR-SOC/MDP formulation (\ref{eq:MDP-BCR}) emphasizes the critical aspect of time consistency within the nested risk function framework. 
Thus, for the proposed BCR-SOC/MDP model with a finite horizon, we can recursively solve it using a dynamic programming solution procedure such
as that outlined in 
Algorithm \ref{alg:A}. 
We will discuss the details of solving
the Bellman equations (\ref{eq:V_T-recursive-T-MDP})-(\ref{eq:V_t-recursive-t-MDP}) 
in Section 6.

\begin{algorithm}
	\caption{Dynamic Programming Method for Finite-horizon BCR-SOC/MDP (\ref{eq:MDP-BCR}).
    }
	\begin{algorithmic}[1]\label{alg:A}
		\STATE Initialization: $V_T^*\left(s_T, \mu_T\right)=\mathcal{C}_T\left(s_T\right)$, for all $\left(s_T, \mu_T\right) \in \mathcal{S} \times \mathcal{D}_1^p$
		\FOR{$t = T-1, T-2, \cdots, 1$}
		\FORALL{$\left(s_t, \mu_t\right) \in \mathcal{S} \times \mathcal{D}_1^p$}
		\STATE Solve problem
             \begin{subequations}\label{eq-algorithm}
  \begin{eqnarray}
  \label{eq:BCR-Alg1}
  \min_{a\in\mathcal{A}} && \rho_{\mu_t} \circ\rho_{P_{\theta_t}}\left[\mathcal{C}_t\left(s_t, a_t, \xi_t\right)+\gamma V_{t+1}^*\left(s^\prime, \mu^\prime\right)\right]\\
  \text{s.t.}
  &&s^\prime=g_t(s_t,a_t,\xi_{t}),\ \mu^\prime=\frac{p\left(\xi_t |\theta\right)\mu_t(\theta) }{\int_\Theta p\left(\xi_t |\theta\right)\mu_t(\theta)  d \theta}
  \end{eqnarray}
    \end{subequations}
  to obtain an optimal policy $\pi_t^*\left(s_t, \mu_t\right)$.
  		\STATE Setting $
				V_t^*\left(s_t, \mu_t\right) := \rho_{\mu_t} \circ\rho_{P_{\theta_t}}\left[\mathcal{C}_t\left(s_t, \pi_t^*\left(s_t, \mu_t\right), \xi_t\right)+\gamma V_{t+1}^*\left(s^\prime,\mu^\prime \right)\right]$.
		\ENDFOR
		\ENDFOR
  \STATE Setting $\mathcal{J}_T^{\gamma,*}(s,\mu) :=V_1^*\left(s_t, \mu_t\right)$ for all $(s,\mu)\in\mathcal{S}\times\mathcal{D}_1^p$.
		\RETURN Optimal policy $\pi^* = \{\pi^*_1, \pi^*_1, \cdots, \pi^*_{T-1}\}$ and the optimal value function $\mathcal{J}_T^{\gamma,*}$.
	\end{algorithmic}
\end{algorithm}

A key step in Algorithm~\ref{alg:A} is to solve problem (\ref{eq-algorithm}) for all $\left(s_t, \mu_t\right) \in \mathcal{S} \times \mathcal{D}_1^p$. This is implementable in practice 
only when 
$\mathcal{S} \times \mathcal{D}_1^p$
is a discrete set with finite elements.
Moreover, it is often difficult to obtain 
a closed form of the objective function when $\xi$ is continuously distributed.
All these mean that the algorithm is conceptual 
as it stands. We will come back to address the
issue in Section 6.
Another issue is the attainability property of problem (\ref{eq-algorithm}). We need to ensure that this is a convex program so that $\pi_t^*\left(s_t, \mu_t\right)$ is an optimal solution which can be achieved under the assumption in Proposition \ref{lem-bcr-1}. It is sufficient to ensure that $V_{t+1}^*(g_t(s_t,a_t,\xi_{t}),\mu^\prime)$ is convex in $a_t$.
We address this below. The following assumption is required.

\begin{assumption}
\label{ass-convex}
 For all $\xi_t\in\Xi$, $\mathcal{C}_t(s_t, a_t, \xi_t)$ and $g_t(s_t,a_t,\xi_t)$ are jointly convex in 
 $(s_t,a_t)$ over $\mathcal{S}\times\mathcal{A}$
and they are non-decreasing in $s_t$ for each fixed pair of $(a_t,\xi_t)$.
\end{assumption}
By comparison with the assumptions in the existing literature (e.g., \cite{guigues2023risk,shapiro2023episodic}), Assumption \ref{ass-convex} is mild. In these works and in many practical applications, it is typically assumed that $g_t$ is affine. 
Note that the convexity of $g$ is required
to ensure that problem (\ref{eq:V_t-recursive-t-MDP}) 
is convex which will facilitate numerical solution of the problem.
The next proposition states the convexity of 
 problem (\ref{eq:V_t-recursive-t-MDP}).

\begin{proposition}\label{convex}
Let Assumptions~\ref{ass-bcr-1} 
and \ref{ass-convex} hold.
If 
$\rho_{\mu_t}(\cdot)$ and $\rho_{P_{\theta_t}}(\cdot)$ are convex risk measures for $t=1,\cdots,T$, then the following assertions hold.

\begin{itemize}

\item[(i)] Problem (\ref{eq:V_t-recursive-t-MDP}) is a convex program.
\item[(ii)]
If, in addition, $\mathcal{C}_t(s_t, a_t, \xi_t)$ and $g_t(s_t, a_t, \xi_t)$ are Lipschitz continuous 
in $s_t \in \mathcal{S}$ uniformly 
for all $a_t$ and $\xi_t$
with modulus 
$L_{\mathcal{C}}$ and $L_g$, respectively, 
then $V^*_t(s_t,\mu_t)$ is Lipschtz continuous 
in $s_t$, i.e.,
\begin{eqnarray} 
\label{eq:Lip-V_t-BCR-SOC/MDP-s_t}
|V_{t}^*(s_t^2,\mu_t)-V_{t}^*(s_t^1,\mu_t)|	\leq (L_{\mathcal{C}}+\gamma L_{V_{t+1}}L_{g})\left\|s_t^1-s_t^2\right\|,\ \forall s_t^1, s_t^2 \in {\cal S},
\end{eqnarray} 
for $t=1,\cdots,T-1$,
where $L_{V_t}=\sum_{i=0}^{T-t}(\gamma L_g)^iL_\mathcal{C}$.

\item[(iii)] 
If, further,  $\mathcal{C}_t(s_t, a_t, \xi_t)$ and $g_t(s_t, a_t, \xi_t)$ 
are Lipschitz continuous 
 in $a_t\in\mathcal{A}$ uniformly 
 for all $s_t$ and $\xi_t$, 
 then ${\cal C}_t(s_t,a_t,\xi_t) +\gamma V^*_{t+1}(s_{t+1},\mu_{t+1})$ is Lipschtz continuous 
in $a_t$, i.e.,
\begin{eqnarray} 
\label{eq:Lip-V_t-BCR-SOC/MDP-a_t}
&&|\mathcal{C}_t\left(s_t,a_t^1,\xi_t\right)+\gamma V^*_{t+1}\left(g_t(s_t,a_t^1,\xi_t), \mu^{\prime}\right) -\mathcal{C}_t\left(s_t,a_t^2,\xi_t\right)+\gamma V^*_{t+1}\left(g_t(s_t,a_t^2,\xi_t), \mu^{\prime}\right)|	\nonumber
\\ 
&\leq& (L_{\mathcal{C}}+\gamma L_{V_{t+1}}L_{g})\left\|a_t^1-a_t^2\right\|,\ \forall a_t^1, a_t^2 \in {\cal A},
\end{eqnarray} 
for $t=1,\cdots,T-1$.

\end{itemize}

\end{proposition}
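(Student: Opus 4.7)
The plan is to establish each assertion by backward induction on $t$, leveraging the structural properties in Assumption~\ref{ass-convex} together with the monotonicity/translation invariance of monetary risk measures.

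For part~(i), I would prove by backward induction the joint claim that $V_t^*(\cdot,\mu_t)$ is both convex and non-decreasing on $\mathcal{S}$. The base case $V_T^*(s_T,\mu_T)=\mathcal{C}_T(s_T)$ is immediate from Assumption~\ref{ass-convex}. For the inductive step, fix $\xi_t$; since $g_t(\cdot,\cdot,\xi_t)$ is jointly convex in $(s_t,a_t)$ and $V_{t+1}^*(\cdot,\mu_{t+1})$ is convex and non-decreasing by the induction hypothesis, the composition $V_{t+1}^*(g_t(s_t,a_t,\xi_t),\mu_{t+1})$ is jointly convex in $(s_t,a_t)$. Adding the jointly convex $\mathcal{C}_t$ and then applying $\rho_{P_{\theta_t}}$ preserves joint convexity: for each realization the integrand is convex in $(s_t,a_t)$, and a convex monetary risk measure $\rho$ satisfies $\rho(f(\lambda s_1+(1-\lambda)s_2,\lambda a_1+(1-\lambda)a_2,\xi))\leq \rho(\lambda f(s_1,a_1,\xi)+(1-\lambda)f(s_2,a_2,\xi))\leq \lambda\rho(f(s_1,a_1,\xi))+(1-\lambda)\rho(f(s_2,a_2,\xi))$ by monotonicity and convexity. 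Applying the outer $\rho_{\mu_t}$ gives the same conclusion. Non-decreasingness of $V_t^*$ in $s_t$ propagates along the same chain, using that $g_t$ and $\mathcal{C}_t$ are non-decreasing in $s_t$ and that the risk measures are monotone. The minimum of a jointly convex function over the convex, compact set $\mathcal{A}$ is convex in $s_t$, establishing the inductive claim and hence that~\eqref{eq:V_t-recursive-t-MDP} is a convex program in $a_t$ for each fixed $s_t$.

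For part~(ii), I would again use backward induction, initialized at $L_{V_T}=L_\mathcal{C}$. For the inductive step, observe that for any fixed $a_t$ and $\xi_t$,
\begin{eqnarray*}
&& |\mathcal{C}_t(s_t^1,a_t,\xi_t)+\gamma V_{t+1}^*(g_t(s_t^1,a_t,\xi_t),\mu') - \mathcal{C}_t(s_t^2,a_t,\xi_t) - \gamma V_{t+1}^*(g_t(s_t^2,a_t,\xi_t),\mu')|\\
&\leq & (L_\mathcal{C}+\gamma L_{V_{t+1}}L_g)\|s_t^1-s_t^2\|,
\end{eqnarray*}
where $\mu'$ is common to both sides because it depends only on $(\xi_t,\mu_t)$. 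The non-expansive property~\eqref{eq:Lip-non-expansive} of monetary risk measures, applied successively to $\rho_{P_{\theta_t}}$ and then to $\rho_{\mu_t}$, transfers this pointwise bound to the nested composite risk. The standard estimate $|\inf_{a}F_1(a)-\inf_{a}F_2(a)|\leq \sup_{a}|F_1(a)-F_2(a)|$ then yields \eqref{eq:Lip-V_t-BCR-SOC/MDP-s_t} with the recursion $L_{V_t}=L_\mathcal{C}+\gamma L_g L_{V_{t+1}}$, whose closed-form solution is the claimed geometric sum. Part~(iii) follows from the same pointwise Lipschitz estimate with $s_t$ replaced by $a_t$; no induction or minimization is needed here since the claim concerns the integrand itself evaluated at two distinct actions.

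The main obstacle is the simultaneous propagation of convexity and monotonicity in part~(i): neither property alone suffices, because the inductive step relies on the standard fact that $V_{t+1}^*\circ g_t$ is convex only when the outer function $V_{t+1}^*$ is both convex and non-decreasing. Consequently, the induction must carry both properties forward in tandem, and Assumption~\ref{ass-convex}'s non-decreasingness clause plays an indispensable role alongside convexity. Verifying that $\rho_{\mu_t}\circ\rho_{P_{\theta_t}}$ preserves joint convexity is routine once the argument combining monotonicity and convexity of the risk measures is made explicit as above.
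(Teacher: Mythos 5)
Your proposal is correct and follows essentially the same route as the paper: backward induction carrying convexity and non-decreasingness in tandem for part (i), backward induction with the pointwise Lipschitz bound pushed through the nested risk measures for part (ii), and a direct pointwise estimate for part (iii). The only cosmetic difference is that you invoke the sup-norm non-expansiveness \eqref{eq:Lip-non-expansive} to transfer the bound through $\rho_{P_{\theta_t}}$ and $\rho_{\mu_t}$, whereas the paper bounds $|\rho(A)-\rho(B)|$ by $\rho(|A-B|)$; since the pointwise bound is a deterministic constant, both reduce to monotonicity plus translation invariance and yield the identical recursion $L_{V_t}=L_{\mathcal{C}}+\gamma L_g L_{V_{t+1}}$.
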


\noindent
\textbf{Proof.}
Part (i). 
Under Assumption \ref{ass-bcr-1} (e), 
it suffices to show that 
$\rho_{\mu_t}\circ\rho_{P_{\theta_t}}\left[\mathcal{C}(s_t, a_t, \xi_t)+\gamma V^*_{t+1}\left(s_{t+1}, \mu_{t+1}\right) \right]$ 
is jointly convex
in $(s_t,a_t)$.
To this end, we need to show that
$V^*_{t+1}\left(s_{t+1}, \mu_{t+1}\right)$ is 
convex in $a_t$. 
Under Assumption~\ref{ass-convex},
this is guaranteed if
$V^*_{t+1}$ is convex and nondecreasing in $s_{t+1}$.
This is because the BCR is convex and monotone since $\rho_{\mu_t}(\cdot)$ and $\rho_{P_{\theta_t}}(\cdot)$ are convex risk measures. 
In what follows, we use backward induction to 
show that 
$V^*_{t}$ is convex and non-decreasing  in $s_t$.
Observe that $V_T^*\left(s_T, 
\mu_T\right)=\mathcal{C}\left(s_T\right), \forall\left(s_T, \mu_T\right) \in \mathcal{S} \times \mathcal{D}_1^p$. Thus, the conclusion holds trivially at episode $T$.
Assuming that the convexity and the non-decreasing property
hold for episodes from $t+1$ to $T$,
we demonstrate 
the same properties are satisfied by $V^*_t$ at episode $t$. 
Since $V^*_{t+1}$ is convex in $s_{t+1}$ and non-decreasing in the argument, then $V_{t+1}^*\left(s_{t+1}, \mu_{t+1}\right)$  is convex in $a_t$ under Assumption~\ref{ass-convex}.
Consequently,
$\rho_{\mu_t}\circ\rho_{P_{\theta_t}}\left[\mathcal{C}_t\left(s_t, a_t, \xi_t\right)+\gamma V_{t+1}^*\left(s_{t+1}, \mu_{t+1}\right)\right]$ is jointly convex in $(a_{t},s_{t})$.
The convexity of $V^*_t$ in 
$s_t$ follows from the fact that minimum of a jointly convex function
w.r.t.~one argument ($a_t$)
makes the resulting optimal value function to be convex in the other argument ($s_t$).
To see the non-decreasing 
property of $V_t^*$ in $s_t$,
we note that 
$\mathcal{C}_t\left(s_t, a_t, \xi_t\right)+\gamma V_{t+1}^*\left(g_t(s_t,a_t,\xi_t), \mu_{t+1}\right)$ is 
nondecreasing in $s_t$ 
(given that $V_{t+1}^*$ is nondecreasing and $g_t$ is non-decreasing in $s_t$)
and 
this property is preserved 
by the BCR operation and the minimization in $a_t$.

Part (ii). We also use 
backward induction
to establish 
Lipschitz continuity of $V^*_t(s_t,\mu_t)$ 
in $s_t$. At episode $T$, the Lipschtz continuity is evident since $V_T^*\left(s_T, \mu_T\right)=\mathcal{C}\left(s_T\right), \forall\left(s_T, \mu_T\right) \in \mathcal{S} \times \mathcal{D}_1^p$. 
Assuming that the Lipschtz continuity holds from episodes $t+1$ to $T$, we demonstrate that
the Lipschitz continuity also holds at episode $t$. 
Under the hypothesis of the 
induction, 
$ V^*_{t+1}$
is Lipschtz continuous 
in $s_{t+1}$ 
with 
modulus $L_{V_{t+1}}$.
For any ${s}^1_t,s^2_t \in \mathcal{S}$, 
$$
\begin{aligned}
	&\left| \rho_{\mu_t}\circ\rho_{P_{\theta_t}}\left[\mathcal{C}_t\left(s_t^1, {a}_t, \xi_t\right)+\gamma V^*_{t+1}\left(g_t(s_t^1,a_t,\xi_t), \mu^{\prime}\right) \right]- \rho_{\mu_t}\circ\rho_{P_{\theta_t}}\left[\mathcal{C}_t\left(s_t^2, {a}_t, \xi_t\right)+\gamma V^*_{t+1}\left(g_t(s_t^2,a_t,\xi_t), \mu^{\prime}\right) \right]\right|\\
	\leq&  \rho_{\mu_t}  \circ\rho_{P_{\theta_t}}\left|\mathcal{C}_t\left(s_t^1, {a}_t, \xi_t\right)+\gamma V^*_{t+1}\left(g_t(s_t^1,a_t,\xi_t), \mu^{\prime}\right) - \mathcal{C}_t\left(s^2_t, {a}_t, \xi_t\right)-\gamma V^*_{t+1}\left(g_t(s_t^2,a_t,\xi_t), \mu^{\prime}\right) \right| \\
	\leq & \rho_{\mu_t} \circ\rho_{P_{\theta_t}}\left(L_{\mathcal{C}}\left\|s_t^1-s_t^2\right\|+\gamma L_{V_{t+1}}\left\|g_t(s_t^1,a_t,\xi_t)-g_t(s_t^2,a_t,\xi_t)\right\|\right) \\
	\leq & (L_{\mathcal{C}}+\gamma L_{V_{t+1}}L_{g})\left\|s_t^1-s_t^2\right\|.
\end{aligned}
$$
Thus
$$
\begin{aligned}
	&V_{t}^*(s_t^1,\mu_t)-V_{t}^*(s_t^2,\mu_t)\\
	\leq& \sup_{a_t\in\mathcal{A}} \left|\rho_{\mu_t} \circ \rho_{P_{\theta_t}}[\mathcal{C}_t\left(s_t^1, a_t, \xi_t\right)+\gamma V^*_{t+1}\left(g_t(s_t^1,{a}_t,\xi_t), \mu^{\prime}\right)] - \rho_{\mu_t} \circ \rho_{P_{\theta_t}}[\mathcal{C}_t\left(s^2_t, {a}_t, \xi_t\right)-\gamma V^*_{t+1}\left(g_t(s_t^2,a_t,\xi_t), \mu^{\prime}\right)]\right|\\
	\leq &(L_{\mathcal{C}}+\gamma L_{V_{t+1}}L_{g})\left\|s_t^1-s_t^2\right\|.
\end{aligned}
$$
A similar conclusion can be derived by 
swapping the positions of $V_{t}^*(s_t^1,\mu_t)$ and $ V_{t}^*(s_t^2,\mu_t)$. Therefore, we have
\[|V_{t}^*(s_t^2,\mu_t)-V_{t}^*(s_t^1,\mu_t)|	\leq (L_{\mathcal{C}}+\gamma L_{V_{t+1}}L_{g})\left\|s_t^1-s_t^2\right\|.\]
This deduction confirms by mathematical induction that the Lipschitz continuity property holds for all episodes.

  Part (iii).  For any ${a}^1_t,a^2_t \in \mathcal{A}$, we have:
$$\begin{aligned}
	&|\mathcal{C}_t\left(s_t,a_t^1,\xi_t\right)+\gamma V^*_{t+1}\left(g_t(s_t,a_t^1,\xi_t), \mu^{\prime}\right) -\mathcal{C}_t\left(s_t,a_t^2,\xi_t\right)+\gamma V^*_{t+1}\left(g_t(s_t,a_t^2,\xi_t), \mu^{\prime}\right)|\\
	\leq& L_{\mathcal{C}}\left\|a_t^1-a_t^2\right\|+\gamma L_{V_{t+1}}||g_t(s_t,a_t^1,\xi_t)-g_t(s_t,a_t^2,\xi_t)||\\
	\leq &(L_{\mathcal{C}}+\gamma L_{V_{t+1}}L_{g})\left\|a_t^1-a_t^2\right\|
\end{aligned}
$$
for $t=1,\cdots,T-1$.
\hfill $\Box$

\section{Infinite-horizon BCR-SOC/MDP}

In this section, we move on to 
discuss the BCR-SOC/MDP problem 
(\ref{eq:MDP-BCR})
with 
infinite-horizon, i.e., $T=\infty$.
We start with the following assumption.

\begin{assumption}\label{ass5.1} Consider problem (\ref{eq:MDP-BCR}).
    (a) 
    The transition dynamics are time-invariant, i.e., $g_t(s,a,\xi) \equiv g(s,a,\xi)$ and  $\mathcal{C}_t(s,a,\xi) \equiv \mathcal{C}(s,a,\xi)$ for all $t$. 
(b) For all $(s, a,\xi) \in \mathcal{S} \times \mathcal{A}\times\Xi$, the 
absolute value of the 
cost function $|\mathcal{C}(s, a, \xi)|$ is bounded by a constant $\bar{\mathcal{C}}$.
(c)     The discount factor $\gamma\in (0,1)$.
\end{assumption}

Under 
Assumption \ref{ass5.1},  
we can formulate the BCR-SOC/MDP model 
(\ref{eq:MDP-BCR})
as follows:
\begin{subequations}
\label{eq:BCR-SOC/MDP-infinite}
\begin{eqnarray} 
\min _\pi && \mathcal{J}_\infty^\gamma(\pi,s_1,\mu_1):=\rho_{\mu_1}\circ\rho_{P_{\theta_{1}}}\left[\mathcal{C}\left(s_1, a_1, \xi_1\right)+\cdots+\gamma\rho_{ \mu_{t}}\circ \rho_{P_{\theta_{t}}}\left[\mathcal{C}\left(s_{t}, a_{t}, \xi_{t}\right)+\cdots\right]\right]
\label{eq:BCR-SOC/MDP-infinite-a} 
\\
		\text {s.t.} && s_{t+1}=g\left(s_t, a_t, \xi_t\right),\ a_t=\pi(s_t,\mu_t),\  t=1,2, \cdots ;
        \label{eq:BCR-SOC/MDP-infinite-b} 
        \\
		&& \mu_{t+1}(\theta)=\frac{p\left(\xi_t |\theta\right)\mu_t(\theta) }{\int_\Theta p\left(\xi_t |\theta\right)\mu_t(\theta)  d \theta},\ t=1,2, \cdots.
        \label{eq:BCR-SOC/MDP-infinite-c} 
    \end{eqnarray} 
    \end{subequations}
We use $\mathcal{J}_\infty^{\gamma,*}$ to denote the optimal value of problem (\ref{eq:BCR-SOC/MDP-infinite}).  
Given the boundness of the cost function $\mathcal{C}(s, a, \xi)$, it follows that the value function $\mathcal{J}_\infty^{\gamma,*}$ is also bounded, i.e., $$||\mathcal{J}_\infty^{\gamma,*}||_\infty:=\sup_{(s,\mu)\in\mathcal{S}\times\mathcal{D}^0_1}|\mathcal{J}_\infty^{\gamma,*}(s,\mu)|\leq\frac{\bar{\mathcal{C}}}{1-\gamma}.$$

\subsection{Bellman equation and optimality}
It is challenging to solve the infinite-horizon BCR-SOC/MDP problem (\ref{eq:BCR-SOC/MDP-infinite}). A standard approach is to use the Bellman equation to develop an iterative scheme
analogous to those used in the existing SOC/MDP 
models (e.g., \cite{guigues2023risk,shapiro2023episodic,xu2010distributionally}).
We begin by defining the 
Bellman equation 
of (\ref{eq:BCR-SOC/MDP-infinite}).
\begin{definition}[Deterministic Bellman operator] \label{def-beo-1}
	 Let $\mathfrak{B}(\mathcal{S},
     \mathcal{D}_1^0)$ 
     denote the space of real-valued bounded measurable functions on $\mathcal{S} \times\mathcal{D}_1^0$. 
     For any value function $V \in \mathfrak{B}(\mathcal{S}, \mathcal{D}_1^0)$,
  define 
  the deterministic operator $\mathcal{T}:  \mathfrak{B}(\mathcal{S}, \mathcal{D}_1^0)\rightarrow \mathfrak{B}(\mathcal{S}, \mathcal{D}_1^0)$ as follows:
	\begin{equation}
	(\mathcal{T} V)(s, \mu)=\min _{a \in \mathcal{A}} \rho_{\mu}\circ\rho_{P_{\theta}}
    \left(\mathcal{C}(s, a, \xi)+\gamma V\left(s^{\prime}, \mu^{\prime}\right)\right)  \text{ for all} \;\; s,\mu
 \label{eq:Operator-T}
	\end{equation}
and operator $\mathcal{T}^\pi: \mathfrak{B}(\mathcal{S}, \mathcal{D}_1^0)\rightarrow \mathfrak{B}(\mathcal{S}, \mathcal{D}_1^0)$ for a given $\pi$:
	\begin{eqnarray} 
     \label{eq:Operator-T^pi}
	\left(\mathcal{T}^\pi V\right)(s, \mu)=\rho_{\mu}
    \circ\rho_{P_{\theta}}\left(\mathcal{C}(s, \pi(s, \mu), \xi)+\gamma V\left(s^{\prime}, \mu^{\prime}\right)\right).
	\end{eqnarray} 
\end{definition}

The dynamic programming formulation integrated into the Bellman operator ensures that risk factors are effectively incorporated into the decision-making process. 
This approach accounts systematically for both epistemic and aleatoric uncertainties, which are crucial when determining the optimal policy at each decision point. In the Bellman equation presented above, the random variable $\xi$ serves as a general representation of the stochastic input in each decision episode. This simplification helps to maintain the recursive structure of the model without the need to explicitly track time indices for the random variables.

Model (\ref{eq:BCR-SOC/MDP-infinite}) and the Bellman equation \eqref{eq:Operator-T} assume deterministic policies, but the 
rationale 
behind this 
set-up has not yet been explained fully. 
The next lemma states that optimizing over randomized policies is essentially equivalent to optimizing over deterministic ones.
The latter will simplify the decision problem and improve computational efficiency. 
We write $\Delta(\mathcal A)$ for the set of all Borel probability measures on the action set $\mathcal A$.
A {randomized policy} is a measurable mapping  
$\pi:\; \mathcal S\times\mathcal D_1^{p}\;\longrightarrow\; \Delta(\mathcal A)$
such that for each 
pair
of state and belief 
$(s,\mu)$, 
$\pi(\cdot\mid s,\mu)$ stipulates 
the probability distribution actions
to be taken over $\mathcal{A}$.  
We write $\Pi^{R}$ for the set of all such randomized policies.

\begin{lemma}
	\label{lem-beo-1}The deterministic Bellman operator is equivalent to the random Bellman operator, i.e.,
	$$
	\begin{aligned}
		(\mathcal{T} V)(s, \mu)&=\min _{a \in \mathcal{A}} \rho_{\mu}\circ\rho_{P_{\theta}}\left(\mathcal{C}(s, a, \xi)+\gamma V\left(s^{\prime}, \mu^{\prime}\right)\right)\\
		& =\inf _{\pi \in \Pi^{R}} \mathbb{E}_{a\sim\pi(\cdot\mid s,\mu)}\left[\rho_{\mu}\circ\rho_{P_{\theta}}\left(\mathcal{C}(s, a, \xi)+\gamma V\left(s^{\prime}, \mu^{\prime}\right)\right)\right].
	\end{aligned}
	$$
\end{lemma}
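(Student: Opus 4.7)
The plan is to fix the state--belief pair $(s,\mu)$, introduce the scalar function
\[
h(a)\;:=\;\rho_{\mu}\circ\rho_{P_{\theta}}\bigl(\mathcal{C}(s,a,\xi)+\gamma V(s',\mu')\bigr),\qquad a\in\mathcal{A},
\]
and show that $\min_{a\in\mathcal{A}} h(a)=\inf_{\pi\in\Pi^{R}}\mathbb{E}_{a\sim\pi(\cdot\mid s,\mu)}[h(a)]$ as an entirely pointwise (in $(s,\mu)$) assertion about integrals of $h$ over $\mathcal{A}$. The argument then reduces to a basic fact: the expectation of a bounded measurable function under any probability measure lies between its essential infimum and essential supremum.

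First I would verify that the minimum on the left is actually attained. Since $V\in\mathfrak{B}(\mathcal{S},\mathcal{D}_1^0)$ is bounded measurable and $\mathcal{C}(s,a,\xi)$ is continuous with the growth bound of Assumption~\ref{ass-bcr-1}(a), Lemma~\ref{Prop:continuity}(iv) (together with the continuity of $\rho_\mu$ and $\rho_{P_\theta}$ assumed in Assumption~\ref{ass-bcr-1}(b)) gives continuity of $h$ in $a$; combined with compactness of $\mathcal{A}$ from Assumption~\ref{ass-bcr-1}(e), the Weierstrass theorem yields an $a^\star\in\mathcal{A}$ with $h(a^\star)=\min_{a\in\mathcal{A}}h(a)$.

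Next I would prove the two inequalities.  For ``$\le$'', I pick the degenerate randomized policy $\pi^\star(\cdot\mid s,\mu)=\delta_{a^\star}$, which lies in $\Pi^{R}$; then
\[
\inf_{\pi\in\Pi^{R}}\mathbb{E}_{a\sim\pi(\cdot\mid s,\mu)}[h(a)]\;\le\;\mathbb{E}_{a\sim\delta_{a^\star}}[h(a)]\;=\;h(a^\star)\;=\;\min_{a\in\mathcal{A}} h(a).
\]
For ``$\ge$'', I use that $h(a)\ge\min_{a'\in\mathcal{A}}h(a')$ for every $a\in\mathcal{A}$; integrating this pointwise inequality against the probability measure $\pi(\cdot\mid s,\mu)$ yields $\mathbb{E}_{a\sim\pi(\cdot\mid s,\mu)}[h(a)]\ge\min_{a'\in\mathcal{A}}h(a')$ for every $\pi\in\Pi^{R}$, and taking the infimum over $\pi$ gives the claim.

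The only real obstacle is the supporting continuity/measurability bookkeeping: I must ensure that $h$ is Borel-measurable so that the expectation $\mathbb{E}_{a\sim\pi(\cdot\mid s,\mu)}[h(a)]$ is well-defined for every randomized policy $\pi\in\Pi^{R}$, and that $h$ is continuous enough for the minimum to be attained rather than merely approached as an infimum (so that the Dirac policy $\delta_{a^\star}$ is available in the first inequality). Continuity in $a$ is not explicitly part of Definition~\ref{def-beo-1}, so I would justify it by appealing to Lemma~\ref{Prop:continuity} applied to the function $(a,\xi)\mapsto\mathcal{C}(s,a,\xi)+\gamma V(g(s,a,\xi),\mu')$, noting that $\mu'$ here does \emph{not} depend on $a$ (it depends only on $\xi$ via the Bayes update), so the $a$-dependence is channelled solely through $\mathcal{C}$ and $g$, both continuous in $a$. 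Once this is in place the remainder is a one-line sandwich argument.
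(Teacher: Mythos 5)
Your proof is correct and is essentially the standard sandwich argument the paper itself uses (the paper defers the details to its online version but describes exactly this route: an arbitrary randomized policy gives an expectation of $h(a)$ bounded below by $\min_a h(a)$, while the Dirac policy at a minimizer achieves it). The only soft spot is your attainment step — continuity of $h$ in $a$ requires $V$ to be continuous in its first argument, which membership in $\mathfrak{B}(\mathcal{S},\mathcal{D}_1^0)$ alone does not guarantee — but this does not affect the stated equality, since Dirac policies at $\varepsilon$-minimizers already yield $\inf_{\pi}\mathbb{E}_{a\sim\pi}[h(a)]\le\inf_{a}h(a)$.
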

The proof is somewhat standard in the literature of MDP, see details in the online version of this paper \cite{ma2024bayesian}. This lemma leverages an arbitrary randomized policy $\pi$ to demonstrate that the deterministic Bellman operator indeed corresponds to the random Bellman operator, thereby justifying our focus on deterministic policies. This simplification, however, does not preclude the extension of our analysis to randomized policies.
In what follows,
we demonstrate that
the 
optimal value function $\mathcal{J}_\infty^{\gamma,*}$ of  BCR-SOC/MDP problem  (\ref{eq:BCR-SOC/MDP-infinite})
is the unique fixed point of operator $\mathcal{T}$.
Specifically,
\begin{eqnarray} 
\label{eq:bellman-infinite-MDP}
\mathcal{T}\mathcal{J}_\infty^{\gamma,*}(s,\mu)=\mathcal{J}_\infty^{\gamma,*}(s,\mu),\ \forall(s,\mu)\in {\cal S}\times {\cal D}_1^0.
\end{eqnarray}
To this end, we derive an intermediate result 
to ensure that the Bellman operators
defined in Definition~\ref{def-beo-1} are monotonic and contractive.

\begin{lemma}[Monotonicity and contraction]
    \label{lem-beo-2} 
    Let $\mathcal{T}$ and $\mathcal{T}^\pi$ be defined as in 
    (\ref{eq:Operator-T}) and (\ref{eq:Operator-T^pi}). The following assertions hold.

    \begin{itemize}
        \item[(i)] Both $\mathcal{T}$ and $\mathcal{T}^\pi$  
   are monotonic, 
    i.e.,
    $V \leq V^{\prime}$ implies that $\mathcal{T}V \leq \mathcal{T}V^{\prime}$ and $\mathcal{T}^\pi V \leq \mathcal{T}^\pi V^{\prime}$.

\item[(ii)]
     For 
    any measurable functions $f_i:\Theta\to\mathbb{R}$ and  $g_i:\Xi\to\mathbb{R}$, $i=1,2$, 
    \begin{eqnarray}
    \label{eq:out-rm-No-expan}
    |\rho_{ \mu}(f_1(\theta))-\rho_{ \mu}(f_2(\theta))|\leq\|f_1(\theta)-f_2(\theta)\|_{\theta,\infty}
    \end{eqnarray} 
	and
	\begin{eqnarray} 
      \label{eq:inn-rm-No-expan}
    |\rho_{ P_{\theta}}(g_1(\xi))-\rho_{P_{\theta}}(g_2(\xi))|\leq\|g_1(\xi)-g_2(\xi)\|_{\xi,\infty},
    \end{eqnarray} 
where $\|\cdot\|_{\theta,\infty}$ is the sup-norm under probability measure $\mu$ and $\|\cdot\|_{\xi,\infty}$ is the sup-norm under probability measure $P_{\theta}$.

\item[(iii)] The operators $\mathcal{T}$ and $\mathcal{T}^\pi$ are $\gamma$ contractive with respect to the $\|\cdot\|_{\infty}$ norm. That is, for any bounded value functions $V, V^{\prime} \in \mathfrak{B}(\mathcal{S}, \mathcal{D}_1^0)$, we have
$$
\left\|\mathcal{T} V-\mathcal{T} V^{\prime}\right\|_{\infty} \leq \gamma\left\|V-V^{\prime}\right\|_{\infty}\text{ and }\left\|\mathcal{T}^\pi V-\mathcal{T} ^\pi V^{\prime}\right\|_{\infty} \leq \gamma\left\|V-V^{\prime}\right\|_{\infty}.
$$
    \end{itemize}
 
\end{lemma}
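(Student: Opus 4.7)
The plan is to prove the three parts in order, with each building on monetary risk‑measure properties stated in Section~2.2, namely monotonicity (a) and translation invariance (b).

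For part (i), I would argue pointwise: if $V\le V'$ on ${\cal S}\times{\cal D}_1^0$, then for every fixed $(s,a,\xi,\theta,\mu)$ the random variables $\mathcal{C}(s,a,\xi)+\gamma V(s',\mu')$ and $\mathcal{C}(s,a,\xi)+\gamma V'(s',\mu')$ satisfy the first $\le$ the second almost surely in $\xi$ (and hence in $\theta$ after the inner measure is applied). Since $\rho_{P_\theta}$ and $\rho_\mu$ are both monotone, applying them successively preserves the inequality, and finally taking the infimum over $a\in\mathcal A$ (for $\mathcal T$) or plugging in $a=\pi(s,\mu)$ (for $\mathcal T^\pi$) keeps the ordering. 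This gives both $\mathcal T V\le \mathcal T V'$ and $\mathcal T^\pi V\le \mathcal T^\pi V'$.

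For part (ii), I would use the classical monetary‑risk‑measure non‑expansion argument. For the outer inequality, write $c:=\|f_1-f_2\|_{\theta,\infty}$ so that $f_1(\theta)\le f_2(\theta)+c$ and $f_2(\theta)\le f_1(\theta)+c$ $\mu$‑a.s. Monotonicity followed by translation invariance of $\rho_\mu$ yields $\rho_\mu(f_1)\le \rho_\mu(f_2)+c$ and the symmetric inequality, giving $|\rho_\mu(f_1)-\rho_\mu(f_2)|\le c$. The inner inequality for $\rho_{P_\theta}$ is identical with $\mu$ replaced by $P_\theta$; this is essentially the statement of~\eqref{eq:Lip-non-expansive} applied at the level of each measure.

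For part (iii), the idea is to peel off the nested risk measures using part (ii) twice. Fix $(s,\mu)$ and any $a\in\mathcal A$. With $h_V(\xi):=\mathcal{C}(s,a,\xi)+\gamma V(s',\mu')$ and $h_{V'}$ defined analogously, the inner non‑expansion gives
\begin{equation*}
\bigl|\rho_{P_\theta}(h_V)-\rho_{P_\theta}(h_{V'})\bigr|
\;\le\; \|h_V-h_{V'}\|_{\xi,\infty}
\;\le\; \gamma\|V-V'\|_\infty,
\end{equation*}
since the cost terms cancel and $\|V-V'\|_\infty$ bounds $|V(s',\mu')-V'(s',\mu')|$ uniformly in $\xi$. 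Treating $\rho_{P_\theta}(h_V)$ and $\rho_{P_\theta}(h_{V'})$ as functions of $\theta$, the outer non‑expansion yields
\begin{equation*}
\bigl|\rho_\mu\!\circ\!\rho_{P_\theta}(h_V)-\rho_\mu\!\circ\!\rho_{P_\theta}(h_{V'})\bigr|
\;\le\; \gamma\|V-V'\|_\infty.
\end{equation*}
For $\mathcal T^\pi$ this is already the desired bound after taking the sup over $(s,\mu)$. For $\mathcal T$ I would then invoke the elementary inequality $|\inf_a F(a)-\inf_a G(a)|\le \sup_a|F(a)-G(a)|$ applied to $F(a),G(a)$ equal to the two nested expressions, which yields the same $\gamma$‑contraction bound uniformly in $(s,\mu)$.

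I do not expect a major obstacle: the only subtlety is that the outer measure acts on the random variable $\theta\mapsto\rho_{P_\theta}(h_V(\cdot))$, which must be measurable and essentially bounded so that the sup‑norm estimates of part (ii) apply; this is ensured by Assumption~\ref{ass5.1}(b) (bounded cost) together with $V,V'\in\mathfrak B({\cal S},{\cal D}_1^0)$, so all the random objects in question lie in $L^\infty$ and the monetary‑risk‑measure properties are directly applicable.
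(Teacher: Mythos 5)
Your proof is correct and is exactly the standard argument the paper has in mind: the paper omits the details here, stating only that ``the proof is standard in the literature of MDP'' and deferring to its online version, and your three steps (pointwise monotonicity through the nested risk measures and the infimum, the translation-invariance/monotonicity non-expansion bound for monetary risk measures, and the double application of non-expansion plus the elementary inequality $|\inf_a F - \inf_a G| \le \sup_a |F-G|$) constitute precisely that standard route. Your closing remark on measurability and essential boundedness of $\theta\mapsto\rho_{P_\theta}(h_V)$ correctly identifies the only point requiring care.
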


The proof for this lemma is standard in the literature of MDP, see details in the online version of this paper \cite{ma2024bayesian}.
We are now ready to present 
the main result 
of this section which states the existence of a unique solution to Bellman equation
(\ref{eq:bellman-infinite-MDP}) and the solution coinciding with the optimal value of 
the infinite BCR-SOC/MDP (\ref{eq:BCR-SOC/MDP-infinite}).

\begin{theorem}\label{thm-beo-1}
Consider the unique solution for Bellman equation (\ref{eq:bellman-infinite-MDP}). The following assertions hold.

\begin{itemize}
  
\item[(i)] 
Equations (\ref{eq:Operator-T}) and \eqref{eq:Operator-T^pi}
have unique value functions $V^*$ and $V^{\pi}$, respectively.
Moreover, 
for any value function $V\in \mathfrak{B}(\mathcal{S}, \mathcal{D}_1^0)$,
if 
$V\geq\mathcal{T}  V$, 
then $V\geq V^*$, and, if $V\leq\mathcal{T}  V$, 
then $V\leq V^*$.

\item[(ii)] 
The optimal value of problem (\ref{eq:BCR-SOC/MDP-infinite}),
$ \mathcal{J}_\infty^{\gamma,*}$,
is the unique solution to that
of (\ref{eq:Operator-T}),
i.e.,
    \[\mathcal{J}_\infty^{\gamma,*}(s,\mu)\equiv V^*(s,\mu),\ \forall (s,\mu)\in\mathcal{S}\times\mathcal{D}_1^0.\] 
\end{itemize}

\end{theorem}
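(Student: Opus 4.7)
The plan is to establish part (i) via the Banach fixed-point theorem applied to $\mathcal{T}$ and $\mathcal{T}^\pi$ on the complete metric space $(\mathfrak{B}(\mathcal{S},\mathcal{D}_1^0),\|\cdot\|_\infty)$, and then deduce part (ii) through a truncation argument combined with the comparison principle established in part (i).

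For part (i), I would first note that $\mathfrak{B}(\mathcal{S},\mathcal{D}_1^0)$ equipped with the sup-norm is a Banach space. Since Lemma \ref{lem-beo-2}(iii) shows both $\mathcal{T}$ and $\mathcal{T}^\pi$ are $\gamma$-contractions with $\gamma\in(0,1)$, the Banach fixed-point theorem yields unique fixed points $V^*$ and $V^\pi$ respectively, and moreover $\mathcal{T}^n V\to V^*$ and $(\mathcal{T}^\pi)^n V\to V^\pi$ in sup-norm for any initial $V\in\mathfrak{B}(\mathcal{S},\mathcal{D}_1^0)$. For the comparison statements, suppose $V\geq\mathcal{T}V$. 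Monotonicity from Lemma \ref{lem-beo-2}(i) yields the decreasing chain $V\geq\mathcal{T}V\geq\mathcal{T}^2 V\geq\cdots\geq\mathcal{T}^n V$. Passing to the limit $n\to\infty$ and invoking the contraction, we obtain $V\geq V^*$. The symmetric case $V\leq\mathcal{T}V$ is handled analogously by reversing inequalities.

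For part (ii), I would prove $V^*=\mathcal{J}_\infty^{\gamma,*}$ by the customary two-sided bound. Define the finite-horizon value under a stationary policy $\pi$ as $V^{\pi,T}:=(\mathcal{T}^\pi)^T \mathbf{0}$; unpacking the definition shows $V^{\pi,T}(s,\mu)$ coincides with the $T$-stage nested BCR cost under $\pi$ starting from $(s,\mu)$. Using the non-expansiveness property \eqref{eq:out-rm-No-expan}--\eqref{eq:inn-rm-No-expan} recursively together with Assumption \ref{ass5.1}(b), the tail contribution beyond stage $T$ is bounded by $\gamma^T\bar{\mathcal{C}}/(1-\gamma)$, so $V^{\pi,T}\to\mathcal{J}_\infty^\gamma(\pi,\cdot,\cdot)$ uniformly as $T\to\infty$. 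On the other hand, $V^{\pi,T}\to V^\pi$ uniformly by the contraction in part (i). Hence $V^\pi(s,\mu)=\mathcal{J}_\infty^\gamma(\pi,s,\mu)$ for every stationary $\pi$.

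To complete the proof, I would derive the two inequalities. For $V^*\leq\mathcal{J}_\infty^{\gamma,*}$, fix any stationary $\pi$; by the Bellman equation and the definition of $\mathcal{T}^\pi$,
\begin{equation*}
V^*(s,\mu)\;=\;(\mathcal{T}V^*)(s,\mu)\;\leq\;(\mathcal{T}^\pi V^*)(s,\mu),
\end{equation*}
so by part (i) applied to $\mathcal{T}^\pi$ we get $V^*\leq V^\pi=\mathcal{J}_\infty^\gamma(\pi,\cdot,\cdot)$, and taking the infimum over $\pi$ gives $V^*\leq\mathcal{J}_\infty^{\gamma,*}$. For the reverse inequality, I would invoke compactness of $\mathcal{A}$ from Assumption \ref{ass-bcr-1}(e) together with the continuity of $a\mapsto\rho_\mu\circ\rho_{P_\theta}[\mathcal{C}(s,a,\xi)+\gamma V^*(g(s,a,\xi),\mu')]$ supplied by Lemma \ref{Prop:continuity}, to extract a measurable selector $\pi^*(s,\mu)$ attaining the minimum in \eqref{eq:Operator-T}. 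Then $V^*=\mathcal{T}^{\pi^*}V^*$, so by uniqueness $V^*=V^{\pi^*}=\mathcal{J}_\infty^\gamma(\pi^*,\cdot,\cdot)\geq\mathcal{J}_\infty^{\gamma,*}$. Combining both directions yields $V^*\equiv\mathcal{J}_\infty^{\gamma,*}$.

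The main obstacle will be the identification $V^\pi=\mathcal{J}_\infty^\gamma(\pi,\cdot,\cdot)$, i.e., showing that the abstract fixed point of $\mathcal{T}^\pi$ really coincides with the nested infinite-horizon objective. This requires the truncation-plus-non-expansiveness argument described above, which is delicate because the outer risk measures at different stages act on different probability spaces; the key leverage is that \eqref{eq:out-rm-No-expan}--\eqref{eq:inn-rm-No-expan} allow the tail error $\gamma^T V_T$ to be passed through all $T$ layers of nested BCRs without amplification, yielding the uniform $\gamma^T\bar{\mathcal{C}}/(1-\gamma)$ bound. A secondary technical point is the measurable selection of $\pi^*$; under the compactness and continuity hypotheses available, a Berge/measurable-maximum theorem delivers this without difficulty.
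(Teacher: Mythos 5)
Your proposal is correct and rests on the same pillars as the paper's proof (Banach fixed point via Lemma \ref{lem-beo-2}(iii), monotonicity via Lemma \ref{lem-beo-2}(i), and the $\gamma^T$ tail bound obtained by passing errors through the non-expansive nested risk measures), but the decomposition differs in two places worth noting. First, for the comparison principle in part (i) you argue directly: $V\geq\mathcal{T}V$ plus monotonicity gives $V\geq\mathcal{T}^nV$, and contraction gives $\mathcal{T}^nV\to V^*$, so $V\geq V^*$. The paper instead selects a policy $\pi$ with $\mathcal{T}^\pi V=\mathcal{T}V$, iterates $\mathcal{T}^\pi$, identifies $(\mathcal{T}^\pi)^nV$ with a finite-horizon nested cost, and drops the terminal term $\gamma^n V(s_{n+1},\mu_{n+1})$ — a step that implicitly needs a sign condition on $V$ (or an extra $\gamma^n\|V\|_\infty$ correction, which the paper does supply in the reverse direction). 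Your route sidesteps this entirely and is cleaner. Second, for part (ii) you first establish the policy-evaluation identity $V^\pi=\mathcal{J}_\infty^\gamma(\pi,\cdot,\cdot)$ by the truncation argument and then get both inequalities from it (one via the comparison principle applied to $\mathcal{T}^\pi$, one via a minimizing selector), whereas the paper obtains $V^*\leq\mathcal{J}_\infty^{\gamma,*}$ from the claim that the finite-horizon optimal values $\mathcal{J}_T^{\gamma,*}$ form a nondecreasing sequence converging to $V^*$ — a monotonicity claim that again tacitly requires nonnegativity of the one-stage cost, while your contraction-based convergence of $(\mathcal{T}^\pi)^T\mathbf{0}$ needs only boundedness from Assumption \ref{ass5.1}(b). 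The one point where you should be explicit is attainment and measurability of the selector $\pi^*$ in \eqref{eq:Operator-T}: this needs compactness of $\mathcal{A}$ and continuity of the objective in $a$ (Assumption \ref{ass-bcr-1}(e) and Lemma \ref{Prop:continuity}), which the paper also uses implicitly; if attainment fails one can still close the argument with $\varepsilon$-optimal selectors. With that caveat your proof is complete and, in the two respects above, slightly tighter than the paper's.
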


\noindent \textbf{Proof.}
     Part (i).
     Since $\mathcal{T}$ is 
     contractive,
the existence and uniqueness of the optimal value function $V^*$ 
     follows directly from the Banach fixed-point theorem. We prove the other two arguments below.
Consider any policy $\pi$ such that
	\begin{equation}\label{eq1}
		V \geq \mathcal{T}^\pi V.
	\end{equation}
	Such a policy $\pi$ always exists since we can choose $\pi$ such that $\mathcal{T}^\pi V=\mathcal{T} V$. By applying $\mathcal{T}^\pi$  iteratively to both sides of (\ref{eq1}) and invoking the monotonicity property from Lemma \ref{lem-beo-2}, we deduce that $V \geq\left(\mathcal{T}^\pi\right)^n V$ for all $ n\in\mathbb{N}^+$. Here, $\left(\mathcal{T}^\pi\right)^n V$ represents the accumulated risk value over a finite horizon with the stationary policy $\pi$ and the terminal value function $V$.  Specifically, 
	$$
	\begin{aligned}
		&\left(\mathcal{T}^\pi\right)^n V \\
        & =\rho_{\mu_1}\circ\rho_{P_{\theta_{1}}}\left[\mathcal{C}\left(s_1, a_1, \xi_1\right)+\gamma\rho_{ \mu_{2}} \circ\rho_{P_{\theta_{2}}}\left[\mathcal{C}\left(s_{2}, a_{2}, \xi_{2}\right)+\cdots+\gamma \rho_{\mu_n}\circ\rho_{P_{\theta_{n}}}\left[\mathcal{C}\left(s_n, a_n, \xi_n\right)+\gamma V\left(s_{n+1},\mu_{n+1}\right)\right]\right]\right]\\
		& \geq \rho_{\mu_1}\circ\rho_{P_{\theta_{1}}}\left[\mathcal{C}\left(s_1, a_1, \xi_1\right)+\gamma\rho_{ \mu_{2}} \circ\rho_{P_{\theta_{2}}}\left[\mathcal{C}\left(s_{2}, a_{2}, \xi_{2}\right)+\cdots+\gamma \rho_{\mu_n}\circ\rho_{P_{\theta_{n}}}\left[\mathcal{C}\left(s_n, a_n, \xi_n\right)\right]\right]\right] .
	\end{aligned}
	$$
As $n$ tends to infinity, the sequence at right-hand side converges to the value function under the policy $\pi$. 
Since $V^*$ is defined as the minimum over all such policies, 
then $V \geq V^*$.

    Next,for the other case, consider a finite horizon BCR-SOC/MDP with the risk value at last episode defined as $ V\left(s_{t+1}, \mu_{t+1}\right)$. It follows that: $$\rho_{\mu_t}\circ\rho_{P_{\theta_{t}}}\left[{\cal C}\left(s_t, a_t, \xi_t\right)+\gamma V\left(s_{t+1}, \mu_{t+1}\right)\right] \geq (\mathcal{T} V)\left(s_t, \mu_t\right) \geq V\left(s_t, \mu_t\right).$$ 
	Therefore, we can deduce
	\begin{align*}
		&\rho_{\mu_1}\circ\rho_{P_{\theta_{1}}}\left[\mathcal{C}\left(s_1, a_1, \xi_1\right)+\gamma\rho_{ \mu_{2}} \circ\rho_{P_{\theta_{2}}}\left[\mathcal{C}\left(s_{2}, a_{2}, \xi_{2}\right)+\cdots+\gamma V\left(s_{t+1},\mu_{t+1}\right)\right]\right] \\
		\geq& \rho_{\mu_1}\circ\rho_{P_{\theta_{1}}}\left[\mathcal{C}\left(s_1, a_1, \xi_1\right)+\gamma\rho_{ \mu_{2}} \circ\rho_{P_{\theta_{2}}}\left[\mathcal{C}\left(s_{2}, a_{2}, \xi_{2}\right)+\cdots+\gamma V\left(s_{t},\mu_{t}\right)\right]\right].
	\end{align*}
	By setting $s_1=s$ and $\mu_1=\mu$, we have
	$$
	\rho_{\mu_1}\circ\rho_{P_{\theta_{1}}}\left[\mathcal{C}\left(s_1, a_1, \xi_1\right)+\gamma\rho_{ \mu_{2}} \circ\rho_{P_{\theta_{2}}}\left[\mathcal{C}\left(s_{2}, a_{2}, \xi_{2}\right)+\cdots+\gamma V\left(s_{t+1},\mu_{t+1}\right)\right]\right] \geq V(s, \mu) .
	$$
	Let $V_{\max}$ be an upper bound on $|V(s, \mu)|$, $\forall s \in \mathcal{S}, \mu \in \mathcal{D}_1^0$, we have
	$$
	\rho_{\mu_1}\circ\rho_{P_{\theta_{1}}}\left[\mathcal{C}\left(s_1, a_1, \xi_1\right)+\gamma\rho_{ \mu_{2}} \circ\rho_{P_{\theta_{2}}}\left[\mathcal{C}\left(s_{2}, a_{2}, \xi_{2}\right)+\cdots+\gamma \rho_{\mu_t}\circ\rho_{P_{\theta_{t}}}\mathcal{C}\left(s_t, a_t, \xi_t\right)\right]\right] \geq V(s, \mu)-V_{\max} \gamma^t .
	$$
	As $t \rightarrow \infty$, the left-hand side of the inequality converges to the value function under the policy $\pi$. The infimum over all policy $\pi$ is bounded from below by $V(s, \mu)$, leading to the conclusion that  $V^* \geq V$.

\vspace{0.3cm}
Part (ii).     To begin, with the definition of $\mathcal{J}_T^\gamma(\pi,s,\mu)$, for any horizon $T$, state $s$, $\mu$ and policy $\pi$, we have that 
    \[\mathcal{J}_T^{\gamma,*}(s,\mu)\leq\mathcal{J}_T^\gamma(\pi,s,\mu)\leq\mathcal{J}_\infty^\gamma(\pi,s,\mu). \]
    Therefore, 
    \[\mathcal{J}_T^{\gamma,*}(s,\mu)\leq\mathcal{J}_\infty^{\gamma,*}(s,\mu). \]
    Given the monotone property of Bellman operator $\mathcal{T}$, $\mathcal{J}_T^{\gamma,*}(s,\mu)$ forms a nondecreasing sequence which implies that $\lim_{T\to\infty}\mathcal{J}_T^{\gamma,*}=V^*.$

    Hence, to complete the proof, it only remains to show that $V^*\geq\mathcal{J}_\infty^{\gamma,*}$.
   Consider any policy \( \pi \) such that \( V^* \geq (\mathcal{T}^\pi)V^* \). By iterating this inequality for all \( n \geq 1 \) and  $(s,\mu)\in\mathcal{S}\times\mathcal{D}_1^0$, we obtain:
    \begin{align*}
        V^*(s,\mu)\geq\rho_{\mu_1}\circ\rho_{P_{\theta_{1}}}\left[\mathcal{C}\left(s_1, a_1, \xi_1\right)+\gamma\rho_{ \mu_{2}} \circ\rho_{P_{\theta_{2}}}\left[\mathcal{C}\left(s_{2}, a_{2}, \xi_{2}\right)+\cdots+\gamma \rho_{\mu_n}\circ\rho_{P_{\theta_{n}}}\left[\mathcal{C}\left(s_n, a_n, \xi_n\right)\right]\right]\right].
    \end{align*}
 Letting \( n \to \infty \), it follows that for all \( (s,\mu) \in \mathcal{S} \times \mathcal{D}_1^0 \),
  \[V^*(s,\mu)\geq\mathcal{J}_\infty^{\gamma}(\pi,s,\mu)\geq\mathcal{J}_\infty^{\gamma,*}(s,\mu).\]
 Therefore, combining this result with Part (i) of Theorem \ref{thm-beo-1}, we conclude that \( \mathcal{J}_\infty^{\gamma,*} \equiv V^* \) satisfies the Bellman equation \( \mathcal{J}_\infty^{\gamma,*} = \mathcal{T}\mathcal{J}_\infty^{\gamma,*} \).
\hfill $\Box$

\subsection{Asymptotic convergence}

Let $V^*$ be defined as in Theorem~\ref{thm-beo-1}
and $P_{\theta^c}$ be defined in Section 2.
Define
\begin{equation}
\label{eq:V^*-MDP-infinite}
	V^*(s,\delta_{\theta^c}) =\min _{a \in \mathcal{A}} \rho_{P_{\theta^c}}\left[\mathcal{C}(s, a, \xi)+\gamma V^{*}\left(s^{\prime}, \delta_{\theta^c}\right)\right]
\end{equation} 
and 
\begin{equation}
\label{eq:V^*-MDP-infinite-sln}
	\pi^*(s,\delta_{\theta^c})\in\arg\min _{a \in \mathcal{A}} \rho_{P_{\theta^c}}\left[\mathcal{C}(s, a, \xi)+\gamma V^{*}\left(s^{\prime}, \delta_{\theta^c}\right)\right].
\end{equation}
For $t=1,2,\cdots$, let $\mu_t$ be defined as
in (\ref{eq:BCR-SOC/MDP-infinite-c}). Define
\begin{equation}
\label{eq:V^*-mu-MDP-infinite}
	V^*(s,\mu_t) =\min _{a \in \mathcal{A}} \rho_{ \mu_t}\circ\rho_{P_{\theta}}\left[\mathcal{C}(s, a, \xi)+\gamma V^{*}\left(s^{\prime}, \mu_{t+1}\right)\right]
\end{equation} 
and 
\begin{equation}
\label{eq:V^*-mu-MDP-infinite-sln}
	\pi^*(s,\mu_t)\in\arg\min _{a \in \mathcal{A}} \rho_{\mu_t}\circ\rho_{P_{\theta}}\left[\mathcal{C}(s, a, \xi)+\gamma V^{*}\left(s^{\prime}, \mu_{t+1}\right)\right].
\end{equation}
In this section, we investigate 
the convergence 
of $V^*(s,\mu_t)$ to $V^*(s,\delta_{\theta^c})$
and $\pi^*(s,\mu_t) $ to $\pi^*(s,\delta_{\theta^c})$
as $t\to\infty$.
The rationale behind the convergence analysis
is that in the infinite-horizon Markov decision making process, 
the DM and the environment interact continuously, allowing the DM to acquire gradually an infinite amount of information about the true environment (represented by the true probability distribution of $\xi$).
As information or data accumulates, 
the DM's understanding of the environment improves continuously, 
diminishing the uncertainty 
of $\theta$. 
This raises a question as to whether
the optimal value $V^*(s,\mu_t)$ and the optimal policy $\pi^*(s,\mu_t)$, 
obtained 
from solving the BCR-SOC/MDP model (\ref{eq:BCR-SOC/MDP-infinite}), 
converge to their respective true optimal counterparts, $	V^*(s,\delta_{\theta^c}) $ and $\pi^*(s,\delta_{\theta^c})$
as the data set expands. 
The first step is to derive weak convergence of 
$\mu_t$ to $\delta_{\theta^c}$. To this end, we need the following assumption.

\begin{assumption}
	 \label{ass-ca-1} 
     Let $m_t$ and $v_t$ denote the posterior mean and variance with respect to $\mu_t$. 
     As $t$ tends to infinity, 
$v_t\to0$ and $m_t\to\theta^c$  almost surely (a.s.). 
\end{assumption}

This assumption indicates that as the sequence $\boldsymbol{\xi}^{t}$  expands with the increase of $t$, more observations of $\xi$ are gathered. Naturally, the Bayesian posterior distribution will increasingly concentrate around $\theta^c$ \cite[Bernstein - von Mises theorem]{gelman2013bayesian}, eventually reducing to a Dirac function centered at $\theta^c$. The following example illustrates that this assumption is reasonable.

\begin{example}\label{example-1}
	 Consider the inventory control problem, as proposed in \cite{lin2022bayesian}. Based on experience and relevant expertise, a warehouse manager believes that the customer demand follows a Poisson distribution with parameter $\theta^c$. The prior distribution $\mu_0$ is modeled as a $\operatorname{Gamma}(a, b)$ distribution, which, because of its conjugate nature with respect to the Poisson distribution, ensures that the posterior distribution $\mu_t$ of $\theta$ remains within the Gamma distribution family with updated parameters $\left(a_t, b_t\right)$. 	 
Here, $a_t=a+t \bar{\xi}$, $\bar{\xi}=\frac{\sum_{i=1}^t\xi_i}{t}$ and $b_t=b+t$.
	Therefore, the posterior mean and variance can easily be calculated as
	$$
	m_t=\frac{a+t\bar{\xi}}{b+t} \quad \text { and } \quad v_t=\frac{a+t \bar{\xi}}{(b+t)^2} .
	$$
	By the law of large numbers, $\bar{\xi} \rightarrow \theta^{c}$ a.s. under the true distribution $P_{\theta^{c}}$. From this, it follows immediately that $v_t \rightarrow 0$ and $m_t \rightarrow \theta^{c}$ almost surely, thereby satisfying Assumption \ref{ass-ca-1}. Further, since the convergence rate of $\bar{\xi}$ is $O(t^{-1/2})$, we have \[ \lim_{t\to\infty}{t}\max \{(m_t-\theta^c)^2,v_t\}=\theta^c,\]
which verifies the conditions required in Theorem \ref{thm-ca-2} presented below.
\end{example}

\begin{proposition}
    Under Assumption \ref{ass-ca-1}, for any $\varepsilon>0$,  we have that
	\begin{equation}\label{eq:mu_t-converge}
		\lim_{t\to\infty}\int_{\theta\in \mathbb{B}(\theta^c,\varepsilon)}\mu_t(\theta)d\theta=1,\ \text{a.s.}
	\end{equation}
	where $\mathbb{B}(\theta^c,\varepsilon):=\{\theta:|\theta-\theta^c|\leq\varepsilon\}$ denotes the neighborhood around $\theta^c$.
\end{proposition}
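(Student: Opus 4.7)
The plan is to show that the posterior probability mass outside $\mathbb{B}(\theta^c,\varepsilon)$ vanishes almost surely, using Chebyshev's inequality applied to the posterior distribution $\mu_t$ together with the two conditions imposed in Assumption~\ref{ass-ca-1}. Since $\int_{\mathbb{B}(\theta^c,\varepsilon)}\mu_t(\theta)d\theta = 1 - \mu_t\left(\{\theta:|\theta-\theta^c|>\varepsilon\}\right)$, it suffices to show that the tail mass $\mu_t\left(\{\theta:|\theta-\theta^c|>\varepsilon\}\right) \to 0$ a.s.

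First, I would use the triangle inequality to split the deviation from the true parameter into the deviation from the posterior mean plus the error of the posterior mean: for any $\varepsilon>0$, the event $\{|\theta-\theta^c|>\varepsilon\}$ is contained in $\{|\theta-m_t|>\varepsilon/2\}\cup\{|m_t-\theta^c|>\varepsilon/2\}$. By Assumption~\ref{ass-ca-1}, $m_t\to\theta^c$ a.s., so the second event has indicator going to zero a.s.; thus, for a.s.\ all sufficiently large $t$, the second event is empty and the entire tail mass is bounded by $\mu_t\left(\{\theta:|\theta-m_t|>\varepsilon/2\}\right)$.

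Second, I would apply Chebyshev's inequality with respect to the posterior distribution $\mu_t$, whose mean is $m_t$ and variance is $v_t$, to obtain
\begin{equation*}
\mu_t\left(\{\theta:|\theta-m_t|>\varepsilon/2\}\right) \leq \frac{4 v_t}{\varepsilon^{2}}.
\end{equation*}
Since $v_t\to 0$ a.s.\ by Assumption~\ref{ass-ca-1}, the right-hand side vanishes almost surely. Combining this with the first step yields $\mu_t\left(\{\theta:|\theta-\theta^c|>\varepsilon\}\right)\to 0$ a.s., and hence \eqref{eq:mu_t-converge}.

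There is essentially no obstacle here, as the result is a direct consequence of the convergence of the first two moments of the posterior; it is the standard second-moment concentration argument. The only mild subtlety is that the convergence $m_t\to\theta^c$ and $v_t\to 0$ holds on a set of full probability with respect to the sampling distribution of $\boldsymbol{\xi}^{t}$ (under $P_{\theta^c}^{\otimes\infty}$), so the ``a.s.'' in the conclusion is with respect to the same probability space; this should be stated explicitly for clarity, but requires no additional work.
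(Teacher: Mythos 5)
Your proof is correct and takes essentially the same route as the paper, which applies the Markov/Chebyshev inequality to obtain $\int_{\mathbb{B}(\theta^c,\varepsilon)}\mu_t(\theta)d\theta\geq 1-\frac{v_t+(m_t-\theta^c)^2}{\varepsilon^2}$ and then invokes Assumption~\ref{ass-ca-1}. Your two-step split (triangle inequality to isolate the bias $|m_t-\theta^c|$, then Chebyshev about the posterior mean) is just a reorganization of the same bias--variance bound, since $\mathbb{E}_{\mu_t}[(\theta-\theta^c)^2]=v_t+(m_t-\theta^c)^2$.
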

In \cite{shapiro2023episodic}, this result 
is called Bayesian consistency, see Assumption 3.2 there.  
As shown in \cite{hong2017review}, it follows 
by Markov inequality that
\begin{equation}\label{bayescon}
	\int_{\theta\in \mathbb{B}(\theta^c,\varepsilon)}\mu_t(\theta)d\theta\geq1-\frac{v_t+(m_t-\theta^c)^2}{\varepsilon^2}.
\end{equation}
 Therefore,  the Bayesian consistency as stated in (\ref{eq:mu_t-converge}) is achieved under Assumption \ref{ass-ca-1}.

\begin{lemma}
\label{lem-weak-converge}
    Under Assumption \ref{ass-ca-1},
    $\mu_t$ converges weakly to the Dirac measure $\delta_{\theta^c}$, i.e.,
\begin{eqnarray} 
\label{eq:Lemma5.3-pf-0}
\lim_{t\to\infty}\int_\Theta h(\theta)\mu_t(\theta)d\theta=  h({\theta^c})
\end{eqnarray} 
for 
any bounded and continuous function $h$. 
Moreover, $\rho_{\mu_t}(h(\theta))$ converges to $\rho_{\delta_{\theta^c}}\left(h({\theta})\right)=h({\theta^c})$ as $t$ tends to infinity.
\end{lemma}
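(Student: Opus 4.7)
The plan is to prove the two claims sequentially, using the Bayesian consistency estimate \eqref{bayescon} as the core ingredient.

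For the weak-convergence limit \eqref{eq:Lemma5.3-pf-0}, I would carry out the standard $\varepsilon$-$\delta$ splitting. Fix a bounded continuous $h$ with $|h|\leq M$ and $\varepsilon>0$. Continuity of $h$ at $\theta^c$ furnishes $\delta>0$ such that $|h(\theta)-h(\theta^c)|<\varepsilon$ on $\mathbb{B}(\theta^c,\delta)$. Splitting $\int_\Theta(h(\theta)-h(\theta^c))\mu_t(\theta)d\theta$ into the contributions from $\mathbb{B}(\theta^c,\delta)$ and its complement, the first is bounded in absolute value by $\varepsilon$, and the second by $2M\bigl(1-\mu_t(\mathbb{B}(\theta^c,\delta))\bigr)$, which tends to $0$ by \eqref{bayescon} under Assumption~\ref{ass-ca-1}. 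Since $\varepsilon$ is arbitrary, the limit equals $h(\theta^c)$.

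For the risk-measure assertion, translation invariance of the monetary risk measure reduces the claim to showing $\rho_{\mu_t}(h(\theta)-h(\theta^c))\to 0$. I would use the pointwise envelope $|h(\theta)-h(\theta^c)|\leq \varepsilon + 2M\,\mathds{1}_{\Theta\setminus\mathbb{B}(\theta^c,\delta)}(\theta)$, and invoke monotonicity together with translation invariance of $\rho_{\mu_t}$ to obtain
\[
|\rho_{\mu_t}(h(\theta)-h(\theta^c))|\leq \varepsilon + \rho_{\mu_t}\bigl(2M\,\mathds{1}_{\Theta\setminus\mathbb{B}(\theta^c,\delta)}\bigr).
\]
The residual term is the risk of a two-point random variable whose mass at $2M$ tends to $0$; its pushforward distribution converges weakly to $\delta_0$ and is supported in the compact set $\{0,2M\}$. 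Lemma~\ref{Prop:continuity}(iv) then yields $\rho_{\mu_t}\bigl(2M\,\mathds{1}_{\Theta\setminus\mathbb{B}(\theta^c,\delta)}\bigr)\to 0$, and letting $\varepsilon\downarrow 0$ finishes the argument.

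The principal obstacle is the risk-measure step. A naive application of the non-expansiveness estimate \eqref{eq:Lip-non-expansive} only bounds the error by $\sup_{0<u<1}|F_{h_\ast\mu_t}^{\leftarrow}(u)-h(\theta^c)|$, i.e.~by the essential supremum of $|h(\theta)-h(\theta^c)|$ under $\mu_t$, which need not vanish even when $\mu_t$ concentrates weakly at $\theta^c$ (any persistent thin tail outside $\mathbb{B}(\theta^c,\delta)$ keeps this essential supremum close to $2M$). The remedy is to exploit the extra weak-continuity furnished by Lemma~\ref{Prop:continuity}(iv) for distributions with bounded support, rather than relying on monetary regularity alone. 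This matches the paper's running hypothesis that $\rho_{\mu_t}$ be a continuous law-invariant monetary risk measure and tacitly excludes degenerate choices such as the essential supremum.
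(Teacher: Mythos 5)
Your proof of the weak-convergence limit \eqref{eq:Lemma5.3-pf-0} is essentially identical to the paper's: the same split of the integral over $\mathbb{B}(\theta^c,\delta)$ and its complement, with continuity controlling the near part and boundedness plus \eqref{bayescon} controlling the far part. Where you genuinely diverge is the risk-measure assertion. The paper disposes of it in one line by citing Lemma~\ref{Prop:continuity} for continuity of $\rho_{\mu_t}(h(\theta))$ in the underlying law under weak convergence; read literally, that appeal needs either part~(iii) (a ``continuous'' law-invariant risk measure) or part~(iv) (compact support of $\theta$), and the paper does not say which. Your sandwich $-\varepsilon-2M\mathds{1}_{\Theta\setminus\mathbb{B}(\theta^c,\delta)}\le h(\theta)-h(\theta^c)\le \varepsilon+2M\mathds{1}_{\Theta\setminus\mathbb{B}(\theta^c,\delta)}$, combined with monotonicity and translation invariance, reduces everything to the risk of a two-point random variable supported on $\{0,\pm 2M\}$, so part~(iv) applies unconditionally and you avoid both the compactness-of-$\Theta$ question and, as you correctly observe, the trap of the sup-norm estimate \eqref{eq:Lip-non-expansive}, which cannot see that the tail mass vanishes. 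That is a more self-contained and arguably more honest argument. One small imprecision: writing $|\rho_{\mu_t}(h(\theta)-h(\theta^c))|\le \varepsilon+\rho_{\mu_t}(2M\mathds{1}_{\cdot})$ tacitly uses $\rho(-Y)=-\rho(Y)$, which fails for general monetary risk measures; the lower bound actually produces $-\varepsilon+\rho_{\mu_t}(-2M\mathds{1}_{\cdot})$, but since that term also tends to $\rho(0)=0$ by the same compact-support weak-continuity argument (and the lemma's conclusion already presumes the normalization $\rho(0)=0$), the conclusion is unaffected.
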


\noindent \textbf{Proof.}
The result is similar to \cite[Lemma 3.2]{shapiro2023bayesian}, here we give a proof for completeness. For any $\varepsilon > 0$, decompose the integral as follows:
\begin{eqnarray} 
\label{eq:Lemma5.3-pf-1}
\int_{\Theta } h(\theta) \mu_t(\theta)d\theta = \int_{\mathbb{B}(\theta^c, \varepsilon)} h(\theta) \mu_t(\theta)d\theta + \int_{\Theta  \setminus \mathbb{B}(\theta^c, \varepsilon)} h(\theta) \mu_t(\theta)d\theta,
\end{eqnarray} 
Since $h$ is continuous, for any $\epsilon > 0$, there exists an $\varepsilon_0 > 0$ such that for all $\varepsilon \leq \varepsilon_0$ and $\theta \in \mathbb{B}(\theta^c, \varepsilon)$, $
|h(\theta) - h(\theta^c)| < \epsilon$.
Consequently, we have
\[
\left| \int_{\mathbb{B}(\theta^c, \varepsilon)} h(\theta) \mu_t(\theta)d\theta -\int_{\mathbb{B}(\theta^c, \varepsilon)}  h(\theta^c) \mu_t(\theta)d\theta \right| < \epsilon \int_{\mathbb{B}(\theta^c, \varepsilon)} \mu_t(\theta)d\theta< \epsilon.
\]
On the other hand, since
$h$ is bounded, 
then there exists a constant 
$M > 0$ such that 
\begin{eqnarray} 
\label{eq:Lemma5.3-pf-3}
\left| \int_{\Theta  \setminus \mathbb{B}(\theta^c, \varepsilon)} h(\theta) \mu_t(\theta)d\theta- \int_{\Theta  \setminus \mathbb{B}(\theta^c, \varepsilon)} h(\theta^c) \mu_t(\theta)d\theta \right| \leq 2M \int_{\Theta  \setminus \mathbb{B}(\theta^c, \varepsilon)} \mu_t(\theta)d\theta\to 0
\end{eqnarray} 
as $t \to \infty$.
Combining 
(\ref{eq:Lemma5.3-pf-1})-(\ref{eq:Lemma5.3-pf-3}), 
since $\epsilon$ can be arbitrarily small, 
we establish (\ref{eq:Lemma5.3-pf-0}) as desired. 

Moreover, since 
$\mu_t$ converges to $\delta_{\theta^c}$ weakly and by 
Lemma \ref{Prop:continuity}, 
$\rho_{\mu_t}(h(\theta))$ is continuous 
in $Q_\mu$,
we conclude that $\rho_{\mu_t}(h(\theta))$ converges to $h({\theta^c})$ as $t$ goes to infinity.
\hfill $\Box$

\subsubsection{Qualitative convergence}
We now focus on addressing qualitative asymptotic convergence of optimal values and optimal policies defined in (\ref{eq:V^*-mu-MDP-infinite})
and (\ref{eq:V^*-mu-MDP-infinite-sln}). To this end, we need the following generic
technical condition.

\begin{assumption}\label{ass-lip-rho}
Define $Z(\xi)=\mathcal{C}(s, a, \xi) + \gamma V^{*}\left(s^{\prime}, \mu_{t+1}\right)$ and
$
\psi(\theta):=\rho_{P_{\theta}}(Z(\xi))-\rho_{P_{\theta^c}}(Z(\xi)).
$ There exist constants $L_\rho^{\rm in}$ and $k$ such that 
    \begin{eqnarray} 
    \label{eq:inner-rm-Lip}
    |\psi(\theta)|
    \leq L_\rho^{\rm in}|\theta-\theta^c|^k,
    \ \forall \theta\in \mathbb{B}(\theta^c,\varepsilon).
    \end{eqnarray} 
\end{assumption}
The following propositions state how the assumption may be satisfied.

\begin{proposition}
\label{lemma-lip-rho}
    Suppose the total variation (TV) distance \cite{strasser1985mathematical} between 
$P_{\theta}$  and $P_{\theta^c}$ 
  satisfies the following condition:  
	\begin{eqnarray} 
    {\mathsf {d\kern -0.07em l}_{TV}}(P_{\theta},P_{\theta^c})=\frac{1}{2}\int_{\Xi}|p(\xi|\theta)-p(\xi|\theta^c)|d\xi\leq L_p|\theta-\theta^c|    ,\; 
\forall \theta\in \mathbb{B}(\theta^c,\varepsilon),
\end{eqnarray} 
where   $L_p$ is a constant.
Then, there exists a constant $L_\rho^{\rm in}$ such that
    \begin{eqnarray} 
    |\psi(\theta)|\leq L_\rho^{\rm in}|\theta-\theta^c|,\ \forall \theta\in \mathbb{B}(\theta^c,\varepsilon),
    \end{eqnarray} 
    if one of the following conditions hold:
(i)
The inner risk measure 
takes a parametric form as: 
		\begin{equation}\label{3.14}
      	\rho_{P_\theta}(Z(\xi))=\inf_{\phi \in \Phi}\mathbb{E}_{P_\theta}[\Psi(Z(\xi),\phi)],
		\end{equation}
		where $\Phi$ is a non-empty closed convex subset of $\mathbb{R}^m$, and  $\Psi:\mathbb{R}\times\Phi\to\mathbb{R}$ is uniformly bounded and jointly convex with respect to $(Z(\xi),\phi)\in\mathbb{R}\times\Phi$.
(ii) The inner risk measure is a coherent risk measure or a robust SRM as $\rho_{P_\theta}(Z(\xi))=\sup_{\sigma\in \mathfrak{A}} M_\sigma(Z(\xi))$.
\end{proposition}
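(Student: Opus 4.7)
The plan is to control the quantity $|\psi(\theta)|=|\rho_{P_\theta}(Z)-\rho_{P_{\theta^c}}(Z)|$ via the TV‑distance hypothesis, treating the two cases with parallel arguments. A common preparatory observation is that, under Assumption~\ref{ass5.1}, both $\mathcal{C}$ and $V^*$ are bounded (with $\|V^*\|_\infty\le\bar{\mathcal{C}}/(1-\gamma)$), so $Z(\xi)=\mathcal{C}(s,a,\xi)+\gamma V^*(s',\mu_{t+1})$ satisfies $|Z(\xi)|\le M$ for some constant $M$; this uniform boundedness underpins both proofs and is what makes the chain TV $\Rightarrow$ Wasserstein $\Rightarrow$ risk measure viable.

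For case (i), I would apply the elementary min--min inequality
\[
\bigl|\inf_\phi g(\theta,\phi)-\inf_\phi g(\theta^c,\phi)\bigr|\le\sup_{\phi\in\Phi}\bigl|g(\theta,\phi)-g(\theta^c,\phi)\bigr|
\]
with $g(\theta,\phi):=\mathbb{E}_{P_\theta}[\Psi(Z(\xi),\phi)]$. The difference $g(\theta,\phi)-g(\theta^c,\phi)=\int_\Xi\Psi(Z(\xi),\phi)\bigl(p(\xi|\theta)-p(\xi|\theta^c)\bigr)d\xi$ is then bounded, using uniform boundedness $|\Psi|\le\bar\Psi$ together with the definition of total variation distance, by $2\bar\Psi\,\mathsf {d\kern -0.07em l}_{TV}(P_\theta,P_{\theta^c})\le 2\bar\Psi L_p|\theta-\theta^c|$. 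This yields the claim with $L_\rho^{\rm in}=2\bar\Psi L_p$.

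For case (ii), I would first exploit law invariance to write $\rho_{P_\theta}(Z)=\varrho(\nu_\theta)$ where $\nu_\theta:=P_\theta\circ Z^{-1}$, and then combine two standard facts: (a) TV distance does not increase under pushforward, hence $\mathsf {d\kern -0.07em l}_{TV}(\nu_\theta,\nu_{\theta^c})\le\mathsf {d\kern -0.07em l}_{TV}(P_\theta,P_{\theta^c})\le L_p|\theta-\theta^c|$; and (b) for measures supported on $[-M,M]$, $\mathsf {d\kern -0.07em l}_W^1(\nu_\theta,\nu_{\theta^c})=\int|F_{\nu_\theta}(x)-F_{\nu_{\theta^c}}(x)|\,dx\le 2M\,\mathsf {d\kern -0.07em l}_{TV}(\nu_\theta,\nu_{\theta^c})$, using the pointwise bound $|F_{\nu_\theta}-F_{\nu_{\theta^c}}|\le 2\,\mathsf {d\kern -0.07em l}_{TV}$. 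When $\rho_{P_\theta}$ is a generic coherent risk measure, inequality~(\ref{eq:Lip-RM-rho}) (with Kantorovich metric $\mathsf {d\kern -0.07em l}_K=\mathsf {d\kern -0.07em l}_W^1$) immediately yields $|\psi(\theta)|\le 2ML\,L_p|\theta-\theta^c|$. For the robust SRM case $\rho_{P_\theta}(Z)=\sup_{\sigma\in\mathfrak{A}}M_\sigma(Z)$, I would invoke Lemma~\ref{Prop:continuity}(ii) to obtain $|\psi(\theta)|\le\mathsf {d\kern -0.07em l}_W^p(\nu_\theta,\nu_{\theta^c})\sup_{\sigma\in\mathfrak{A}}\|\sigma\|_{L^q}$, and then specialize to $p=1$, $q=\infty$ (permissible because $Z$ is bounded so $\nu_\theta\in\mathcal{M}_1^p$ for every $p$), after which steps (a) and (b) close the argument.

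The main obstacle is the robust SRM sub‑case, where one must ensure that $\sup_{\sigma\in\mathfrak{A}}\|\sigma\|_{L^q}$ is finite; the boundedness of $Z$ lets us take $p=1$, but then $\mathfrak{A}$ should be bounded in $L^\infty$, which is slightly stronger than the generic compactness in $L^q$ assumed in Lemma~\ref{Prop:continuity}(ii). An alternative route that avoids this is to recognize each $M_\sigma$ as a Choquet integral or mixture of AVaRs, each of which takes the parametric form $\text{AVaR}^\alpha_{P_\theta}(Z)=\inf_c\{c+(1/\alpha)\mathbb{E}_{P_\theta}[(Z-c)_+]\}$ of case~(i); importing the case~(i) bound uniformly over the mixing family then reduces the obstacle to a tightness/boundedness check on the associated measures, which is typically encoded by the compactness assumption on $\mathfrak{A}$.
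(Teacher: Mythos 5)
Your proof is correct and, in substance, the same as the paper's: part (i) is the identical min--min inequality combined with the uniform bound on $\Psi$ and the TV hypothesis, and for coherent inner risk measures you use the same chain (TV does not increase under the pushforward by $Z$, boundedness of $Z$ by $\bar{\mathcal C}/(1-\gamma)$ converts the TV bound into a Wasserstein bound, and then inequality~(\ref{eq:Lip-RM-rho}) finishes). The only place you genuinely diverge is the robust-SRM subcase. The paper bounds $\mathsf {d\kern -0.07em l}_W^p(\nu_{P_\theta},\nu_{P_{\theta^c}})\le \frac{\bar{\mathcal C}}{1-\gamma}\bigl[\mathsf {d\kern -0.07em l}_{TV}(P_\theta,P_{\theta^c})\bigr]^{1/p}$ for the $p$ conjugate to the $q$ with $\mathfrak{A}\subset \mathcal{L}^q[0,1]$, and then invokes Lemma~\ref{Prop:continuity}(ii); this only yields the H\"older exponent $k=1/p$, which matches the displayed Lipschitz conclusion only when $p=1$. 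You instead force $p=1$, $q=\infty$ to preserve exponent $1$, at the price of requiring $\mathfrak{A}$ to be bounded in $L^\infty$ --- a strictly stronger hypothesis than the $L^q$-compactness assumed in Lemma~\ref{Prop:continuity}(ii) --- and you correctly flag this gap yourself. Your fallback of writing each $M_\sigma$ as a mixture of AVaRs and importing the case-(i) bound uniformly over the mixing family is a legitimate alternative that avoids the $L^\infty$ requirement. Either way the substance is sound; just note that for a general $\mathfrak{A}\subset \mathcal{L}^q$ with $q<\infty$ the honest conclusion (as in the paper's own proof) is a H\"older estimate with exponent $1/p$, not the Lipschitz estimate stated.
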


\noindent \textbf{Proof.}
Part (i). By definition
$$
\begin{aligned}
   |\psi(\theta)|=&\left|\rho_{P_{\theta}}\left[\mathcal{C}(s, a, \xi) + \gamma V^{*}\left(s^{\prime}, \mu_{t+1}\right)\right]-\rho_{P_{\theta^c}}\left[\mathcal{C}(s, a, \xi) + \gamma V^{*}\left(s^{\prime}, \mu_{t+1}\right)\right]\right|\\
=&\left|\inf_{\phi \in \Phi}\mathbb{E}_{P_\theta}[\Psi(\mathcal{C}(s, a, \xi) + \gamma V^{*}\left(s^{\prime}, \mu_{t+1}\right),\phi)]-\inf_{\phi \in \Phi}\mathbb{E}_{P_{\theta^c}}[\Psi(\mathcal{C}(s, a, \xi) + \gamma V^{*}\left(s^{\prime}, \mu_{t+1}\right),\phi)] \right|\\
\leq & \sup_{\phi\in\Phi}\left|\mathbb{E}_{P_\theta}[\Psi(\mathcal{C}(s, a, \xi) + \gamma V^{*}\left(s^{\prime}, \mu_{t+1}\right),\phi)]-\mathbb{E}_{P_{\theta^c}}[\Psi(\mathcal{C}(s, a, \xi) + \gamma V^{*}\left(s^{\prime}, \mu_{t+1}\right),\phi)]\right|\\
\leq&  K \int|p(\xi|\theta)-p(\xi|\theta^c)|d\xi
\leq 2 K L_p|\theta-\theta^c|,
\end{aligned}
$$
where the second last inequality follows from the assumption on $\rho_{P_{\theta}}$,
which guarantees the existence of a constant $K$ such that $\Psi(\mathcal{C}(s, a, \xi) + \gamma V^{*}\left(s^{\prime}, \mu_{t+1}\right),\phi)\leq K$.    By setting $L_\rho^{\rm in}=2KL_p$ and $k=1$, we can draw the conclusion.

Part (ii). Denote $\nu_{P_\theta}=P_\theta\circ Z^{-1}$ and $\nu_{P_{\theta^c}}=P_{\theta^c}\circ Z^{-1}$. Since $Z(\xi)$ is bounded by $\frac{\bar{\mathcal{C}}}{1-\gamma}$, we have  
\[\mathsf {d\kern -0.07em l}_{W}^p(\nu_{P_\theta},\nu_{P_{\theta^c}})\leq \frac{\bar{\mathcal{C}}}{1-\gamma}[\mathsf {d\kern -0.07em l}_{TV}(\nu_{P_\theta},\nu_{P_{\theta^c}})]^{\frac{1}{p}}\leq \frac{\bar{\mathcal{C}}}{1-\gamma} [{\mathsf {d\kern -0.07em l}_{TV}}(P_{\theta},P_{\theta^c})]^{\frac{1}{p}}\leq L_pM|\theta-\theta^c|^{\frac{1}{p}}.\]
In the case that the inner risk measure is coherent, by setting $p=1$, we have from \eqref{eq:Lip-RM-rho} that
\[|\rho_{P_\theta}(Z(\xi))-\rho_{P_{\theta^c}}(Z(\xi))|\leq L\mathsf {d\kern -0.07em l}_{K}(\nu_{P_\theta},\nu_{P_{\theta^c}})\leq \frac{L_p\bar{\mathcal{C}}}{1-\gamma}|\theta-\theta^c|.\]
Further, by setting $L_\rho^{\rm in}=\frac{L_p\bar{\mathcal{C}}}{1-\gamma}\sup_{\sigma\in \mathfrak{A}}\left[\int_0^1\sigma(\tau)^qd\tau\right]^{\frac{1}{q}}$ and $k=1/p$, we can draw the conclusion of robust SRM using part (ii) of Lemma \ref{Prop:continuity}.
\hfill $\Box$

 The  assumption about TV distance in
 Proposition \ref{lemma-lip-rho} is satisfied by a number of important parametric probability distributions
    including exponential, 
    gamma and 
    mixed normal distributions under some moderate conditions.
The condition of part (i) in Proposition
\ref{lemma-lip-rho} 
is commonly used in the literature, 
such as in ~\cite{guigues2023risk,shapiro2021lectures,jiang2018risk}. 
It encompasses a variety of risk measures including
AVaR, $\phi$-divergence risk measure, $L_1$-norm risk measure and $L_\infty$-norm risk measure.

{\color{black}
\begin{proposition}\label{prop-k=2}
Suppose that  $Y(\theta):=\rho_{P_\theta}(Z(\xi))$ is twice {continuously} differentiable  
          on $\mathbb B(\theta^c,\varepsilon)$,  $ \nabla_\theta Y(\theta^c)=0$ and there exists an 
          $M$ such that for all $\theta\in\mathbb B(\theta^c,\varepsilon)$,
          \[
              \sup_{\theta\in\mathbb B(\theta^c,\varepsilon)}
              \bigl\|\nabla_{\theta}^2Y(\theta)\bigr\|
              \;\le\;\;M<\infty.
          \]
Then there exists a constant $L_\rho^{\rm in}$ such that
    \begin{eqnarray} 
    |\psi(\theta)|\leq L_\rho^{\rm in}|\theta-\theta^c|^2,\ \forall \theta\in \mathbb{B}(\theta^c,\varepsilon).
    \end{eqnarray} 
\end{proposition}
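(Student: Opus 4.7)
The plan is to apply Taylor's theorem with the Lagrange form of the remainder to the function $Y(\theta)=\rho_{P_\theta}(Z(\xi))$ on the ball $\mathbb{B}(\theta^c,\varepsilon)$, exploiting the vanishing first-order term at the expansion point $\theta^c$. Since $\psi(\theta)$ is by definition $Y(\theta)-Y(\theta^c)$, all that is really needed is a second-order bound on this difference.

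First, I would note that the twice continuous differentiability of $Y$ on $\mathbb{B}(\theta^c,\varepsilon)$ justifies the use of a second-order Taylor expansion around $\theta^c$. For any $\theta\in\mathbb{B}(\theta^c,\varepsilon)$, there exists $\tilde\theta$ on the segment joining $\theta^c$ and $\theta$ such that
\[
Y(\theta)=Y(\theta^c)+\nabla_\theta Y(\theta^c)^{\top}(\theta-\theta^c)+\tfrac{1}{2}(\theta-\theta^c)^{\top}\nabla_\theta^{2}Y(\tilde\theta)(\theta-\theta^c).
\]
The hypothesis $\nabla_\theta Y(\theta^c)=0$ kills the linear term, leaving only the quadratic remainder.

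Second, I would bound the quadratic remainder using the uniform bound on the Hessian. By the Cauchy--Schwarz inequality and the operator-norm bound $\|\nabla_\theta^{2}Y(\tilde\theta)\|\le M$ valid throughout $\mathbb{B}(\theta^c,\varepsilon)$ (in particular at $\tilde\theta$), we obtain
\[
|\psi(\theta)|=|Y(\theta)-Y(\theta^c)|=\tfrac{1}{2}\bigl|(\theta-\theta^c)^{\top}\nabla_\theta^{2}Y(\tilde\theta)(\theta-\theta^c)\bigr|\le \tfrac{M}{2}|\theta-\theta^c|^{2}.
\]
Setting $L_\rho^{\rm in}:=M/2$ yields the desired inequality with $k=2$ in Assumption~\ref{ass-lip-rho}.

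Since the statement allows $d\ge 1$ (the paper explains that $d=1$ is assumed only to ease exposition), I would phrase the argument using vector/matrix notation so that the same proof covers multidimensional $\theta$. There is no genuine obstacle here: the argument is a textbook second-order Taylor expansion, and the only substantive ingredient is the assumption $\nabla_\theta Y(\theta^c)=0$, which replaces a potentially linear-order error by a quadratic one. The real content of the proposition lies in identifying natural families of inner risk measures for which $Y(\theta)$ is $C^2$ with a vanishing gradient at $\theta^c$; verifying that for concrete examples (e.g.\ smooth parametric families where $\theta^c$ is a critical point of $\rho_{P_\theta}(Z)$) would be the harder companion question, but is outside the scope of the present statement.
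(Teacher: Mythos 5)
Your proof is correct and follows essentially the same route as the paper: a second-order Taylor expansion of $Y$ about $\theta^c$, using $\nabla_\theta Y(\theta^c)=0$ to kill the linear term and the uniform Hessian bound to control the remainder, yielding $L_\rho^{\rm in}=M/2$. The only cosmetic difference is that you use the Lagrange form of the remainder while the paper uses the integral form; both give the identical constant.
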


\begin{proof}
Let $\Delta=\theta-\theta^c$.  The integral form of Taylor’s 
expansion gives rise to
\[
  Y(\theta)=Y(\theta^c)+
             {\nabla Y(\theta^c)}^\top\Delta
               +\int_0^1(1-t)\,
                 \Delta^\top
                 \nabla_{\theta}^2Y(\theta^c+t\Delta)
                 \,\Delta\,dt.
\]
Using the uniform bound $\|\nabla_{\theta}^2Y\|\le M$, we have
\[
  \bigl|Y(\theta)-Y(\theta^c)\bigr|
  \;\le\;
  \|\Delta\|^2\int_0^1(1-t)M\,dt
  =\frac{M}{2}\,\|\theta-\theta^c\|^2 .
\]
Replacing $Y(\theta)$ and $Y(\theta^c)$ 
with 
$\rho_{P_\theta}(Z)$ and $\rho_{P_{\theta^c}}(Z)$ yields the
desired inequality with $L_\rho^{\rm in}=M/2$ and $k=2$.
\end{proof}

Note that if $p(\xi|\theta)$ is twice {continuously} differentiable  
          on $\mathbb B(\theta^c,\varepsilon)$ and there exists a non-negative integrable  function 
          $r(\xi)$ such that for all $\theta$ in this ball, we have
          \[
                \bigl\|\nabla_\theta\log p(\xi|\theta)\bigr\|,\;
                \bigl\|\nabla^2_{\theta}\log p(\xi|\theta)\bigr\|
                \;\le\; r(\xi),
                \quad
                \sup_{\theta\in\mathbb B(\theta^c,\varepsilon)}
                \mathbb{E}_{P_\theta}\bigl[r(\xi)^2\bigr]<\infty,
          \]
then the conditions in Proposition \ref{prop-k=2} 
are satisfied by various of risk measures 
such as expectation and entropic risk.
}

	\begin{theorem}
    \label{thm-canew-1}
	Let $V^*(s,\mu_t)$ and $V^*(s,\delta_{\theta^c})$ be defined as in 
    (\ref{eq:V^*-mu-MDP-infinite}) and
    (\ref{eq:V^*-MDP-infinite}), let
    $\pi^*(s,\mu_t) $ and  $\pi^*(s,\delta_{\theta^c})$
    be the associated optimal policies defined as in 
    (\ref{eq:V^*-mu-MDP-infinite-sln}) and
    (\ref{eq:V^*-MDP-infinite-sln}).
    Under Assumptions \ref{ass-bcr-1}, \ref{ass5.1}, \ref{ass-ca-1} and \ref{ass-lip-rho}, the following assertions 
   hold.
   \begin{itemize}

\item[(i)]    $V^{*}(s,\mu_t)$ converges 
to $V^{*}(s,\delta_{\theta^c})$ 
uniformly w.r.t.~$s\in {\cal S}$
 as $t\to\infty$, i.e., 
 for any $\varepsilon>0$, there exists a $t^*(\varepsilon)$ such that for all $t\geq t^*(\varepsilon)$,
		\begin{equation}
			\label{3.2}
			\sup_{s\in\mathcal{S}}\left|V^{*}(s,\mu_t)-V^{*}(s,\delta_{\theta^c})\right|\leq\frac{\varepsilon}{1-\gamma}, \text { a.s.}
		\end{equation}

\item[(ii)]
For each fixed $\bar{s}\in {\cal S}$,
$\pi^{*}(\bar{s},\mu_t)$ converges to the set $\mathcal{A}^{*}(\bar{s},\delta_{\theta^c})$ 
a.s.~as $t \rightarrow \infty$. 
    Moreover, 
    if $\mathcal{A}^{*}(\bar{s},\delta_{\theta^c})=\left\{\pi^{*}(\bar{s},\delta_{\theta^c})\right\}$ is a singleton, then $\pi^{*}(\bar{s},\mu_t)$ converges to $\pi^{*}(\bar{s},\delta_{\theta^c})$ almost surely.               
   \end{itemize}
	\end{theorem}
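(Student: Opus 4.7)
The plan for Part (i) is to combine the $\gamma$-contractivity of the Bellman operator (Lemma \ref{lem-beo-2}(iii)) with an asymptotic stability result: as $\mu_t\Rightarrow\delta_{\theta^c}$ (Lemma \ref{lem-weak-converge}), the BCR $\rho_{\mu_t}\!\circ\!\rho_{P_\theta}$ collapses to $\rho_{P_{\theta^c}}$ on uniformly bounded cost families, so the Bellman operator $\mathcal T$ approaches the classical operator $\mathcal T^c$ whose unique fixed point is $\bar V^*(s):=V^*(s,\delta_{\theta^c})$. I would proceed in three steps: a one-step perturbation bound, control of the residual at $\delta_{\theta^c}$, and closure via finite unfolding.

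\emph{Step 1 (one-step bound).} Starting from the Bellman identities for $V^*(s,\mu_t)$ and $\bar V^*(s)$, use $|\min f-\min g|\le\sup|f-g|$ and insert the intermediate $\rho_{\mu_t}\!\circ\!\rho_{P_\theta}[\bar Z(s,a,\cdot)]$ with $\bar Z(s,a,\xi):=\mathcal C(s,a,\xi)+\gamma\bar V^*(g(s,a,\xi))$; the non-expansiveness of Lemma \ref{lem-beo-2}(ii) then gives
\[
\sup_s|V^*(s,\mu_t)-\bar V^*(s)|\;\le\;\gamma\sup_{s,\,\mu\in\mathcal R_t}|V^*(s,\mu)-\bar V^*(s)|\;+\;\mathrm{Err}_t,
\]
where $\mathcal R_t$ is the set of beliefs reachable from $\mu_t$ via a single Bayes update, and $\mathrm{Err}_t:=\sup_{s,a}|\rho_{\mu_t}\!\circ\!\rho_{P_\theta}[\bar Z]-\rho_{P_{\theta^c}}[\bar Z]|$. \emph{Step 2 (residual).} Setting $\tilde h(\theta):=\rho_{P_\theta}(\bar Z)$, which is bounded and continuous in $\theta$ by Proposition \ref{Prop:continuity} (with modulus of continuity at $\theta^c$ uniform in $(s,a)$ by Assumption \ref{ass-lip-rho}), the monetary property of $\rho_{\mu_t}$ reduces $\mathrm{Err}_t$ to $\sup_{s,a}|\rho_{\mu_t}(\tilde h-\tilde h(\theta^c))|$. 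Splitting $\Theta=\mathbb B(\theta^c,\varepsilon)\cup(\Theta\setminus\mathbb B(\theta^c,\varepsilon))$, one has $|\tilde h-\tilde h(\theta^c)|\le L_\rho^{\mathrm{in}}\varepsilon^k$ on the ball and the global bound $2\bar{\mathcal C}/(1-\gamma)$ outside; for the law-invariant risk measures of interest (expectation, AVaR, robust SRM, etc.)~admitting a Kusuoka-type averaged-AVaR representation, this together with the Markov bound \eqref{bayescon} yields $\mathrm{Err}_t\le L_\rho^{\mathrm{in}}\varepsilon^k+C(v_t+(m_t-\theta^c)^2)/\varepsilon^2\to 0$ a.s.~as $\varepsilon\downarrow 0$ then $t\to\infty$, by Assumption \ref{ass-ca-1}. \emph{Step 3 (closing).} Unfold Step 1 $n$ times and use the uniform bound $\|V^*-\bar V^*\|_\infty\le 2\bar{\mathcal C}/(1-\gamma)$ for the remainder; choose $n$ large so the tail $\gamma^n\cdot 2\bar{\mathcal C}/(1-\gamma)$ is below $\varepsilon/(2(1-\gamma))$ and then $t$ large so each residual appearing in the unfolding is below $\varepsilon(1-\gamma)/(2n)$, yielding \eqref{3.2}.

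For Part (ii), introduce $F_t(a):=\rho_{\mu_t}\!\circ\!\rho_{P_\theta}[\mathcal C(\bar s,a,\xi)+\gamma V^*(g(\bar s,a,\xi),\mu'(\cdot;\xi))]$ (where $\mu'$ is the Bayes update of $\mu_t$) and $F_\infty(a):=\rho_{P_{\theta^c}}[\mathcal C(\bar s,a,\xi)+\gamma\bar V^*(g(\bar s,a,\xi))]$. Part (i), Proposition \ref{Prop:continuity} and Lemma \ref{lem-weak-converge} together imply $F_t\to F_\infty$ uniformly on the compact action set $\mathcal A$ (Assumption \ref{ass-bcr-1}(e)) almost surely; a standard Berge-type maximum-theorem argument then yields upper hemicontinuity of the arg-min correspondence, so $\pi^*(\bar s,\mu_t)$ accumulates a.s.\ in $\mathcal A^*(\bar s,\delta_{\theta^c})$, with pointwise convergence when the latter is a singleton. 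The main obstacle lies in Step 3 of Part (i): the reachable beliefs $\mathcal R_t^{(k)}$ after $k$ hypothetical Bayes updates involve observations $\xi$ that may be atypical under $P_{\theta^c}$, so not every $\mu\in\mathcal R_t^{(k)}$ concentrates at $\delta_{\theta^c}$. The finite-$n$ unfolding circumvents this by absorbing the tail into the uniform $L^\infty$ bound on $V^*-\bar V^*$, but making each intermediate residual uniformly small requires weak continuity of the Bayes map at $\delta_{\theta^c}$ uniformly in $\xi$ on the set $\{p(\xi|\theta^c)>0\}$.
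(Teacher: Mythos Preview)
Your overall strategy for Part (i)---a one-step contraction bound plus control of a residual term via Bayesian posterior concentration, iterated via the discount factor---is exactly the paper's. Part (ii) is also the same: uniform convergence of the Bellman objectives on the compact action set, then a level-set/argmin stability argument. The substantive difference, and the source of the obstacle you identify in Step~3, is your choice of intermediate term in Step~1.

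You insert $\rho_{\mu_t}\!\circ\!\rho_{P_\theta}[\bar Z]$ with $\bar Z$ built from $\bar V^*$, which forces the contraction term to become $\gamma\sup_{\mu\in\mathcal R_t}\|V^*(\cdot,\mu)-\bar V^*\|_\infty$. The paper instead inserts $\rho_{\mu_t}\!\circ\!\rho_{P_{\theta^c}}$ while \emph{keeping} $V^*(\cdot,\mu_{t+1})$ inside: the first term is then $|\rho_{\mu_t}(\psi(\theta))|$ with $\psi(\theta)=\rho_{P_\theta}(Z)-\rho_{P_{\theta^c}}(Z)$, which is precisely the quantity Assumption~\ref{ass-lip-rho} bounds by $h(\theta):=L_\rho^{\mathrm{in}}|\theta-\theta^c|^k$; Lemma~\ref{lem-weak-converge} then gives $\rho_{\mu_t}(h(\theta))\to h(\theta^c)=0$. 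The second term becomes $\gamma\,\rho_{P_{\theta^c}}\bigl[\sup_s|V^*(s,\mu_{t+1})-V^*(s,\delta_{\theta^c})|\bigr]$, i.e.\ exactly the same quantity at the next step, evaluated under~$\rho_{P_{\theta^c}}$ rather than a sup over~$\mathcal R_t$. Iterating, the $j$-th residual is $\rho_{P_{\theta^c}}^{(j)}[\rho_{\mu_{t+j}}(h(\theta))]$, where the hypothetical belief process $(\mu_{t+j})_j$ lives under nested~$\rho_{P_{\theta^c}}$'s---hence under the \emph{true} data law. Since Assumption~\ref{ass-ca-1} gives $\rho_{\mu_{t'}}(h)\le\varepsilon$ for all $t'\ge t^*$ on a $P_{\theta^c}^{\otimes\infty}$-full set, and monetary risk measures preserve a.s.\ pointwise bounds by monotonicity, each residual is~$\le\varepsilon$ and the geometric sum closes to $\varepsilon/(1-\gamma)$. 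This sidesteps entirely the uniformity-in-$\xi$ of the Bayes map that your split would require: the paper never needs $\mathrm{Err}_\mu$ small for \emph{all} $\mu\in\mathcal R_t^{(j)}$, only $P_{\theta^c}$-almost surely along the posterior process.
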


    \noindent \textbf{Proof.}
           Part (i).  It suffices to show the 
            uniform convergence of the right-hand side of (\ref{eq:V^*-mu-MDP-infinite}). 
	  By definition, 
		\begin{equation}
			\begin{aligned}\label{4.18}
					&\sup_{s\in\mathcal{S}}\left|V^{*}(s,\mu_t)-V^{*}(s,\delta_{\theta^c})\right|\\
				=&\sup_{s\in\mathcal{S}}\left|\min _{a \in \mathcal{A}} \rho_{ \mu_t}\circ\rho_{P_{\theta}}\left[\mathcal{C}(s, a, \xi)+\gamma V^{*}\left(s^{\prime}, \mu_{t+1}\right)\right] -\min _{a \in \mathcal{A}} \rho_{P_{\theta^c}}\left[\mathcal{C}(s, a, \xi)+\gamma V^{*}\left(s^{\prime}, \delta_{\theta^c}\right)\right]\right| \\
				 \leq& \sup_{(s,a)\in\mathcal{S}\times\mathcal{A}}\left|\rho_{ \mu_t}\circ\rho_{P_{\theta}}\left[\mathcal{C}(s, a, \xi)+\gamma V^{*}\left(s^{\prime}, \mu_{t+1}\right)\right] -\rho_{\mu_t}\circ\rho_{P_{\theta^c}}\left[\mathcal{C}(s, a, \xi)+\gamma V^{*}\left(s^{\prime}, \delta_{\theta^c}\right)\right]\right|\\
				 \leq& \sup_{(s,a)\in\mathcal{S}\times\mathcal{A}}\left|\rho_{ \mu_t}\circ\rho_{P_{\theta}}\left[\mathcal{C}(s, a, \xi)+\gamma V^{*}\left(s^{\prime}, \mu_{t+1}\right)\right] -\rho_{\mu_t}\circ\rho_{P_{\theta^c}}\left[\mathcal{C}(s, a, \xi)+\gamma V^{*}\left(s^{\prime}, \mu_{t+1}\right)\right] \right|\\
				 +& \sup_{(s,a)\in\mathcal{S}\times\mathcal{A}}\left|\rho_{P_{\theta^c}}\left[\mathcal{C}(s, a, \xi)+\gamma V^{*}\left(s^{\prime}, \mu_{t+1}\right)\right] -\rho_{P_{\theta^c}}\left[\mathcal{C}(s, a, \xi)+\gamma V^{*}\left(s^{\prime}, \delta_{\theta^c}\right)\right]\right|\\
				 \leq&| \rho_{\mu_t}[h(\theta)]|+ \gamma\rho_{P_{\theta^c}}\left[\sup_{s\in\mathcal{S}}\left| V^{*}\left(s, \mu_{t+1}\right)- V^{*}\left(s, \delta_{\theta^c}\right)\right|\right]\\
				  \leq& \varepsilon+ \gamma\varepsilon+ \gamma^2\rho_{P_{\theta^c}}\circ\rho_{P_{\theta^c}}\left[\sup_{s\in\mathcal{S}}\left| V^{*}\left(s, \mu_{t+2}\right)- V^{*}\left(s, \delta_{\theta^c}\right)\right|\right]\\
				 \leq& \sum_{i=1}^n\gamma^{i-1}\varepsilon^i+ \gamma^{n+1}\rho_{P_{\theta^c}}\cdots\rho_{P_{\theta^c}}\left[\sup_{s\in\mathcal{S}}\left| V^{*}\left(s, \mu_{t+n+1}\right)- V^{*}\left(s, \delta_{\theta^c}\right)\right|\right]
     		\leq\frac{\varepsilon}{1-\gamma}.
			\end{aligned}
		\end{equation}
The fourth inequality follows by defining $h(\theta):= L_\rho^{\rm in}|\theta-\theta^c|^k$ and applying Lemma \ref{lem-weak-converge} directly since $h(\theta^c)\equiv0$.
		Letting $n\to\infty$, we obtain that $V^{*}(s,\mu_t)$ converges uniformly to $V^{*}(s,\delta_{\theta^c})$.

Part (ii).  The result here is similar to that of \cite[Proposition~3.2]{shapiro2023episodic}, we provide a proof 
    for completeness due to the adoption of BCR
    in our model.
    By Part (i), 
    \begin{eqnarray*}
    \sup_{(s,a)\in\mathcal{S}\times\mathcal{A}}\left|\rho_{ \mu_t}\circ\rho_{P_{\theta}}\left[\mathcal{C}(s, a, \xi)+\gamma V^{*}\left(s^{\prime}, \mu_{t+1}\right)\right] -\rho_{P_{\theta^c}}\left[\mathcal{C}(s, a, \xi)+\gamma V^{*}\left(s^{\prime}, \delta_{\theta^c}\right)\right]\right|\to0,\ a.s.
    \end{eqnarray*}
Thus, for any $\varepsilon>0$, there exists a $t^*(\varepsilon)<\infty$ such that for all $t \geq t^*(\varepsilon)$,
	$$
	\begin{array}{r}
		\rho_{P_{\theta^c}}\left[\mathcal{C}(\bar{s},\pi^{*}(\bar{s},\mu_t) , \xi)+\gamma V^{*}\left(\bar{s}^\prime, \delta_{\theta^c}\right)\right]-\varepsilon / 2 \leq \rho_{\mu_t}\circ\rho_{P_{\theta}}\left[\mathcal{C}(\bar{s},\pi^{*}(\bar{s},\mu_t) , \xi)+\gamma V^{*}\left(\bar{s}^\prime, \mu_t\right)\right], \\
	\rho_{\mu_t}\circ\rho_{P_{\theta}}\left[\mathcal{C}(\bar{s}, \pi^{*}(\bar{s},\delta_{\theta^c}), \xi)+\gamma V^{*}\left(\bar{s}^\prime, \mu_t\right)\right]-\varepsilon / 2\leq \rho_{P_{\theta^c}}\left[\mathcal{C}(\bar{s}, \pi^{*}(\bar{s},\delta_{\theta^c}), \xi)+\gamma V^{*}\left(\bar{s}^\prime, \delta_{\theta^c}\right)\right] .
	\end{array}
	$$
By the definition of $\pi^{*}(\bar{s},\mu_t)$, 
	$$
\rho_{\mu_t}\circ\rho_{P_{\theta}}\left[\mathcal{C}(\bar{s},\pi^{*}(\bar{s},\mu_t) , \xi)+\gamma V^{*}\left(\bar{s}^\prime, \mu_t\right)\right] \leq 	\rho_{\mu_t}\circ\rho_{P_{\theta}}\left[\mathcal{C}(\bar{s}, \pi^{*}(\bar{s},\delta_{\theta^c}), \xi)+\gamma V^{*}\left(\bar{s}^\prime, \mu_t\right)\right] .
	$$
	Thus, we can derive
	$$
	\rho_{P_{\theta^c}}\left[\mathcal{C}(\bar{s},\pi^{*}(\bar{s},\mu_t) , \xi)+\gamma V^{*}\left(\bar{s}^\prime, \delta_{\theta^c}\right)\right] \leq \rho_{P_{\theta^c}}\left[\mathcal{C}(\bar{s}, \pi^{*}(\bar{s},\delta_{\theta^c}), \xi)+\gamma V^{*}\left(\bar{s}^\prime, \delta_{\theta^c}\right)\right]+\varepsilon.
	$$
	Define the lower level set:
	$$
	L_{\varepsilon}(\bar{s}):=\{a \in \mathcal{A}: 	\rho_{P_{\theta^c}}\left[\mathcal{C}(\bar{s},a , \xi)+\gamma V^{*}\left(\bar{s}^\prime, \delta_{\theta^c}\right)\right] \leq \rho_{P_{\theta^c}}\left[\mathcal{C}(\bar{s}, \pi^{*}(\bar{s},\delta_{\theta^c}), \xi)+\gamma V^{*}\left(\bar{s}^\prime, \delta_{\theta^c}\right)\right]+\varepsilon\}.
	$$
 Then 
 $\pi^{*}(\bar{s},\mu_t) \in L_{\varepsilon}(\bar{s})$.  
 By the convexity of the objective function 
 and the compactness and convexity of the 
 action set, we 
 deduce that for any $\bar{s} \in \mathcal{S}$, 
 the set of the 
 optimal actions,
 $\mathcal{A}^{*}(\bar{s},\delta_{\theta^c})$,
 is nonempty and bounded. 
 Consider a decreasing sequence $\varepsilon_k \to 0$, it is evident that $L_{\varepsilon_{k+1}}(\bar{s}) \subseteq L_{\varepsilon_k}(\bar{s})$. Thus the intersection $\cap_{k=1}^{\infty} L_{\varepsilon_k}(\bar{s})$ is contained in the topological closure of $\mathcal{A}^{*}(\bar{s},\delta_{\theta^c})$. 
 
 Assume for the sake of contradiction that 
 the convergence 
 does not hold. Then 
 we can find a bounded sequence $\{a_k\} \in L_{\varepsilon_k}(\bar{s})$ such that $\text{dist}\left(a_k, \mathcal{A}^{*}(\bar{s},\delta_{\theta^c})\right) \geq \delta>0$ for any $k$. 
 Let  $a^{*}$ be an 
 accumulation point of the sequence.
 Then 
 $$
 \rho_{P_{\theta^c}}\left[\mathcal{C}(\bar{s},a^{*}, \xi)+\gamma V^{*}\left(\bar{s}^\prime, \delta_{\theta^c}\right)\right]\leq  \rho_{P_{\theta^c}}\left[\mathcal{C}(\bar{s}, \pi^{*}(\bar{s},\delta_{\theta^c}), \xi)+\gamma V^{*}\left(\bar{s}^\prime, \delta_{\theta^c}\right)\right]+\varepsilon$$ for any $\varepsilon>0$, and hence $$\rho_{P_{\theta^c}}\left[\mathcal{C}(\bar{s},a^{*}, \xi)+\gamma V^{*}\left(\bar{s}^\prime, \delta_{\theta^c}\right)\right]= \rho_{P_{\theta^c}}\left[\mathcal{C}(\bar{s}, \pi^{*}(\bar{s},\delta_{\theta^c}), \xi)+\gamma V^{*}\left(\bar{s}^\prime, \delta_{\theta^c}\right)\right],
 $$ 
 i.e., $a^{*} \in \mathcal{A}^{*}(\bar{s},\delta_{\theta^c})$. However, this contradicts 
 the assumption that 
 $\operatorname{dist}\left(a^{*}, \mathcal{A}^{*}(\bar{s},\delta_{\theta^c})\right) \geq \delta$.	
 Consequently, the distance from 
 $\pi^{*}(\bar{s},\mu_t)$ 
 to the topological closure of 
 $\mathcal{A}^{*}(\bar{s},\delta_{\theta^c})$
 converges to zero almost surely, 
 and therefore the distance from $\pi^{*}(\bar{s},\mu_t)$ to $\mathcal{A}^{*}(\bar{s},\delta_{\theta^c})$ converges to zero as well. 
 The conclusion 
 is straightforward when $\mathcal{A}^{*}(\bar{s},\delta_{\theta^c})$ is a singleton.
\hfill $\Box$

The theorem shows uniform convergence
of $V^*(s,\mu_t)$ to $V^*(s,\delta_{\theta^c})$ and 
point-wise convergence of the optimal policies, which are similar to the convergence results in \cite[Propositions 3.1-3.2]{shapiro2023episodic}. 
We include a proof as 
the conventional SOC/MDPs and the episodic Bayesian SOC model \cite{shapiro2023episodic} focus on a risk-neutral objective function under fixed environment parameters, whereas the BCR-SOC/MDP model (\ref{eq:BCR-SOC/MDP-infinite}) 
introduces additional complexity by incorporating epistemic uncertainty about parameter estimation and its impact on both the policy and value function. 
Moreover, 
since the BCR does not usually preserve linearity, 
we need to make sure the specific 
details of the convergence of  
the Bellman optimal value function and policy
work through.
Note also that
the asymptotic convergence
is short of quantifying 
$t^*(\varepsilon)$ for the prescribed 
precision $\varepsilon$.
In the next section, 
we will present some 
quantitative convergence results when the BCR takes a specific form.

\subsubsection{Quantitative convergence} 

From Theorem~\ref{thm-canew-1}, we can see that
the convergence of the optimal values and optimal policies
depends heavily on the convergence
of $\mu_t$ to $\delta_{\theta^c}$.
Under some special circumstances, convergence of the latter
may be quantified by the mean and variance of $\mu_t$. 
This raises a question as to whether 
the convergence of the optimal values and optimal policies
can be quantified in terms of the convergence of 
the mean and variance of $\mu_t$. In this subsection, we address this. 

\begin{theorem}\label{thm-ca-2}
    Let Assumptions \ref{ass-ca-1} and \ref{ass-lip-rho} hold. If 
    there exist positive constants $M_1$ and $M_2$ such that 
    \begin{equation} \label{quant-1}
 \lim_{t\to\infty}{t^{M_1}}\max \{(m_t-\theta^c)^2,v_t\}={M_2},
    \end{equation}
    then the following assertions hold.
    \begin{itemize}
\item [(i)]  If 
the outer risk measure is chosen 
with $\mathbb{E}_{\mu_t}$, 
then 
\begin{eqnarray} 
\label{upbound-E-rho}
\sup_{s\in\mathcal{S}}\left|V^{*}(s,\mu_t)-V^{*}(s,\delta_{\theta^c})\right|\leq L_\rho^{\rm in}\left(v_t+\left(m_t-\theta^c\right)^2\right)^{\frac{k}{2}},\ a.s.
\end{eqnarray} 
for all $k\leq2.$
\item[(ii)]    If 
the outer risk measure is chosen 
with $\text{VaR}_{\mu_{t}}^\alpha$, 
then 
\begin{eqnarray}  \label{upbound-VaR-rho}
\sup_{s\in\mathcal{S}}\left|V^{*}(s,\mu_t)-V^{*}(s,\delta_{\theta^c})\right|\leq\frac{L_\rho^{\rm in} }{1-\gamma}\left({\frac{v_t+(m_t-\theta^c)^2}{\alpha}}\right)^{\frac{k}{2}},\ a.s.
\end{eqnarray} 
Furthermore, if the outer risk measure is chosen with robust SRM  $\sup_{\sigma\in \mathfrak{A}} M_\sigma$, then 
\begin{eqnarray}  \label{upbound-SRM-rho}
\sup_{s\in\mathcal{S}}\left|V^{*}(s,\mu_t)-V^{*}(s,\delta_{\theta^c})\right|\leq\frac{L_\rho^{\rm in} \left(v_t+(m_t-\theta^c)^2\right)^{\frac{k}{2}}}{1-\gamma }\sup_{\sigma\in \mathfrak{A}}\int_0^1(1-\tau)^{-\frac{k}{2}}\sigma(\tau)d\tau,\ a.s.
\end{eqnarray} 
\item[(iii)] If
    the outer risk measure is
    chosen with 
    $\text{AVaR}_{\mu_{t}}^\alpha$, then 
	\begin{eqnarray} 
\label{upbound-CVaR-rho}    \sup_{s\in\mathcal{S}}\left|V^{*}(s,\mu_t)-V^{*}(s,\delta_{\theta^c})\right|
    \leq \frac{L_\rho^{\rm in} t^{M_1k/4}}{(1-\gamma)\alpha}\left({\frac{v_t+(m_t-\theta^c)^2}{\alpha}}\right)^{\frac{k}{2}}+\frac{\bar{\mathcal{C}}}{(1-\gamma)^2t^{M_1/2}},\ a.s.
    \end{eqnarray}    
    \end{itemize}
\end{theorem}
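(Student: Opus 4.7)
My plan is to leverage the recursive structure established in the proof of Theorem~\ref{thm-canew-1}. Writing $\Delta_t(s) := |V^*(s,\mu_t) - V^*(s,\delta_{\theta^c})|$ and $h(\theta) := L_\rho^{\rm in}|\theta-\theta^c|^k$, Assumption~\ref{ass-lip-rho} together with the telescoping argument already used for the qualitative convergence yields the one-step inequality
\[
\sup_{s \in \mathcal{S}}\Delta_t(s) \;\leq\; \rho_{\mu_t}(h(\theta)) \;+\; \gamma\,\rho_{P_{\theta^c}}\!\left[\sup_{s \in \mathcal{S}}\Delta_{t+1}(s)\right].
\]
Iterating this recursion and invoking the crude uniform bound $\sup_s \Delta_t(s) \leq 2\bar{\mathcal{C}}/(1-\gamma)$ as a fallback for the tail of the geometric series, the core task reduces to controlling $\rho_{\mu_t}(h(\theta))$ quantitatively by $m_t$ and $v_t$ for each choice of the outer risk measure, and then assembling the result via the $(1-\gamma)$-geometric sum together with the rate \eqref{quant-1}.

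For case (i), since $x \mapsto x^{k/2}$ is concave on $[0,\infty)$ when $k\leq 2$, Jensen's inequality gives
\[
\mathbb{E}_{\mu_t}\!\left[|\theta-\theta^c|^k\right] \;\leq\; \bigl(\mathbb{E}_{\mu_t}[(\theta-\theta^c)^2]\bigr)^{k/2} \;=\; \bigl(v_t + (m_t-\theta^c)^2\bigr)^{k/2},
\]
which immediately produces \eqref{upbound-E-rho}. For case (ii), I would apply Chebyshev's inequality $\mu_t(|\theta-\theta^c| > r) \leq (v_t+(m_t-\theta^c)^2)/r^2$ to obtain the quantile bound $\text{VaR}^\alpha_{\mu_t}(|\theta-\theta^c|^k) \leq ((v_t+(m_t-\theta^c)^2)/\alpha)^{k/2}$; plugging this into the Bellman recursion and summing the geometric series delivers \eqref{upbound-VaR-rho}. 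For the robust SRM extension, the same Chebyshev quantile bound is inserted pointwise into the spectral representation $M_\sigma(X) = \int_0^1 F^{\leftarrow}(\tau)\sigma(\tau)\,d\tau$ with $\alpha$ replaced by $1-\tau$, producing the weight factor $\int_0^1(1-\tau)^{-k/2}\sigma(\tau)\,d\tau$, after which the supremum over $\sigma\in\mathfrak{A}$ is taken.

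The hardest case is (iii), because AVaR probes the entire upper tail of $h(\theta)$ rather than a single quantile, and the naive bound $\text{AVaR}^\alpha \leq \mathbb{E}/\alpha$ does not reproduce the hybrid rate in \eqref{upbound-CVaR-rho}. My plan is to use a truncation argument: fix a radius $R_t$ calibrated to $t^{-M_1/4}$, split $\mu_t$ into its restrictions to $\{|\theta-\theta^c|\leq R_t\}$ and to the complementary tail, and handle them separately. On the bounded region, the VaR-type bound from case (ii) applies with an inflated probability level (absorbing the truncation factor $t^{M_1 k/4}$ through condition \eqref{quant-1}), yielding the first term of \eqref{upbound-CVaR-rho} after the geometric sum; on the tail, Chebyshev controls $\mu_t(|\theta-\theta^c|>R_t)$ by $(v_t+(m_t-\theta^c)^2)/R_t^2 \sim 1/t^{M_1/2}$ (using \eqref{quant-1}), while the value-function difference itself is dominated by $2\bar{\mathcal{C}}/(1-\gamma)$, so the AVaR of this tail contributes the second error term $\bar{\mathcal{C}}/((1-\gamma)^2 t^{M_1/2})$. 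The main technical obstacle I expect is verifying that the truncation error remains summable against the discounted iterates $\gamma^i \rho_{P_{\theta^c}^{\otimes i}}[\,\cdot\,]$ as the recursion propagates through future posteriors, so the choice of $R_t$ has to be coupled with the decay rate of $v_{t+i}+(m_{t+i}-\theta^c)^2$ uniformly in $i$; this is where the exponent $M_1 k/4$ versus $M_1/2$ trade-off arises and must be tightened to match \eqref{upbound-CVaR-rho}.
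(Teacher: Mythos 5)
Your proposal follows essentially the same route as the paper's proof: Jensen's inequality for the concave map $x\mapsto x^{k/2}$ in part (i), the Chebyshev-type quantile bound $Q_{\mu_t}(|\theta-\theta^c|^k\le\varepsilon)\ge 1-(v_t+(m_t-\theta^c)^2)\varepsilon^{-2/k}$ with $\varepsilon=((v_t+(m_t-\theta^c)^2)/\alpha)^{k/2}$ for part (ii) (and its pointwise insertion into the spectral representation for the robust SRM), and for part (iii) exactly the paper's split of $\Theta$ into $\Theta_\varepsilon$ and its complement with threshold $\varepsilon=t^{M_1k/4}((v_t+(m_t-\theta^c)^2)/\alpha)^{k/2}$ (equivalently your radius $R_t\sim t^{-M_1/4}$), the tail contribution controlled by $\bar{\mathcal{C}}/(1-\gamma)$ times the tail mass $\alpha/t^{M_1/2}$, all assembled through the recursion of (\ref{4.18}). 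The only caveat is that your uniform recursion-plus-geometric-sum framework would attach a $1/(1-\gamma)$ factor to part (i) as well, whereas the stated bound \eqref{upbound-E-rho} (and the paper's own proof of (i)) omits it by bounding the difference directly by the single-step term $\mathbb{E}_{\mu_t}[|\psi_{s,a}(\theta)|]$ without the discounted propagation term.
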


Before providing a proof, it might be helpful to comment on the conditions and results of the theorem.
Unlike Theorem \ref{thm-canew-1}, 
Theorem \ref{thm-ca-2} 
explicitly quantifies 
the convergence by providing 
a worst-case upper bound
for the optimal value functions 
uniformly 
with respect to 
risk measures.
This is achieved by 
strengthening 
Assumption \ref{ass-ca-1}
with 
an additional condition \eqref{quant-1} 
about the rate of 
convergence of $v_t$ and $(m_t-\theta^c)^2$,
which 
means that as 
episode $t$ increases, both the posterior variance and bias decrease at a rate of $t^{-M_1}$. 
As 
shown in \cite{gelman2013bayesian}, under standard statistical assumptions, both $v_t$ and $(m_t-\theta^c)^2$ typically converge at a rate of $O(\frac{1}{t})$, corresponding to $M_1=1$,
see 
Example \ref{example-1}.
With the convergence rate of $v_t$ and $(m_t-\theta^c)^2$, 
we can establish the rate of  convergence of the optimal value function 
from Theorem \ref{thm-ca-2}
under some specific BCRs.
\begin{itemize}
    \item If 
the outer risk measure is chosen 
with $\mathbb{E}_{\mu_t}$, $\text{VaR}_{\mu_{t}}^\alpha$, 
and robust SRM, the convergence rate of $V^{*}(s,\mu_t)$ to $V^{*}(s,\delta_{\theta^c})$ is $O(t^{-\frac{k}{2M_1}})$.
   \item If 
the outer risk measure is chosen 
with $\text{AVaR}_{\mu_{t}}^\alpha$, the convergence rate of $V^{*}(s,\mu_t)$ to $V^{*}(s,\delta_{\theta^c})$ is $O(t^{-\frac{k}{4M_1}})$.
\end{itemize}
Shapiro et al.~\cite{shapiro2023episodic} 
demonstrate that the episodic Bayesian SOC model converges at a rate of $O(\frac{1}{\sqrt{t}})$. Under some additional mild conditions, we have demonstrated that the convergence rate can be $O(\frac{1}{t})$ ($M_1=1$ and $k=2$).
Moreover, Part (i) 
of the theorem provides 
a supplement to Part (iii). 
Specifically, when $k \leq 2$, we have the inequality $\text{AVaR}_{\mu_t}^\alpha(\psi(\theta))\leq\frac{1}{\alpha}\mathbb{E}_{\mu_t}(\psi(\theta))$, demonstrating that the convergence rate for $\text{AVaR}_{\mu_t}^\alpha$ can be as fast as that of $\mathbb{E}_{\mu_t}$.

\noindent \textbf{Proof.}
{\color{black}Part (i). By 
the definitions of $v_t$ and $m_t$ in Assumption~\ref{ass-ca-1}, we have
\begin{eqnarray} 
\mathbb{E}_{\mu_t}\left[\left(\theta-\theta^c\right)^2\right]=v_t+\left(m_t-\theta^c\right)^2.
\label{eq:thm5.3-pf-1-var}
\end{eqnarray} 
Since $\rho_{\mu_t}=\mathbb{E}_{\mu_t}$, then by 
(\ref{eq:inner-rm-Lip}), we have
\begin{eqnarray}
\sup_{s\in\mathcal{S}}\left|V^*(s, \mu_t)-V^*\left(s, \delta_{\theta^c}\right)\right| 
\leq \sup_{(s,a)\in\mathcal{S}\times\mathcal{A}}
\mathbb{E}_{\mu_t}\left[|\psi_{s, a}(\theta)|\right] 
\leq L_\rho^{\rm in}\mathbb{E}_{\mu_t}[|\theta-\theta^c|^k].
\label{eq:thm5.3-pf-2}
\end{eqnarray} 
By H\"older inequality,
\begin{eqnarray}
\mathbb{E}_{\mu_t}[|\theta-\theta^c|^k]\leq
\left(\mathbb{E}_{\mu_t}\left[\left(\theta-\theta^c\right)^2\right]\right)^{\frac{k}{2}}\leq \left(v_t+\left(m_t-\theta^c\right)^2\right)^{\frac{k}{2}}.
\label{eq:thm5.3-pf-3}
\end{eqnarray} 
A combination of (\ref{eq:thm5.3-pf-1-var})-
(\ref{eq:thm5.3-pf-3}) yields \eqref{upbound-E-rho}.

\vspace{0.2cm}

Part (ii) Under Assumption \ref{ass-lip-rho}, it follows from (\ref{bayescon}) that
	\begin{equation*}
		Q_{\mu_t}\left(|\psi(\theta)|\leq  L_\rho^{\rm in} \varepsilon\right)
        \geq Q_{\mu_t}(|\theta-\theta^c|^k\leq\varepsilon)\geq1-\frac{v_t+(m_t-\theta^c)^2}{\varepsilon^{2/k}}.
	\end{equation*}
For the specified $\alpha$ in the definition
of $\text{VaR}^\alpha_{\mu_t}$, let 
$\varepsilon :=\left({\frac{v_t+(m_t-\theta^c)^2}{\alpha}}\right)^{\frac{k}{2}}$.
Then the inequality above can equivalently be 
written as
	\begin{equation}\label{eqvar}
		Q_{\mu_t}\left(|\psi(\theta)|\leq L_\rho^{\rm in} \left({\frac{v_t+(m_t-\theta^c)^2}{\alpha}}\right)^{\frac{k}{2}}\right)\geq1-\alpha.
	\end{equation}
Let $q(\theta) = |\psi(\theta)|
$ and $\nu_q=Q_{\mu_t}\circ q^{-1}$. Then \eqref{eqvar} implies
	\begin{equation}\label{eqvar-2}
		{\nu_q}\left(q(\theta)\leq L_\rho^{\rm in} \left({\frac{v_t+(m_t-\theta^c)^2}{\alpha}}\right)^{\frac{k}{2}}\right)\geq1-\alpha.
	\end{equation}
    By the definition of VaR, this implies
$$
\text{VaR}_{\nu_q}^\alpha(q(\theta))\leq
L_\rho^{\rm in} \left({\frac{v_t+(m_t-\theta^c)^2}{\alpha}}\right)^{\frac{k}{2}}.
$$
Thus, we obtain that
    \begin{align*}
		&
\left|\text{VaR}_{\mu_t}^\alpha\circ\rho_{P_{\theta}}\left[\mathcal{C}(s, a, \xi)+\gamma V^{*}\left(s^{\prime}, \mu_{t+1}\right)\right]-\rho_{P_{\theta^c}}\left[\mathcal{C}(s, a, \xi)+\gamma V^{*}\left(s^{\prime}, \mu_{t+1}\right)\right]\right|\\
\leq& \text{VaR}_{\mu_t}^\alpha(|\psi(\theta)|)=\text{VaR}_{\nu_q}^\alpha(q(\theta))\leq
L_\rho^{\rm in} \left({\frac{v_t+(m_t-\theta^c)^2}{\alpha}}\right)^{\frac{k}{2}}.
    \end{align*}           	
    Similar to the proof of (\ref{4.18}), we can conclude that
\begin{equation*}
			\begin{aligned}
					&\sup_{s\in\mathcal{S}}\left|V^{*}(s,\mu_t)-V^{*}(s,\delta_{\theta^c})\right|\\
				 \leq& \sup_{(s,a)\in\mathcal{S}\times\mathcal{A}}\left|\rho_{ \mu_t}\circ\rho_{P_{\theta}}\left[\mathcal{C}(s, a, \xi)+\gamma V^{*}\left(s^{\prime}, \mu_{t+1}\right)\right] -\rho_{\delta_{\theta^c}}\circ\rho_{P_{\theta}}\left[\mathcal{C}(s, a, \xi)+\gamma V^{*}\left(s^{\prime}, \mu_{t+1}\right)\right] \right|\\
				 +& \sup_{(s,a)\in\mathcal{S}\times\mathcal{A}}\left|\rho_{\delta_{\theta^c}}\circ\rho_{P_{\theta}}\left[\mathcal{C}(s, a, \xi)+\gamma V^{*}\left(s^{\prime}, \mu_{t+1}\right)\right] -\rho_{\delta_{\theta^c}}\circ\rho_{P_{\theta}}\left[\mathcal{C}(s, a, \xi)+\gamma V^{*}\left(s^{\prime}, \delta_{\theta^c}\right)\right]\right|\\
				 \leq&L_\rho^{\rm in} \left({\frac{v_t+(m_t-\theta^c)^2}{\alpha}}\right)^{\frac{k}{2}}+ \gamma\rho_{P_{\theta^c}}\left[\sup_{s\in\mathcal{S}}\left| V^{*}\left(s, \mu_{t+1}\right)- V^{*}\left(s, \delta_{\theta^c}\right)\right|\right]\\
     		\leq&\frac{L_\rho^{\rm in} }{1-\gamma}\left({\frac{v_t+(m_t-\theta^c)^2}{\alpha}}\right)^{\frac{k}{2}}.
			\end{aligned}
		\end{equation*}
Thus the proof for $\text{VaR}_{\mu_{t}}^\alpha$ is completed. Similar result can be deduced by the definition of $\sup_{\sigma\in \mathfrak{A}} M_\sigma$.

    Part (iii). Similar to the proof 
    of Part (ii),
    we have 
	\begin{equation*}
		Q_{\mu_t}\left(|\psi(\theta)|\leq {L_\rho ^{\rm in}}\varepsilon\right)\geq Q_{\mu_t}(|\theta-\theta^c|^k\leq\varepsilon)\geq1-\frac{v_t+(m_t-\theta^c)^2}{\varepsilon^{2/k}}.
	\end{equation*}
By setting $\varepsilon=t^{M_1k/4}\left({\frac{v_t+(m_t-\theta^c)^2}{\alpha}}\right)^{\frac{k}{2}}$, we can 
obtain
	\begin{equation}\label{eqcvar}
	Q_{\mu_t}\left(|\psi(\theta)|\leq {L_\rho^{\rm in} }{t^{M_1k/4}}\left({\frac{v_t+(m_t-\theta^c)^2}{\alpha}}\right)^{\frac{k}{2}}\right)\geq1-\frac{\alpha}{t^{M_1/2}}\geq1-\alpha.
\end{equation}
Let 
$$
\Theta_{\varepsilon}=\left\{\theta:|\psi(\theta)|\leq  {L_\rho^{\rm in} }{t^{M_1k/4}}\left({\frac{v_t+(m_t-\theta^c)^2}{\alpha}}\right)^{\frac{k}{2}}\right\}, \ \bar{\Theta}_\varepsilon=\Theta-\Theta_\varepsilon,
$$ 
and 
$$
\Theta_\alpha=\left\{\theta:\rho_{P_{\theta}}\left[\mathcal{C}(s, a, \xi)+\gamma V^{*}\left(s^{\prime}, \mu_{t+1}\right)\right]\geq\text{VaR}_{\mu_t}^\alpha\circ\rho_{P_{\theta}}\left[\mathcal{C}(s, a, \xi)+\gamma V^{*}\left(s^{\prime}, \mu_{t+1}\right)\right]\right\}.
$$ 
Then, for all $s\in\mathcal{S}$, we have
	\begin{align*}
		&\left|\text{AVaR}_{\mu_t}^\alpha\circ\rho_{P_{\theta}}\left[\mathcal{C}(s, a, \xi)+\gamma V^{*}\left(s^{\prime}, \mu_{t+1}\right)\right]-\rho_{P_{\theta^c}}\left[\mathcal{C}(s, a, \xi)+\gamma V^{*}\left(s^{\prime}, \mu_{t+1}\right)\right]\right|\\
		=&\frac{1}{\alpha}\left|\int_{\Theta_\alpha}\left(\rho_{P_{\theta}}\left[\mathcal{C}(s, a, \xi)+\gamma V^{*}\left(s^{\prime}, \mu_{t+1}\right)\right]-\rho_{P_{\theta^c}}\left[\mathcal{C}(s, a, \xi)+\gamma V^{*}\left(s^{\prime}, \mu_{t+1}\right)\right]\right)\mu_t(\theta)d\theta\right|\\
		\leq&\frac{1}{\alpha}\int_{\Theta_\alpha\cap\Theta_\varepsilon}\left|\rho_{P_{\theta}}\left[\mathcal{C}(s, a, \xi)+\gamma V^{*}\left(s^{\prime}, \mu_{t+1}\right)\right]-\rho_{P_{\theta^c}}\left[\mathcal{C}(s, a, \xi)+\gamma V^{*}\left(s^{\prime}, \mu_{t+1}\right)\right]\right|\mu_t(\theta)d\theta\\
		+&\frac{1}{\alpha}\int_{\Theta_\alpha\cap\bar{\Theta}_\varepsilon}\left|\rho_{P_{\theta}}\left[\mathcal{C}(s, a, \xi)+\gamma V^{*}\left(s^{\prime}, \mu_{t+1}\right)\right]-\rho_{P_{\theta^c}}\left[\mathcal{C}(s, a, \xi)+\gamma V^{*}\left(s^{\prime}, \mu_{t+1}\right)\right]\right|\mu_t(\theta)d\theta\\
  		\leq&\frac{1}{\alpha}\sup_{\theta\in\Theta_\varepsilon}\left|\rho_{P_{\theta}}\left[\mathcal{C}(s, a, \xi)+\gamma V^{*}\left(s^{\prime}, \mu_{t+1}\right)\right]-\rho_{P_{\theta^c}}\left[\mathcal{C}(s, a, \xi)+\gamma V^{*}\left(s^{\prime}, \mu_{t+1}\right)\right]\right|\\
		+&\frac{1}{\alpha}\int_{\bar{\Theta}_\varepsilon}\left|\rho_{P_{\theta}}\left[\mathcal{C}(s, a, \xi)+\gamma V^{*}\left(s^{\prime}, \mu_{t+1}\right)\right]-\rho_{P_{\theta^c}}\left[\mathcal{C}(s, a, \xi)+\gamma V^{*}\left(s^{\prime}, \mu_{t+1}\right)\right]\right|\mu_t(\theta)d\theta\\
  		\leq&\frac{1}{\alpha}\sup_{\theta\in\Theta_\varepsilon}\left|\psi(\theta)\right|+\frac{\bar{\mathcal{C}}}{(1-\gamma)\alpha}\int_{\bar{\Theta}_\varepsilon}\mu_t(\theta)d\theta\\
		\leq&\frac{{L_\rho^{\rm in} }t^{M_1k/4}}{\alpha}\left({\frac{v_t+(m_t-\theta^c)^2}{\alpha}}\right)^{\frac{k}{2}}+\frac{\bar{\mathcal{C}}}{(1-\gamma)t^{M_1/2}}.
	\end{align*}    
	The second last inequality results from the boundedness of the cost function. In combination with (\ref{4.18}), we can conclude that
	\[\sup_{s\in\mathcal{S}}\left|V^{*}(s,\mu_t)-V^{*}(s,\delta_{\theta^c})\right|\leq\frac{L_\rho^{\rm in} t^{M_1k/4}}{(1-\gamma)\alpha}\left({\frac{v_t+(m_t-\theta^c)^2}{\alpha}}\right)^{\frac{k}{2}}+\frac{\bar{\mathcal{C}}}{(1-\gamma)^2t^{M_1/2}},\]
    which completes the proof for $\text{AVaR}_{\mu_{t}}^\alpha$.
\hfill $\Box$

\subsection{Value iterations algorithm for BCR-SOC/MDP}

Having examined the theoretical properties of the proposed infinite-horizon BCR-SOC/MDP problem, we now consider its solution. This subsection introduces an algorithm for computing the unique value function $V^*$ within the infinite-horizon BCR-SOC/MDP framework. 
The algorithm is inspired by the dynamic programming equation $(\mathcal{T}  V^{*})(s, \mu)= V^*(s, \mu)$ embodying a risk-averse adaptation of the usual value iteration method applied in standard SOC/MDPs (see \cite{bellman2015applied,howard1960dynamic} for details). 

In the BCR-SOC/MDP model, the introduction of risk-averse characteristics requires adjustments to the conventional value iteration algorithm to accommodate the new dynamic programming equations. The core of these adjustments lies in integrating the impact of risk measures on decision-making during the value function update process. Building on the framework of the conventional value iteration algorithm, we propose a value iteration algorithm tailored for BCR-SOC/MDP, as detailed in Algorithm~\ref{alg:B}, which aims to approximate $V^*$ with $\hat{V}^*$ and meanwhile identify the corresponding $\varepsilon$-optimal policy $\pi^{\varepsilon,*}$.

\begin{algorithm}
	\caption{Value Iteration Algorithm for Infinite-horizon BCR-SOC/MDP (\ref{eq:BCR-SOC/MDP-infinite}).}	
    \label{alg:B}  
	\begin{algorithmic}[1]
		\STATE Initialization: the precision parameter $\varepsilon$, the iteration number $I$
        and the        
        value functions 
        $V_0=0$ and 
        $V_1={\cal T}V_0$, 
set $i :=0$;
		\WHILE{$\left\|V_{i+1}-V_{i}\right\|_\infty\geq\varepsilon(1-\gamma)/2\gamma$
        \textbf{ or }
        $i\leq I$}
        
		\FORALL{$\left(s, \mu\right) \in \mathcal{S} \times \mathcal{D}_1^0$}
		\STATE 
        \bgeqn 
         \label{eq:BCR-Alg2}
        V_{i+1}(s,\mu)\gets \min _{a \in \mathcal{A}} \rho_{\mu}\circ\rho_{P_{\theta}}\left[\mathcal{C}(s, a, \xi)+\gamma V_i\left(s^{\prime}, \mu^{\prime}\right)\right],\ s'=g\left(s, a, \xi\right),\  \mu'(\theta)=\frac{p\left(\xi |\theta\right)\mu(\theta) }{\int_\Theta p\left(\xi |\theta\right)\mu(\theta)  d \theta}.\quad
        \edeqn 
		\ENDFOR
		\STATE Set $i :=i+1$.
		\ENDWHILE
		\FORALL{$\left(s, \mu\right) \in \mathcal{S} \times \mathcal{D}_1^0$}
		\STATE 
        Set   $\hat{V}^*(s,\mu) :=V_{i}(s,\mu)$ 
        and determine $\pi^{\varepsilon,*}(s,\mu)$ by
        \begin{eqnarray} \label{eq-algorithm2}
        \pi^{\varepsilon,*}(s,\mu)=\arg\min _{a \in \mathcal{A}} \rho_{\mu}\circ\rho_{P_{\theta}}\left[\mathcal{C}(s, a, \xi)+\gamma \hat{V}^*\left(s^{\prime}, \mu^{\prime}\right)\right].
        \end{eqnarray} 
		\ENDFOR
		\RETURN The optimal value function $\hat{V}^*$ and optimal policy $\pi^{\varepsilon,*}$.
	\end{algorithmic}
\end{algorithm}

This kind of algorithm is well known in the literature of infinite horizon MDP, see \cite{puterman2014markov,ruszczynski2010risk}. 
There are some specific issues in the algorithm 
which need to be addressed.

\begin{itemize}
\item[(i)] Well-definedness of problems \eqref{eq:BCR-Alg2} and
\eqref{eq-algorithm2}. We need to ensure that the minimum in the two problems are attainable and that the second problem has a unique optimal solution.

\item[(ii)] Convergence of sequence $\{V^i\}$. The convergence of such algorithms is 
well documented and will provide a theoretical
guarantee for the convergence of the algorithm. Specifically,
we will need to explain the 
stopping criterion 
for obtaining 
an approximate optimal value function and 
a corresponding $\varepsilon$-optimal policy of infinite-horizon BCR-SOC/MDPs.

\item[(iii)] Discreteness of the set $\mathcal{S}\times\mathcal{D}^0_1$.
Steps 2-7 are designed to 
obtain an approximate optimal value function of problem (\ref{eq:BCR-SOC/MDP-infinite}). As in Algorithm~\ref{alg:A}, 
this requires the set $\mathcal{S}\times\mathcal{D}^0_1$ to be a finite discrete set. 
The main challenge (computational complexity) lies
in Steps 3-5.
In the case that the physical state space ${\cal S}$ and/or
the belief space $\mathcal{D}_1^0$ is infinite,
the algorithm cannot complete the steps in a finite number of iterations. On the other hand, 
in many practical applications, the belief space $\mathcal{D}_1^0$ is often infinite dimensional 
(the posterior distributions $\mu_t$, $t=1,\cdots$,
are continuously distributed). 
We address the issue in 
the Section 6. 
\end{itemize}
In what follows, we address 
the first two issues. We begin with convergence of $\{V^i\}$. The next theorem states the uniform convergence of the value functions.

\begin{theorem}\label{thm-viqi-1}
	Let the sequence 
    $\{V_{i}\}$ be generated by Algorithm \ref{alg:B}. Then 
    \begin{itemize}
    
\item[(i)]     $\{V_{i}\}$  converges uniformly
to the unique value function $V^*$ at a rate of $\gamma$. Further, the initialization $V_0=0$ ensures a non-decreasing sequence $\{V_{i}\}$.
Conversely, if Algorithm \ref{alg:B} is initialized with  $V_0\geq\frac{\bar{\mathcal{C}}}{1-\gamma}$, the resulting sequence $\{V_i\}$ is non-increasing.

\item[(ii)] The policy $\pi^{\varepsilon,*}$  derived in Algorithm  \ref{alg:B} is $\varepsilon$-optimal, i.e., the corresponding value function ${V}^{\pi^{\varepsilon,*}}$ satisfies
\begin{eqnarray}\label{eq-vipi-result}
\left\|
{V}^{\pi^{\varepsilon,*}}
-V^*\right\|_\infty\leq\varepsilon.
\end{eqnarray}

\end{itemize}
\end{theorem}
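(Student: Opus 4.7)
The proof proceeds in two parts, both leveraging the $\gamma$-contraction and monotonicity of the Bellman operator $\mathcal{T}$ established in Lemma \ref{lem-beo-2}, together with the fact (Theorem \ref{thm-beo-1}) that $V^*$ is the unique fixed point of $\mathcal{T}$ in $(\mathfrak{B}(\mathcal{S},\mathcal{D}_1^0),\|\cdot\|_\infty)$.

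\emph{Part (i).} Since $V_{i+1}=\mathcal{T}V_i$ and $V^*=\mathcal{T}V^*$, iterating the $\gamma$-contraction inequality gives
\begin{equation*}
\|V_i-V^*\|_\infty = \|\mathcal{T}V_{i-1}-\mathcal{T}V^*\|_\infty \leq \gamma\|V_{i-1}-V^*\|_\infty \leq \cdots \leq \gamma^i\|V_0-V^*\|_\infty,
\end{equation*}
which establishes uniform convergence at geometric rate $\gamma$ from any starting point. For the monotonicity claims, I would first verify the seed inequality $V_1\geq V_0$ (respectively $V_1\leq V_0$); once this is established, monotonicity of $\mathcal{T}$ (Lemma \ref{lem-beo-2}(i)) propagates the ordering inductively, yielding $V_{i+1}=\mathcal{T}V_i\geq\mathcal{T}V_{i-1}=V_i$ (resp.\ $V_{i+1}\leq V_i$). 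For $V_0=0$, the monetary property of the BCR combined with non-negativity of the cost (standard in SOC/MDP formulations bounded by $\bar{\mathcal{C}}$) gives $V_1=\min_a\rho_\mu\circ\rho_{P_\theta}[\mathcal{C}]\geq 0=V_0$. For $V_0\geq\bar{\mathcal{C}}/(1-\gamma)$, since $V_0$ is a constant and $\mathcal{C}\leq\bar{\mathcal{C}}$, translation invariance yields $\mathcal{C}+\gamma V_0\leq\bar{\mathcal{C}}+\gamma V_0\leq V_0$ pointwise, so $V_1=\mathcal{T}V_0\leq V_0$.

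\emph{Part (ii).} By construction, $\pi^{\varepsilon,*}$ is greedy with respect to $\hat{V}^*$, so $\mathcal{T}^{\pi^{\varepsilon,*}}\hat{V}^*=\mathcal{T}\hat{V}^*$. Write $\bar{V}:=\mathcal{T}\hat{V}^*$; the termination criterion of Algorithm \ref{alg:B} guarantees $\|\bar{V}-\hat{V}^*\|_\infty<\varepsilon(1-\gamma)/(2\gamma)$. Using $V^*=\mathcal{T}V^*$, the $\gamma$-contraction, and the triangle inequality,
\begin{equation*}
\|\bar{V}-V^*\|_\infty \leq \gamma\|\hat{V}^*-V^*\|_\infty \leq \gamma\bigl(\|\hat{V}^*-\bar{V}\|_\infty+\|\bar{V}-V^*\|_\infty\bigr),
\end{equation*}
which rearranges to $\|\bar{V}-V^*\|_\infty\leq\tfrac{\gamma}{1-\gamma}\|\bar{V}-\hat{V}^*\|_\infty$. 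An analogous argument starting from the fixed-point identity $V^{\pi^{\varepsilon,*}}=\mathcal{T}^{\pi^{\varepsilon,*}}V^{\pi^{\varepsilon,*}}$ and invoking the $\gamma$-contraction of $\mathcal{T}^{\pi^{\varepsilon,*}}$ (Lemma \ref{lem-beo-2}(iii)) yields $\|V^{\pi^{\varepsilon,*}}-\bar{V}\|_\infty\leq\tfrac{\gamma}{1-\gamma}\|\bar{V}-\hat{V}^*\|_\infty$. A final triangle inequality combines these into $\|V^{\pi^{\varepsilon,*}}-V^*\|_\infty\leq\tfrac{2\gamma}{1-\gamma}\|\bar{V}-\hat{V}^*\|_\infty<\varepsilon$, the stopping threshold being chosen precisely to absorb the factor $2\gamma/(1-\gamma)$.

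The main obstacle is the seed inequality for monotonicity in Part (i): verifying $V_1\geq V_0$ for the zero-initialization case is not automatic from the bound $|\mathcal{C}|\leq\bar{\mathcal{C}}$ alone and relies on a (commonly implicit) non-negativity assumption on the cost, or else on replacing $V_0=0$ with the lower-bound initialization $V_0=-\bar{\mathcal{C}}/(1-\gamma)$, for which translation invariance of the BCR gives the seed inequality cleanly. Everything else reduces to the standard contraction-plus-greedy-policy estimates, with the key auxiliary ingredient that the same contraction constant $\gamma$ controls $\mathcal{T}^\pi$ as well as $\mathcal{T}$.
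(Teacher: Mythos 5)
Your proof is correct and follows the standard contraction/greedy-policy argument that the paper itself describes as ``standard in the literature of MDP'' and defers to its online version: the geometric-rate bound $\|V_i-V^*\|_\infty\le\gamma^i\|V_0-V^*\|_\infty$, the monotone propagation from the seed inequality via Lemma~\ref{lem-beo-2}(i), and the splitting through $\bar V=\mathcal{T}\hat V^*$ giving $\|V^{\pi^{\varepsilon,*}}-V^*\|_\infty\le\tfrac{2\gamma}{1-\gamma}\|\bar V-\hat V^*\|_\infty$ all check out against the stopping threshold $\varepsilon(1-\gamma)/(2\gamma)$. Your caveat about the seed inequality is well taken: Assumption~\ref{ass5.1}(b) only imposes $|\mathcal{C}|\le\bar{\mathcal{C}}$, so the claim that $V_0=0$ yields a non-decreasing sequence does require the additional (implicit) hypothesis $\mathcal{C}\ge 0$, or else the initialization $V_0=-\bar{\mathcal{C}}/(1-\gamma)$ that you suggest, for which translation invariance gives the seed inequality cleanly.
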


The proof is standard in the literature of MDP, see details in the online version of this paper \cite{ma2024bayesian}.

Next, we discuss the well-definedness of problems \eqref{eq:BCR-Alg2} and
\eqref{eq-algorithm2}. To this end, we need to make the following assumption.

\begin{assumption}\label{ass:cont}
Suppose (a)
$\mathcal C(s,a,\xi)$ is jointly continuous and convex in $(s,a)$
for every $\xi\in\Xi$ and strongly convex in~$a$
with a constant $\lambda>0$.
(b)
$g(s,a,\xi)$ is jointly continuous in $(s,a)$
for every $\xi$ and affine in $a$.
(c)
Both inner and outer risk measures
are coherent risk measures.
\end{assumption}

\begin{proposition}\label{thm:V-cont}
Under Assumptions~\ref{ass5.1} and~\ref{ass:cont}, both the  approximation \(\hat V^*\) produced by Algorithm~\ref{alg:B} and the unique fixed point \(V^*\) are continuous in \((s,\mu)\).  Moreover, the policy \(\pi^{\varepsilon,*}(s,\mu)\) defined in \eqref{eq-algorithm2} is unique and continuous.
\end{proposition}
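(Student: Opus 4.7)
The plan is to establish continuity via induction on the value‑iteration sequence $\{V_i\}$, pass to the limit using the uniform convergence of Theorem~\ref{thm-viqi-1}, and finally derive uniqueness and continuity of $\pi^{\varepsilon,*}$ from strong convexity of the per‑episode objective combined with Berge's maximum theorem. A preliminary step is to show that the Bayesian update map $\mathcal U:\Xi\times\mathcal D_1^0\to\mathcal D_1^0$, $\mathcal U(\xi,\mu)(\theta)=p(\xi|\theta)\mu(\theta)/\!\int_\Theta p(\xi|\theta')\mu(\theta')d\theta'$, is jointly continuous with $\mathcal D_1^0$ equipped with the weak topology: under Assumption~\ref{ass-bcr-1}(d) every bounded continuous test integral $\int\phi(\theta)p(\xi|\theta)\mu(\theta)d\theta$ is continuous in $(\xi,\mu)$, and the normalizer is bounded away from zero on the relevant compacta. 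Together with Assumption~\ref{ass:cont}(b), the augmented transition $(s,a,\xi,\mu)\mapsto(g(s,a,\xi),\mathcal U(\xi,\mu))$ is jointly continuous.

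I would then prove by induction that each iterate $V_i$ is continuous on $\mathcal S\times\mathcal D_1^0$ and convex in $s$. The base $V_0\equiv 0$ is trivial. For the inductive step, the integrand $h_i(s,a,\xi,\mu):=\mathcal C(s,a,\xi)+\gamma V_i(g(s,a,\xi),\mathcal U(\xi,\mu))$ is bounded (using Assumption~\ref{ass5.1}(b) and $\|V_i\|_\infty\le\bar{\mathcal C}/(1-\gamma)$) and jointly continuous in $(s,a,\xi,\mu)$. Applying Lemma~\ref{Prop:continuity}(iv) first to the inner risk $\rho_{P_\theta}$ (with $\xi$ the random element and $(s,a,\mu,\theta)$ the parameter) and then to the outer risk $\rho_\mu$ (with $\theta$ the random element and $(s,a,\mu)$ the parameter), combined with the continuity of $\theta\mapsto P_\theta$ in weak topology furnished by Assumption~\ref{ass-bcr-1}(d), yields joint continuity of $(s,a,\mu)\mapsto\rho_\mu\circ\rho_{P_\theta}(h_i)$. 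Compactness of $\mathcal A$ (Assumption~\ref{ass-bcr-1}(e)) and Berge's maximum theorem then give continuity of $V_{i+1}$, while convexity in $s$ propagates as in Proposition~\ref{convex}(i). Since Algorithm~\ref{alg:B} halts at a finite iterate, $\hat V^*=V_i$ inherits both properties; and by Theorem~\ref{thm-viqi-1}(i), $V_i\to V^*$ uniformly, so $V^*$ is also continuous and convex in $s$ as a uniform limit.

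For the minimizer in \eqref{eq-algorithm2}, I would show that $a\mapsto\rho_\mu\circ\rho_{P_\theta}[\mathcal C(s,a,\xi)+\gamma\hat V^*(g(s,a,\xi),\mathcal U(\xi,\mu))]$ is $\lambda$-strongly convex. The affine dependence of $g$ on $a$ together with convexity of $\hat V^*$ in $s$ makes the second summand convex in $a$, and Assumption~\ref{ass:cont}(a) supplies the $\lambda$-strong convexity of $\mathcal C$ in $a$, so the integrand is pointwise $\lambda$-strongly convex in $a$. The composite risk measure preserves this property: for any coherent (hence monotone, translation-invariant and convex) $\rho$ and random function $X(a)$ that is pointwise $\lambda$-strongly convex,
\begin{eqnarray*}
\rho\!\left(X(\tau a_1+(1-\tau)a_2)\right)
&\leq& \rho\!\left(\tau X(a_1)+(1-\tau)X(a_2)-\tfrac{\tau(1-\tau)\lambda}{2}\|a_1-a_2\|^2\right)\\
&=&\rho\!\left(\tau X(a_1)+(1-\tau)X(a_2)\right)-\tfrac{\tau(1-\tau)\lambda}{2}\|a_1-a_2\|^2\\
&\leq&\tau\rho(X(a_1))+(1-\tau)\rho(X(a_2))-\tfrac{\tau(1-\tau)\lambda}{2}\|a_1-a_2\|^2,
\end{eqnarray*}
by monotonicity, translation invariance, and convexity respectively; applying this successively to $\rho_{P_\theta}$ and $\rho_\mu$ transfers $\lambda$-strong convexity to the composite objective. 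Uniqueness of $\pi^{\varepsilon,*}(s,\mu)$ follows, and Berge's maximum theorem with single-valued argmin upgrades upper hemicontinuity to continuity on $\mathcal S\times\mathcal D_1^0$.

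The principal obstacle will be the first step: rigorously verifying joint continuity of the Bayesian update in the weak topology on $\mu$ and checking that all continuity statements of Lemma~\ref{Prop:continuity} interact cleanly with the dependence of $\mu'=\mathcal U(\xi,\mu)$ on the integration variable $\xi$ inside the inner risk measure. Once this coupling is handled, the induction, the uniform passage to the limit, and the strong-convexity argument proceed along standard lines.
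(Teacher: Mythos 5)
Your proposal is correct and follows essentially the same route as the paper's proof: induction on the value-iteration sequence with Berge's maximum theorem at each step, uniform convergence (Theorem~\ref{thm-viqi-1}) to transfer continuity to $V^*$, and $\lambda$-strong convexity in $a$ (inherited from $\mathcal C$, affine $g$, and convexity of the iterates in $s$) to obtain uniqueness and continuity of $\pi^{\varepsilon,*}$. The two places where you go beyond the paper --- the explicit verification that coherent risk measures preserve pointwise strong convexity via monotonicity, translation invariance and convexity, and the joint continuity of the Bayesian update map, which the paper's proof takes for granted --- are welcome additions rather than deviations.
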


\begin{proof}
Let
$$
\Psi_V(s,\mu,a)
  :=\rho_\mu\!\circ\rho_{P_\theta}\!
     \bigl[\mathcal C(s,a,\xi)+\gamma V\bigl(g(s,a,\xi),\mu'\bigr)\bigr].
$$
We proceed the proof by induction.  
For $i=0$, 
the initial value function \(V_0\equiv 0\) which  is trivially continuous in \((s,\mu)\) and convex in \(s\).  
Thus, \(\Psi_{V_0}\) is continuous in \((s,\mu)\) and strongly convex in \(a\) given that $\mathcal{C}(s,a,\xi)$ is strongly convex in $a$. 
Since the set of 
feasible actions
\(\mathcal A\) is compact, then by Berge’s maximum theorem \cite[Berge’s maximum theorem 17.31]{aliprantis2006infinite}, problem
\[
  (\mathcal T V_0)(s,\mu)
  = \min_{a\in\mathcal A}\Psi_{V_0}(s,\mu,a)
\]
has a unique attainable minimizer and 
the optimal value function is continuous in 
\((s,\mu)\).
Moreover, since $\mathcal C(s,a,\xi)$ is jointly convex in $(s,a)$, then $(\mathcal T V_0)(s,\mu)$ is convex in $s$.
Assume now the conclusion holds for $i=i_0$. That is, 
$V_{i_0}(s,\mu)$ is continuous in $(s,\mu)$ and strongly convex in $s$. Then
$$
V_{i_0+1}(s,\mu) = \min_{a\in\mathcal A}\Psi_{V_{i_0}}(s,\mu,a)=\rho_\mu\!\circ\rho_{P_\theta}\!
     \bigl[ \mathcal{C}(s,a,\xi)+\gamma V_{i_0}\bigl(g(s,a,\xi),\mu'\bigr)\bigr].
$$
Since $g$ is linear in $a$, then $V_{i_0}\bigl(g(s,a,\xi),\mu'\bigr)$ is also convex in $a$ and $C(s,a,\xi)+\gamma V_{i_0}\bigl(g(s,a,\xi),\mu'\bigr)$ is strongly convex in $a$. Repeating the argument in $i=0$ case, we conclude that $V_{i_0+1}(s,\mu)$ is continuous in $(s,\mu)$ and convex in $s$.
On the other hand, it follows from the algorithm that
\(
\hat V^*  := 
V_I\) and 
that by Theorem~\ref{thm-viqi-1},
$V_I$
converges uniformly to $V^*$
as $I\to \infty$
since $\|V_I-V^*\|_\infty\leq \frac{\gamma^I}{1-\gamma}\|V_1-V_0\|_\infty$.
Thus $V^*$ is continuous in $(s,\mu)$.

Finally, since \(\Psi_{\hat V^*}(s,\mu,\cdot)\) is \(\lambda\)-strongly convex according to Assumption \ref{ass:cont}, the \(\varepsilon\)-optimal action \(\pi^{\varepsilon,*}(s,\mu)\) is unique.  
By \cite[Berge’s maximum theorem 17.31]{aliprantis2006infinite}, the
\(\varepsilon\)-optimal action \(\pi^{\varepsilon,*}(s,\mu)\) is continuous in $(s,\mu)$.
\end{proof}

\begin{proposition}
\label{prop-lip-infinite-horizon}
Assume: (a) Assumptions~\ref{ass5.1} and~\ref{ass:cont} hold, and (b)
$\mathcal{C}(s,a,\xi)$ and $g(s,a,\xi)$ are Lipschitz continuous in $s\in\mathcal S$
uniformly for all $a,\xi$ with moduli $L_{\mathcal C}$ and $L_g$, respectively.
Then the folloowing assertions hold.
\begin{itemize}
    
\item[(i)] For any $\varepsilon>0$, let
$
I\ \ge\ \Big\lceil \frac{\ln\!\big(\tfrac{\bar{\cal C}}{(1-\gamma)\varepsilon}\big)}{\ln(1/\gamma)} \Big\rceil$, 
then
$\hat V^*:=V_I$ is an approximation of $V^*$ such that
$\|\hat V^*-V^*\|_\infty\le \varepsilon$ 
and
$\hat V^*(s,\mu)$ is $L_{\hat V^*}$-Lipschitz in $s$ with
\begin{equation}\label{eq:Lip-V_infinite}
L_{\hat V^*}=\frac{L_{\mathcal C}\,\bigl(1-(\gamma L_g)^{I}\bigr)}{1-\gamma L_g}.
\end{equation}

\item[(ii)] 
If, in addition,  $\gamma L_g<1$, then the optimal value function  $V^*$ is $L_{V^*}$-Lipschitz in $s$ with
$L_{V^*}\le \frac{L_{\mathcal C}}{1-\gamma L_g}$.

\end{itemize}

\end{proposition}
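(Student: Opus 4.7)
The plan is to adapt the finite-horizon Lipschitz argument of Proposition~\ref{convex}(ii) to the value iteration sequence $\{V_i\}$ of Algorithm~\ref{alg:B}. The key tool is the non-expansiveness of $\rho_{\mu}$ and $\rho_{P_\theta}$ with respect to the sup-norm, established in Lemma~\ref{lem-beo-2}(ii), which, combined with the routine inequality $|\inf_a f(a)-\inf_a g(a)|\le\sup_a|f(a)-g(a)|$, will transfer Lipschitz estimates on the integrand $\mathcal{C}(s,a,\xi)+\gamma V_i(g(s,a,\xi),\mu')$ into a Lipschitz estimate on $V_{i+1}=\mathcal{T}V_i$.

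For Part (i) I would first calibrate the number of iterations. From the $\gamma$-contraction of $\mathcal{T}$ (Lemma~\ref{lem-beo-2}(iii)) applied to $V_0=0$ and to $V^*$, together with the a priori bound $\|V^*\|_\infty\le\bar{\mathcal{C}}/(1-\gamma)$, I obtain $\|V_I-V^*\|_\infty\le\gamma^I\bar{\mathcal{C}}/(1-\gamma)$; requiring the right-hand side to be at most $\varepsilon$ and solving for $I$ yields the stated lower bound. I would then prove Lipschitz continuity of each $V_i$ in $s$ by induction on $i$, with base case $L_{V_0}=0$ being trivial. For the inductive step, fix $s^1,s^2\in\mathcal{S}$, $\mu$, and a common action $a$, and apply Lemma~\ref{lem-beo-2}(ii) first with $\rho_{P_\theta}$ and then with $\rho_\mu$; the resulting bound is at most $\sup_\xi|\mathcal{C}(s^1,a,\xi)-\mathcal{C}(s^2,a,\xi)|+\gamma\sup_\xi|V_i(g(s^1,a,\xi),\mu')-V_i(g(s^2,a,\xi),\mu')|$, which by the Lipschitz hypotheses on $\mathcal{C}$ and $g$ together with the inductive modulus $L_{V_i}$ is at most $(L_{\mathcal{C}}+\gamma L_g L_{V_i})\|s^1-s^2\|$. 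Taking the infimum over $a$ on both sides gives the recursion $L_{V_{i+1}}\le L_{\mathcal{C}}+\gamma L_g L_{V_i}$, and iterating from $L_{V_0}=0$ yields the geometric sum $L_{V_I}\le L_{\mathcal{C}}\sum_{k=0}^{I-1}(\gamma L_g)^k=L_{\mathcal{C}}(1-(\gamma L_g)^I)/(1-\gamma L_g)$, which matches \eqref{eq:Lip-V_infinite}.

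Part (ii) then follows from Part (i) by letting $I\to\infty$. The assumption $\gamma L_g<1$ forces $(\gamma L_g)^I\to0$, so the Lipschitz modulus of $V_I$ converges to $L_{\mathcal{C}}/(1-\gamma L_g)$; since $V_I\to V^*$ uniformly by Theorem~\ref{thm-viqi-1} and a uniform limit of $L$-Lipschitz functions is $L$-Lipschitz with the limiting modulus, the claim follows. No single step is conceptually hard here, and the argument is essentially the infinite-horizon analogue of Proposition~\ref{convex}(ii), with time-indexing replaced by iteration-indexing. The only technical care needed is the correct sequential application of the two non-expansiveness bounds (inner then outer) and the routine ``infimum passes through uniform bounds'' step, both of which are standard.
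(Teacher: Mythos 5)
Your proposal is correct and follows essentially the same route as the paper: the same calibration of $I$ via the $\gamma$-contraction (the paper bounds $\|V_1-V_0\|_\infty\le\bar{\mathcal C}$ while you bound $\|V_0-V^*\|_\infty\le\bar{\mathcal C}/(1-\gamma)$, yielding the identical estimate), the same induction giving the recursion $L_{i+1}\le L_{\mathcal C}+\gamma L_g L_i$ from $L_0=0$ via non-expansiveness of the risk measures, and the same passage to the limit for Part (ii).
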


\begin{proof}
Part (i). Consider the value iteration sequence $V_{i+1}:=\mathcal T V_i$ with $V_0\equiv 0$. 
From Theorem~\ref{thm-viqi-1},
we know that 
\begin{equation}\label{eq:geom-error}
\|V_I-V^*\|_\infty \;\le\; \frac{\gamma^I}{1-\gamma}\,\|V_1-V_0\|_\infty .
\end{equation}
Since the cost function $\cal C$ is bounded by $\bar{\cal C}$ and 
coherent 
risk measures 
are $1$-Lipschitz in $\|\cdot\|_\infty$, 
we have
$\|V_1\|_\infty=\|\mathcal T V_0\|_\infty\le \bar{\cal C}$. Combining with \eqref{eq:geom-error} gives
$\|V_I-V^*\|_\infty\le \frac{\gamma^I}{1-\gamma}\bar{\cal C}$. Therefore, if
\[
I\ \ge\ \Big\lceil \frac{\ln\!\big(\tfrac{\bar{\cal C}}{(1-\gamma)\varepsilon}\big)}{\ln(1/\gamma)} \Big\rceil,
\]
then $\|V_I-V^*\|_\infty\le \varepsilon$. The conclusion follows by setting $\hat V^*:=V_I$.

Suppose for the moment that $V$ is $L_V$-Lipschitz in $s$ (uniformly with respect to $a,\xi$).
Under the Lipschitzness of 
$\mathcal C$ and $g$,
we can derive for any $s_1,s_2\in\mathcal S$ and $a\in\mathcal A$, 
\[
\begin{aligned}
\big| \rho_\mu\!\circ\rho_{P_\theta}\!\big[\mathcal C(s_1,a,\xi)+\gamma V(g(s_1,a,\xi),\mu')\big]
      - \rho_\mu\!\circ\rho_{P_\theta}\!\big[\mathcal C(s_2,a,\xi)+\gamma V(g(s_2,a,\xi),\mu')\big]\big| &\le  \big(L_{\mathcal C}+\gamma L_g L_V\big)\,\|s_1-s_2\|,
\end{aligned}
\]
which implies that $\mathcal T V$ is $\big(L_{\mathcal C}+\gamma L_g L_V\big)$-Lipschitz in $s$.
Thus, if $V_i$ is Lipschitz continuous with modulus $L_i$, then
$V_{i+1}$ is also Lipschitz continuous with modulus
\[
L_{i+1}\ \le\ L_{\mathcal C}+\gamma L_g\,L_i,\;\; \text{for all}\; i\ge 0.
\]
Starting from $i=0$, we observe that 
$V_0\equiv 0$, which  is trivially Lipschitz continuous in $s$ with modulus
$L_0=0$. The 
recursion above ensures that $V_I$ is Lipschitz continuous in $s$ with
\[
L_I\ \le\ L_{\mathcal C}\sum_{j=0}^{I-1}(\gamma L_g)^j
= \frac{L_{\mathcal C}\,\bigl(1-(\gamma L_g)^I\bigr)}{1-\gamma L_g}.
\]
Since $\hat V^*=V_I$, we obtain 
\(
L_{\hat V^*}
= \frac{L_{\mathcal C}\left(1-(\gamma L_g)^I\right)}{1-\gamma L_g}.
\)
%

Part (ii). 
In the case when
$\gamma L_g<1$, the rhs of the equation is strictly increasing in $I$. Thus,
by letting 
$I\to\infty$,
we obtain an upper  bound
$L_{V^*}\le \frac{L_{\mathcal C}}{1-\gamma L_g}$ for 
the Lipschitz modulus of the 
optimal value function $V^*$.
\end{proof}

\begin{remark} In the literature,
the transition mapping $g$ is  nonexpansive in the state variable $s$, i.e., 
$\|g(s_1,a,\xi)-g(s_2,a,\xi)\|\le \|s_1-s_2\|$ for all $s_1,s_2\in\mathcal S$, $a\in\mathcal A$, and $\xi\in\Xi$, see, e.g.,\cite{shapiro2023episodic,powell2012approximate,yang2020wasserstein,zipkin2008structure}. 
The condition is particularly sensible 
when ${\cal S}$ is bounded.
Under the circumstance,  the condition 
$L_g\le 1$ is trivially satisfied.
\end{remark}

\section{Hyper–parameter approach for discretization of posterior distribution space 
and SAA}

As we commented earlier, 
Algorithms~\ref{alg:A}–\ref{alg:B}
can be implemented in practice only when 
$\mathcal{S} \times \mathcal{D}_1^p$
is a discrete set. 
Since the belief space $\mathcal{D}_1^p$ is often continuous, we propose an approach 
for discretizing the belief space in this section. To this end, we confine our discussions to the case when the prior distributions can be represented 
as a family of 
parametric distributions parameterized by a finite dimensional vector of parameters. 
The hyperparameterization technique 
is widely used in the literature of 
machine learning.
The next assumption specifies this.

\begin{assumption}
    Every posterior belief $\mu\in\mathcal{D}_1^p$ 
    can be uniquely determined by
        a finite–dimensional vector $h\in\mathcal{H}\subseteq\mathbb{R}^k$, i.e.,
        $\mu(\theta)$ can be written as a parametric distribution with pdf $\mu_h(\theta)$.
 The Bayesian 
 update \eqref{eq:MDP-BCR-c} can be equivalently 
represented by update of parameters 
$h' :=h+H(\xi)$,
 where $H(\xi)$ denotes the sufficient statistic with observed $\xi \in \Xi$. 
\end{assumption}

The update \(h' := h + H(\xi)\) 
is common in  conjugate families such as
exponential‐family models with conjugate priors \cite{gelman2013bayesian}.
By Bayes’ rule, the posterior pdf \(\mu'(\theta)\) is a function of the prior pdf  \(\mu(\theta)\) and the new observation \(\xi\). 
If \(\mu_h\) is uniquely parameterized by \(h \in \mathbb{R}^k\), then the posterior’s parameter \(h'\) must be of the form \(h' = F(h,\xi)\), see Example \ref{example-2}. 
In the conjugate exponential‐family case, \(F\) 
can be simplified to adding the sufficient statistic \(H(\xi)\) to  \(h\).
In many cases, 
the hyper-parameter set $\mathcal{H}$ is a continuous set, which 
requires 
to solve a continuum  of BCR 
problems  \eqref{eq:BCR-Alg1} and 
 \eqref{eq:BCR-Alg2}
in Algorithms~\ref{alg:A}–\ref{alg:B}.
This prompts us to adopt the scenario reduction technique from MSP and clustering technique to further discretize $\mathcal{H}$.
Subsequently, we only need to solve a finite number of BCR minimization problems in Algorithms~\ref{alg:A}–\ref{alg:B}.
Furthermore, since the random variables $\xi$ and $\theta$ are usually continuously distributed, we further use 
the well-known Sample Average Approximation (SAA) 
to discretize these continuous distributions.

To concretize the parametrization process for both the finite and infinite time  horizon cases, we introduce a new index 
\[
  \tau =
  \begin{cases}
     t   & \text{in dynamic programming (DP)
     for finite  horizon}, \\
     I-i & \text{in value iteration (VI) for infinite horizon},
  \end{cases}
\]
which unifies 
 the \textbf{stage index} $t=1,2,\cdots,T$ for the finite horizon and the \textbf{iteration index} $i=1,\cdots,I$ for the infinite horizon.  
In the rest of the section, we use $\tau$ unless specified otherwise.
For each $\tau$, the update
$
  h_{\tau+1}=h_{\tau} + H(\xi)
$
incrementally moves the hyper-parameter along its support. Consequently, at step 
$\tau$ the only reachable hyper-parameters belong to
\bgeqn 
\label{eq:H_tau-sec6}
  \mathcal{H}_\tau \;=\;\left\{\,h_1 + \sum_{j=1}^{\tau-1}H(\xi_j)\colon \xi_j\in\Xi,\, \; \text{for}\; j=1,\cdots,\tau-1\right\},
\edeqn 
which is 
a subset of 
the full space
\(\mathcal{H}\). 
The next example illustrates this.

\begin{example}[Dirichlet–Categorical model]
  Let \(\theta=(\theta_1,\dots,\theta_M)\) be the unknown probability vector on categories \(1,\dots,M\),
  with prior
  \[
    \theta\sim\mathrm{Dirichlet}(\alpha_{0,1},\dots,\alpha_{0,M}).
  \]
  At each step \(\tau\), we observe a \(\xi_\tau\in\{1,\dots,M\}\), and update the posterior hyper-parameters
  \[
    \alpha_{\tau+1,i}
    =\alpha_{\tau,i} + \mathds{1}_{\{\xi_\tau = i\}},
    \quad i=1,\dots,M.
  \]
  Hence
  \[
    \mathcal{H}_\tau
    = \bigl\{\alpha_0 + n: n=(n_1,\dots,n_M),\, n_i\in\mathbb{N},\,\sum_i n_i = \tau-1\bigr\},
  \]
  and indeed the whole hyper-parameter space \(\mathcal{H} = \bigcup_\tau\mathcal{H}_\tau\), while
  \(\mathcal{H}_\tau\subseteq\mathcal{H}\) with
  \(\mathcal{H}_\tau\cap\mathcal{H}_{\tau'}=\varnothing\) for \(\tau\neq \tau'\).  
\end{example}

 Based on the discussions above, 
 we propose 
 a step‑wise adaptive grid generation algorithm as follows.

\begin{algorithm}[ht]
\caption{Step-Wise Adaptive Grid for Hyper-Parameters}
\label{alg:adaptive-grid}
\begin{algorithmic}[1]
    \STATE Initialize candidate set $\Upsilon \leftarrow \emptyset$, support size \(B\), 
      max grid size \(M_{\max}\),
      projection tolerance \(\epsilon\), representative set $\mathcal{H}_1^{\mathrm{rep}} \leftarrow \{h_1\}$ and projection map $\operatorname{Rep}_1(h_1) \leftarrow h_1$.
\FOR{$\tau = 1,2,\cdots,T-1$ (DP) \textbf{or} $\tau=1,2,\cdots,I$ (VI)}
    \STATE \textbf{(1) Expansion Step:} 
     \FORALL{$h \in \mathcal{H}_\tau^{\mathrm{rep}}$}
        \STATE Calculate $\{\xi^{(b)}\}_{b=1}^B \subseteq \Xi$, which forms an $\epsilon$‑net of $\mathbb{B}(0,R):=\{\xi\in\Xi:\|\xi\|\leq R\}$.
        \FOR{$b = 1,\cdots,B$}
            \STATE Compute exact hyper-parameter $h^{\mathrm{exact}} \leftarrow h + H(\xi^{(b)})$.
            \STATE Calculate weight $w_b \leftarrow \int_{\Theta}p(\xi^{(b)}|\theta)\mu_h(\theta)d\theta$ (use numerical integration or closed-form, if possible).
            \STATE Add 
            $(h^{\mathrm{exact}},w_b)$ 
            to set $\Upsilon$.
        \ENDFOR
    \ENDFOR
    
    \STATE \textbf{(2) Clustering Step:}
    \WHILE{$|\Upsilon| > M_{\max}$}
        \STATE Identify closest pair $(h_i,h_j)$ in $\Upsilon$ by Euclidean distance.
        \STATE Merge pair into a new center:
        \[
          h^{\mathrm{new}}:=\frac{h_iw_i+h_jw_j}{w_i+w_j}, \quad w^{\mathrm{new}}:=w_i+w_j.
        \]
        \STATE Replace $(h_i,w_i),(h_j,w_j)$ with $(h^{\mathrm{new}},w^{\mathrm{new}})$ {\color{black} and relabel the elements in $\Upsilon$}.
    \ENDWHILE
    \STATE Update $\mathcal{H}_{\tau+1}^{\mathrm{rep}} \leftarrow \{h : (h,\cdot) \in \Upsilon\}$.
    
    \STATE \textbf{(3) Projection Guarantee:}
    \FORALL{$(h^{\mathrm{exact}},\cdot) \in \Upsilon$}
        \IF{$\min_{h\in \mathcal{H}_{\tau+1}^{\mathrm{rep}}}\|h-h^{\mathrm{exact}}\|_2 > \epsilon$}
            \STATE Add $h^{\mathrm{exact}}$ explicitly to $\mathcal{H}_{\tau+1}^{\mathrm{rep}}$.
        \ENDIF
    \ENDFOR
    
    \STATE \textbf{(Optional Re-merge)} if $|\mathcal{H}_{\tau+1}^{\mathrm{rep}}| > M_{\max}$.
    
    \STATE \textbf{(4) Define Projection Map:}
    \STATE Set projection $\operatorname{Rep}_{\tau+1}(h) \leftarrow \arg\min_{h'\in \mathcal{H}_{\tau+1}^{\mathrm{rep}}} \|h-h'\|_2$.
\ENDFOR

\RETURN $\{\mathcal{H}_\tau^{\mathrm{rep}}\}_{\tau}$ and projection maps $\{\operatorname{Rep}_\tau\}_{\tau}$.

\end{algorithmic}
\end{algorithm}

 Algorithm \ref{alg:adaptive-grid} step‑wise constructs a finite, adaptive grid
$\Upsilon$
over the posterior hyper-parameter space. At each step, we first expand the previous grid 
$\Upsilon$
by sampling a fixed number of support points $\{\xi^{(b)}\}_{b=1}^B \subseteq \Xi$
and computing the corresponding hyper-parameter 
and likelihood weights, which captures all newly reachable posterior directions. 
To control the overall grid size, we then cluster the expanded grid set by repeatedly merging the two closest hyper-parameters into a single weighted centroid whenever the grid size exceeds a prescribed maximum. This helps us preserve the  representativeness of the grid while controlling its 
size. To 
control the resulting approximation error, a projection‑guarantee step re‑inserts any original candidate grid that lies beyond a predefined distance from all grid points, which ensures all significant posterior modes are included. Finally, we project any hyper-parameter to its nearest grid point with respect to Euclidean distance, which enables efficient belief updates in later DP or VI loops. The combination of above expansion, weighted clustering, and projection operations properly balances between accurately approximating an infinite‑dimensional belief space and maintaining a tractable, finite grid for efficiently solution of BCR-SOC/MDP problems.

Recall that $Q_{\mu}$ denotes the probability 
distribution of $\theta$
associated with 
pdf $\mu$. To ease the notation, we introduce the notation $Q_{h}:=Q_{\mu_h}$ to represent the probability measure corresponding to $h \in {\cal H}$.
Next, we move on to 
quantify the errors of approximation in Algorithm \ref{alg:adaptive-grid}. 
To this end, we need to make the following generic assumption.


\begin{assumption}\label{ass:holder}
  For all $h\in\mathcal{H}$ and some $\varepsilon>0$, there exist constants $C>0$ and $k>0$ such that
  \[
    \mathsf {d\kern -0.07em l}_{\rm K}\bigl(Q_{h},Q_{{h'}}\bigr)\le C_h\bigl\| h - h'\bigr\|^{k},\ \forall h'\in\mathbb{B}(h,\varepsilon)\cap {\cal H}.
  \]
\end{assumption}
This assumption
is satisfied by virtually all conjugate posteriors commonly used in practice, including Gaussian, Beta, Gamma, Dirichlet and other exponential--family posteriors.
Define
\bgeqn 
  {\cal M}_{\cal H_\tau}
  :=\bigl\{ Q_{h}:h\in{\cal H}_\tau\bigr\}\subset {\cal M}_1^p,
  \quad
  {\cal M}_{{\cal H}_\tau^{\mathrm{rep}}}
  :=\bigl\{Q_{h}:h\in{\cal H}_\tau^{\mathrm{rep}}\bigr\}\subset {\cal M}_1^p.
\edeqn
Suppose there exists a $p>0$ such that the $p$-th moment of $\xi$ is finite, thus for all $R>0$, there exists $M_p$ such that
$
  P_{\theta^c}(\|\xi\|>R)
\le \frac{M_p}{R^p}.
$
Based on this mild assumption, we have the following result.

\begin{proposition}
Suppose Assumption \ref{ass:holder} holds and $H\colon \Xi\to\mathbb{R}^{k}$ is Lipschitz continuous with modulus $L_{H}$.
  Let $h_\tau=h_{1} + \sum_{j=1}^{\tau-1}H(\xi_j)$ be the exact hyper-parameter at step $\tau$ generated by the sequence $\boldsymbol\xi^{\tau}:=(\xi_1,\dots,\xi_{\tau-1})$, and $\mathcal{H}_{\tau}^{\mathrm{rep}}$ be the discrete grid
  produced by Algorithm~\ref{alg:adaptive-grid}. 
  For any small positive number $\varepsilon$, if   
$\epsilon\leq\frac{\varepsilon}{(\tau_{\max} - 1)(1 + L_{H})}$, where 
 $\epsilon$ is the precision in 
 Algorithm~\ref{alg:adaptive-grid}, 
  then 
  \bgeqn  \label{eq-th-error-bound}
P^{\otimes (\tau-1)}_{\theta^c} \left( 
    \delta_{\tau} :=  \mathrm{dist}\Bigl(h_\tau,\mathcal{H}_{\tau}^{\mathrm{rep}}\Bigr)=\inf_{h\in\mathcal{H}_{\tau}^{\mathrm{rep}}}\|h_\tau-h\| \le 
    \varepsilon\right) \geq \left(1 - \frac{M_p}{R^p}\right)^{\tau-1},
    \qquad \forall \tau\geq1
  \edeqn 
 and 
 \bgeqn 
P^{\otimes (\tau-1)}_{\theta^c} \left( 
\mathsf {d\kern -0.07em l}_{\rm K}\Bigl(Q_{{h_\tau}},\mathcal{M}_{\mathcal{H}_{\tau}^{\mathrm{rep}}}\Bigr)=\inf_{Q_{h}\in\mathcal{M}_{\mathcal{H}_{\tau}^{\mathrm{rep}}}}\mathsf {d\kern -0.07em l}_{\rm K}\Bigl(Q_{{h_\tau}},Q_h\Bigr)
    \le C_{h_\tau}\varepsilon^k\right)\geq \left(1 - \frac{M_p}{R^p}\right)^{\tau-1},
  \edeqn 
  where $P^{\otimes (\tau-1)}_{\theta^c}$ is the probability measure over product space 
$\Xi^{\otimes (\tau-1)} = \underbrace{\Xi\times \cdots\times \Xi}_{\tau-1}$. 
\end{proposition}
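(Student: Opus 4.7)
The plan is to decompose the event $\{\delta_\tau \le \varepsilon\}$ into a deterministic tracking argument and a stochastic boundedness event. Concretely, I would first introduce the good event
\[
  G_\tau := \bigl\{\|\xi_j\|\le R \text{ for all } j=1,\dots,\tau-1\bigr\},
\]
and show that on $G_\tau$ the true hyper-parameter $h_\tau$ can always be tracked to within $(\tau-1)(1+L_H)\epsilon$ by some grid point in $\mathcal H_\tau^{\rm rep}$. Because the $\xi_j$ are i.i.d.\ under $P_{\theta^c}$, the tail bound $P_{\theta^c}(\|\xi\|>R)\le M_p/R^p$ and independence immediately give $P^{\otimes(\tau-1)}_{\theta^c}(G_\tau)\ge (1-M_p/R^p)^{\tau-1}$, which is exactly the lower bound in \eqref{eq-th-error-bound}.

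The deterministic part proceeds by induction on $\tau$, with the hypothesis ``on $G_\tau$, there exists $\tilde h_\tau\in\mathcal H_\tau^{\rm rep}$ with $\|h_\tau-\tilde h_\tau\|\le (\tau-1)(1+L_H)\epsilon$.'' The base case $\tau=1$ is trivial since $\mathcal H_1^{\rm rep}=\{h_1\}$. For the inductive step, choose $\tilde h_\tau\in\mathcal H_\tau^{\rm rep}$ realizing the hypothesis, and let $\xi^{(b^*)}$ be the $\epsilon$-net point in $\mathbb B(0,R)$ closest to $\xi_\tau$, so $\|\xi_\tau-\xi^{(b^*)}\|\le\epsilon$ on $G_{\tau+1}$. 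The expansion step of Algorithm~\ref{alg:adaptive-grid} inserts the candidate $h^{\rm exact}:=\tilde h_\tau+H(\xi^{(b^*)})$ into $\Upsilon$, and by the Lipschitz assumption on $H$ we obtain
\[
  \|h_{\tau+1}-h^{\rm exact}\|
    \;\le\; \|h_\tau-\tilde h_\tau\|+\|H(\xi_\tau)-H(\xi^{(b^*)})\|
    \;\le\; (\tau-1)(1+L_H)\epsilon+L_H\epsilon.
\]
The projection-guarantee step of the algorithm then ensures that $h^{\rm exact}$ lies within distance $\epsilon$ of some $h^*\in\mathcal H_{\tau+1}^{\rm rep}$, so adding the two inequalities yields $\|h_{\tau+1}-h^*\|\le \tau(1+L_H)\epsilon$, closing the induction. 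Taking $\epsilon\le \varepsilon/[(\tau_{\max}-1)(1+L_H)]$ gives $\delta_\tau\le\varepsilon$ uniformly for all $\tau\le\tau_{\max}$, and combining with the probability bound for $G_\tau$ proves \eqref{eq-th-error-bound}. The Kantorovich statement follows by applying Assumption~\ref{ass:holder} to the pair $(h_\tau,h^*)$ on this same event: $\mathsf {d\kern -0.07em l}_K(Q_{h_\tau},Q_{h^*})\le C_{h_\tau}\|h_\tau-h^*\|^k\le C_{h_\tau}\varepsilon^k$, provided $\varepsilon$ is small enough to lie in the local H\"older neighborhood.

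The main obstacle I anticipate is the clustering step, which merges nearby candidates into weighted centroids and can in principle destroy the inductive invariant, since the centroid of two candidate hyper-parameters is not itself a candidate of the form $\tilde h+H(\xi^{(b)})$. The projection-guarantee step is precisely designed to plug this leak: any $h^{\rm exact}$ that drifts farther than $\epsilon$ from the merged grid is reinserted, so the inductive invariant is restored modulo an additional $\epsilon$ error that I absorb into the $(1+L_H)\epsilon$ per-step increment. A secondary subtlety is the ``optional re-merge'' line; to keep the argument clean I would assume (or require in the algorithm's specification) that any further merging still preserves the $\epsilon$-covering property of the projection step, which is a natural compatibility condition that can always be enforced in implementation.
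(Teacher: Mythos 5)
Your proposal is correct and follows essentially the same route as the paper's proof: an induction on $\tau$ that combines the $\epsilon$-net covering of $\mathbb{B}(0,R)$, the Lipschitz continuity of $H$, and the projection-guarantee step to accumulate a per-step error of $(1+L_H)\epsilon$, with the probability bound coming from the i.i.d.\ tail estimate $P_{\theta^c}(\|\xi\|>R)\le M_p/R^p$. Your explicit isolation of the good event $G_\tau$ and your remark about why the clustering/re-merge steps do not break the invariant are slightly cleaner presentationally, but the underlying argument is the same.
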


\begin{proof}
The proof proceeds by induction on $\tau$.  Since $\mathcal{H}_{1} = \mathcal{H}_{1}^{\mathrm{rep}} = \{h_{1}\}$ exactly, we have $\delta_{1} = 0$.
  In the expansion step, for each representative $h_{1}\in
  \mathcal{H}_{1}^{\mathrm{rep}}$, we draw 
  \ samples
  $\{\xi^{(b)}\}_{b=1}^{B}\subseteq \mathbb{B}(0,R)$.  By the algorithm, the 
  samples 
  form an $\epsilon$‑net of $\mathbb{B}(0,R)$,
  i.e., for every \(\xi_1\in \mathbb{B}(0,R)\),
  there exists some \(\xi^{(b)}\) with
  \(\| \xi_1 - \xi^{(b)}\| \le \epsilon\).
   Using Lipschitz continuity of $H$, we obtain
  \[
    \| H(\xi_1) - H(\xi^{(b)})\| 
    \le L_{H}\| \xi_1 - \xi^{(b)}\| 
    \le L_{H}\epsilon.
  \]
 Algorithm~\ref{alg:adaptive-grid} ensures projection to the nearest grid point within $\epsilon$, thus for any $\xi_1 \in \mathbb{B}(0,R)$,
  \begin{align*}
    \delta_2=\mathrm{dist}\left(h_{1} + H(\xi_1),\mathcal{H}_{2}^{\mathrm{rep}}\right)
    &\le \| h_{1} + H(\xi_1) - (h_{1} + H(\xi^{(b)}))\| 
           + \mathrm{dist}\bigl(h_{1} + H(\xi^{(b)}),\mathcal{H}_{2}^{\mathrm{rep}}\bigr) \\
    &\le L_{H}\epsilon + \epsilon 
    =(1 + L_{H})\epsilon.
  \end{align*}
  Therefore, when $\xi_1\in \mathbb{B}(0,R)$ (which itself occurs with
  probability at least $1 - \frac{M_p}{R^p}$), every new node
  $h_2=h_{1}+H(\xi_1)\in\mathcal{H}_2$ lies within distance $(1+L_{H})\epsilon$ of
  $\mathcal{H}_{2}^{\mathrm{rep}}$. 
  Thus we have
    \[
    P_{\theta^c}\left(\mathsf {d\kern -0.07em l}_{\rm K}\bigl(Q_{{h_2}},\mathcal{M}_{\mathcal{H}_{2}^{\mathrm{rep}}}\bigr)
    \le C_{h_2}\delta_{2}^{k}
   \right)\geq1 - \frac{M_p}{R^p}.
  \]
  This proves the statement for $\tau=2$.
  Now fix any $\tau\ge 2$ and suppose
   \[
    \delta_{\tau} 
    = \mathrm{dist}\left(h_{1} + \sum_{j=1}^{\tau-1}H(\xi_j),\mathcal{H}_{\tau}^{\mathrm{rep}}\right)
    \le (\tau - 1)(1 + L_{H})\epsilon\leq\varepsilon,
  \]
  with probability at least
  $(1 - \frac{M_p}{R^p})^{\tau-1}$, i.e.,
  \bgeqn
  \label{eq:Prob-H-H-rep}
P^{\otimes (\tau-1)}_{\theta^c} \left( \delta_{\tau} 
    = \mathrm{dist}\left(h_{1} + \sum_{j=1}^{\tau-1}H(\xi_j),\mathcal{H}_{\tau}^{\mathrm{rep}}\right)
    \le (\tau - 1)(1 + L_{H})\epsilon\right)\geq \left(1 - \frac{M_p}{R^p}\right)^{\tau-1}.
  \edeqn 
For each $h_\tau^{\mathrm{rep}}\in \mathcal{H}_{\tau}^{\mathrm{rep}}$, we draw another
  batch of $B$ i.i.d.\ samples $\{\xi^{(b)}\}_{b=1}^{B}$ which form an $\epsilon$‑net of
  $\mathbb{B}(0,R)$. 
  Now consider an arbitrary member of
  \(\mathcal{H}_{\tau+1}\). 
  If
  $\xi_\tau\in \mathbb{B}(0,R)$, pick $\xi^{(b)}$ with $\| \xi_\tau - \xi^{(b)}\| \le
  \epsilon$.  
  Then define
  $
    h_{\tau+1}^{\mathrm{exact}} 
    = h_\tau^{\mathrm{rep}} + H(\xi^{(b)})
  $  and we have
  \[
    \begin{aligned}
      \bigl\| h_{\tau+1} - h_{\tau+1}^{\mathrm{exact}}\bigr\| 
      &=\bigl\| h_\tau + H(\xi_\tau) - \bigl(h_\tau^{\mathrm{rep}} + H(\xi^{(b)})\bigr)\bigr\| \\
      &\le{\| h_\tau - h_\tau^{\mathrm{rep}}\|}
             + {\| H(\xi_\tau) - H(\xi^{(b)})\|}
      \le \delta_{\tau} + L_{H}\epsilon.
    \end{aligned}
  \]
  Next, the algorithm’s projection‐guarantee step ensures that once
  $h_{\tau+1}^{\mathrm{exact}}$ is inserted into the candidate set
  \(\Upsilon\), any final projection to
  \(\mathcal{H}_{\tau+1}^{\mathrm{rep}}\) is within $\epsilon$.  Hence
  \[
    \mathrm{dist}\bigl(h_{\tau+1},\mathcal{H}_{\tau+1}^{\mathrm{rep}}\bigr)
    \le
    \| h_{\tau+1} - h_{\tau+1}^{\mathrm{exact}}\| +  \mathrm{dist}\bigl(h_{\tau+1}^{\mathrm{exact}},\mathcal{H}_{\tau+1}^{\mathrm{rep}}\bigr)
    \le
     \delta_{\tau} +  L_{H}\epsilon+\epsilon
    = \tau(1 + L_{H})\epsilon\leq\varepsilon.
  \]
  Finally, we have
  \[
    \mathsf {d\kern -0.07em l}_{\rm K}\bigl(Q_{{h_{\tau+1}}},\mathcal{M}_{\mathcal{H}_{\tau+1}^{\mathrm{rep}}}\bigr)
    \le C_{{h_{\tau+1}}}\delta_{\tau+1}^{k}
    \le C_{{h_{\tau+1}}}\varepsilon^k.
  \]
  The  probability over step $\tau+1$ is bounded
  by
 $(1 - \frac{M_p}{R^p})^{\tau}$.  This completes the induction.
\end{proof}

\begin{assumption}\label{H-5}
Let $\rho_{P_\theta}$ and 
 $\rho_{\mu_h}$ 
 be risk measures
 over $L^p$.
 Let $Z:\Xi\to\R$ be 
 a real–valued function and 
 $Y(\theta):=\rho_{P_\theta}(Z(\xi))$. 
 For each fixed $h\in\mathcal{H}$,
 there exist constants $L_{h,\rho}>0$, $k>0$ and $\varepsilon>0$ such that  
\bgeqn
    |\rho_{\mu_h}\circ\rho_{P_\theta}(Z)-\rho_{\mu_{h'}}\circ\rho_{P_\theta}(Z)|\leq L_{h,\rho}\|h - h'\|^{k},\ \forall h'\in \mathbb{B}(h,\varepsilon)\cap \mathcal{H}. 
\edeqn 
\end{assumption}
The following proposition provides conditions under which Assumption \ref{H-5} is satisfied.
\begin{proposition}[Sufficient condition for Assumption \ref{H-5}]
\label{prop-out-lip}
    Suppose Assumption~\ref{ass:holder} holds, $Y(\theta)$ is Lipschitz continuous in $\theta$ with modulus $L_{\rho}^{\rm in}$ and
   the outer risk measure $\rho_{\mu_h}$
   is a coherent risk measure for any $h\in {\cal H}$.
    Then there exists a constant $L_{h,\rho}$ such that
    \bgeqn 
    |\rho_{\mu_h}\circ\rho_{P_\theta}(Z)-\rho_{\mu_{h'}}\circ\rho_{P_\theta}(Z)|\leq L_{h,\rho}\|h - h'\|^{k},\ \forall h'\in\mathbb{B}(h,\varepsilon)\cap {\cal H}.
    \edeqn 
\end{proposition}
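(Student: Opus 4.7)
The plan is to chain together three Lipschitz/Hölder type estimates: one that transfers the distance between outer beliefs on $\Theta$ to the distance between the induced laws of the random variable $Y(\theta)=\rho_{P_\theta}(Z(\xi))$ on $\mathbb{R}$, one that controls the coherent risk measure $\rho_{\mu_h}$ by the Kantorovich distance on the space of its distributions, and one that is the Hölder bound from Assumption~\ref{ass:holder} itself. Concretely, for each $h\in\mathcal{H}$ define the pushforward $\eta_{h}:=Q_{h}\circ Y^{-1}\in\mathscr{P}(\mathbb{R})$, so that by the law-invariant representation $\rho_{\mu_h}\circ\rho_{P_\theta}(Z)=\rho_{\mu_h}(Y(\theta))=\hat\varrho(\eta_h)$ (cf.~\eqref{eq:varrho}).

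First I would invoke~\eqref{eq:Lip-RM-rho}: since $\rho_{\mu_h}$ is a coherent (in particular law-invariant) risk measure, there exists a Lipschitz modulus $L_{\rho}^{\rm out}>0$ such that
\begin{eqnarray*}
|\rho_{\mu_h}(Y)-\rho_{\mu_{h'}}(Y)|
= |\hat\varrho(\eta_h)-\hat\varrho(\eta_{h'})|
\le L_{\rho}^{\rm out}\,\mathsf {d\kern -0.07em l}_K(\eta_h,\eta_{h'}).
\end{eqnarray*}
Second, I would use the Kantorovich duality representation
$\mathsf {d\kern -0.07em l}_K(\mu,\mu')=\sup_{\mathrm{Lip}(f)\le 1}|\int f\,d\mu-\int f\,d\mu'|$
to transfer this distance back to the parameter space: given that $Y$ is Lipschitz in $\theta$ with modulus $L_{\rho}^{\rm in}$, every $1$-Lipschitz test function $g$ on $\mathbb{R}$ yields a function $g\circ Y$ that is $L_{\rho}^{\rm in}$-Lipschitz on $\Theta$, hence
\begin{eqnarray*}
\mathsf {d\kern -0.07em l}_K(\eta_h,\eta_{h'})
=\sup_{\mathrm{Lip}(g)\le 1}\Bigl|\int (g\circ Y)\,dQ_h-\int (g\circ Y)\,dQ_{h'}\Bigr|
\le L_{\rho}^{\rm in}\,\mathsf {d\kern -0.07em l}_K(Q_h,Q_{h'}).
\end{eqnarray*}
Third, Assumption~\ref{ass:holder} gives $\mathsf {d\kern -0.07em l}_K(Q_h,Q_{h'})\le C_h\|h-h'\|^k$ on the local ball. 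Composing the three estimates yields the claim with $L_{h,\rho}:=L_{\rho}^{\rm out} L_{\rho}^{\rm in} C_h$.

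I expect the main subtlety, rather than a genuine obstacle, to be the first step: extracting a uniform (in $h,h'$) Lipschitz constant $L_{\rho}^{\rm out}$ of the outer risk measure from~\eqref{eq:Lip-RM-rho}. Since~\eqref{eq:Lip-RM-rho} is formulated for a fixed law-invariant coherent risk measure acting on random variables with different distributions on a common probability space, I would simply apply it to the two $L^1$ random variables on $([0,1],\mathrm{Leb})$ obtained as the quantile transforms of $\eta_h$ and $\eta_{h'}$; this gives the constant $L_{\rho}^{\rm out}$ intrinsic to $\rho_{\mu_h}$ (e.g.~$1/\alpha$ for AVaR$^\alpha$, $\sup_{\sigma\in\mathfrak A}\|\sigma\|_\infty$ for a robust SRM on bounded risks). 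A secondary point worth recording in the write-up is that $Y$ is well defined and finite-valued under Assumption~\ref{ass-bcr-1}, so that $\eta_h\in\mathcal{M}_1^1$ and the Kantorovich distances above are finite.
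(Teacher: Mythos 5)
Your proposal is correct and follows essentially the same route as the paper's proof: both push $Q_h$ forward through $Y$ to measures $\nu_h=Q_h\circ Y^{-1}$, bound $\mathsf {d\kern -0.07em l}_K(\nu_h,\nu_{h'})\le L_\rho^{\rm in}\,\mathsf {d\kern -0.07em l}_K(Q_h,Q_{h'})$ via Kantorovich duality and the Lipschitz continuity of $Y$, and then combine \eqref{eq:Lip-RM-rho} with Assumption~\ref{ass:holder} to obtain $L_{h,\rho}=L_\rho^{\rm out}L_\rho^{\rm in}C_h$, which is exactly the paper's constant.
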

\begin{proof}
Let $\nu_{h} :=Q_{h}\circ Y^{-1}$ and $\nu_{{h'}}: =Q_{{h'}}\circ Y^{-1}$
be probability measures on $\R$ induced by 
$Y$.
Given that $Y$ is Lipschitz continuous with constant $L_{\rho}^{\mathrm{in}}$ and Assumption \ref{ass:holder}, we have
\bgeq \dd_{K}\bigl(\nu_{h},\nu_{{h'}}\bigr) &=& \sup_{g\in {\cal G}_1}\left|
\int_\R g(x) Q_h\circ Y^{-1}(dx)-
\int_\R g(x) Q_{h'}\circ Y^{-1}(dx)\right|\\
&=&
\sup_{g\in {\cal G}_1}\left|
\int_\Theta g(Y(\theta)) Q_h(d\theta)
-
\int_\Theta g(Y(\theta)) Q_{h'}(d\theta)
\right|\\
&\le&
  L_{\rho}^{\rm in}\;\dd_{K}\bigl(Q_h,Q_{h'}\bigr)
  \;\le\;
  L_{\rho}^{\rm in}\,C_h\;\| h-h'\|^{k},
\edeq
which gives
\[
\begin{aligned}
  \Bigl|\,
    \rho_{\mu_h}\circ\rho_{P_\theta}(Z)
    -\rho_{\mu_{h'}}\circ\rho_{P_\theta}(Z)
  \Bigr|
  =\bigl|\rho_{{\mu_h}}(Y)-\rho_{{\mu_{h'}}}(Y)\bigr|
  \le L^{\rm out}_\rho
 \dd_{K}\bigl(\nu_{h},\nu_{{h'}}\bigr)
  \le
  L^{\rm out}_\rho L_{\rho}^{\rm in}C_h\;\| h-h'\|^{k},
\end{aligned}
\]
where the first inequality follows from the Lipschitz continuity of the outer risk measure in \eqref{eq:Lip-RM-rho} with constant $L^{\rm out}_\rho$. Setting $L_{h,\rho}=L^{\rm out}_\rho L_{\rho}^{\rm in}C_h$ completes the proof.
\end{proof}
Proposition \ref{prop-out-lip} remains valid for a broad class of non-coherent outer risk measures satisfying \eqref{eq:Lip-RM-rho} such as ERM and SRM \cite{bhat2019concentration}. For a detailed discussion of these risk measures and their associated Lipschitz modulus $L_\rho^{\rm out}$, we refer to \cite[Appendix B]{liang2024regret}. 
In the following, we consider the DP method for finite horizon problems for simplicity. It is worth noting that similar results can be obtained for VI in infinite-horizon problems by simply modifying the initial value function.

\begin{theorem}[Local Lipschitz continuity of value function in $h$]
\label{thm:V-h-Lipschitz}
Suppose Assumption \ref{H-5} holds, define $
  L_{h,T} = L_{h,\rho}\bar{\mathcal{C}}, $
and for \(t=T-1,T-2,\cdots,1\) set
\begin{equation}\label{eq:Lh-recursion}
  L_{h,t}
  :=L_{h,\rho}+
 \gamma L_{h,t+1}.
\end{equation}
Then for each \(t\)
and fixed \(h\in\mathcal H\), 
\bgeqn 
\label{eq:thm5-V-Lip}
  \sup_{s\in \mathcal{S}}\; \bigl|
    V_t^*(s,\mu_h)-V_t^*(s,\mu_{h'})\bigr|
  \le
  L_{h,t}\|h - h'\|^{k},\ \forall h'\in\mathbb{B}(h,\varepsilon)\cap {\cal H}.
\edeqn 
\end{theorem}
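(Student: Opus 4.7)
The plan is to proceed by backward induction on $t$, starting from the terminal stage $t=T$ and working down to $t=1$, by exploiting the Bellman recursion \eqref{eq:V_t-recursive-t-MDP} together with two Lipschitz mechanisms: the non-expansiveness of monetary risk measures with respect to the sup norm (inequality \eqref{eq:Lip-non-expansive}) for changes inside the integrand, and Assumption~\ref{H-5} for changes of the outer belief.

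For the base case $t=T$, since $V_T^*(s,\mu)=\mathcal{C}_T(s)$ does not depend on the belief at all, the left-hand side of \eqref{eq:thm5-V-Lip} is identically zero and the bound holds trivially with $L_{h,T}=L_{h,\rho}\bar{\mathcal C}\ge 0$.  For the inductive step, assume the bound with constant $L_{h,t+1}$ holds at stage $t+1$ and pick any $\tilde h\in\mathbb{B}(h,\varepsilon)\cap\mathcal H$.  Write the recursion in the compact form
\begin{equation*}
V_t^*(s,\mu_h)=\min_{a\in\mathcal A}\rho_{\mu_h}\!\circ\!\rho_{P_\theta}\bigl[Z_a^{h}(\xi)\bigr],\qquad Z_a^{h}(\xi):=\mathcal C_t(s,a,\xi)+\gamma V_{t+1}^*\bigl(g_t(s,a,\xi),\mu_{h+H(\xi)}\bigr),
\end{equation*}
and analogously for $\tilde h$.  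Using that the minimum is non-expansive in its integrand, the difference $|V_t^*(s,\mu_h)-V_t^*(s,\mu_{\tilde h})|$ is dominated by $\sup_{a}|\rho_{\mu_h}\!\circ\!\rho_{P_\theta}[Z_a^{h}]-\rho_{\mu_{\tilde h}}\!\circ\!\rho_{P_\theta}[Z_a^{\tilde h}]|$, and I would insert the intermediate quantity $\rho_{\mu_h}\!\circ\!\rho_{P_\theta}[Z_a^{\tilde h}]$ and use the triangle inequality to split this into two terms.

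The first term accounts for the change of integrand through $V_{t+1}^*$ while keeping the outer belief fixed at $\mu_h$: applying the non-expansive property \eqref{eq:Lip-non-expansive} once to $\rho_{\mu_h}$ and once to $\rho_{P_\theta}$ yields the bound $\|Z_a^{h}-Z_a^{\tilde h}\|_\infty=\gamma\sup_\xi|V_{t+1}^*(g_t(s,a,\xi),\mu_{h+H(\xi)})-V_{t+1}^*(g_t(s,a,\xi),\mu_{\tilde h+H(\xi)})|$.  Invoking the induction hypothesis at the shifted hyper-parameters, and observing that $\|(h+H(\xi))-(\tilde h+H(\xi))\|=\|h-\tilde h\|$, this is controlled by $\gamma L_{h,t+1}\|h-\tilde h\|^{k}$.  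The second term isolates the change of outer measure on a fixed integrand $Z_a^{\tilde h}$, and is bounded directly by $L_{h,\rho}\|h-\tilde h\|^{k}$ via Assumption~\ref{H-5}.  Adding the two contributions and taking the supremum over $a$ produces $(L_{h,\rho}+\gamma L_{h,t+1})\|h-\tilde h\|^{k}=L_{h,t}\|h-\tilde h\|^{k}$, closing the induction.

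The main obstacle I anticipate is a uniformity issue that is easy to overlook.  The recursion implicitly treats $L_{h,\rho}$ as a constant that is independent of the particular integrand $Z_a^{\tilde h}$ arising at each stage, and treats $L_{h+H(\xi),t+1}$ as equal to $L_{h,t+1}$ uniformly in the shift $H(\xi)$.  The first point is legitimate under the conditions of Proposition~\ref{prop-out-lip}, where $L_{h,\rho}=L_\rho^{\mathrm{out}}L_\rho^{\mathrm{in}}C_h$ depends on the outer and inner risk-measure moduli rather than on $Z$, but this is worth stating explicitly as a uniformity hypothesis on Assumption~\ref{H-5}.  The second point requires the shifted centres $h+H(\xi)$ to remain in a neighbourhood of $h$ on which the local Lipschitz constants are controlled; a clean way to handle it is to replace $L_{h,t}$ throughout by $\bar L_{h,t}:=\sup_{h'\in\mathbb{B}(h,\varepsilon)\cap\mathcal H}L_{h',t}$ and verify that the reachable hyper-parameters generated by the Bayesian update stay inside this ball (which is automatic when $h'$ ranges over $\mathbb{B}(h,\varepsilon)$).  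Modulo this bookkeeping, the proof reduces to a two-term triangle inequality plus a clean backward induction.
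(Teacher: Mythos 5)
Your proof is correct and follows essentially the same route as the paper's: backward induction from $t=T$, then a two-term triangle-inequality split of the belief perturbation into an outer-measure change on a fixed integrand (controlled by Assumption~\ref{H-5}) and an integrand change through $V_{t+1}^*$ (controlled by the non-expansiveness of the risk measures in the sup norm together with the induction hypothesis applied at the shifted hyper-parameters $h+H(\xi)$ and $h'+H(\xi)$, whose difference is still $\|h-h'\|$). The uniformity caveat you raise --- that the induction hypothesis is invoked at centres $h+H(\xi)$ with the constant $L_{h,t+1}$ rather than $L_{h+H(\xi),t+1}$, and that $L_{h,\rho}$ must not depend on the particular integrand --- is a genuine bookkeeping point that the paper's own proof also passes over silently, and your proposed remedy of replacing the constants by their suprema over the relevant ball is the appropriate fix.
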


\begin{proof}
We prove by backward induction. At $t=T$, 
since \(\mathcal C_T(s)\) is independent 
of \(h\), then $V_T^*(s,\mu_{h})$ is also independent of $h$. Thus
\[
  \bigl|V_T^*(s,\mu_h) - V_T^*(s,\mu_{h'})\bigr| = 0 
  \le L_{h,T}\|h-h'\|^{k}
\]
for any \(s\in\mathcal S\), \(h\in\mathcal H\) and $ h'\in\mathbb{B}(h,\varepsilon)\cap {\cal H}$.
Next,  assume for some \(t+1 \le T\) that
\bgeqn 
  \bigl|V_{t+1}^*(s,\mu_h)-V_{t+1}^*(s,\mu_{h'})\bigr|
  \le
  L_{h,t+1}\|h - h'\|^{k},
 \ \forall h'\in\mathbb{B}(h,\varepsilon)\cap {\cal H},
\label{eq:indct-V^*-t+1}
\edeqn 
where \(L_{h,t+1}\) is defined as in \eqref{eq:Lh-recursion}.
For fixed  \(s\in\mathcal S\),  \(h\in\mathcal H\), and \(a\in\mathcal A\), define 
\[
  Z_h(\xi)
  :=
  \mathcal C_t\bigl(s,a,\xi\bigr)
  +
  \gamma V_{t+1}^*\!\Bigl(g_t(s,a,\xi),\mu_{h + H(\xi)}\Bigr).
\]
Under Assumption \ref{H-5}, 
\begin{equation}\label{eq:outer-diff}
  \bigl|
     \rho_{\mu_{h}}\circ\rho_{P_\theta}(Z_h)
    -  \rho_{\mu_{h'}}\circ\rho_{P_\theta}(Z_h)
  \bigr|
  \le
  L_{h,\rho}\|h - h'\|^{k},\ \forall h'\in\mathbb{B}(h,\varepsilon)\cap {\cal H}.
\end{equation}
By Lemma \ref{lem-beo-2}, for each \(\theta\), we have
\[
\begin{aligned}
 \bigl|
    \rho_{P_\theta}\bigl(Z_h\bigr)
    -
    \rho_{P_\theta}\bigl(Z_{h'}\bigr)
  \bigr|
  &\le
  \|
    Z_h(\xi) - Z_{h'}(\xi)
  \bigr\|_{\xi,\infty}\\
  &= 
  \bigl\|
    \mathcal C_t(s,a,\xi) + \gamma V_{t+1}^*(g_t(s,a,\xi),\mu_{h + H(\xi)})
    - \mathcal C_t(s,a,\xi) - \gamma V_{t+1}^*(g_t(s,a,\xi),\mu_{h' + H(\xi)})
  \bigr\|_{\xi,\infty}\\
  &= 
  \gamma
  \bigl\|
    V_{t+1}^*(g_t(s,a,\xi),\mu_{h + H(\xi)})
    - V_{t+1}^*(g_t(s,a,\xi),\mu_{h' + H(\xi)})
  \bigr\|_{\xi,\infty}\\
  &\le \gamma L_{h,t+1}
    \bigl\|(h + H(\xi)) - (h' + H(\xi))\bigr\|^{k} \quad \text{(by (\ref{eq:indct-V^*-t+1}))}\\
    &  =\gamma L_{h,t+1}\|h - h'\|^{k}.
  \end{aligned}
\]
The inequality above and  translation invariance of $\rho_{\mu_{h'}}$ imply
\begin{equation}\label{eq:inner-diff}
  \bigl|
     \rho_{\mu_{h'}}\circ\rho_{P_\theta}(Z_h)
    -  \rho_{\mu_{h'}}\circ\rho_{P_\theta}(Z_{h'})
  \bigr|
  \le
  \gamma L_{h,t+1}\|h - h'\|^{k}.
\end{equation}
From \eqref{eq:outer-diff} and \eqref{eq:inner-diff}, for each fixed \(a\in\mathcal A\) we get
\[
\begin{aligned}
 \bigl|
     \rho_{\mu_{h}}\circ\rho_{P_\theta}(Z_h)
    -  \rho_{\mu_{h'}}\circ\rho_{P_\theta}(Z_{h'})
  \bigr|\le
  (L_{h,\rho}+
 \gamma L_{h,t+1})\|h - h'\|^{k}.
\end{aligned}
\]
Since 
$
  V_t^*(s,\mu_h) =\min_{a\in\mathcal A} \rho_{\mu_{h}}\circ\rho_{P_\theta}(Z_h(\xi))
$
and similarly for \(h'\), we have
\[
  \bigl|
    V_t^*(s,\mu_h) - V_t^*(s,\mu_{h'})
  \bigr|
  \le
  \sup_{a\in\mathcal A}\left| \rho_{\mu_{h}}\circ\rho_{P_\theta}
  (Z_h(\xi))-
   \rho_{\mu_{h'}}\circ\rho_{P_\theta}
  (Z_{h'}(\xi))\right|
  \le
  (L_{h,\rho}+
 \gamma L_{h,t+1})\|h - h'\|^{k},
\]
which leads to \eqref{eq:thm5-V-Lip}.
\end{proof}

We are now ready to state the main result in this section.

\begin{theorem}[Error bound on approximation of value functions due to discretization]
    For any sequence $\boldsymbol \xi^t=\{\xi_1,\cdots,\xi_t\}$, define \(h_t:=h_1+\sum_{j=1}^{t-1}H(\xi_j)\in\mathcal H_t\), and \(h_t^{\mathrm{rep}} = \mathrm{Rep}_t(h_t)\).
        For any small positive number $\varepsilon$, 
  if      
        $\epsilon \le \frac{\varepsilon}{(T - 1)(1 + L_H)}$, where 
$\epsilon$ is the precision in 
Algorithm~\ref{alg:adaptive-grid},
    then
    for $t=1,2,\cdots,T$,
\bgeqn 
 P^{\otimes (t-1)}_{\theta^c} \left(\sup_{s\in \mathcal{S}} \bigl|\,
    V_t^*(s,\mu_{h_t}) - V_t^*(s,\mu_{h_t^{\mathrm{rep}}})\bigr|
\le
  L_{h_t,t}\,\bigl[(t-1)(1 + L_H)\epsilon\bigr]^{k}\right)\geq (1 - \frac{M_p}{R^p})^{t-1}.
\edeqn
\end{theorem}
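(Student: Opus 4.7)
The plan is to combine the two ingredients already developed in the paper: the probabilistic control on the grid distance $\delta_t = \mathrm{dist}(h_t,\mathcal H_t^{\mathrm{rep}})$ established in the proposition immediately after Algorithm~\ref{alg:adaptive-grid}, and the local $k$-H\"older bound on $V_t^*(s,\mu_h)$ in $h$ given by Theorem~\ref{thm:V-h-Lipschitz}. Essentially, the event on which the grid is fine enough is the same event that makes the Lipschitz-type inequality applicable, and on that event the value-function error is a deterministic function of $\delta_t$.

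First, I would invoke \eqref{eq-th-error-bound} with the given choice $\epsilon\le\varepsilon/((T-1)(1+L_H))$. This gives, for every $t=1,\cdots,T$,
\begin{equation*}
  P^{\otimes(t-1)}_{\theta^c}\Bigl(\delta_t\le (t-1)(1+L_H)\epsilon\Bigr)\;\ge\;\Bigl(1-\tfrac{M_p}{R^p}\Bigr)^{t-1}.
\end{equation*}
Because $(t-1)(1+L_H)\epsilon\le (T-1)(1+L_H)\epsilon\le\varepsilon$, on the above event we automatically have $h_t^{\mathrm{rep}}\in \mathbb{B}(h_t,\varepsilon)\cap\mathcal H$, so that Theorem~\ref{thm:V-h-Lipschitz} (applied with $h:=h_t$ and $h':=h_t^{\mathrm{rep}}$) yields the pointwise-in-$s$ bound
\begin{equation*}
  \sup_{s\in\mathcal S}\bigl|V_t^*(s,\mu_{h_t})-V_t^*(s,\mu_{h_t^{\mathrm{rep}}})\bigr|\;\le\;L_{h_t,t}\,\|h_t-h_t^{\mathrm{rep}}\|^{k}\;\le\;L_{h_t,t}\,\bigl[(t-1)(1+L_H)\epsilon\bigr]^{k}.
\end{equation*}
Monotonicity of the probability measure then transfers the grid-distance bound into the stated value-function bound.

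The main obstacle, which is handled exactly by the hypothesis on $\epsilon$, is that Theorem~\ref{thm:V-h-Lipschitz} is only a \emph{local} H\"older inequality valid on the ball $\mathbb{B}(h_t,\varepsilon)$; without a uniform neighborhood guarantee, a stochastic $h_t^{\mathrm{rep}}$ could in principle fall outside this ball. The uniform choice $\epsilon\le\varepsilon/((T-1)(1+L_H))$ is precisely what forces $\|h_t-h_t^{\mathrm{rep}}\|\le\varepsilon$ on the good event for every $t\le T$ simultaneously, so the local Lipschitz constant $L_{h_t,t}$ (which depends on the realized $h_t$) is the legitimate one to use. A minor technical point is that $L_{h_t,t}$ is itself random through $h_t$; since the inequality inside the probability already carries $L_{h_t,t}$, this creates no difficulty, and no measurability issue arises because $h_t$ is a deterministic function of $\boldsymbol\xi^{t-1}$. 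With these pieces in place, no further computation is required beyond assembling the two estimates, so the proof reduces to one short paragraph.
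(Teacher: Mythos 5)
Your proposal is correct and follows essentially the same route as the paper's own proof, which likewise chains the probabilistic grid-distance bound \eqref{eq:Prob-H-H-rep} with the local H\"older bound of Theorem~\ref{thm:V-h-Lipschitz} via monotonicity of the probability measure. Your explicit remark that the hypothesis on $\epsilon$ is what guarantees $h_t^{\mathrm{rep}}\in\mathbb{B}(h_t,\varepsilon)$ so the local inequality is applicable is a detail the paper leaves implicit, but it does not change the argument.
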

\begin{proof}
By \eqref{eq:thm5-V-Lip} and \eqref{eq:Prob-H-H-rep},
\bgeq 
&&P^{\otimes (t-1)}_{\theta^c} \left(
\sup_{s\in \mathcal{S}} \bigl|\,
    V_t^*(s,\mu_{h_t}) - V_t^*(s,\mu_{h_t^{\mathrm{rep}}})\bigr|
\le
  L_{h_t,t}\,\bigl[(t-1)(1 + L_H)\epsilon\bigr]^{k}
\right)\\
&\geq& 
P^{\otimes (t-1)}_{\theta^c} \left(
  L_{h_t,t}\|h_t - h_t^{\mathrm{rep}}\|^{k}
\le
  L_{h_t,t}\,\bigl[(t-1)(1 + L_H)\epsilon\bigr]^{k}
\right)\\
&\geq& 
P^{\otimes (t-1)}_{\theta^c} \left(
 \|h_t - h_t^{\mathrm{rep}}\|
\le
(t-1)(1 + L_H)\epsilon
\right)\geq  
    \left(1 - \frac{M_p}{R^p}\right)^{t-1}.
\edeq 
The conclusion follows.
\end{proof}

}

With the adaptively discretized hyper-parameter 
set generated by Algorithm \ref{alg:adaptive-grid}, 
the BCR problems  \eqref{eq:BCR-Alg1} and 
 \eqref{eq:BCR-Alg2}
can 
 then be  approximated by the following problem:
\begin{subequations}
\label{eq-SAA-1}
	\begin{eqnarray} 
	\min _{a \in \mathcal{A}} &&\rho_{\mu_{h_\tau}}\circ\rho_{P_{\theta}}\left[\mathcal{C}(s, a, \xi)+\gamma V\left(s^{\prime}, \mu_{h^{\prime}}\right)\right]\\   
			\text{s.t.} && s'=g\left(s, a, \xi\right),\  h'={\operatorname{Rep}_{\tau+1}}(h_\tau+H(\xi)),
	\end{eqnarray} 
	\end{subequations}
where \(V\) is 
the value function defined 
over \(\mathcal{S}\times \mathcal{H}^{\mathrm{rep}}_{\tau+1}\).
In the rest of this section, we discuss how an appropriate SAA method can be implemented when the inner and outer risk measures take specific risk measures such as VaR and AVaR.

\subsection{The VaR-Expectation case}
In Example \ref{bcr-2}, 
we explained how BCR-SOC/MDPs 
can be derived 
under the distributionally robust SOC/MDP framework
by setting 
 the outer risk measure
 as $\text{VaR}_{\mu}^{\alpha}$ with $\alpha=0$.
In this section, we revisit the problem but with a general $\alpha\in (0,1)$:
	\begin{subequations}\label{VE}
	\begin{eqnarray} 
            \min _{a \in \mathcal{A}} &&\text{VaR}_{\mu_{h_{\tau}}}^{\alpha} \circ\mathbb{E}_{P_{\theta}}\left[\mathcal{C}(s, a, \xi)+\gamma V\left(s^{\prime}, \mu_{h^{\prime}}\right)\right]\\
			\text{s.t.} && s'=g\left(s, a, \xi\right),\  h'={\operatorname{Rep}_{\tau+1}}({h_{\tau}}+H(\xi)).
	\end{eqnarray} 
	\end{subequations}
    In this formulation, the objective is to minimize 
    the $\alpha$-quantile of the expected costs
  $\mathbb{E}_{P_{\theta}}\left[\mathcal{C}(s, a, \xi)+\gamma V\left(s^{\prime}, \mu_{h^{\prime}}\right)\right]$. 
  This approach is less conservative than a DRO problem
  \begin{eqnarray} 
  \label{eq:Qian-VaR}
  \min_{a\in\mathcal{A}} \max_{\theta\in{\Theta}}\mathbb{E}_{P_{\theta}}\left[\mathcal{C}(s, a, \xi)+\gamma V\left(s^{\prime}, \mu_{h^{\prime}}\right)\right],
  \end{eqnarray} 
as demonstrated in \cite{qian2019composite}.
Further, problem \eqref{VE} can be reformulated as a chance-constrained minimization problem:
	\begin{equation}
		\begin{aligned}
		&\min _{\kappa,a \in \mathcal{A}}  \ \ \kappa\\
		&\text{ s.t.} \ \ \ \  Q_{{h_{\tau}}} (\mathbb{E}_{P_{\theta}}\left[\mathcal{C}\left(s,a,\xi\right)+\gamma V\left(s',\mu_{h^{\prime}}\right) \right]\geq \kappa)\leq \alpha.
		\end{aligned}
		\label{VE_CHANCE}\tag{$\text{P}_{\text{VE}}^{\alpha}$}
	\end{equation}
This reformulation provides a probabilistic interpretation of the $\alpha$-quantile constraint, aligning it with Bayesian decision-making. Specifically, by treating $\mathbb{E}_{P_{\theta}} \left[\mathcal{C}(s, a, \xi) + \gamma V(s', \mu_{h^{\prime}}) \right] - \kappa$ as a function of $\theta$, the chance constraint in \eqref{VE_CHANCE} can be understood as Bayesian posterior feasibility, as defined in \cite{gupta2019near}. 
Thus, problem \eqref{VE_CHANCE} can be equivalently reformulated as a DRO problem with a Bayesian ambiguity set, as proposed in \cite{gupta2019near}. 

Below, we discuss how to use SAA to discretize 
\eqref{VE}. To this end, let $\theta_i, i=1,2,\cdots,N$, 
be iid 
random variables having 
posterior distribution $\mu_{h_{\tau}}$.
Analogous to \cite{luedtke2008sample,qian2019composite}, we
can formulate the sample average approximation 
of \eqref{VE_CHANCE} as:  
	\begin{equation}
	    \begin{aligned} \min _{\kappa,a \in \mathcal{A},z_i \in\{0,1\}}  \ \  &\kappa\\
			\text{s.t.}\ \ \ \ \ \ \ \  &-Uz_i+\mathbb{E}_{P_{\theta_{i}}}\left[\mathcal{C}\left(s,a,\xi\right)+\gamma V\left(s',\mu_{h^{\prime}}\right) \right]\leq \kappa, \ i=1,2,\cdots,N,\\
			&\sum_{i=1}^Nz_i=\lfloor \alpha N \rfloor,
	    \end{aligned}\label{VE_MINLP} \tag{$\text{P}_{\text{VE}}^{N,\alpha}$}
	\end{equation}		
where $z_i$, $i=1,\cdots N$ are auxiliary 
variables taking integer values $\{0,1\}$,
$\lfloor b\rfloor$ stands for the largest integer value
below $b$ and 
$U$ is a large 
positive constant 
exceeding 
$$
\max_{a \in \mathcal{A},i=1,\cdots,N} \mathbb{E}_{P_{\theta_{i}}}\left[\mathcal{C}\left(s,a,\xi\right)+\gamma V\left(s',\mu_{h^{\prime}}\right) \right]- \min_{a \in \mathcal{A},i=1,\cdots,N} \mathbb{E}_{P_{\theta_{i}}}\left[\mathcal{C}\left(s,a,\xi\right)+\gamma V\left(s',\mu_{h^{\prime}}\right) \right].
$$
The next proposition states convergence of 
the problem \eqref{VE_MINLP} to its true counterpart \eqref{VE_CHANCE} in terms of the optimal value.
    
    \begin{proposition}\label{sam-gua}
Assume: (a)        
        $\mathbb{E}_{P_{\theta}}
	\left[\mathcal{C}\left(s,a,\xi\right)
		+\gamma V\left(s',\mu_{h^{\prime}}\right) \right]$ 
	is globally Lipschitz continuous 
    in $a$ with 
    modulus $L$ 
    uniformly 
    for all $s\in\mathcal{S}$ and $\theta\in \Theta$, 
    (b) 
the optimal values $\vartheta^{\alpha}_{VE}$ and
		$\vartheta^{N,\alpha}_{VE}$
of 
problems \eqref{VE_CHANCE} and \eqref{VE_MINLP} are finite.
        Let
$0 <\iota < \alpha, \epsilon \in(0,1), \eta > 0$,  $D\geq ||a||$ for all $a\in\mathcal{A}$ and
		\begin{equation}\label{eqsam-gua1}
			N_0=\frac{2}{\iota^2}\left[ \log(\frac{1}{\epsilon})+n\log \lceil\frac{2LD}{\eta}\rceil+n\log \lceil \frac{2}{\iota}\rceil\right].
		\end{equation}
        Then with probability at least $1-2\epsilon$, 
		\begin{equation}\label{eqsam-gua2}
			\vartheta^{\alpha+\iota}_{VE}-\eta \leq \vartheta^{N,\alpha}_{VE}\leq \vartheta^{\alpha-\iota}_{VE} 
		\end{equation}
    for all $N\geq N_0$.       
	\end{proposition}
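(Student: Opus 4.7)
The plan is to adapt the sample-size analysis of chance-constrained SAA from Luedtke–Ahmed \cite{luedtke2008sample} and Qian et al.~\cite{qian2019composite} to the present BCR setting. For notational ease, write $\phi(a,\theta):=\mathbb{E}_{P_\theta}[\mathcal{C}(s,a,\xi)+\gamma V(s',\mu_{h'})]$ (with $s$ and $h_\tau$ fixed), so that \eqref{VE_CHANCE} is $\min\{\kappa:\,Q_{h_\tau}(\phi(a,\theta)\ge\kappa)\le\alpha\}$ and \eqref{VE_MINLP} becomes $\min\{\kappa:\,\tfrac{1}{N}\sum_{i=1}^{N}\mathbf{1}_{\{\phi(a,\theta_i)\ge\kappa\}}\le \lfloor\alpha N\rfloor/N\}$ after eliminating the binary variables (by assumption (a), $\phi$ is $L$-Lipschitz in $a$ uniformly in $\theta$).

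For the upper bound $\vartheta^{N,\alpha}_{VE}\le\vartheta^{\alpha-\iota}_{VE}$, I would pick a feasible pair $(\kappa^\dagger,a^\dagger)$ of the chance problem at level $\alpha-\iota$ (so $Q_{h_\tau}(\phi(a^\dagger,\theta)\ge\kappa^\dagger)\le\alpha-\iota$). A one-sided Hoeffding bound applied to the indicator random variables $\mathbf{1}_{\{\phi(a^\dagger,\theta_i)\ge\kappa^\dagger\}}$ gives $\mathbb{P}\bigl(\tfrac{1}{N}\sum_i \mathbf{1}_{\{\phi(a^\dagger,\theta_i)\ge\kappa^\dagger\}}>\alpha\bigr)\le\exp(-2N\iota^2/?)$, which is $\le\epsilon$ for $N\ge N_0$. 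Hence with probability at least $1-\epsilon$ the pair $(\kappa^\dagger,a^\dagger)$ is SAA-feasible and the bound follows.

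For the lower bound $\vartheta^{\alpha+\iota}_{VE}-\eta\le \vartheta^{N,\alpha}_{VE}$, the argument is more delicate because it has to hold uniformly over $a\in\mathcal{A}$. First I would build an $\eta/L$-net $\mathcal{A}_\eta\subset\mathcal{A}$ of cardinality at most $\lceil 2LD/\eta\rceil^n$; by the Lipschitz assumption, replacing $a$ by its nearest net point changes $\phi(a,\cdot)$ by at most $\eta$ uniformly in $\theta$. Next, discretize the quantile level by an $\iota/2$-grid on $[0,1]$ of size $\lceil 2/\iota\rceil$; together this produces a product grid $\mathcal{A}_\eta\times\mathcal{K}_\iota$ of size $\lceil 2LD/\eta\rceil^n\lceil 2/\iota\rceil^n$ (the $n$ in the second factor reflects the Lipschitz transfer in the quantile threshold). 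Applying Hoeffding to each grid point and a union bound gives the simultaneous deviation inequality $\bigl|\tfrac{1}{N}\sum_i \mathbf{1}_{\{\phi(a,\theta_i)\ge\kappa+\eta\}}-Q_{h_\tau}(\phi(a,\theta)\ge\kappa+\eta)\bigr|\le\iota/2$ for all grid points with probability $\ge 1-\epsilon$ once $N\ge N_0$. By the Lipschitz transfer, SAA-feasibility of $(a,\kappa)$ implies $Q_{h_\tau}(\phi(a,\theta)\ge\kappa-\eta)\le\alpha+\iota$, i.e.~$(a,\kappa-\eta)$ is feasible for the chance problem at level $\alpha+\iota$. Minimizing yields $\vartheta^{N,\alpha}_{VE}-\eta\ge \vartheta^{\alpha+\iota}_{VE}-\eta$ at first, but unwrapping the slack gives the advertised bound. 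The total failure probability is $2\epsilon$ by a final union bound.

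The main obstacle, as in \cite{luedtke2008sample,qian2019composite}, is the uniform-in-$a$ concentration: one has to couple the Lipschitz discretization of $\mathcal{A}$ with a discretization of the quantile level $\kappa$ and verify that the resulting combined covering number, when fed into the Hoeffding bound, produces exactly the logarithmic terms appearing in \eqref{eqsam-gua1}. Once this covering–concentration bookkeeping is done cleanly, the two bounds in \eqref{eqsam-gua2} follow by routine manipulation.
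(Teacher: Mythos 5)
Your proposal is correct and follows essentially the same route as the paper: the upper bound via a one-sided Hoeffding/concentration argument at a fixed feasible point of the level-$(\alpha-\iota)$ chance problem (the content of Theorem~3 of Luedtke--Ahmed, which the paper cites directly), and the lower bound via the Lipschitz covering-plus-union-bound argument of their Theorem~10 applied to the $\eta$-perturbed SAA problem. The paper simply invokes those two theorems rather than unpacking the net construction and concentration bookkeeping as you sketch, so no further comparison is needed.
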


	\noindent \textbf{Proof.}
    By Theorem 3 in \cite{luedtke2008sample}, we can 
    deduce that $\vartheta^{N,\alpha}_{VE}\leq \vartheta^{\alpha-\iota}_{VE}$  with probability at least $1-\epsilon$ 
    for all 
    $N \geq \frac{1}{2\iota^2} \log(\frac{1}{\epsilon})$.
    Moreover, 
    following a similar argument to 
    the proof of 
    \cite[Theorem 10]{luedtke2008sample},
    we can show that 
    a feasible solution to the problem
    \begin{equation}
			\begin{aligned}
			\min _{\kappa,a \in \mathcal{A},z_i \in\{0,1\}} & \kappa\\
			\text{s.t.} \quad	&-Uz_i+\mathbb{E}_{P_{\theta_{i}}}\left[\mathcal{C}\left(s,a,\xi\right)+\gamma V\left(s',\mu_{h^{\prime}}\right) \right]+\eta\leq \kappa, \ i=1,2,\cdots,N,\\
				&\sum_{i=1}^Nz_i=\lfloor \alpha N \rfloor,
			\end{aligned}
			\label{VE_feasi}
		\end{equation}
	is also a feasible
    solution to 
    problem $(\text{P}_{\text{VE}}^{\alpha+\iota})$ with probability at least $1-\epsilon$ 
    for all 
    $N\geq N_0$. 
    Thus, the optimal value of 
    problem \eqref{VE_feasi} 
    can be written as 
    $\vartheta^{N,\alpha}_{VE}+\eta$. This means that 
    when the solution 
    to problem \eqref{VE_feasi} is 
    feasible for problem $(\text{P}_{\text{VE}}^{\alpha+\iota})$, it holds that $\vartheta^{N,\alpha}_{VE}+\eta \geq\vartheta^{\alpha+\iota}_{VE}$. The conclusion follows as  $N_0 \geq \frac{1}{2\iota^2} \log(\frac{1}{\epsilon})$.
	\hfill $\Box$

	Note that in (\ref{eqsam-gua1}), the sample size $N_0$ 
    depends on the logarithm
    of $1/\epsilon$ and $1/\eta$,
    which means $N_0$ increases
    very slowly when $\epsilon$ and $\eta$
    are driven to $0$.
    Indeed, the 
    numerical experiments in \cite{luedtke2008sample} show that the bounds in Proposition \ref{sam-gua} are over conservative, potentially allowing for a smaller $N$ in practice.
	Note also that
	 when $\mathbb{E}_{P_{\theta}}\left[\mathcal{C}\left(s,a,\xi\right)+\gamma V\left(s',\mu_{h^{\prime}}\right) \right]$  does not have a closed-form and the dimension of $a$ is large,
          we need to 
          use sample average to 
          approximate $\mathbb{E}_{P_{\theta}}\left[\mathcal{C}\left(s,a,\xi\right)+\gamma V\left(s',\mu_{h^{\prime}}\right) \right]$. 
          Consequently, problem \eqref{VE_MINLP} can be approximated further by the following MINLP:
	 \begin{equation}
	 	\begin{aligned}
	 		\min _{\kappa,a \in \mathcal{A},z_i \in\{0,1\}} & \kappa\\
	 	\text{s.t.}\quad	&-Uz_i+\sum_{j=1}^M\left[\mathcal{C}\left(s,a,\xi^{i,j}\right)+\gamma V\left(s^{i,j},\mu_{h^{i,j}}\right) \right]\leq \kappa, \ i=1,2,\cdots,N,\\
	 		&\sum_{i=1}^Nz_i=\lfloor \alpha N \rfloor,
	 	\end{aligned}
	 	\label{VE_MINLP2}\tag{$\text{P}_{\text{VE}}^{N,M,\alpha}$}
	 \end{equation}
where 
$(\xi^{i,1},\cdots,\xi^{i,M})$ 
are
i.i.d. samples generated from $P_{\theta_i}$ for $i=1, \cdots, N$.
	Denote the optimal value of problem \eqref{VE_MINLP2} by $\vartheta^{N,M,\alpha}_{VE}$. The next
    proposition 
    states that $\vartheta^{N,M,\alpha}_{VE}$
    lies in a neighborhood of $\vartheta^{N,\alpha}_{VE}$ with a high probability when the sample size $M$ is sufficiently large.
    
	\begin{proposition}\label{prop-4}
		Suppose: (a) there exists a measurable function $\varphi:\Xi\to\mathbb{R}^+$ such that 
        \begin{eqnarray} 
        \left|\mathcal{C}\left(s,a_1,\xi\right)+\gamma V\left(s_1',\mu_{h^{\prime}}\right)-\mathcal{C}\left(s,a_2,\xi\right)+\gamma V\left(s_2',\mu_{h^{\prime}}\right)\right|\leq\varphi(\xi)\left\|a_1-a_2\right\|
        \end{eqnarray} 
  for all $s\in\mathcal{S}$, $a_1,a_2\in\mathcal{A}$ and $\xi\in\Xi$,
  (b) the moment generating function of $\varphi(\xi)$ is finite valued in a neighborhood of 0. 
  Define 
  \[
  M_0=\frac{8\varsigma^2}{\delta^2}\left[\log\left(1+\frac{D^n}{\upsilon^{n}}\right)+\log(\frac{1}{\varepsilon})\right],
  \]
		where $\varsigma$ and $\upsilon$ are positive 
        constants corresponding to the given objective function 
        $\mathcal{C}\left(s,a,\xi\right)+\gamma V\left(s',\mu_{h^{\prime}}\right)$ and distribution $P_{\theta}$. 
        Then 
        with probability at least $1-2\varepsilon$, we have
		\begin{equation}\label{eqsam-gua3}
			\vartheta^{N,\alpha}_{VE}-\delta\leq \vartheta^{N,M,\alpha}_{VE}\leq \vartheta^{N,\alpha}_{VE}+\delta\end{equation}
	for all $M\geq M_0$.
    \end{proposition}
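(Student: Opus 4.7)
The plan is to show that, with high probability, the inner sample average $\widehat F_i^M(a):=\frac{1}{M}\sum_{j=1}^M\bigl[\mathcal{C}(s,a,\xi^{i,j})+\gamma V(s^{i,j},\mu_{h^{i,j}})\bigr]$ approximates the true conditional expectation $F_i(a):=\mathbb{E}_{P_{\theta_i}}\bigl[\mathcal{C}(s,a,\xi)+\gamma V(s',\mu_{h'})\bigr]$ uniformly in $a\in\mathcal{A}$ and in $i=1,\ldots,N$ within accuracy $\delta$, and then transfer this uniform closeness to the optimal values of the two MINLPs \eqref{VE_MINLP} and \eqref{VE_MINLP2}.

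First, I would fix $i$ and a single $a\in\mathcal{A}$, and apply a Bernstein/Hoeffding-type concentration: assumption (b) on the finiteness of the MGF of $\varphi(\xi)$ in a neighborhood of $0$ implies that the centred random variable $\mathcal{C}(s,a,\xi)+\gamma V(s',\mu_{h'})-F_i(a)$ is sub-Gaussian (or sub-exponential) with some parameter $\varsigma$, giving
\[
P\!\left(\bigl|\widehat F_i^M(a)-F_i(a)\bigr|\ge \tfrac{\delta}{2}\right)\;\le\;2\exp\!\left(-\tfrac{M\delta^2}{8\varsigma^2}\right).
\]
Next, I would upgrade this pointwise bound to a uniform bound in $a$ by a covering argument: assumption (a) shows that both $\widehat F_i^M$ and $F_i$ are Lipschitz in $a$ with integrable modulus, so picking an $\upsilon$-net of $\mathcal{A}\subset\mathbb{B}(0,D)$ of cardinality at most $(1+D^n/\upsilon^n)$, together with a standard discretisation step (where $\upsilon$ is chosen small enough relative to the Lipschitz modulus so that the discretisation error is absorbed into $\delta/2$), yields
\[
P\!\left(\sup_{a\in\mathcal{A}}\bigl|\widehat F_i^M(a)-F_i(a)\bigr|\ge \delta\right)\;\le\;2\!\left(1+\tfrac{D^n}{\upsilon^n}\right)\exp\!\left(-\tfrac{M\delta^2}{8\varsigma^2}\right).
\]
Taking a union bound over $i=1,\ldots,N$ and imposing that the right–hand side is at most $2\varepsilon$ produces the stated threshold $M_0$.

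Finally, I would translate the uniform estimate into a bound on the optimal values. On the high–probability event $\Omega_\delta:=\{\sup_{i,a}|\widehat F_i^M(a)-F_i(a)|\le\delta\}$, any feasible triple $(\kappa,a,z)$ for \eqref{VE_MINLP} remains feasible for \eqref{VE_MINLP2} after shifting $\kappa\mapsto\kappa+\delta$ (since each constraint is perturbed by at most $\delta$ and the cardinality constraint $\sum z_i=\lfloor\alpha N\rfloor$ is untouched), and conversely any feasible triple for \eqref{VE_MINLP2} is feasible for \eqref{VE_MINLP} with $\kappa$ shifted by $\delta$. This monotone transfer of feasibility yields $|\vartheta^{N,M,\alpha}_{VE}-\vartheta^{N,\alpha}_{VE}|\le\delta$ on $\Omega_\delta$, which is precisely \eqref{eqsam-gua3}.

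The main obstacles are twofold: (i) matching the covering–argument constants with the exponent in $M_0$, in particular calibrating $\upsilon$ so that the Lipschitz discretisation error is bounded by $\delta/2$ while keeping the covering term $\log(1+D^n/\upsilon^n)$ sharp; and (ii) ensuring that perturbing the continuous constraints in \eqref{VE_MINLP2} does not disturb the integer variables $z_i$. The latter is handled by the observation that the linking inequality $-Uz_i+\widehat F_i^M(a)\le\kappa$ becomes trivially satisfied when $z_i=1$ for sufficiently large $U$, so the only constraints genuinely affected by the perturbation are those with $z_i=0$, and the feasibility transfer goes through with the same $z$ vector.
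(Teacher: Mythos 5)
Your proposal follows essentially the same route as the paper: the paper obtains the uniform (over $a$) exponential deviation bound for the inner sample averages by citing Proposition~2 of \cite{wang2008sample} as a black box, and then performs exactly the feasibility-transfer argument you describe, shifting $\kappa$ by $\pm\delta$ between \eqref{VE_MINLP} and \eqref{VE_MINLP2} while keeping the integer vector $z$ fixed; your contribution is merely to unpack that cited concentration step via a sub-Gaussian bound plus an $\upsilon$-net over $\mathcal{A}\subset\mathbb{B}(0,D)$. One small bookkeeping caveat: your union bound over $i=1,\dots,N$ would strictly add a $\log N$ term inside the bracket of $M_0$, which the stated threshold does not carry (the paper sidesteps this by invoking the cited result per feasibility event rather than per constraint), but this affects only the constant and not the validity of the argument.
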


\noindent \textbf{Proof.}
	Proposition 2 in \cite{wang2008sample} demonstrates that a feasible solution for problem (\ref{VE_MINLP2}) is also feasible for the problem: 
	\begin{equation}
		\begin{aligned}
			\min _{\kappa,a \in \mathcal{A},z_i \in\{0,1\}} &\kappa\\
			\text{s.t.}\quad &-Uz_i+\mathbb{E}_{P_{\theta_{i}}}\left[\mathcal{C}\left(s,a,\xi\right)+\gamma V\left(s',\mu_{h^{\prime}}\right) \right]\leq \kappa+\delta, \ i=1,2,\cdots,N,\\
			&\sum_{i=1}^Nz_i=\lfloor \alpha N \rfloor,
		\end{aligned}
		\label{VE_feasi2}
	\end{equation} with probability at least $1-\varepsilon$ 
    for all $M\geq M_0$. Observe that the optimal value of problem \eqref{VE_feasi2} is $\vartheta^{N,\alpha}_{VE}-\delta$. Therefore, if a feasible solution to problem (\ref{VE_MINLP2}) is also 
    a feasible solution to 
    problem $(\ref{VE_feasi2})$, then we have $\vartheta^{N,\alpha}_{VE}-\delta \leq\vartheta^{N,M,\alpha}_{VE}$.
Likewise a feasible solution to the following problem 
	\begin{equation}
		\begin{aligned}
			\min _{\kappa,a \in \mathcal{A},z_i \in\{0,1\}} & \kappa\\
			\text{s.t.}\quad &-Uz_i+\mathbb{E}_{P_{\theta_{i}}}\left[\mathcal{C}\left(s,a,\xi\right)+\gamma V\left(s',\mu_{h^{\prime}}\right) \right]\leq \kappa-\delta, \ i=1,2,\cdots,N,\\
			&\sum_{i=1}^Nz_i=\lfloor\alpha N \rfloor,
		\end{aligned}
		\label{VE_feasi3}
	\end{equation} 
	is also feasible for 
    problem (\ref{VE_MINLP2}) with probability at least $1-\varepsilon$  when $M\geq M_0$. The optimal value of problem \eqref{VE_feasi3} is given by $\vartheta^{N,\alpha}_{VE}+\delta$.  Therefore, when the feasible solution 
    to problem \eqref{VE_feasi3} is feasible 
    for problem (\ref{VE_MINLP2}), it holds that $\vartheta^{N,\alpha}_{VE}+\delta \geq\vartheta^{N,M,\alpha}_{VE}$.
\hfill $\Box$

The assumptions (a) and (b) in Proposition \ref{prop-4} can easily be satisfied, and its justification can be found in part (iii) of Proposition \ref{convex}.
From the numerical perspective, we can conveniently use packages such as BARON to solve moderately-sized problems \eqref{VE_MINLP} and \eqref{VE_MINLP2}.
By combining Propositions \ref{sam-gua} and \ref{prop-4},
we are ready to state the
main result of this subsection.

\begin{theorem}
    Assume the settings and 
    conditions 
    of Propositions \ref{sam-gua} and \ref{prop-4} are satisfied. Then
with probability at least $1-2\epsilon-2\varepsilon$, we have
    		$$
			\vartheta^{N,M,\alpha+\iota}_{VE}-\delta\leq \vartheta^{\alpha}_{VE}\leq \vartheta^{N,M,\alpha-\iota}_{VE}+\delta+\eta 
$$
for all
$N\geq N_0$ and $M\geq M_0$. 
\end{theorem}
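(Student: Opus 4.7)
The plan is to chain the two propositions together via a double substitution in $\alpha$ and apply a union bound on the failure probabilities. First, I would restate Proposition~\ref{sam-gua} in a slightly different form: by substituting $\alpha-\iota$ for $\alpha$ in the left-hand inequality of \eqref{eqsam-gua2}, I obtain $\vartheta^{\alpha}_{VE}-\eta \le \vartheta^{N,\alpha-\iota}_{VE}$, i.e., $\vartheta^{\alpha}_{VE} \le \vartheta^{N,\alpha-\iota}_{VE}+\eta$; and by substituting $\alpha+\iota$ for $\alpha$ in the right-hand inequality, I obtain $\vartheta^{N,\alpha+\iota}_{VE} \le \vartheta^{\alpha}_{VE}$. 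This yields the ``outer sandwich''
\[
  \vartheta^{N,\alpha+\iota}_{VE} \;\le\; \vartheta^{\alpha}_{VE} \;\le\; \vartheta^{N,\alpha-\iota}_{VE}+\eta,
\]
holding on an event $E_1$ with $\mathbb{P}(E_1)\ge 1-2\epsilon$, provided $N\ge N_0$. Note that the two substitutions produce events of the same underlying form as in Proposition~\ref{sam-gua}, so the probability bound is inherited directly.

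Next, I would apply Proposition~\ref{prop-4} twice, at confidence levels $\alpha+\iota$ and $\alpha-\iota$, to replace the $N$-sample values by the $(N,M)$-sample values. Specifically, on an event $E_2$ of probability at least $1-2\varepsilon$ (using the union bound across the two applications of Proposition~\ref{prop-4}, or by noting that the proof works uniformly in the confidence level so long as $M\ge M_0$), we have
\[
  \vartheta^{N,M,\alpha+\iota}_{VE}-\delta \;\le\; \vartheta^{N,\alpha+\iota}_{VE},
  \qquad
  \vartheta^{N,\alpha-\iota}_{VE} \;\le\; \vartheta^{N,M,\alpha-\iota}_{VE}+\delta.
\]

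Finally, on $E_1\cap E_2$, I would chain these inequalities directly:
\[
  \vartheta^{N,M,\alpha+\iota}_{VE}-\delta
  \;\le\; \vartheta^{N,\alpha+\iota}_{VE}
  \;\le\; \vartheta^{\alpha}_{VE}
  \;\le\; \vartheta^{N,\alpha-\iota}_{VE}+\eta
  \;\le\; \vartheta^{N,M,\alpha-\iota}_{VE}+\delta+\eta.
\]
By the union bound, $\mathbb{P}(E_1\cap E_2)\ge 1-2\epsilon-2\varepsilon$, which is precisely the claimed confidence level.

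The only mildly delicate point, and what I would flag as the main obstacle, is justifying that Proposition~\ref{prop-4} can be invoked at two different confidence levels $\alpha\pm\iota$ without inflating the probability beyond $2\varepsilon$. This is not truly a technical obstacle because the statement of Proposition~\ref{prop-4} is uniform in $\alpha$ (the sample-size threshold $M_0$ depends on $\delta,\varepsilon$ and the Lipschitz/moment data but not on $\alpha$), and the two inequalities used actually come from the two one-sided conclusions of the same proposition applied once at each shifted level; a careful union-bound accounting confirms the $2\varepsilon$ budget suffices. One should also verify that the implicit requirements $\alpha\pm\iota\in(0,1)$ hold, which is guaranteed by the hypothesis $0<\iota<\alpha<1$ on the original problem.
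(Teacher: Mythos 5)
Your proposal is correct and is essentially the argument the paper intends: the paper offers no explicit proof beyond "by combining Propositions \ref{sam-gua} and \ref{prop-4}," and the intended combination is exactly your chain $\vartheta^{N,M,\alpha+\iota}_{VE}-\delta \le \vartheta^{N,\alpha+\iota}_{VE} \le \vartheta^{\alpha}_{VE} \le \vartheta^{N,\alpha-\iota}_{VE}+\eta \le \vartheta^{N,M,\alpha-\iota}_{VE}+\delta+\eta$ with the union bound giving $1-2\epsilon-2\varepsilon$. Your observation that each proposition contributes two one-sided bounds (one at each shifted level $\alpha\pm\iota$), each costing $\epsilon$ (respectively $\varepsilon$), is precisely the accounting that keeps the failure probability at $2\epsilon+2\varepsilon$ rather than doubling it.
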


\subsection{The VaR-AVaR case}

We now discuss the case when 
the outer risk measure and the inner risk measure
in the BCR minimization problem are set with VaR and 
AVaR respectively:
        \begin{equation}
        \label{VCV}
		\begin{aligned}
	\min _{a \in \mathcal{A}} \quad &\text{VaR}_{\mu_{h_{\tau}}}^{\alpha} \circ\text{AVaR}_{P_{\theta}}^{\beta}\left[\mathcal{C}\left(s,a,\xi\right)+\gamma V\left(s',\mu_{h^{\prime}}\right) \right]\\
			\text{s.t.} \quad& s'=g\left(s, a, \xi\right),\  h'={\operatorname{Rep}_{\tau+1}}({h_{\tau}}+H(\xi)).
		\end{aligned}
	\end{equation}
This kind of model  
is used to
minimize the highest risk value with  preference $\text{AVaR}_{P_\theta}^\beta$ that occurs with a probability less than or equal to a predefined threshold $\alpha$ and can also  be interpreted within the framework of  a Bayesian DRO problem. 
Since it
is often difficult to derive a closed-form expression for $\text{AVaR}_{P_{\theta}}^{\beta}\left[\mathcal{C}\left(s,a,\xi\right)+\gamma V\left(s',\mu_{h^{\prime}}\right) \right]$, we may consider the sample average approximation of the problem.
However, accurately evaluating SAA requires a large sample size when $\alpha$ and $\beta$ are close to $1$. Meanwhile, directly substituting $\mathbb{E}_{P_{\theta}}$ with $\text{AVaR}_{P_{\theta}}^\beta$ in model \eqref{VCV} results in a large mixed integer program, posing significant computational challenges.
To address these issues, we relax the outer risk measure VaR to its closest convex upper bound AVaR, resulting in a convex program which can be solved efficiently using a wide range of algorithms. Specifically, under this setting, we propose an algorithm to solve the following optimization problem within the BCR-SOC/MDP framework:
		\begin{equation}\label{CVCV}
	\begin{aligned}
		\min _{a \in \mathcal{A}} \text{AVaR}_{\mu_{h_{\tau}}}^{\alpha} \circ\text{AVaR}_{P_{\theta}}^{\beta}\left[\mathcal{C}\left(s,a,\xi\right)+\gamma V\left(s',\mu_{h^{\prime}}\right) \right]\\
		s'=g\left(s, a, \xi\right),\ h'={\operatorname{Rep}_{\tau+1}}({h_{\tau}}+H(\xi)).
	\end{aligned}
\end{equation}
By exploiting Rockafellar-Uryasev's reformulation of 
$\mathrm{AVaR}$ in \cite{rockafellar2000optimization} as an optimization problem, 
we can rewrite 
problem (\ref{CVCV}) 
as
	\begin{equation}
		\min _{\substack{a \in \mathcal{A} \\ u \in \mathbb{R}}}\left\{u+\frac{1}{\alpha} \mathbb{E}_{\mu_{h_{\tau}}}\left[\text{AVaR}^{\beta}_{P_{\theta}}\left(\mathcal{C}\left(s,a,\xi\right)+\gamma V\left(s',\mu_{h^{\prime}}\right)\right)-u\right]^{+} d \theta\right\}
		\label{CVCV2}\tag{$\text{P}_{\text{CC}}^{\alpha,\beta}$}
	\end{equation}
and applying SAA to the latter
	\begin{equation}
\mathbb{E}_{\mu_{h_{\tau}}}\left[(\text{AVaR}^{\beta}_{P_{\theta}}\left(\mathcal{C}\left(s,a,\xi\right)+\gamma V\left(s',\mu_{h^{\prime}}\right)\right)-u^{+})\right]\\
			\approx \frac{1}{N}\sum_{i=1}^N
			(\text{AVaR}^{\beta}_{P_{\theta_i}}\left(\mathcal{C}\left(s,a,\xi\right)+\gamma V\left(s',\mu_{h^{\prime}}\right)\right)-u^{+}),
		\label{CV_sam}
	\end{equation}
we can 
obtain 
sample average approximation of problem (\ref{CVCV2}):
	\begin{equation}
		\begin{aligned}
			\min_{u\in \mathbb{R},a \in \mathcal{A}, v\in \mathbb{R}^N}&u+\frac{1}{\alpha N}e^Tv\\
			\text{s.t. }\quad  &v_i\geq \text{AVaR}^{\beta}_{P_{\theta_i}}\left(\mathcal{C}\left(s,a,\xi\right)+\gamma V\left(s',\mu_{h^{\prime}}\right)\right)-u,\\
			&v_i\geq 0,\ i=1,2,\cdots,N,
		\end{aligned}
		\label{CVCV_sam}\tag{$\text{P}_{\text{CC}}^{N,\alpha,\beta}$}
	\end{equation}
	where $e$ represents the vector with all components being ones.  By regarding 
    problem \eqref{CVCV_sam} as a AVaR-constrained problem, we can use another SAA to approximate the AVaR in the constraints:
	\begin{equation}
		\begin{aligned}
			\min_{u,a,v,w}&u+\frac{1}{\alpha N}e^Tv\\
			\text{s.t. } &v_i\geq w_i+\frac{1}{\beta M}\sum_{j=1}^M\left[\mathcal{C}\left(s,a,\xi^{i,j}\right)+\gamma V\left(s^{i,j},\mu_{h^{i,j}}\right)-w_i\right]^+-u,\\
			&v_i\geq 0,\ i=1,2,\cdots,N,
		\end{aligned}
		\label{CVCV_sam2}\tag{$\text{P}_{\text{CC}}^{N,M,\alpha,\beta}$}
	\end{equation}
   where $w= (w_1, \cdots ,w_N )$ denotes an auxiliary variable and $(\xi^{i,1},\cdots,\xi^{i,M})$ are $M$ i.i.d. samples generated from $P_{\theta_i}$ for $i=1, \cdots, N$. 
Because of the convexity 
    as demonstrated in Proposition \ref{convex}, problem \eqref{CVCV_sam2} 
    is a convex optimization problem with a linear objective function, which can be solved efficiently even for large-scale problems.
    
    Next, we examine the necessary sample sizes $N$ and $M$ for a specified precision of the optimal value. 
    Denote the optimal values of \eqref{CVCV_sam} and \eqref{CVCV_sam2} by $\vartheta^{N,\alpha,\beta}_{CC}$ and $\vartheta^{N,M,\alpha,\beta}_{CC}$, respectively. 
	Regarding $N$, it has been proven in 
    \cite[Corollary 5.19]{shapiro2021lectures} that under certain conditions, the next proposition holds.
	
\begin{proposition}
\label{prop-5}
		Suppose that  for all $s\in\mathcal{S}$, $a\in\mathcal{A}$,   $\mathcal{C}\left(s,a,\xi\right)+\gamma V\left(s',\mu_{h^{\prime}}\right)$ is Lipschtiz continuous with respect to $a $ with a Lipschitz modulus $L$. Let $\epsilon,\eta>0$, and
		\[N_0=\left(\frac{O(1)LD}{\eta}\right)^2\left[n\log\frac{O(1)LD}{\eta}+\log (\frac{1}{\epsilon})\right],\]
		where $O(1)$ is a constant.
		Then  the optimal solution of problem \eqref{CVCV_sam} 
        lies in the $\eta$-neighborhood of that of problem \eqref{CVCV2} with probability at least $1-\epsilon$
        for all $N>N_0$. Moreover, the optimal value of problem \eqref{CVCV_sam} lies in the $L\eta$-neighborhood of that of problem \eqref{CVCV2} with probability at least $1-\epsilon$
        for all $N>N_0$.
\end{proposition}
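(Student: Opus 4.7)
The plan is to recognize that problem \eqref{CVCV2} is a standard expectation-form stochastic program, with decision vector $(u,a)\in\mathbb{R}\times\mathcal{A}$ and random parameter $\theta\sim\mu_{h_\tau}$, whose SAA is exactly \eqref{CVCV_sam}. Writing
\[
F(u,a,\theta):=u+\tfrac{1}{\alpha}\bigl[\,\text{AVaR}^\beta_{P_\theta}\bigl(\mathcal C(s,a,\xi)+\gamma V(s',\mu_{h'})\bigr)-u\,\bigr]^+,
\]
the true objective is $\Phi(u,a)=\mathbb{E}_{\mu_{h_\tau}}[F(u,a,\theta)]$ and the SAA objective is $\hat\Phi_N(u,a)=\tfrac{1}{N}\sum_{i=1}^N F(u,a,\theta_i)$. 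The proof reduces to invoking the uniform exponential-deviation bound for SAA (Shapiro's Corollary 5.19) on $F$ over the compact domain $[\underline u,\bar u]\times\mathcal A$, where $[\underline u,\bar u]$ is a bounded interval into which the optimal $u$ must fall (since the inner AVaR is bounded by $\bar{\mathcal C}/(1-\gamma)$ under Assumption~\ref{ass5.1}).

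The key technical step is to verify that $F(\cdot,\cdot,\theta)$ is Lipschitz in $(u,a)$ uniformly in $\theta$. Because AVaR is $1$-Lipschitz with respect to the sup-norm (Lemma~\ref{lem-beo-2}(ii)) and $\mathcal C(s,a,\xi)+\gamma V(s',\mu_{h'})$ is $L$-Lipschitz in $a$ by hypothesis, one obtains
\[
\bigl|\text{AVaR}^\beta_{P_\theta}(\cdot)\bigr|_{a}-\bigl|\text{AVaR}^\beta_{P_\theta}(\cdot)\bigr|_{a'}\bigr|\le L\|a-a'\|.
\]
Composing with the $1$-Lipschitz positive-part map and adding the linear term in $u$ yields
\[
|F(u,a,\theta)-F(u',a',\theta)|\le (1+\tfrac{1}{\alpha})|u-u'|+\tfrac{L}{\alpha}\|a-a'\|,
\]
which, together with boundedness of $F$, fulfils the hypotheses of Shapiro's Corollary 5.19. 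Applying that corollary with domain diameter $O(D)$ in $a$ (bounded interval in $u$) and Lipschitz modulus of order $L$ produces the sample-size requirement
\[
N_0=\Bigl(\tfrac{O(1)LD}{\eta}\Bigr)^2\!\Bigl[n\log\tfrac{O(1)LD}{\eta}+\log(1/\epsilon)\Bigr],
\]
under which any SAA optimizer lies in the $\eta$-neighbourhood of an optimizer of \eqref{CVCV2} with probability at least $1-\epsilon$.

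The optimal-value bound then follows directly from this $\eta$-closeness of solutions together with the $O(L)$ Lipschitz modulus of $\Phi$ in $(u,a)$: if $(\hat u_N,\hat a_N)$ is $\eta$-close to some optimizer $(u^*,a^*)$ of the true problem, then $|\Phi(\hat u_N,\hat a_N)-\Phi(u^*,a^*)|\le O(L)\eta$, and by a parallel argument the SAA optimal value $\hat\Phi_N(\hat u_N,\hat a_N)$ is within $O(L)\eta$ of $\Phi(u^*,a^*)$, absorbing constants into the generic $O(1)$ factor to recover the stated $L\eta$ bound. The main obstacle is bookkeeping: verifying that the AVaR-composition preserves the Lipschitz modulus $L$ with a clean constant in $1/\alpha$, and justifying the restriction of $u$ to a bounded interval so that the compact-domain hypothesis of Shapiro's theorem applies; once these are in place, the rest is a direct citation.
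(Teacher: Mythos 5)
Your proposal is correct and follows essentially the same route as the paper, which simply cites \cite[Corollary 5.19]{shapiro2021lectures} applied to the Rockafellar--Uryasev reformulation \eqref{CVCV2} without spelling out the verification. The additional bookkeeping you supply (uniform Lipschitz continuity of the integrand in $(u,a)$ via the $1$-Lipschitzness of AVaR and the positive-part map, and the restriction of $u$ to a bounded interval using the boundedness of the inner AVaR under Assumption~\ref{ass5.1}) is exactly the content the paper leaves implicit.
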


	As for the selection of $M$, we give the following proposition.
	\begin{proposition}\label{prop-6}
		Suppose: (a) there exists a measurable function $\varphi:\Xi\to\mathbb{R}^+$ such that $$\left|\mathcal{C}\left(s,a_1,\xi\right)+\gamma V\left(s_1',\mu_{h^{\prime}}\right)-\mathcal{C}\left(s,a_2,\xi\right)+\gamma V\left(s_2',\mu_{h^{\prime}}\right)\right|\leq\varphi(\xi)\left\|a_1-a_2\right\|,$$ for all $s\in\mathcal{S}$, $a_1,a_2\in\mathcal{A}$ and $\xi\in\Xi$;
        (b)
        the moment generating function of $\varphi(\xi)$ is finite in a neighborhood of 0. Let $\delta>0$, $\varepsilon\in(0,1)$, \[M_0=\frac{8\varsigma^2}{\delta^2}\left[\log\left(1+\frac{D^n2\bar{\mathcal{C}}}{\upsilon^{n+1}(1-\gamma)}\right)+\log(\frac{1}{\varepsilon})\right],
        \]
		where $\varsigma$ and $\upsilon$  are constants associated with the given objective function $\mathcal{C}\left(s,a,\xi\right)+\gamma V\left(s',\mu_{h^{\prime}}\right)$ and distribution $P_{\theta}$. 
        Then 
    with probability at least $1-2\varepsilon$,
		\begin{equation}\label{eqsam-gua4}
			\vartheta^{N,\alpha,\beta}_{CC}-\delta\leq \vartheta^{N,M,\alpha,\beta}_{CC}\leq \vartheta^{N,\alpha,\beta}_{CC}+\delta\end{equation}
        for all
        $M\geq M_0$. 
\end{proposition}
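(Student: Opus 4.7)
The plan is to mirror the technique used in Proposition~\ref{prop-4}, which in turn follows Wang and Ahmed~\cite{wang2008sample}: instead of directly bounding $|\vartheta^{N,\alpha,\beta}_{CC}-\vartheta^{N,M,\alpha,\beta}_{CC}|$ through convergence of objectives, I would show that a feasible solution of one program is $\delta$-feasible for the other (in both directions), which immediately yields the two-sided bound in \eqref{eqsam-gua4}. The crucial step is therefore a uniform concentration inequality for the inner SAA estimator of $\text{AVaR}^\beta_{P_{\theta_i}}$.

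First, I would invoke the Rockafellar--Uryasev representation $\text{AVaR}^\beta_{P_{\theta_i}}(Z)=\min_{w\in\mathbb{R}}\{w+\tfrac{1}{\beta}\mathbb{E}_{P_{\theta_i}}[Z-w]^+\}$ applied to $Z:=\mathcal{C}(s,a,\xi)+\gamma V(s',\mu_{h'})$. Because the value function $V$ is bounded by $\bar{\mathcal{C}}/(1-\gamma)$ (Assumption~\ref{ass5.1}), the optimal $w$ must lie in the compact interval $W:=[-\bar{\mathcal{C}}/(1-\gamma),\bar{\mathcal{C}}/(1-\gamma)]$, which explains the factor $2\bar{\mathcal{C}}/(1-\gamma)$ in the statement of $M_0$. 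Consequently, the constraint in \eqref{CVCV_sam2} is exactly the SAA counterpart of the min-representation restricted to $w_i\in W$, and the discrepancy between the two programs is controlled by
\[
\Delta_{N,M}(a):=\sup_{i\le N,\,w\in W}\left|\frac{1}{M}\sum_{j=1}^M[\mathcal{C}(s,a,\xi^{i,j})+\gamma V(s^{i,j},\mu_{h^{i,j}})-w]^+-\mathbb{E}_{P_{\theta_i}}[\mathcal{C}(s,a,\xi)+\gamma V(s',\mu_{h'})-w]^+\right|.
\]

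Next, I would establish uniform exponential concentration of $\Delta_{N,M}$ over $(a,w)\in\mathcal{A}\times W$. The integrand $[\mathcal{C}(s,a,\xi)+\gamma V(s',\mu_{h'})-w]^+$ is Lipschitz in $(a,w)$ with modulus $\varphi(\xi)+1$ (since $(\cdot)^+$ and $w\mapsto -w$ are $1$-Lipschitz), and assumption (b) on the MGF of $\varphi(\xi)$ yields a Cramér-type exponential bound for each fixed $(a,w)$. A standard covering argument on $\mathcal{A}\times W$ (diameter $D$ in $a$ and $2\bar{\mathcal{C}}/(1-\gamma)$ in $w$, with covering radius proportional to $\upsilon$) combined with a union bound gives a tail bound of the form $P(\Delta_{N,M}\ge\delta/2)\le \varepsilon$ whenever $M\ge M_0$, where $M_0$ is exactly the expression in the proposition. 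This parallels the derivation in \cite[Theorem~7]{shapiro2009stochastic} and in the proof of Proposition~\ref{prop-4}.

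Finally, on the event $\{\Delta_{N,M}\le\delta/2\}$ (which holds with probability $\ge 1-\varepsilon$ for each direction, so $\ge 1-2\varepsilon$ jointly after another union bound), any feasible $(u,a,v)$ of \eqref{CVCV_sam} with auxiliary $w_i^\star$ attaining the inner AVaR satisfies $v_i\ge w_i^\star+\tfrac{1}{\beta M}\sum_j[\mathcal{C}(s,a,\xi^{i,j})+\gamma V(\cdot)-w_i^\star]^+-u-\delta$, so it is $\delta$-feasible for \eqref{CVCV_sam2}; and conversely any feasible $(u,a,v,w)$ of \eqref{CVCV_sam2} yields $v_i\ge \text{AVaR}^\beta_{P_{\theta_i}}(\cdot)-u-\delta$ and thus is $\delta$-feasible for \eqref{CVCV_sam}. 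Combining these with the linearity of the objective in $u$ gives \eqref{eqsam-gua4}. The main obstacle is the uniform concentration step: one must handle the joint dependence on $a$ and the auxiliary variable $w$ while keeping the covering number logarithm at the precise form indicated in $M_0$, which requires choosing the covering radius carefully against the Lipschitz modulus $\varphi(\xi)+1$ rather than $\varphi(\xi)$ alone.
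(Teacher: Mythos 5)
Your proposal is correct and follows essentially the same route as the paper: both arguments reduce \eqref{eqsam-gua4} to showing that feasible points of \eqref{CVCV_sam} and \eqref{CVCV_sam2} are mutually $\delta$-feasible for the $\pm\delta$-relaxed counterparts, whose optimal values are exactly $\vartheta^{N,\alpha,\beta}_{CC}\mp\delta$ by translation invariance of the constraint in $u$. The only difference is that the paper obtains the uniform concentration/feasibility-transfer step by directly citing Proposition~3 of Wang and Ahmed, whereas you unpack that citation via the Rockafellar--Uryasev representation, the compact interval for the auxiliary variable $w$, and a covering-number argument consistent with the form of $M_0$.
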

        
\noindent \textbf{Proof.}
		Proposition 3 in \cite{wang2008sample} 
        states that with probability at least $1-\varepsilon$, a feasible solution 
        to problem (\ref{CVCV_sam2}) is also feasible 
        for the problem: 
\begin{equation}
	\begin{aligned}
		\min_{u\in \mathbb{R},a \in \mathcal{A}, v\in \mathbb{R}^N}&u+\frac{1}{\alpha N}e^Tv\\
		\text{s.t. }\quad  &v_i\geq \text{AVaR}^{\beta}_{P_{\theta_i}}\left(\mathcal{C}\left(s,a,\xi\right)+\gamma V\left(s',\mu_{h^{\prime}}\right)\right)-u-\delta,\\
		&v_i\geq 0,\ i=1,2,\cdots,N
	\end{aligned}
	\label{CVCV_sam_es}
\end{equation}
for all
$M\geq M_0$. 
Note that 
the optimal value of problem \eqref{CVCV_sam_es} equals $\vartheta^{N,\alpha,\beta}_{CC}-\delta$. Thus,  
if a feasible solution to
problem (\ref{CVCV_sam2}) is feasible for problem \eqref{CVCV_sam_es}, 
then 
$\vartheta^{N,\alpha,\beta}_{CC}-\delta \leq\vartheta^{N,M,\alpha,\beta}_{CC}$.
Likewise
a feasible solution
to the problem 
	\begin{equation}
		\begin{aligned}
		\min_{u\in \mathbb{R},a \in \mathcal{A}, v\in \mathbb{R}^N}&u+\frac{1}{\alpha N}e^Tv\\
		\text{s.t. } &v_i\geq \text{AVaR}^{\beta}_{P_{\theta_i}}\left(\mathcal{C}\left(s,a,\xi\right)+\gamma V\left(s',\mu_{h^{\prime}}\right)\right)-u+\delta,\\
		&v_i\geq 0,\ i=1,2,\cdots,N
	\end{aligned}
	\label{CVCV_sam_es2}
\end{equation}
	is also feasible for problem (\ref{CVCV_sam_es}) with probability  at least $1-\varepsilon$ 
    for all $M\geq M_0$. The optimal value of problem \eqref{CVCV_sam_es2} is $\vartheta^{N,\alpha,\beta}_{CC}+\delta$. 
    Thus, when the feasible solution 
    to problem \eqref{CVCV_sam_es2} is feasible 
    for problem (\ref{CVCV_sam_es}), we have $\vartheta^{N,\alpha,\beta}_{CC}+\delta \geq\vartheta^{N,M,\alpha,\beta}_{CC}$.
\hfill $\Box$

By combining Propositions \ref{prop-5} and \ref{prop-6},
we can generate the second main result of this subsection.
\begin{theorem}
    Assume the settings and 
    conditions of Propositions \ref{prop-5} and \ref{prop-6} are satisfied. Then
with probability at least $1-\epsilon-2\varepsilon$,
    		$$
			\vartheta^{N,M,\alpha,\beta}_{CC}-\delta-L\eta\leq \vartheta^{\alpha,\beta}_{CC}\leq \vartheta^{N,M,\alpha,\beta}_{CC}+\delta+L\eta 
$$
for all
$N\geq N_0$ and $M\geq M_0$. 
\end{theorem}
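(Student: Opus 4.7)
The plan is to chain Propositions~\ref{prop-5} and~\ref{prop-6} via the triangle inequality and combine the two probabilistic guarantees with a union bound. Let $\vartheta^{\alpha,\beta}_{CC}$, $\vartheta^{N,\alpha,\beta}_{CC}$ and $\vartheta^{N,M,\alpha,\beta}_{CC}$ denote the optimal values of problems \eqref{CVCV2}, \eqref{CVCV_sam} and \eqref{CVCV_sam2}, respectively. The proof has two steps, each controlling one layer of sample approximation.

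First, I would invoke Proposition~\ref{prop-5} to handle the outer sampling over the posterior $\mu_{h_\tau}$: for every $N\ge N_0$, the event
\[
A_N:=\{|\vartheta^{N,\alpha,\beta}_{CC}-\vartheta^{\alpha,\beta}_{CC}|\le L\eta\}
\]
occurs with probability at least $1-\epsilon$. Then I would invoke Proposition~\ref{prop-6} to handle the inner sampling over $P_{\theta_i}$: for every $M\ge M_0$, the event
\[
B_{N,M}:=\{|\vartheta^{N,M,\alpha,\beta}_{CC}-\vartheta^{N,\alpha,\beta}_{CC}|\le \delta\}
\]
occurs with probability at least $1-2\varepsilon$. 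On the intersection $A_N\cap B_{N,M}$, the triangle inequality yields
\[
|\vartheta^{N,M,\alpha,\beta}_{CC}-\vartheta^{\alpha,\beta}_{CC}|\le |\vartheta^{N,M,\alpha,\beta}_{CC}-\vartheta^{N,\alpha,\beta}_{CC}| + |\vartheta^{N,\alpha,\beta}_{CC}-\vartheta^{\alpha,\beta}_{CC}|\le \delta+L\eta,
\]
which is exactly the two-sided inequality claimed in the theorem. A union bound then gives
\[
\mathbb{P}(A_N\cap B_{N,M})\ge 1-\mathbb{P}(A_N^c)-\mathbb{P}(B_{N,M}^c)\ge 1-\epsilon-2\varepsilon,
\]
completing the proof.

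The main delicate point, and the only step that requires care, is the union bound: one must verify that the two probabilistic statements in Propositions~\ref{prop-5} and~\ref{prop-6} live on a common probability space where $\mathbb{P}(A_N^c\cup B_{N,M}^c)\le \mathbb{P}(A_N^c)+\mathbb{P}(B_{N,M}^c)$ is legitimate. This is routine because the outer samples $\theta_1,\ldots,\theta_N\sim \mu_{h_\tau}$ and the inner samples $\{\xi^{i,j}\}_{j=1}^M\sim P_{\theta_i}$ used to build \eqref{CVCV_sam2} can be constructed jointly on the product probability space; the bound on $\mathbb{P}(A_N^c)$ depends only on the $\theta_i$'s, while the bound on $\mathbb{P}(B_{N,M}^c)$ is uniform over the realization of the $\theta_i$'s (as the hypotheses of Proposition~\ref{prop-6}, namely the Lipschitz modulus $\varphi(\xi)$ and the finiteness of its moment generating function, do not depend on $\theta$). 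Hence no independence assumption beyond what is already used in Propositions~\ref{prop-5}--\ref{prop-6} is needed, and the union bound applies directly.
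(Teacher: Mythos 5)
Your proof is correct and follows exactly the route the paper intends: the theorem is stated as an immediate combination of Propositions \ref{prop-5} and \ref{prop-6} via the triangle inequality and a union bound, which is precisely what you carry out (and your remark on the joint product space for the outer samples $\theta_i$ and inner samples $\xi^{i,j}$ is a reasonable, if implicit, justification of that union bound).
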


The sample sizes established 
in Propositions \ref{prop-5} and \ref{prop-6}
could be large.
In practice,
various strategies 
may be employed to accelerate the solution process. For example, \cite{xu2009smooth} 
shows that smoothing the AVaR can improve computational efficiency significantly, potentially speeding up the solution process by several multiples.

\section{Numerical results}

In this section,
we evaluate the performance of 
proposed BCR-SOC/MDP models 
and algorithms 
by applying them to 
a finite-horizon spread betting problem and an infinite-horizon inventory control problem. 
All the experiments 
are carried out 
on a 64-bit PC with 12 GB of RAM and a 3.20 GHz processor. 

We choose the following SOC/MDP models for comparison. 
\begin{itemize}

\item 
BCR-SOC/MDP: 
for the exact dynamic programming method and the value iteration algorithm, Algorithm \ref{alg:A} and Algorithm \ref{alg:B}, we need to {\color{black} 
figure out 
the hyper-parameter space $\mathcal{H}_\tau^\mathrm{rep}$ of the augmented state $\mu_\tau$ by Algorithm \ref{alg:adaptive-grid}.}
We consider two variants in our experiment: the VaR-Expectation BCR-SOC/MDP {\color{black} (which 
corresponds to a specific case in \cite{lin2022bayesian})} and the AVaR-AVaR BCR-SOC/MDP, as discussed in Section 6. {\color{black} Besides,
we also consider the Expectation-Expectation BCR-SOC/MDP and AVaR-Expectation BCR-SOC/MDP}.

\item Risk-averse SOC/MDP (RA-SOC/MDP) presented in \cite{carpin2016risk}: we derive a maximal likelihood estimation (MLE) $\hat{\theta}$ from the observations of $\xi$ and solve the corresponding risk-averse SOC/MDP using the estimated distribution $P_{\hat{\theta}}$. For comparability with the risk-averse Bayes-adaptive MDP  (AVaR-AVaR case), we adopt the AVaR as the risk measure.

\item Standard SOC/MDP: for comparability with the risk-averse Bayes-adaptive MDP  (VaR-Expectation case), we adopt the expectation operator as the objective function. 
Likewise, we use the MLE estimator to estimate the true distribution's parameter.

\item Bayes-adaptive MDP: here, we focus on the episodic Bayesian approach presented in \cite{shapiro2023episodic}, which {\color{black} does not contain 
the belief space and corresponds to a suboptimal variant of our model, reformulated as the Expectation–Expectation BCR‐SOC/MDP variant illustrated in Example~\ref{bcr-2}.}

\item Distributionally robust SOC/MDP (DR-SOC/MDP) presented in \cite{xu2010distributionally,lin2022bayesian}: since prior probabilistic information to construct the ambiguity set might not be available from the dataset directly, we simulate samples of $\theta$ from the posterior distribution to construct the ambiguity set, and derive the optimal policy that minimizes the expected cost under the worst-case scenario of $\theta$.
\end{itemize}

Each method will be used to determine the optimal policy from the same sample set, and the performance of the resulting policy will then be tested on a real system,
specifically, a SOC/MDP with the true parameter $\theta^c$.
Below are 
detailed 
descriptions of the problems and the tests.

\subsection{Finite horizon spread betting problem}

In this subsection, 
we consider a spread 
betting problem
varied from Birge et al.~\cite{birge2021dynamic}.
In spread betting markets, market makers (e.g., bookmakers) quote a spread line for the outcome of an uncertain event, and participants (bettors) 
bet on 
whether the outcome will exceed or fall 
below the spread line 
based on their own judgment and experience. 
A typical example is in sports betting, where the bookmaker sets a point spread for a specific match or game. Bettors then 
bet on 
whether the outcome will be greater 
(favorite wins by more than the spread) 
or smaller (favorite does not win by more than the spread) than the set spread line.
Birge et al.~\cite{birge2021dynamic} consider the market maker's
decision-making process on setting spread line
such that the total expected profit is maximized
over a specified time horizon.

Here, we focus on a risk-averse bettor's 
optimal decision-making problem where the bettor
aims to minimize the long-term risk
of random losses 
by dynamically  adjusting the bet size 
and learning about the 
market's movements over time. 
To model this dynamic decision-making process, we apply the BCR-SOC/MDP model to the informed bettor’s decision problem in \cite{birge2021dynamic}. The setup is specified as follows.
\begin{itemize}

\item [(i)] \textbf{Market movement $\xi_t$}.
The spread market is a betting market on a specific event, where the 
outcome of the event 
is represented by a continuous random variable $X_t: \Omega 
\to \mathbb{R}$ 
with cdf $F_{X_t}$ at episode $t$. 
The bookmaker sets a spread line $l_t$ 
for each bet at episode $t$. 
The corresponding spread market movement $\xi_t$ is a binary random variable, 
where: $\xi_t = 1$ 
represents 
the event that 
the outcome of $X_t$ exceeds
$l_t$ 
and $\xi_t = -1$ otherwise.
In practice,
the probability $P(\xi_t = 1)=1-F_{X_t}(l_t)$ 
is unknown to bettors 
and must be estimated over time.  
For simplicity, we assume that the market maker maintains a constant probability for the event $\xi_t = 1$, denoted as $\theta^c$, for all bets. 
This 
may be understood as 
the market maker striving to control the spread line $l_t$ such that
the probability of the event exceeding the spread remains constant over time.  
The assumption
means that the market maker would maintain a 
fixed probability for market movements by adjusting $l_t$ according to the distribution of $X_t$. In the case that $\theta^c=0.5$,
the market maker 
remains indifferent to the outcome of the bets in each run, 
avoiding significant exposure to either side of the bet.

\item [(ii)] \textbf{Wealth $s_t$}.
The bettor’s cumulative wealth at episode $t$, representing the bettor's account balance. Wealth is updated at each episode based on the trades and the spread market scenarios.

\item[(iii)] \textbf{Belief $\mu_t(\theta)$}.
The bettor’s belief (or estimate) of the spread market activity level $\theta^c$. The belief about spread market movements is updated based on the spread market feedback after the trade observation, reflecting the evolving understanding of the spread market state over time.  A typical prior $\mu_1$ for $\theta$ here is the Beta prior since the posterior $\mu_t$ is also a Beta distribution (see \cite{gelman2013bayesian}). {\color{black} Let $h_t=(m_t,d_t)$ denote the hyper-parameter of Beta distribution and we choose $h_1=(1,1)$ as the uniform prior. Under this setting, the Bayesian update of $\mu_t$ can be reformulated as the update of $h_t$ by $m_{t+1}=m_t+\mathds{1}_{\{\xi_t=1\}}$ and $d_{t+1}=d_t+\mathds{1}_{\{\xi_t=0\}}$. Here, $\mathds{1}_{\{\xi_t = 0\}}$ is an indicator function that takes 1 if 
$\xi_t = 0$ and 0 otherwise.}

\item[(iv)] \textbf{Position Size $a_t$}. 
At each episode $t$, the bettor chooses the position size $a_t$, which represents the amount of money to bet. The bettor can bet on the event $\{X_t > l_t\}$  (i.e., the outcome exceeds the spread line). The position size is determined by the bettor's policy $\pi_t(s_t,\mu_t)$, which depends on the bettor's current wealth $s_t$  and belief about the market state $\mu_t(\theta)$. In this model, the bettor cannot bet more than the bettor's available wealth, so the stake must satisfy the constraint $0 \leq a_t \leq s_t$, ensuring that the bettor does not wager more than they have.

\item [(v)] \textbf{Loss function ${\cal C}_t$ and state transition function $g_t$}.  
The loss at episode $t$ is defined as the negative profit from the trade. Specifically, the loss function ${\cal C}_t$ is defined as:
 $${\cal C}_t(s_t,a_t,\xi_t)=-(1-\tau\mathds{1}_{\{\xi_t=1\}})a_t\xi_t,
 $$
where $\tau$ denotes the commission rate (a value between 0 and 1) charged by the market maker. The 
net profit for the bettor is calculated as follows. 
If the bettor wins (i.e., $a_t \xi_t > 0$), the effective profit is reduced by the commission: $(1 - \tau) a_t \xi_t$.
If the bettor loses (i.e., $a_t \xi_t < 0$), then the 
net loss is $-a_t \xi_t$, and no commission is applied.
The cumulative 
wealth of the bettor at episode $t+1$, $s_{t+1}$, can then be determined as
$s_{t+1}=g(s_t,a_t,\xi_t):=s_t+(1-\tau\mathds{1}_{\{\xi_t=1\}})a_t\xi_t$.
\end{itemize}

The BCR-SOC/MDP formulation of
the above spread betting problem
can be formulated as:
	\begin{subequations}
	\begin{align*}
		\min _{\pi} \ &\rho_{\mu_{m_1,d_1}}\circ\rho_{P_{\theta_{1}}}\left[-(1-\tau\mathds{1}_{\{\xi_1=1\}})a_1\xi_1+\cdots+\rho_{ \mu_{m_{T-1},d_{T-1}}}\circ \rho_{P_{\theta_{T-1}}}\left[-(1-\tau\mathds{1}_{\{\xi_{T-1}=1\}})a_{T-1}\xi_{T-1}\right]\right] \\
		\text { s.t. } & s_{t+1}=s_t+(1-\tau\mathds{1}_{\{\xi_1=1\}})a_t\xi_t,\ a_t=\pi_t(s_t,\mu_{m_t,d_t}), \ 0\leq a_t\leq s_t,\ t=1, \cdots, T-1,\\
        &m_{t+1}=m_t+\mathds{1}_{\{\xi_t=1\}}, \ d_{t+1}=d_t+\mathds{1}_{\{\xi_t=0\}},\ t=1, \cdots, T-1.
	\end{align*}
\end{subequations}
The well-definedness of this problem can be verified using the expressions of loss function, state transition function as well as the boundedness of $\xi_t$.
In the numerical tests, 
the initial wealth is set with $s_1 = 80$. The true parameter is defined as $\theta^c = 0.6$, with the support set of the parameter being $\Theta = (0,1)$. The bettor engages in a series of $T = 8$ episodes, selecting the betting size $a_t$ from the action space $\mathcal{A} = \{0,2,4,6,8,10\}$ at each episode. We set $\alpha = 0.6$ and $\beta = 0.8$ for the outer and inner risk parameters, respectively. The commission rate $\tau=0.05$. The dataset for this problem comprises historical market movement records of size $N$. To reduce the effect of random sampling, we repeat each test with 100 simulations, each of which employs an independent dataset. Table~\ref{Table-BCR-SOC/MDP-1} displays the average CPU time (in seconds) required to determine the optimal betting policy, as well as the mean and variance of the actual performance of the optimal policy across the 100 simulations with different sample sizes $N = 5, 10, 50,150$.
Figure \ref{fig:3} visualizes the histogram of actual performances for the BCR-SOC/MDP with $\text{AVaR}_{\mu}^{\alpha}$-$\text{AVaR}_{P_\theta}^{\beta}$, incorporating different combinations of risk levels: $\alpha = 0.01, 0.5, 0.9$ and $\beta = 0.01, 0.5, 0.9$.

\begin{table}[ht]
	\centering
	\caption{CPU time, mean and variance of actual performances of different approaches for the spread betting problem.}
    \scalebox{0.75}{
	\begin{tabular}{lcccccccccc}
		\hline
		{Approach} & {time(s)} & \multicolumn{2}{c}{$N=5$} & \multicolumn{2}{c}{$N=10$} &\multicolumn{2}{c}{$N=50$} &\multicolumn{2}{c}{$N=150$}\\
		& & {mean} & {variance} & {mean} & {variance} & {mean} & {variance} & {mean} & {variance} \\
		\hline
   Bayes-adaptive MDP ($\mathbb{E}_{\mu}$-$\mathbb{E}_{P_{\theta}}$) \cite{shapiro2023episodic} & 1.14 & {-10.43}  & {58.19}  & {-12.23} & {46.95} &{-13.10}& {12.39} &{-14.92}& {0.87}\\
    
    BCR-SOC/MDP ($\mathbb{E}_{\mu}$-$\mathbb{E}_{P_{\theta}}$) & 4.11 & \textbf{-12.33}  & \textbf{16.79}  & \textbf{-13.31} & \textbf{14.24} &\textbf{-15.37}& \textbf{2.89} &\textbf{ -15.60}& \textbf{0.82}\\
    
		Standard SOC/MDP & 1.08 & -10.85  & 60.53 & -12.55 & 47.63  & -12.78 & 14.52 & -15.37 & 0.96\\
        
		BCR-SOC/MDP ($\text{VaR}_{\mu}^{\alpha}$-$\mathbb{E}_{P_{\theta}}$) & 5.46 & \textbf{-12.12}  &\textbf{14.52}  & \textbf{-14.86} & \textbf{11.02} & \textbf{-15.44} &\textbf{2.05}& \textbf{-15.55} &\textbf{0.64}\\
        
        BR-MDP ($\text{VaR}_{\mu}^{\alpha}$-$\mathbb{E}_{P_{\theta}}$) \cite{lin2022bayesian} & 834.66 & -11.88  & 16.41  & -13.40 & 11.56 & -15.19 & 2.38 & -15.54 & 0.77
        \\
        
        BCR-SOC/MDP ($\text{AVaR}_{\mu}^{\alpha}$-$\mathbb{E}_{P_{\theta}}$) & 8.30 & \textbf{-11.43}  &\textbf{13.04}  & \textbf{-13.66} & \textbf{10.08} & \textbf{-15.26} &\textbf{1.21}& \textbf{-15.89} &\textbf{0.63}\\
        
        BR-MDP ($\text{AVaR}_{\mu}^{\alpha}$-$\mathbb{E}_{P_{\theta}}$) \cite{lin2022bayesian} & 13.25 & -9.31  & 14.63  & -11.85 & 12.47 & -14.17 & 1.75 & -15.07 & 0.78
        \\
        
		RA-SOC/MDP  & 1.15 & -8.29 & 50.07 & -10.76 & 45.97& -12.02 & 13.57& -14.36 & 0.88 \\
        
		BCR-SOC/MDP ($\text{AVaR}_{\mu}^{\alpha}$-$\text{AVaR}_{P_\theta}^{\beta}$) &8.37 & \textbf{-9.09}  & \textbf{12.39} & \textbf{-11.63}	 & \textbf{9.74}	& \textbf{-14.79} &	\textbf{1.18}& \textbf{-15.39} &	\textbf{0.67} \\
        
		DR-SOC/MDP & 1.84 & 0.00 & 0.00 & 0.00 & 0.00  & 0.00 & 0.00& 0.00 & 0.00\\
		\hline
	\end{tabular}}\label{Table-BCR-SOC/MDP-1}
\end{table}

\begin{figure}[ht]
	\centering 
	\includegraphics[width=1\textwidth]{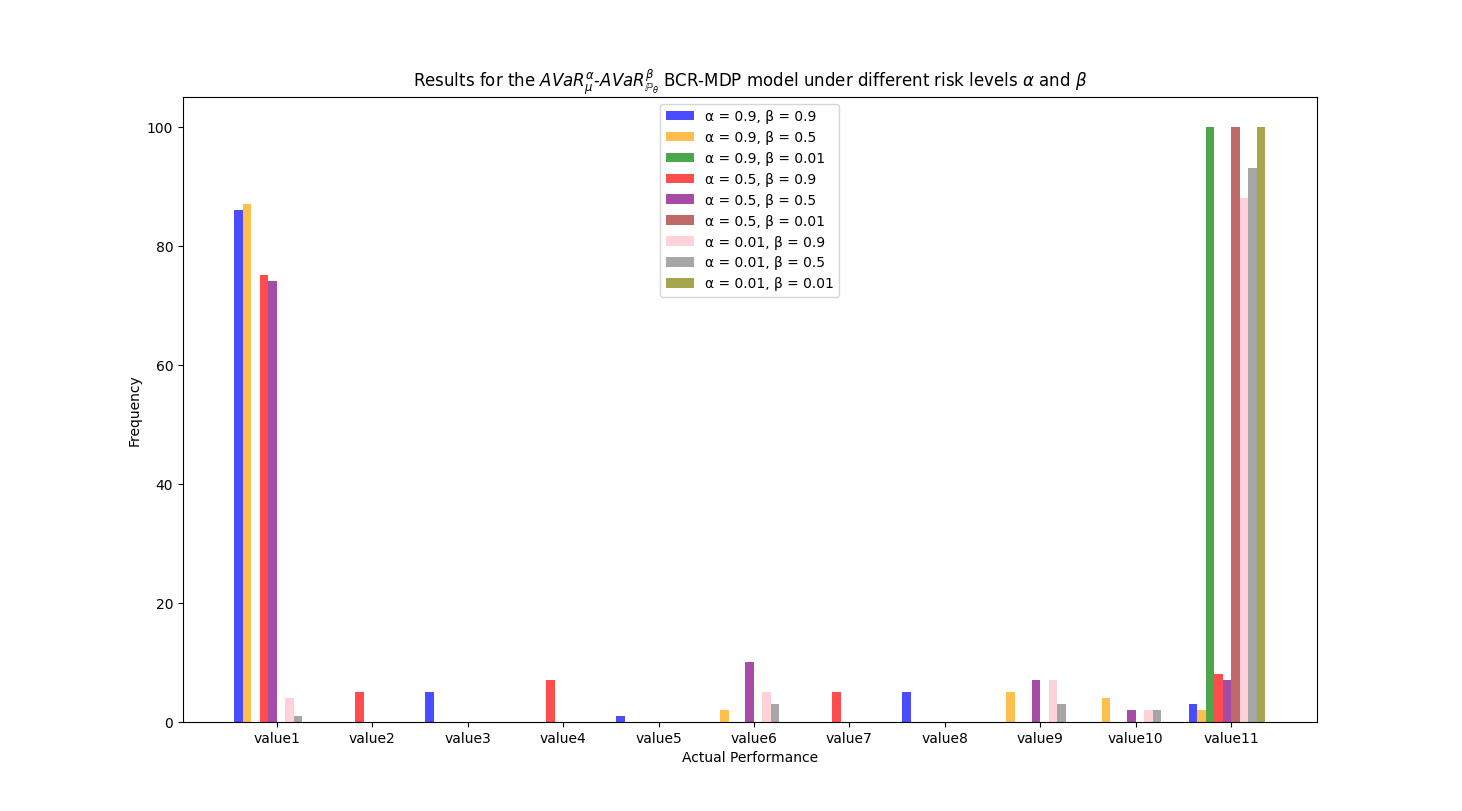}
	\includegraphics[width=1\textwidth]{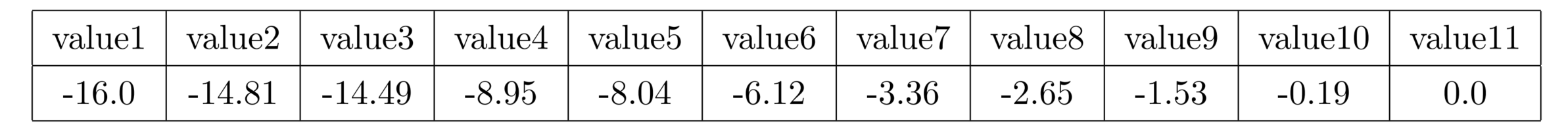} 
	\caption{Histogram of performance frequencies  over 100 replications for  the $\text{AVaR}_{\mu}^{\alpha}$-$\text{AVaR}^\beta_{P_{\theta}}$ BCR-SOC/MDP with different $\alpha$ and $\beta$.} 
	\label{fig:3} 
\end{figure}

The observations presented in Table~\ref{Table-BCR-SOC/MDP-1} and Figure~\ref{fig:3} lead to the following conclusions.

\begin{itemize}

\item Compared to the standard SOC/MDP and RA-SOC/MDP, 
the 
BCR-SOC/MDP model 
exhibits 
better 
robustness 
in terms of the mean and variance
across 
some specific BCR measures 
including
$\mathbb{E}_{\mu}$-$\mathbb{E}_{P_{\theta}}$, $\text{VaR}_{\mu}^{\alpha}$-$\mathbb{E}_{P_{\theta}}$ and $\text{AVaR}_{\mu}^{\alpha}$-$\text{AVaR}_{P_\theta}^{\beta}$.
{\color{black}
Since the episodic Bayesian approach  \cite{shapiro2023episodic} can be treated as an MLE estimation of $\theta^c$ with a prior, which has been illustrated in Example \ref{bcr-3}, it performs similarly to the standard SOC/MDP, 
highlighting its 
suboptimality in absence of
belief-dependent adjustments.}
Unlike these methods that depend solely on the point estimate $\hat{\theta}$, our approach utilizes the update of posterior $\mu$ to capture the uncertainty in $\theta$ effectively, significantly enhancing the stability of the optimal policy for both small and large sample sizes. 
Moreover, the DR-SOC/MDP tends to be over conservative, as it always considers the worst-case scenario from the set of all possible market uncertainties, resulting in zero variance in actual performance. 
In particular, even in situations where the probability of winning at $\theta^c$ is high, the DM under DR-SOC/MDP model refrains from betting because of its conservative nature.

\item As the sample size increases, the uncertainty corresponding to the model parameter diminishes. Consequently, the 
performances of all SOC/MDP 
models improve consistently, with variance monotonically
decreasing.
The tendency is underpinned by 
the convergence of both the posterior distribution $\mu$ and the MLE estimator $\hat{\theta}$ to the true parameter $\theta^c$ as the sample size increases. Eventually, every model converges to the SOC/MDP model with respect to the true parameter $\theta^c$ in terms of 
optimal values and 
optimal policies.

\item In 
the BR-MDP 
model~\cite{lin2022bayesian}, 
the authors 
use the 
pair of augmented state 
including 
the original physical state and 
the posterior distribution  
to obtain the 
value function ($\mathcal{J}^{\gamma,*}_T$ in their paper)
and 
the corresponding optimal policy. 
{\color{black}The size of the space of 
discretized state ($\mathcal{D}_1^p$) 
is excessively large ($10^{6}$),
which may result 
in prohibitively 
expensive 
computational cost in terms of CPU time for the proposed BR-MDP model ($\text{VaR}_{\mu}^{\alpha}$-$\mathbb{E}_{P_{\theta}}$). 
To address the issue, the authors
propose an
algorithm
to solve the BR-MDP ($\text{AVaR}_{\mu}^{\alpha}$-$\mathbb{E}_{P_{\theta}}$) which is an approximation of the BR-MDP model ($\text{VaR}_{\mu}^{\alpha}$-$\mathbb{E}_{P_{\theta}}$).
The resulting CPU time is significantly
lower than the one required for the 
discretized BR-MDP  ($\text{VaR}_{\mu}^{\alpha}$-$\mathbb{E}_{P_{\theta}}$) model.
Here, we use the hyper-parameter approach as detailed in Section 6
to solve the problem.  The CPU time (8.30 seconds) is comparable to theirs (13.25 seconds).  
The underlying reason is that in our hyper-parameter approach, the size of augmented state space $\mathcal{H}_t$ at stage $t$ is $t$, 
which 
is significantly 
lower than the size of 
the discretized 
augmented state space ($10^6$)
in \cite{lin2022bayesian} and 
subsequently the computational time.}


\item 
In Figure~\ref{fig:3}, the risk levels $\alpha$ and $\beta$ reflect the bettor's risk preferences towards the epistemic and aleatoric uncertainties, respectively. With lower values of $\alpha$ and $\beta$, the DM tends to adopt a more conservative policy, typically choosing not to bet. Consequently, this conservative stance causes the distribution of actual performance to shift toward the right. However, the risk preferences characterized by $\alpha$ and $\beta$ diverge in specific aspects.
A lower $\alpha$ indicates a bettor's increased concern over losses in extreme environment, signifying a preference for avoiding such outcomes. As $\alpha$ decreases, the optimal decisions under BCR-SOC/MDP model closely resemble those of the DR-SOC/MDP approach, resulting in more conservative choices. 
Conversely, a lower $\beta$ directly signifies a bettor's aversion to losses, demonstrating prudence and a preference for minimal potential losses. Thus, as $\beta$ decreases, the bettor focuses more on the losses with respect to worst-case $\xi$ irrespective of the environmental parameter $\theta$, steering the BCR-SOC/MDP model towards the robust SOC/MDP model. This invariably leads to more cautious decisions. Moreover, compared to the risk level $\alpha$, $\beta$ exerts a stronger influence on the conservativeness of the outcomes. This is because, while the outer conservatism ensures that losses maintain a degree of randomness even in the worst environment, the inner conservatism compels the bettor to directly consider the most adverse scenario.
 \end{itemize}

\subsection{Infinite horizon inventory control problem}
Having demonstrated the reasonability and efficiency of the proposed BCR-SOC/MDP model compared with typical SOC/MDP models in the literature, in this subsection, we mainly apply the proposed infinite horizon BCR-SOC/MDP model to the inventory control problem with a discount factor $\gamma=0.8$. 

Inventory models have numerous real-world applications, including healthcare systems, electric energy storage systems, and more (see \cite{saha2019modelling, weitzel2018energy} and the references therein). In inventory management, the problem typically involves deciding the order quantity required in each episode to fulfill customer demand while minimizing costs. The goal of the inventory manager is to manage the trade-offs between ordering costs, holding costs, and penalty costs associated with stockouts, while dynamically adjusting orders based on inventory levels and the uncertainty of future demand. A typical risk-averse inventory control problem aims to minimize long-term costs by dynamically adjusting order quantities and updating beliefs about future demand.

To model this dynamic decision-making process using the BCR-SOC/MDP framework, we introduce the following parameters and assumptions:

\begin{itemize}
    \item [(i)] \textbf{Customer demand $\xi_t$}.
The random variable $\xi_t$ represents the demand for goods at episode $t$. We assume that the demand $\xi_t$ follows a Poisson distribution with an unknown parameter $\theta^c$ \cite{shapiro2021lectures}. 

    \item [(ii)] \textbf{Inventory level $s_t$}.
The inventory level at episode $t$, denoted by $s_t$, represents the quantity of goods available in stock at that episode. Inventory is updated at each episode based on the decision made by the inventory manager (the order quantity) and the actual demand observed during that episode.

   \item [(iii)] \textbf{Belief $\mu_t(\theta)$}.
The inventory manager’s belief about the demand distribution at episode $t$, denoted by $\mu_t(\theta)$, represents the manager's knowledge of the likelihood of various demand amounts.  A typical prior $\mu_1$ for $\theta$ here is the Gamma prior since the posterior $\mu_t$ is also a Gamma distribution (see also \cite{gelman2013bayesian}). {\color{black} Here, \(h_t=(m_t,d_t)\) denotes the hyper-parameter pair of the Gamma distribution, and we choose \(h_1=(m_1,d_1)=(1,1)\) as a noninformative prior.  Under the usual Poisson demand model, observing a single demand \(\xi_t\) updates the hyper-parameters via
$m_{t+1}=m_t + \xi_t$, 
$d_{t+1}=d_t + 1.$}

\item [(iv)]\textbf{ Order quantity $a_t$}.
The order quantity at episode $t$, denoted by $a_t$, represents the amount of goods the inventory manager decides to order. 
This decision $a_t$ is determined by the manager's policy $\pi_t(s_t,\mu_t)$, which depends on the current inventory level $s_t$ and the inventory manager's belief $\mu_t(\theta)$ about the demand distribution. 

\item [(v)] \textbf{Cost function ${\cal C}$ and state transition function $g$}.  
The cost ${\cal C}$ at episode $t$ is defined as 
$$\mathcal{C}(s_t,a_t,\xi_t)=oa_{t}+c(s_{t}+a_{t},\xi_{t}),$$
where
\[c(y,z):=p\max\{z-y,0\}+h\max\{z-d,0\},\]
$o,p$ 
and $h$ are 
unit 
order cost, penalty cost 
for unfulfilling 
a unit demand,
and unit holding cost for excess inventory, respectively.
The inventory level $s_{t+1}$ can then be defined as $s_{t+1}=g(s_t,a_t,\xi_t):=s_t+a_t-\xi_t$.
\end{itemize}

Under the BCR-SOC/MDP framework, the infinite horizon inventory control problem can be formulated as: 
	\begin{subequations}
	\begin{align}\label{numer-2}
		\min_{\pi} \quad &\rho_{\mu_{m_1,d_1}}\circ\rho_{P_{\theta_{1}}}\left[oa_1+c(s_1+a_1,\xi_1)+\cdots+\gamma\rho_{ \mu_{m_t,d_t}} \circ\rho_{P_{\theta_{t}}}\left[oa_{t}+c(s_{t}+a_{t},\xi_{t})+\cdots\right]\right] \\
		\text { s.t. } \quad & s_{t+1}=s_t+a_t-\xi_t,\ a_t=\pi(s_t,\mu_{m_t,d_t}),\ a_t\geq0,\ t=1, 2,\cdots,\\
        &m_{t+1}=m_t+\xi_t, \ d_{t+1}=d_t+1,\ t=1, \cdots.
	\end{align}
\end{subequations}

The classical stationary inventory control problem corresponding to the true distribution $P_{\theta^c}$ can be described in BCR-SOC/MDP form as follows:
	\begin{equation}
	\begin{aligned}\label{true-inventory}
		\min_{\pi} \quad &\rho_{P_{\theta^c}}\left[oa_1+c(s_1+a_1,\xi_1)+\cdots+\gamma\rho_{P_{\theta^c}}\left[oa_{t}+c(s_{t}+a_{t},\xi_{t})+\cdots\right]\right] \\
		\text {s.t. } \quad  &s_{t+1}=s_t+a_t-\xi_t,\ a_t=\pi(s_t,\delta_{\theta^c}),\ a_t\geq0,\  t=1, 2,\cdots
	\end{aligned}
\end{equation}
As demonstrated in \cite{shapiro2023episodic}, the optimal policy for problem (\ref{true-inventory}) is the base-stock policy $\pi^*(s,\delta^c)=\max\{s^*-s,0\}$, where $s^*=H^{-1}(\kappa)$ with $H(\cdot)$ being the cdf of the random demand and $\kappa:=\frac{p-(1-\gamma)o}{p+h}$. Thus, for $s\leq s^*$, the optimal value of (\ref{true-inventory}) is given by:
\begin{equation}\label{optimalvalue}
	V^*(s,\delta_{\theta^c})=-os+os^*+(1-\gamma)^{-1}\rho_{P_{\theta^c}}[\gamma o\xi+c(s^*,\xi)].
\end{equation}

With this analytical expression, we can compute the true optimal value functions with $\rho_{P_{\theta^c}}=\mathbb{E}_{P_{\theta^c}}$ and $\rho_{P_{\theta^c}}=\text{AVaR}_{P_{\theta^c}}^{\beta}$, respectively, as benchmarks for standard SOC/MDP and risk-averse SOC/MDP to demonstrate the convergence of the BCR-SOC/MDP models  corresponding to $\text{VaR}_{\mu}^{\alpha}$-$\mathbb{E}_{P_{\theta}}$ and $\text{AVaR}_{\mu}^{\alpha}$-$\text{AVaR}_{P_\theta}^{\beta}$ with respect to the value functions.

In this inventory control problem, 
we set 
$\Theta=\mathbb{R}_+$. The unit cost for ordering
is 
$o=2$, the unit 
holding 
cost is $h = 4$, and the penalty for unmet demand is set at $p = 6$. 
For the BCR-SOC/MDP model, we set the outer risk parameter $\alpha = 0.6$ and the inner risk parameter $\beta=0.4$.
{\color{black} 
Since the support set $\Xi$ is $\mathbb{N}^+$, we first employ Algorithm \ref{alg:adaptive-grid} to construct a finite hyper-parameter approximation space. To demonstrate the efficiency of the proposed method, we analyze the histogram of projection errors $\delta_I=\mathrm{dist}\left(h_{1} + \sum_{j=1}^{I-1}H(\xi_j),\mathcal{H}_{I}^{\mathrm{rep}}\right)$ across Monte Carlo simulations.
The parameters are set with $I=10$, $R = 20$ and $\theta^c = 10$ for this experiment.
Figure \ref{fig:eps_M_grid} depict the empirical density of $\delta_I$ where the blue histogram represents the distribution of $\delta_I$ across all simulations. The red dashed lines signify the theoretical error bounds for $\delta_I$ (see \eqref{eq-th-error-bound} for definition) under ${P}_{\theta^c}^{\otimes(I-1)}(\|\xi\|\leq R)^{I-1}=0.98$ for $\epsilon \in \{1, 1.5,2\}$  (here $\epsilon \geq 1$ is chosen since $\xi$ takes integer values) and $M_{\max}=\{10,20,100\}$. 
\begin{figure}[ht]
  \centering
  \begin{subfigure}[b]{0.3\textwidth}
    \centering
    \includegraphics[width=\textwidth]{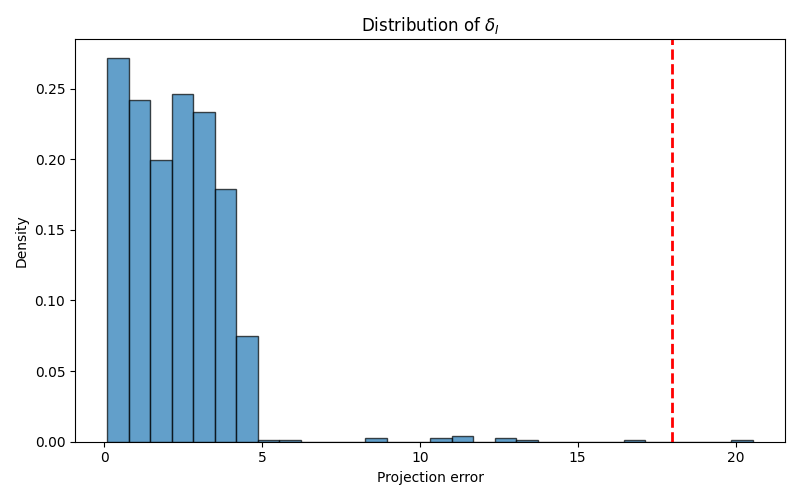}
    \caption{$\epsilon=1,\ M_{\max}=10$}
  \end{subfigure}%
  \hfill
  \begin{subfigure}[b]{0.3\textwidth}
    \centering
    \includegraphics[width=\textwidth]{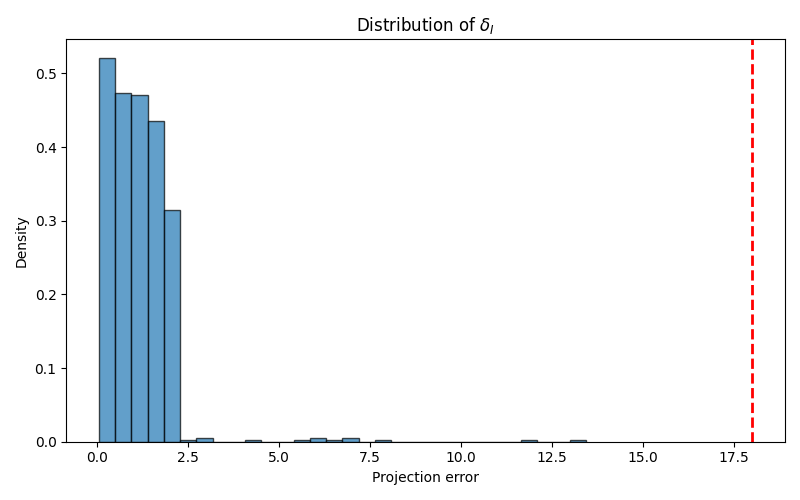}
    \caption{$\epsilon=1,\ M_{\max}=20$}
  \end{subfigure}%
  \hfill
  \begin{subfigure}[b]{0.3\textwidth}
    \centering
    \includegraphics[width=\textwidth]{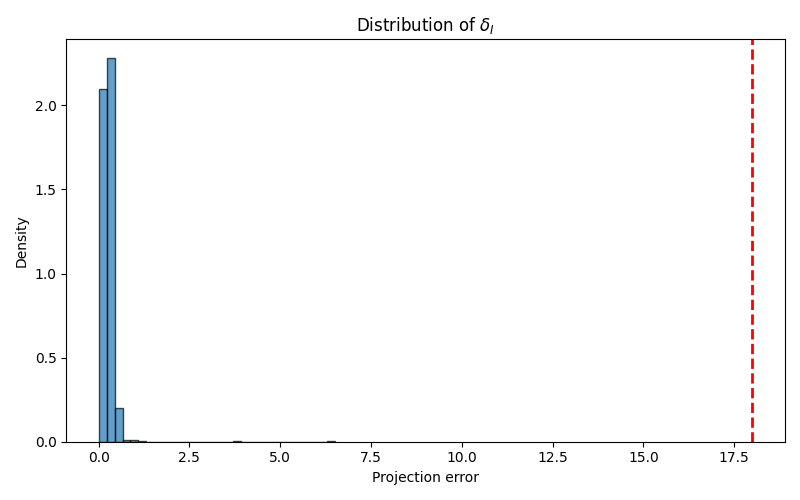}
    \caption{$\epsilon=1,\ M_{\max}=100$}
  \end{subfigure}
  \begin{subfigure}[b]{0.3\textwidth}
    \centering
    \includegraphics[width=\textwidth]{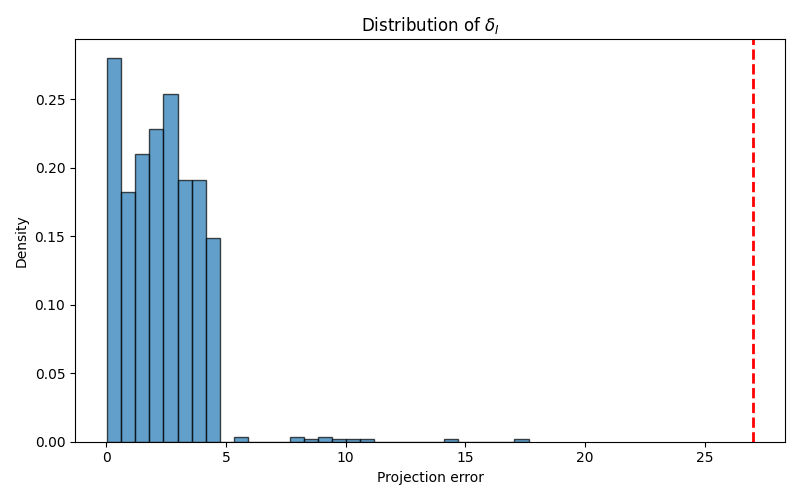}
    \caption{$\epsilon=1.5,\ M_{\max}=10$}
  \end{subfigure}%
  \hfill
  \begin{subfigure}[b]{0.3\textwidth}
    \centering
    \includegraphics[width=\textwidth]{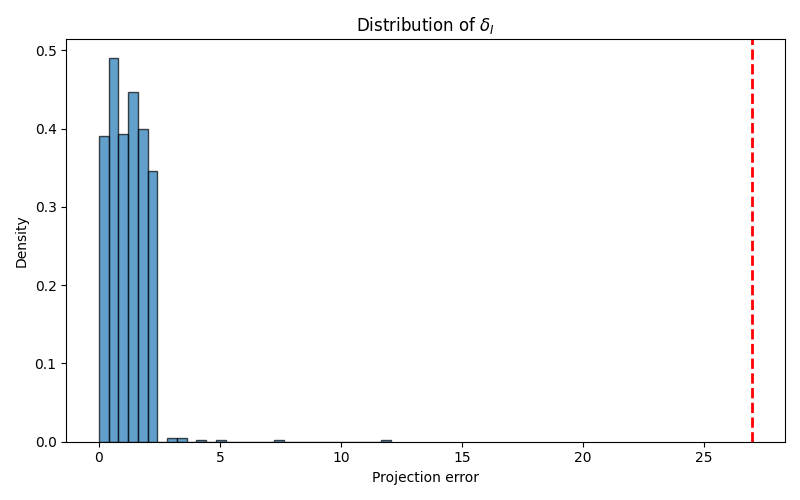}
    \caption{$\epsilon=1.5,\ M_{\max}=20$}
  \end{subfigure}%
  \hfill
  \begin{subfigure}[b]{0.3\textwidth}
    \centering
    \includegraphics[width=\textwidth]{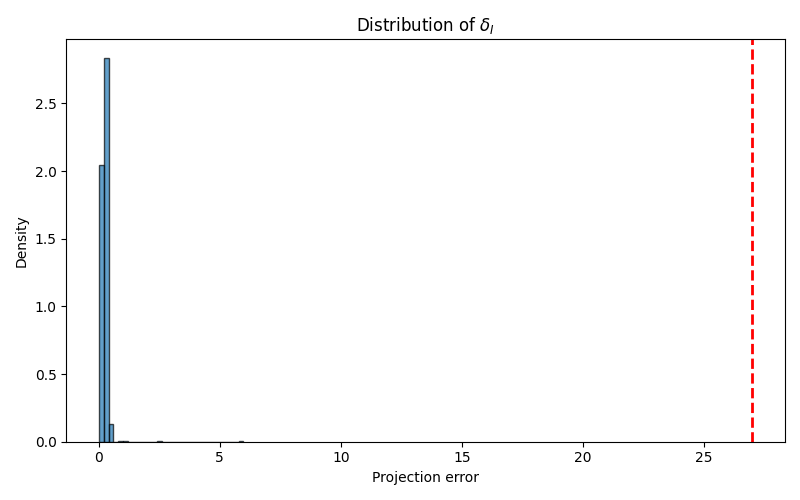}
    \caption{$\epsilon=1.5,\ M_{\max}=100$}
  \end{subfigure}
   \begin{subfigure}[b]{0.3\textwidth}
    \centering
    \includegraphics[width=\textwidth]{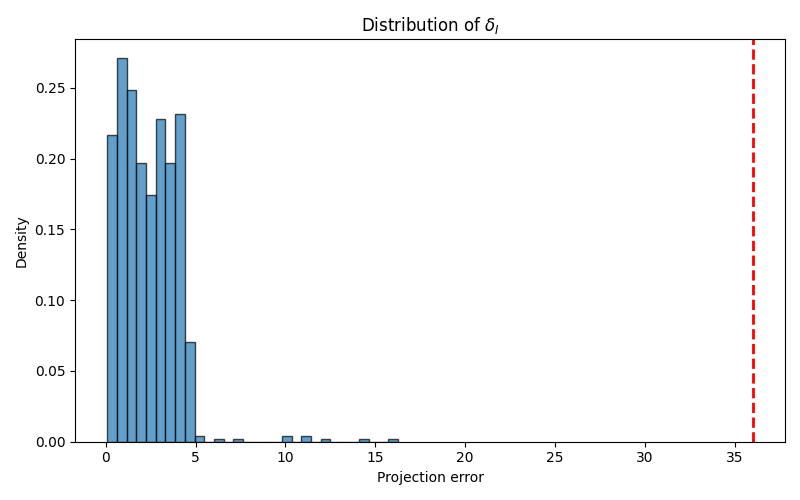}
    \caption{$\epsilon=2,\ M_{\max}=10$}
  \end{subfigure}%
  \hfill
  \begin{subfigure}[b]{0.3\textwidth}
    \centering
    \includegraphics[width=\textwidth]{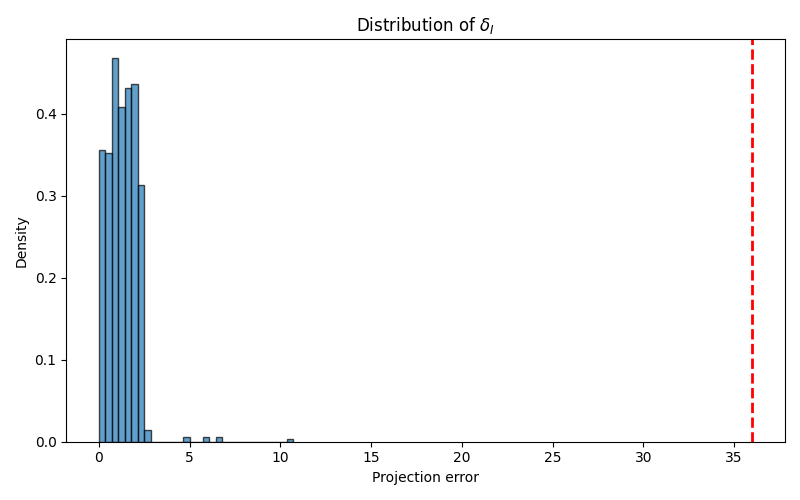}
    \caption{$\epsilon=2,\ M_{\max}=20$}
  \end{subfigure}%
  \hfill
  \begin{subfigure}[b]{0.3\textwidth}
    \centering
    \includegraphics[width=\textwidth]{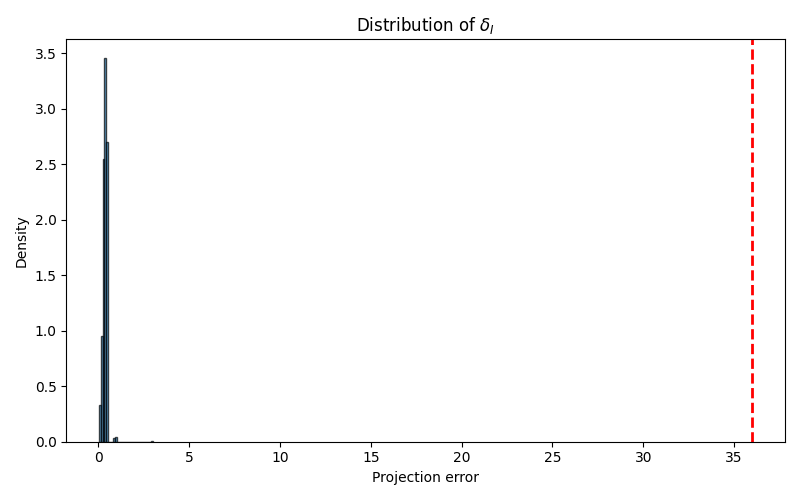}
    \caption{$\epsilon=2,\ M_{\max}=100$}
  \end{subfigure}
  \caption{The distribution of $\delta_I$ and theoretical error bound with different tolerance $\epsilon$ and max grid size $M_{\max}$.}
  \label{fig:eps_M_grid}
\end{figure}
From Figure~\ref{fig:eps_M_grid}, we can observe that the theoretical error bound for hyper-parameter discretization decreases as $\epsilon$ is reduced from 
$2$ to $1$, the real errors decrease as $M_{\max}$ increasing and are significantly smaller than 
the theoretical error bounds.
This justifies the practicality and efficiency of discretization as an effective approximation strategy for solving the BCR-SOC/MDP problem.}

To facilitate comparison with other MDP models, we 
examine the performance
of the model with different true parameter values $\theta^c$.
Figure \ref{fig:0} 
depicts 
the performance when the learned policy is deployed in various environments
with the demand
following different Poisson distributions.
To illustrate the convergence of optimal value function, we use Algorithm \ref{alg:B} to find the unique optimal value function $V^*$ of the Bellman equation corresponding to \eqref{numer-2}. 
Then, we generate sample sequences $\boldsymbol{\xi}^{t}$ across different episodes $t$, compute $\mu_{m_t,d_t}$ and obtain the corresponding optimal value $V^*(s,\mu_{m_t,d_t})$. The procedure is replicated 100 times to ensure robustness, and the results are shown in Figures \ref{fig:1} and \ref{fig:2}. Specifically, we plot boxplot of 
the sup-norm of the gap between the estimation $V^*(\cdot,\mu_{m_t,d_t})$ and the true optimal value $V^*(\cdot,\delta_{\theta^c})$, which is calculated as $\sup_{s \in \mathcal{S}} |V^*(s,\mu_{m_t,d_t}) - V^*(s,\delta_{\theta^c})|$. Moreover, we also use line chart to show the average and standard deviation of the sup-norm of the gap between the estimated value $V^*(\cdot,\mu_{m_t,d_t})$ and the true optimal value $V^*(\cdot,\delta_{\theta^c})$ across different episodes, providing insights into the robustness and variability of the estimation process.
We can make the following observations from Figures \ref{fig:0}, \ref{fig:1} and \ref{fig:2}.

\begin{figure}[ht]
    \centering
        \includegraphics[width=0.8\textwidth]{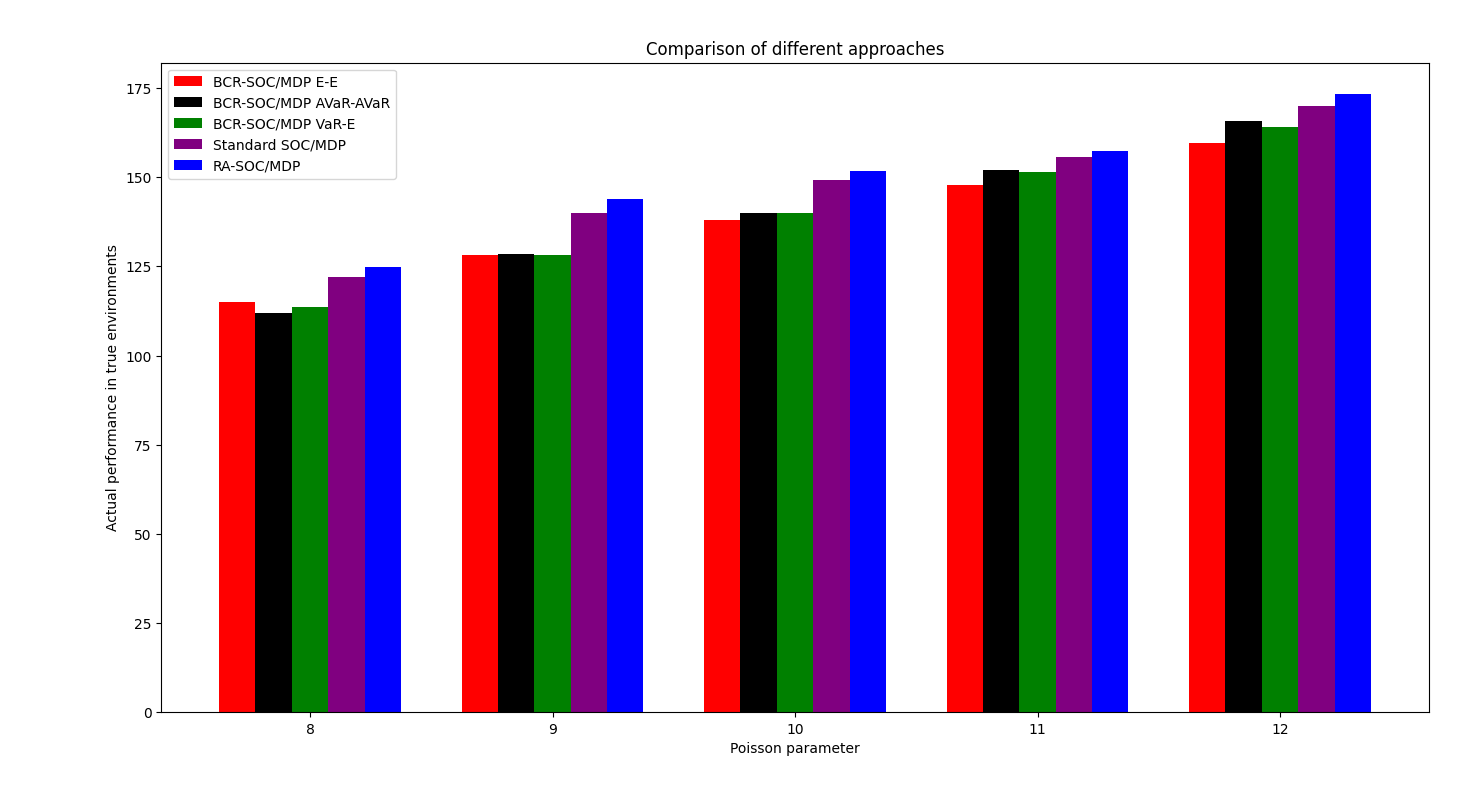}
    \caption{Actual performances of different approaches with different $\theta^c$. }
    	\label{fig:0}
\end{figure}

\begin{figure}[ht]
    \centering
    \begin{minipage}{0.5\textwidth}
        \centering
        \includegraphics[width=1.1\textwidth]{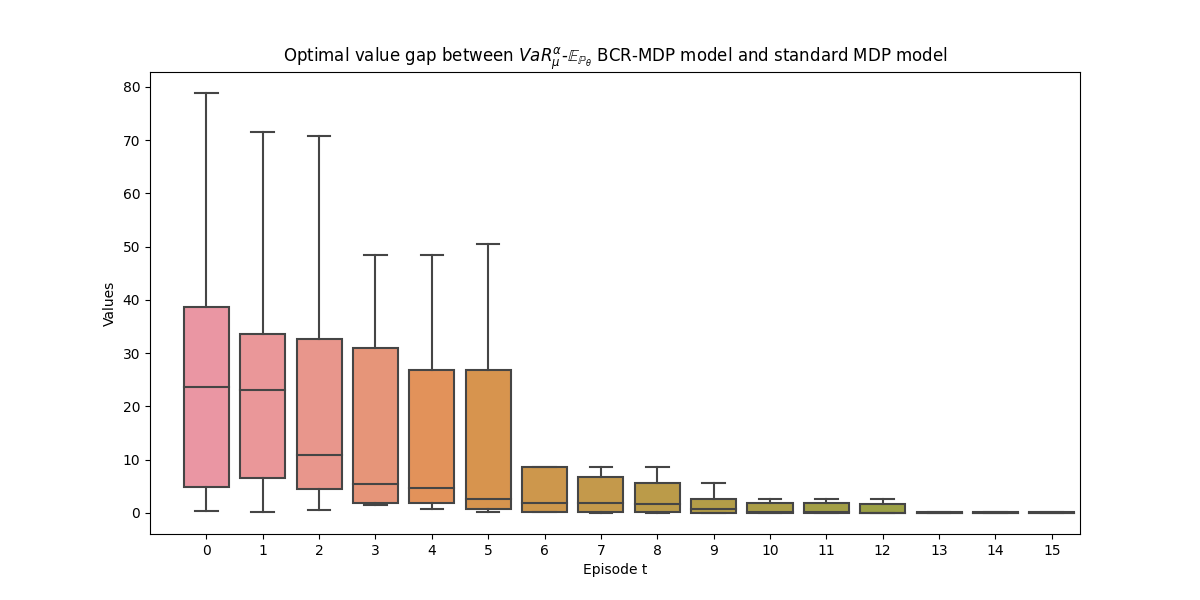}
    \end{minipage}%
    \begin{minipage}{0.5\textwidth}
        \centering
        \includegraphics[width=1.1\textwidth]{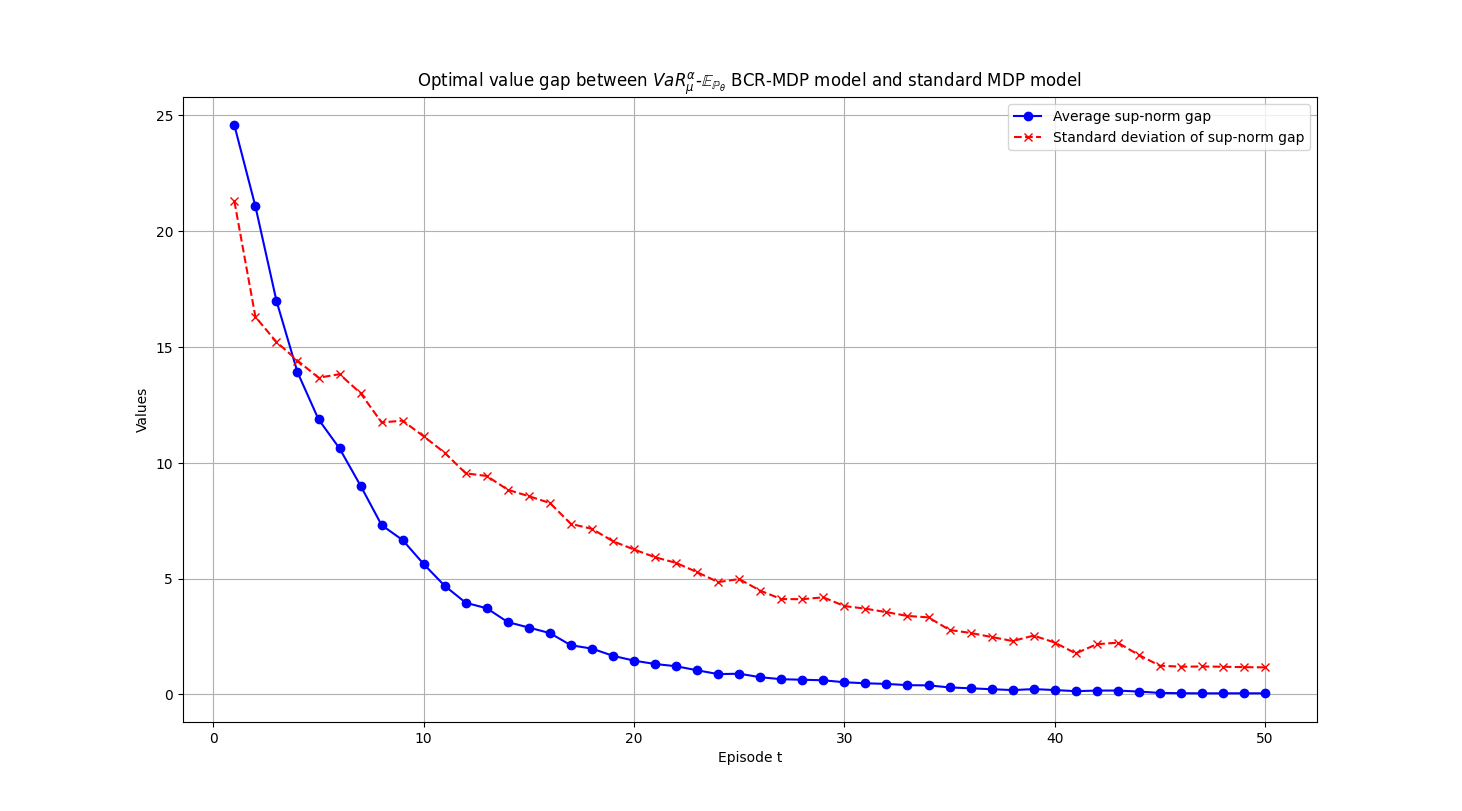}
    \end{minipage}
    \caption{Convergence of the optimal value function of the $\text{VaR}_{\mu}^{\alpha}$-$\mathbb{E}_{P_{\theta}}$ BCR-SOC/MDP model} 
    	\label{fig:1}
\end{figure}

\begin{figure}[ht]
    \centering
    \begin{minipage}{0.5\textwidth}
        \centering
        \includegraphics[width=1.1\textwidth]{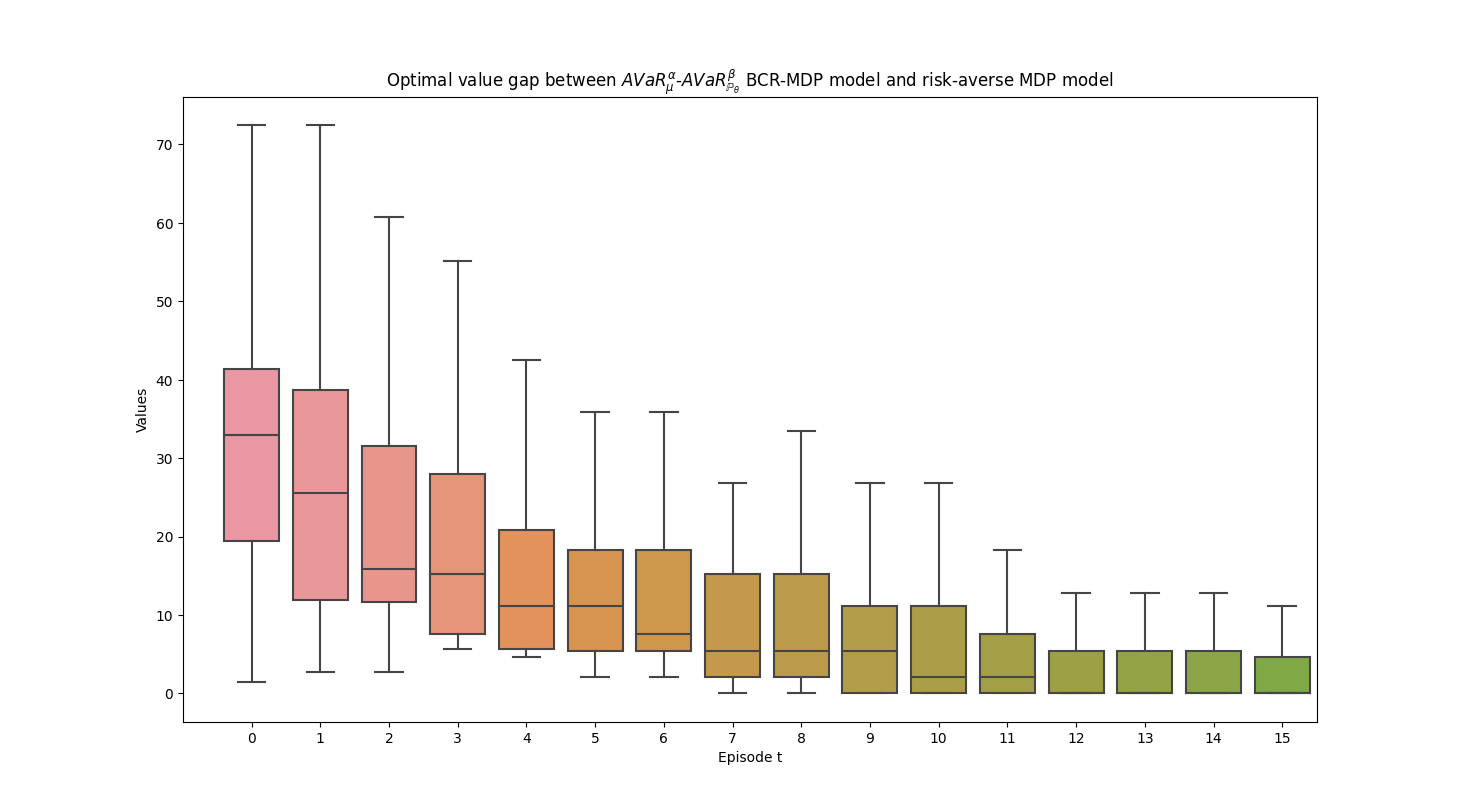}
    \end{minipage}%
    \begin{minipage}{0.5\textwidth}
        \centering
        \includegraphics[width=1.1\textwidth]{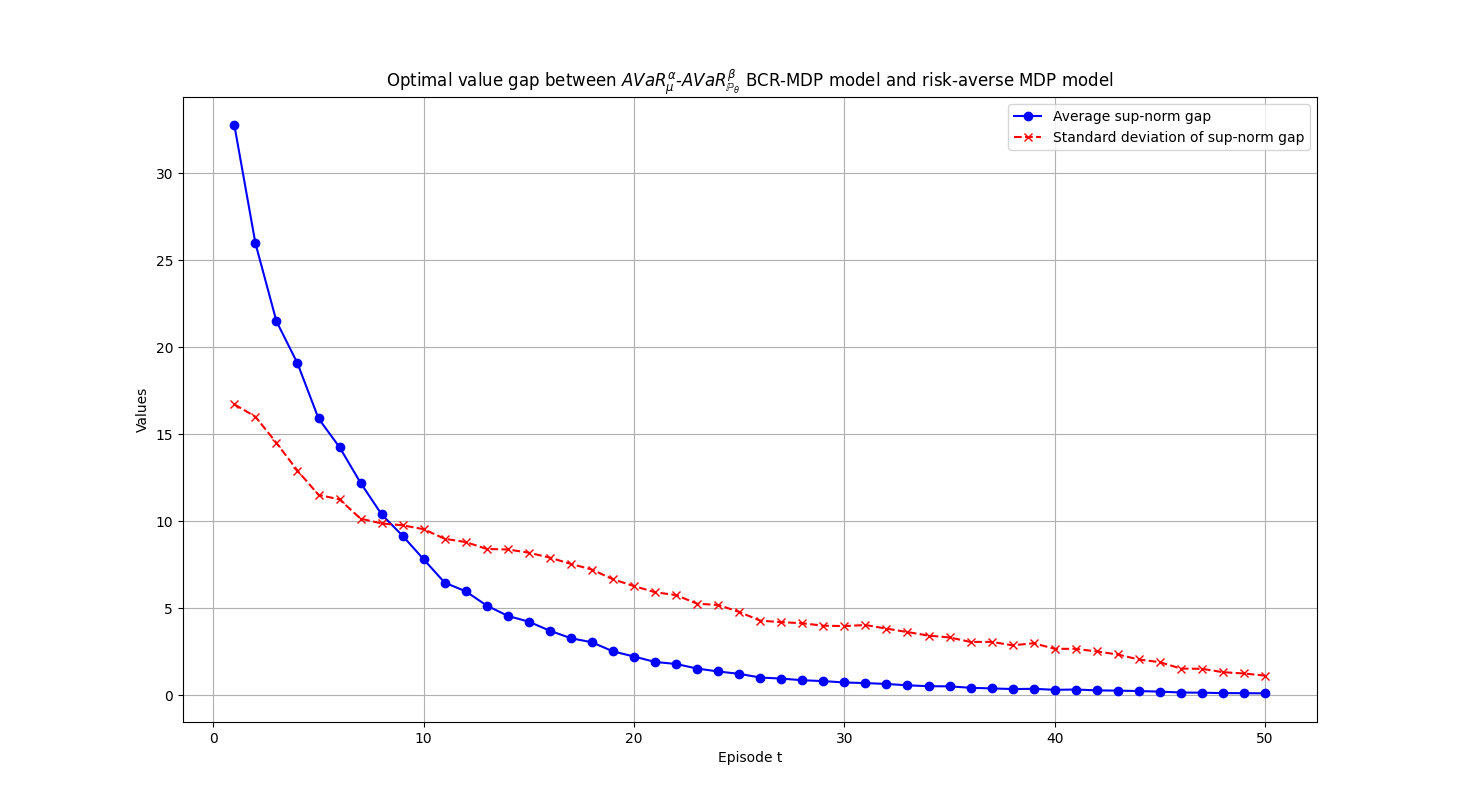}
    \end{minipage}
\caption{Convergence of the optimal value function of the $\text{AVaR}_{\mu}^{\alpha}$-$\text{AVaR}_{P_\theta}^{\beta}$ BCR-SOC/MDP model}
\label{fig:2}
\end{figure}

\begin{itemize}
\item  {\color{black}The BCR-SOC/MDP model outperforms the standard SOC/MDP and RA-SOC/MDP models 
in diverse environments and various BCR measures, such as $\mathbb{E}_{\mu}$-$\mathbb{E}_{P_{\theta}}$, $\text{VaR}_{\mu}^{\alpha}$-$\mathbb{E}_{P_{\theta}}$ and $\text{AVaR}_{\mu}^{\alpha}$-$\text{AVaR}_{P_\theta}^{\beta}$.
Moreover, when compared to the risk-neutral Bayes-adaptive SOC/MDP model, the BCR-SOC/MDP models incorporating risk-averse preferences achieve a lower optimal cost value in more adversarial scenarios (e.g., when the Poisson distribution's parameter is less than 10), emphasizing their robustness and adaptability to a wide range of challenging environments.}

\item The mean, median and the range of the sup-norm of the gap all
decrease to $0$ as $t$ increases, which means that 
the optimal value functions of all $\text{VaR}_{\mu}^{\alpha}$-$ \mathbb{E}_{P_{\theta}} $ and  $\text{AVaR}_{\mu}^{\alpha} $-$\text{AVaR}_{P_\theta}^{\beta}$ BCR-SOC/MDP models
display uniform convergence as envisaged by the theoretical
results.
{\color{black} From the boxplots in Figures \ref{fig:1} and \ref{fig:2}, we observe that the convergence rate of the optimal value functions corresponding to the $\text{AVaR}_{\mu}^{\alpha} $-$\text{AVaR}_{P_\theta}^{\beta}$ case is slower than that of the $ \text{VaR}_{\mu}^{\alpha}$-$ \mathbb{E}_{P_{\theta}}$ BCR-SOC/MDP model, which is consistent with the conclusion in Theorem \ref{thm-ca-2}. Moreover, the line charts in Figures \ref{fig:1} and \ref{fig:2} show that, compared to the $ \text{VaR}_{\mu}^{\alpha}$-$ \mathbb{E}_{P_{\theta}}$ BCR-SOC/MDP model, the $\text{AVaR}_{\mu}^{\alpha}$-$\text{AVaR}_{P_\theta}^{\beta}$ BCR-SOC/MDP model exhibits smaller variability in the sup-norm gap, reflecting the more robust characteristics of the latter risk-averse attitude with respect to the value function.}

\item As the episode \( t \) 
increases, 
the elements of the stochastic process \( \boldsymbol{\xi}^{t} \) increase, 
and the optimal value function 
converges to the optimal value 
of the true model as \( \mu_t \) converges. 
This manifests
the significance and characteristics of
Bayes-adaptive MDPs. 
Specifically, the epistemic uncertainty gradually diminishes as the DM continuously 
observes/learns 
about the environment. 
Consequently, the DM's understanding of the environment becomes more precise, 
driving the gap of the 
optimal values to zero. 
Likewise, 
the variability 
of the gap reduces to zero. 
By contrast, 
for a standard SOC/MDP or risk-averse SOC/MDP, 
once the model is established, 
the optimal value function remains 
fixed 
and does not change over time. 
Specifically, once 
an MLE estimator \( \hat{\theta} \) 
is derived for the standard or risk-averse SOC/MDP,
the corresponding 
optimal value function 
takes a similar form 
to that in (\ref{optimalvalue}), 
with \( \theta^c \) being 
replaced by \( \hat{\theta} \). 
Consequently, for any episode $t$, 
the optimal value functions 
for the estimated SOC/MDP models 
remain fixed,
which explains
that these models 
do not adaptively learn in response to 
dynamic change of the environment.   
\end{itemize}

\section{Concluding remarks}
In this paper, we introduce 
a BCR-SOC/MDP model which 
subsumes a 
range of existing SOC/MDP models
and relates to preference robust SOC/MDP models.
By comparison with \cite{shapiro2023episodic},
the  
new model 
exhibits adaptive learning capabilities 
and ensures robustness against the inherent parameter randomness.
In the finite-horizon case, we derive
some important properties of the BCR-SOC/MDP model.
In the infinite-horizon case,
we derive some basic properties 
and demonstrate the model's adaptability 
by showing that as the number of episodes increases, the optimal value functions and optimal policies almost surely converge to those of the true underlying problem. 
We introduce a novel hyper-parameter algorithm for discretizing the posterior distribution space in order to ensure the practicality of proposed algorithms.
We also propose SAA-type algorithms tailored for solving the BCR-SOC/MDP model under specific composite risk measures and examine 
the robustness and adaptability
of the model 
through numerical experiments. The theoretical 
analyses 
and numerical results show that the monetary risk measure, especially the Value at Risk (VaR), robust spectral risk measure (SRM) and average VaR, also exhibits convergence properties, prompting an examination of how to relax the conditions to encompass a broader range of risk measures.

There are a few aspects that we may take from this work 
for more in-depth exploration.
First, the relationship between the BCR-SOC/MDP and preference-robust SOC/MDP models.
We have indicated in Section 3.2 that a relationship between the two models 
may be established. It might be
interesting to investigate the 
fine details of the relationship between distributionally robustness and preference robustness in the dynamic setting given that both are fundamentally related to DM's dynamic 
risk preferences.
Moreover, the BCR-SOC/MDP 
framework 
may potentially provide a more convenient 
avenue for solving multistage randomized preference-robust optimization problems
\cite{wu2024multistage}. 
Second,
it might be worth discussing SDDP-type
methods for solving 
the proposed model as in \cite{shapiro2023episodic} at least under some specific cases given the known effectiveness of SDDP approach.
Third, as for practical application 
of the BCR-SOC/MDP model, particularly in data-driven problems, 
samples are often 
drawn from empirical data which might contain some noise. In that case, it might be relevant to investigate whether
the proposed models are 
stable 
against perturbations in the uncertainty data
in terms of both optimal values and optimal policies. Stability results for conventional SOC/MDP models have been derived by Kern et al.~\cite{kern2020first}, and it would be valuable to explore whether similar stability results can be established for the BCR-SOC/MDP model.
Finally, the numerical tests were carried out 
with two conventional SOC/MDP problems. It 
will be much more interesting 
if we can find 
new applications 
particularly 
related to online decision making. 
We leave all these avenues for future research.

\section*{Declarations}
\begin{itemize}
\item 
This project is supported by the National Key R\&D Program of China (2022YFA1004000, \linebreak 2022YFA1004001) and CUHK start-up grant.
\end{itemize}

\section*{Acknowledgments}
 The authors 
 would like to 
 thank Alex Shapiro and 
 Zhiyao~Yang 
  for 
  a number of valuable 
  discussions
  during preparation 
  of this work, {\color{black}they would also like to thank Enlu Zhou and George Lan for instrumental comments/suggestions 
  during a workshop on stochastic optimization in the  Kunming International Tianyuan Mathematics Centre from 7-10 July 2025}.

\bibliographystyle{siam}
\bibliography{ref}

\section{Appendix}

\begin{definition}[Fortet-Mourier metric]\label{D-Fort-Mou-metric}Let 
\begin{eqnarray}
\label{eq:define_L}
\mathcal{F}_{p}(Z):=\left\{h: Z\rightarrow \mathbb{R}: |h({\bm z}')-h({{\bm z}''})|\leq L_{p}({\bm z}',{{\bm z}''})\|{\bm z}'-{{\bm z}''}\|,\ \forall{\bm z}',{{\bm z}''}\in Z\right\}
\end{eqnarray}
{\color{black}
be the set of 
locally Lipschitz continuous functions of growth  order $p$,
}
where $\|\cdot\|$ denotes some norm on $Z$,
$L_{p}({\bm z}',{{\bm z}''}):=\max\{1,\|{\bm z}'\|,\|{{\bm z}''}\|\}^{p-1}$ 
for all ${\bm z}',{{\bm z}''}\in Z$,
and $p\geq 1$ describes the growth of the local Lipschitz constants. 
The $p$-th order Fortet-Mourier metric over $\mathscr{P}(Z)$ is defined by
\begin{eqnarray}
\mathsf {d\kern -0.07em l}_{\rm FM}(P',P''):=\sup_{h\in \mathcal{F}_{p}(Z)}\left|\int_{Z}h({\bm z})P'(d{\bm z})-\int_{Z}h({\bm z})P''(d{\bm z})\right|,\ \forall P',P''\in \mathscr{P}(Z).
 \label{eq:FM-Kan}
\end{eqnarray}
\end{definition} 
In the case when $p=1$, it reduces to 
Kantorovich metric, {\color{black}
in which case we denote the distance by $\mathsf {d\kern -0.07em l}_K$.}

\begin{definition}[Wasserstein distance/\,metric]
		For probability measures $P$ and $\tilde P$, the Wasserstein distance/\,metric
		of order $r\ge 1$ is
		\begin{equation}
		\mathsf {d\kern -0.07em l}_W^r(P,\tilde{P})=\left(\inf_{\pi}\iint d\left(\xi,\tilde\xi\right)^r\pi(\mathrm{d}\xi,\mathrm{d}\tilde{\xi})\right)^{\frac{1}{r}},
		\label{eq:Wasser-dist}
		\end{equation}
		where $\pi$ is among all probability measure with marginals $P$ and $\tilde{P}$,
		i.e.,
		\begin{align}
		P(A) & =\pi(A\times\Xi),\quad A\in\mathscr{B}(\Xi)\ \text{ and}\nonumber\\
		\tilde P(B) & =\pi(\Xi\times B),\quad B\in\mathscr{B}(\Xi).\label{eq:20}
		\end{align}
	\end{definition}

One of the main results	concerning the Wasserstein distance is the Kantorovich--Rubinstein Theorem (\cite{kantorovich1958space}),
	which establishes a relationship between the Kantorovich metric
		of  two probability measures and the Wasserstein distance when $r=1$, i.e.,
		\begin{equation}\label{eq:Rubinstein}
		\mathsf {d\kern -0.07em l}_W^{1}(P,\tilde{P})
		=\mathsf {d\kern -0.07em l}_K(P,\tilde{P}).
		\end{equation}
\end{document}